\newenvironment{myabstract}{\par\noindent
{\bf Abstract . } \small }
{\par\vskip8pt minus3pt\rm}
\newcounter{item}[section]
\newcounter{kirshr}
\newcounter{kirsha}
\newcounter{kirshb}
\newenvironment{enumroman}{\setcounter{kirshr}{1}
\begin{list}{(\roman{kirshr})}{\usecounter{kirshr}} }{\end{list}}
\newenvironment{enumarab}{\setcounter{kirshb}{1}
\begin{list}{(\arabic{kirshb})}{\usecounter{kirshb}} }{\end{list}}
\newenvironment{athm}[1]{\vskip3mm\par\noindent%\stepcounter{item}
{\bf #1 }. \slshape }
{\upshape\par\vskip10pt minus3pt}
\newtheorem{theorem}{Theorem}[section]
\newtheorem{lemma}[theorem]{Lemma}
\newtheorem{corollary}[theorem]{Corollary}
\newenvironment{demo}[1]{\noindent{\bf #1.}\upshape\mdseries}
{\nopagebreak{\hfill\rule{2mm}{2mm}\nopagebreak}\par\normalfont}
\theoremstyle{definition}
\newtheorem{example}[theorem]{Example}
\newtheorem{definition}[theorem]{Definition}
\def\R{\mathbb{R}}
\def\C{{\mathfrak{C}}}
\def\Fm{{\mathfrak{Fm}}}
\def\At{{\bf At}}
\def\Nr{{\mathfrak{Nr}}}
\def\Sg{{\mathfrak{Sg}}}
\def\Fm{{\mathfrak{Fm}}}
\def\A{{\mathfrak{A}}}
\def\B{{\mathfrak{B}}}
\def\C{{\mathfrak{C}}}
\def\D{{\mathfrak{D}}}
\def\M{{\mathfrak{M}}}
\def\N{{\mathfrak{N}}}
\def\Sn{{\mathfrak{Sn}}}
\def\CA{{\bf CA}}
\def\SC{{\bf SC}}
\def\QEA{{\bf QEA}}
\def\Df{{\bf Df}}
\def\Lf{{\bf Lf}}
\def\PA{{\bf PA}}
\def\PEA{{\bf PEA}}
\def\K{{\bf K}}
\def\K{{\bf K}}
\def\RCA{{\bf RCA}}
\def\Rd{{\ Rd}}
\def\(R)RA{{\bf (R)RA}}
\def\RA{{\bf RA}}
\def\RRA{{\bf RRA}}
\def\R{\mathbb{R}}
\def\Sc{{\bf Sc}}
\def\c #1{{\cal #1}}
 \def\CA{{\sf CA}}
\def\B{{\sf B}}
\def\G{{\sf G}}
\def\w{{\sf w}}
\def\y{{\sf y}}
\def\g{{\sf g}}
\def\r{{\sf r}}
\def\K{{\sf K}}
\def\tp{{\sf tp}}
 \def\Cm{{\mathfrak{Cm}}}
\def\Nr{{\mathfrak{Nr}}}
\def\restr #1{{\restriction_{#1}}}
\def\cyl#1{{\sf c}_{#1}}
\def\diag#1#2{{\sf d}_{#1#2}}
\def\Ra{{\mathfrak{Ra}}}
\def\Ca{{\mathfrak{Ca}}}
\def\set#1{\{#1\} }
\def\Ra{{\mathfrak{Ra}}}
\def\Nr{{\mathfrak{Nr}}}
\def\Tm{{\mathfrak{Tm}}}
\def\A{{\mathfrak{A}}}
\def\B{{\mathfrak{B}}}
\def\C{{\mathfrak{C}}}
\def\D{{\mathfrak{D}}}
\def\A{{\mathfrak{A}}}
\def\B{{\mathfrak{B}}}
\def\C{{\mathfrak{C}}}
\def\D{{\mathfrak{D}}}
\def\GG{{\mathfrak{GG}}}
\def\L{{\mathfrak{L}}}
\def\Rd{{\mathfrak{Rd}}}
\def\At{{\mathfrak{At}}}
\def\L{{\mathfrak{L}}}
\def\Bl{{\mathfrak{Bl}}}
\def\CA{{\bf CA}}
\def\RA{{\bf RA}}
\def\RRA{{\bf RRA}}
\def\RCA{{\bf RCA}}
\def\G{{\bf G}}
\def\F{{\mathfrak{F}}}
\def\At{{\sf{At}}}
\def\N{\mathbb{N}}
\def\R{\mathfrak{R}}
\def\CCA{{\bf CCA}}
\def\Cs{{\bf Cs}}
\def\RPEA{{\bf RPEA}}
\def\cyl#1{{\sf c}_{#1}}
\def\diag#1#2{{\sf d}_{#1#2}}
\def\c #1{{\cal #1}}
\def\pa{$\forall$}
\def\pe{$\exists$}
\def\nodes{{\sf nodes}}
\def\restr #1{{\restriction_{#1}}}
\def\Ra{{\mathfrak{Ra}}}
\def\Nr{{\mathfrak{Nr}}}
\def\Z{{\cal Z}}
\def\CA{{\bf CA}}
\def\RCA{{\bf RCA}}
\def\c#1{{\mathcal #1}}
\def\set#1{ \{#1\}}
\def\Ca{{\mathfrak Ca}}
\def\pe{$\exists$}
\def\pa{$\forall$}
\def\Cm{{\mathfrak Cm}}
\def\Sg{{\mathfrak Sg}}
\def\P{{\mathfrak P}}
\def\Rl{{\mathfrak Rl}}
\def\N{{\cal N}}
\def\At{{\sf At}}
\def\rng{{\sf rng}}
\def\dom{{\sf dom}}
\def\w{{\sf w}}
\def\g{{\sf g}}
\def\y{{\sf y}}
\def\r{{\sf r}}
\def\bb{{\sf b}} %\b means bold, I think
\def\tp{{\sf tp}}
\def\cyl#1{{\sf c}_{#1}}
\def\diag#1#2{{\sf d}_{#1#2}}
\def\ws{winning strategy}
\def\y{{\sf y}}
\def\g{{\sf g}}
\def\bb{{\sf b}}%not to be confused with ``bold''
\def\r{{\sf r}}
\def\w{{\sf w}}
\title{On Completions, neat atom structures, and omitting types}
\author{Tarek Sayed Ahmed \\
Department of Mathematics, Faculty of Science,\\ 
Cairo University, Giza, Egypt.
  }
\begin{document}
\maketitle
\begin{myabstract} This paper has a survey character, but it also contains several new results.
The paper tries to give a panoramic picture of the recent developments in algebraic logic.
We take a long magical tour in algebraic logic starting from classical notions due to Henkin Monk and Tarski
like neat embeddings, culminating in presenting sophisticated model theoretic constructions based on graphs, to solve problems on neat reducts.

We investigate several algebraic notions that apply to varieties of 
boolean algebras with operators in general $BAOs$, like canonicity, atom-canonicity and completions. We also show that in certain significant
special cases, when we have a Stone-like representabilty notion, like in cylindric, relation and polyadic algebras
such abtract notions turn out intimately related 
to more concrete notions, like complete representability, 
and the existence of weakly but not srongly representable atom structures.

In connection to the multi-dimensional  corresponding modal logic, we 
prove several omitting types theorem for the finite $n$ variable fragments of first order logic, 
the multi-dimensional modal logic corresponding to $CA_n$; the class of cylindric algebras of dimension $n$.

A novelty that occurs here is that
theories could be uncountable. Our constructions depend on deep model-theoretic results of Shelah. 

Several results mentioned in \cite{OTT} without proofs are proved fully here, one such result is:
There exists an uncountable atomic algebra in $\Nr_n\CA_{\omega}$ that is not completely representable. 
Another result: If $T$ is an $L_n$ theory (possibly uncountable), where $|T|=\lambda$, $\lambda$ is a regular cardinal, and $T$ admits elimination of 
quantifiers, then
$<2^{\lambda}$ non principal types {\it maximal} can be omitted. 

A central notion, that connects, representations, completions, complete representations for cylindric algebras is that of neat embedding,
which is an algebraic counterpart of Henkin constructions, and is a nut cracker in cylindric-like 
algebras for proving representation results and related results concerning various forms of the amalagmation property
for clases of representable algebras. 
For example, representable algebras are those algebras that have the neat embeding property, 
completey representable countable ones, are the atomic algebras that have the complete neat embedding 
property. We show that countability cannot be omitted which is sharp in view to our omitting types theorem mentiond above.
We show that the class $\Nr_n\CA_{\omega}$ is psuedo-elementary, not elementary, and its elementary closure is not finitely axiomatizable
for $n\geq 3$, We  characterize this class by games.

We give two constructions for weakly representable atom structures that is not 
strongly representable, that are simple variations on existing themes, and using fairly straightforward modificatons 
of constructons of Hirsch and Hodkinson, we show that
the latter class is not elementary for any reduct of polyadic algebras contaiting all cylindrifiers. 

We introduce the new notions of strongly neat, weakly neat and very weakly neat atom structures.
An $\alpha$ dimensional  atom strucure is  very weakly neat, $\alpha$
an ordinal,  if no algebra based on it in 
$\Nr_{\alpha}\CA_{\alpha+\omega};$ weakly neat if it has at least one algebra based on it that 
is in $\Nr_{\alpha}\CA_{\alpha+\omega}$, and finally  strongly neat if every algebra based on it is in 
$\Nr_{\alpha}\CA_{\alpha+\omega}.$ We give examples of the first two, show that they are distinct, 
and further show that the class of weakly 
neat atom structures is not elementary.
This is done for all dimensions $>2$, infinite included. For the third, we show that finite atom structures are strongly neat (in finite dimensions).

Modifying several constructions in the literature, as well as providing new ones, several 
results on complete representations and completions are presented, 
answering several questions posed by Robin Hirsch, and Ian Hodkinson, concerning relation algebras, and complete representabiliy
of reducts of plyadic algebras.
%In the second case, obtain a representable atomic polyadic algebra, 
%of dimension $n$, such that the completion of its $CA$ reduct
%is not in $S\Nr_nCA_{n+2};$ in particular, it is not representable. We readily infer that the class 
%$S\Nr_n\CA_{n+2}$ is not atom-canonical and is not closed under completions. Our approach is algebraic. 
%Finally a variant of Vaught's conjecture is investigated in the context of G\"odel's intuitionistic logic.
\footnote{Mathematics Subject Classification. 03G15; 06E25}
 
%Key words: multimodal logic, substitution algebras, interpolation}
\end{myabstract}

\section{Introduction}

Atom canonicity, completions, complete representations and omitting types are  four notions that could appear at first glimpse unrelated. 
The first three, are algebraic notions that apply 
to varieties of Boolean algebras with operators $BAO$s. Omitting types is a metalogical notion that applies to the corresponding multi-modal logic. 

Canonicity is one of the most important concepts of modal 
completeness theory. From an algebraic perspective, canonical models are not abstract oddities, 
they are precisey the structure one is led to 
by underlying the idea in Stones representabilty theory for Boolean algebras.

The canonical extension of an algebra  has universe the power set algebra of 
the set of ultrafilters; that is its Stone space, and  the extra non-Boolean operations induced naturally from the original ones.
A variety is canonical if it is closed under taking canonical
extensions. 

This is typically a {\it persistence property}. Persistence properties refer to closure of a variety $V$  under 
passage from a given member in $V,$ to some 'larger' algebra .

The other  persistence property, namely, atom-canonicity, concerns the atom structure $\At\A$ of an atomic algebra $\A$. 
As the name suggests, $\At\A$ is a certain relational structure based on the set of atoms of $\A$. 
A variety $V$ is atom-canonical if it contains the complex algebra $\Cm\At(\A)$ whenever it contains $\A$.
If $\At V$ is the class of all atom structures of atomic algebras in $V$, 
then atom-canonicity amounts to the requirement that $\Cm\At V\subseteq V$.%

The (canonical) models of a multi-modal logic $\L_V$, corresponding to a canonical variety $V$,  are Kripke frames; 
these are the ultrafilter frames. The  atom structures, are special cases, we call these atomic 
models.

The canonical extension of an algebra is a complete atomic algebra that the original algebra embeds into, however it only preserves finite meets. 
Another completion is the Dedekind MacNeille completion, also known as the 
minimal completion. Every completely  additive $BAO$ has such a completion. It is uniquely 
determined by the fact that it is complete and the original algebra is dense in it; hence it preserves all meets existing in the original 
algebra.
The completion is atomic if and only if the original algebra is. The completion and canonical extension of an algebra 
only coincide when the algebra is finite.

Complete representations has to do with algebras that have a notion of representations involving - using jargon of modal logic - complex algebras 
of square frames or, using algebraic logic jargon, full set algebras having square units, and also having the 
Boolean operations of (concrete) intersections and complementation,
like relation algebras and cylindric algebras.  Unlike atom-canonicity, this notion is semantical.
Such  representations is a representation that carries existing (possibly infinite) meets to set theoretic 
intersections.

Atomic representability is also related to the Orey-Henkin omitting types theorem. Let $V$ be a variety of $BAO$'s which has a notion of representation,
like for example $CA_n$. The variety $CA_n$ corresponds to the syntactical part of $L_n$ 
first order logic restricted to the first $n$ variables, while $RCA_n$ corresponds to the semantical models 

Indeed given an $L_n$ theory $T$ and a model $\M$ of $T$, let $\phi^M$ denote the set of$n$-ary assignments satisfying
$\phi$ in $\M$, notationaly $\phi^{\M}=\{s\in {}^nM: \M\models \phi[s]\}$.
Then $\{\phi^M:\phi\in L\}$ is the universe of a cylindric set algebra $Cs_n$ with the operations of cyindrifiers corresponding to 
the semantics of existential quabtifiers
and diagonal elements to equality. The class of subdirect products of algebras in 
$Cs_n$ is the class $RCA_n$.

A set $\Gamma$ in the language of $T$ is said to be omitted by the model $\M$ of $T$, 
if $\bigcap \phi^{\M}=\emptyset$. This can be formulated algebraically as follows:
Let $\A\in \CA_n$ and  let there be given a family $(X_i: i\in I)$ of subsets of $\A$,  then there exists an injective homomorphism 
$f:\A\to \wp(V)$, $V$ a disjoint union of cartesian squares, that  omits the $X_i$'s, that is $\bigcap_{x\in X_i} f(x)=0$.

The Orey-Henkin omitting types theorem says that this always happens if $\A$ is countable and locally finite,
and when $I$ is countable, and  the $X_i'$ contain only finitely many 
dimensions (free variables). 
But it is clear that the above algebraic formulation lends itself to other contexts.

Note that if omitting types theorem holds for arbitrary cardinalities, then the 
atomic representable algebras are completely representable, by finding a representation that omits the 
non principal types $\{-x: x\in \At\A\}$. The converse is false. There are easy examples. Some will be provided below.

Now let us try to find a connection between such notions. 
Consider a variety$V$ of $BAO$s that is {\it not} atom-canonical. Ths means that there is an $\A\in V$, such that $\Cm\At\A\notin V$. 
If $V$ is completely aditive, then $\Cm\At\A$, is the completion of $\A$. So $V$ is not closed 
under completions. 

There are significant varieties that are not atom-canonical, like the variety of representable cylindric algebras 
$\RCA_n$ for $n\geq 3$ and representable relation algebras $\RRA$.
These have a notion of representability involving square units. 
%Consider $\RCA_{n}\subset \CA_{n}.$

Let $\A$ be an atomic representable such that $\Cm\At\A\notin \RCA_{n}$, for $n\geq 3$. Such algebras exist, the first of its kind was constructed
by Hodkinson. The term algera $\Tm\At\A$, which is the subalgebra of the complex algebra,
is contained in $\A$, because $\RCA_n$ is completely additiv; furthermore, it is 
representable but it {\it cannot} have a complete representation, for such a representation
would necessarily induce a representation of $\Cm\At\A$. That is to say, $\A$ is an example of an atomic theory in the cylindric modal logic of dimension
$n$, but it has no atomic model. 
In such a context, $\At\A$ is also an example of a {\it weakly representable} atom structure that is not {\it strongly representable.}

A weakly representable atom structure is an atom structure such that there is at least one algebra based on this atom structure that is 
representable \cite{w}. It is strongly representable, if every algebra based on it is representable.
Hodkinson, the first to construct a weakly representable cylindric atom structure that is not strongly representable, 
used a somewhat complicated construction depending on so called Rainbow 
constructions. His proof is model-theoretic.
In \cite{w} we use the same method to construct such an atom structure for both relation and cylindric algebras.
On the one hand, the graph we used substantially simplifies Hodkinson's construction, 
and, furthermore, we get our result for relation and cylindric algebras in one go.

%Such a result have a wide range of repercussions, for example it shows that the class of representable algebras 
%is not closed under completions, and it is not
%atom canonical, there are atomic algebras with no complete representations, 
%and that this class cannot be axiomatized by Sahlqvist equations.

Hirsch and Hodkinson show that the class of strongly representable atom structures of relation algebras 
(and cylindric algebras) is not elementary \cite{strong}.
The construction makes use of the pobabilistic method of  Erd\"os to show that there are finite graphs with arbitrarily large 
chromatic number and girth.   
In his pioneering paper of 1959, Erdos took a radically new approach to construct such graphs: for each $n$ he 
defined a probability space on the set of graphs with $n$ vertices, and showed that, for some carefully chosen probability measures, 
the probability that an $n$ vertex graph has these properties is positive for all large enough $n$.
This approach, now called the {\it probabilistic method} has since unfolded into a 
sophisticated and versatile proof technique, in graph theory and in other 
branches of discrete mathematics.
This method was used first in algebraic logic by Hirsch and Hodkinson to show that the class of strongly representable atom structures of cylindric 
and relation algebras is not elementary  
and that varieties of representable relation algebras are barely canonical \cite{hv}.
But yet again using these methods of Erd\"os in  \cite{ghv} it is shown 
that there exist continuum-many canonical equational classes of Boolean algebras 
with operators that are not generated by the complex algebras of any first-order 
definable class of relational structures. Using a variant of this construction  the authors resolve the long-standing question of Fine, 
by exhibiting a bimodal logic that is valid in its canonical frames, but is not sound and complete for any first-order definable class of 
Kripke frames. 

There is an ongoing interplay between algebraic logic on the one hand, and model theory and finite combinatorics particularly graph theory, 
on the other. 
Monk was the first to use Ramsey's theorems to construct what is known an Monk's algebras.
witnessing non finite axiomatizability for 
the class of representable cylindric algebras.
The key idea of the construction of a Monk's algebra is not so hard. 
Such algebras are finite, hence atomic, more precisely their Boolean reducts are atomic.
The atoms are given colours, and 
cylindrifications and diagonals are defined by stating that monochromatic triangles 
are inconsistent. If a Monk's algebra has many more atoms than colours, 
it follows from Ramsey's Theorem that any representation
of the algebra must contain a monochromatic triangle, so the algebra
is not representable. 

Later Monk-like constructions were substantially generalized by Andr\'eka N\'emeti \cite{ANT}, 
Maddux \cite{m}, and finally Hirsch and Hodkinson \cite{HHM}.
Constructing algebras from Erdos graphs have 
proved extremely rewarding \cite{strong}, \cite{hv}, \cite{ghv}. 
Another construction invented by Robin Hirsch and Ian Hodkinson is the so-called rainbow construction, which  is an ingenious technique 
that has been used to show that several classes are not elementary \cite{HH}, \cite{HHbook}, \cite{r}, and was used together with a 
lifting argument of Hodkinson of construction polyadic algebras from relation algebras to show that it is undecidable whether a finite relation or 
cylindric algebra is representable.
This shows that  ceratin important products modal logics are undecidable.
We will use the rainbow construction below to prove the $CA$ analogue of a deep result in \cite{r}.

Constructing cylindric algebras based on certain models satisfyig certain properties like homogeniouty, saturation, 
elimination of quantifiers, using model theory like in \cite {MLQ}, will be generalized below to answer a question of Hirsch \cite{r},
on relation algebra reducts of cylindric algebras.

Another model theoretic construction of Hodkinson, 
based on rainbow graphs, considerably simplified in \cite{w} will be further simplified here 
to prove  that several varieties approximating the class of representable cylindric algebras are not closed under completions.

%something stronger.
\section*{ The main new results}

\begin{enumarab}

\item  Answering a question of Robin Hirsch in \cite{R} on complete representations for both relation and cylindric algebras 
using an example in the same paper. This example shows that in the characterization of countable completely representable algebras,
both relation and cylindric algebras, the condition of countability is necessary, it {\it cannot} be omitted.

\item Using an example by Andr\'eka et all, to show that unlike cylindric and polyadic equality algebras, 
atomic polyadic algebras, and Pinters substitution algebras, even without cylindrfiers, of 
dimension $2$ may  not be completely representable. However, the class of completely representable algebras 
is not so hard to characterize; it is finitely axiomatizable in first order logic.
This is contrary to a current belief for polyadic algebras, and is an answer to a result of Hodkinson for cylindifier free
Pinter's algebras.

\item Using the construction in Andr\'eka et all \cite{ANT}, 
showing that the omitting types theorem fails for finite first 
order definable extension of first order logic as defined by Tarski and Givant, and further pursued by others, like 
Maddux and Biro,  a result mentioned in the above cited paper without a proof.

\item Characterizing the class $\Nr_n\CA_{\omega}$ by games, and 
showing that the class $\Nr_n\CA_{\omega}$ is pseudo elementary, and its elementary closure is not finitely axiomatizable.

\item Characterizing the class of countable completely representable algebras of infinitely countable dimensions using weak representations 
(the question remains whether this class is elementary, 
the Hirsch Hodkinson example depending on a cardinality argument does not works when our units are weak spaces.)

%\item Showing that the varieties $S\Nr_n\CA_{n+k}$ $k\geq 2$ are not atom canonical, and not closed under completions.
%(This answers a question by Hirsch and Hodkinson and a question in \cite{1}.)

\item Giving full proofs to three results mentioned in \cite{OTT} without proofs, referring to a pre-print, 
concerning omitting types in uncountable theories using finitely
many variables. This is the pre print, expanded, modified and polished containing proofs of these results and much more.
The results concerning omitting types depend on deep model-theoretic constructions of Shelah's.

\item We show that the class of weakly neat atom structures, as defined in the abstract, is not elementary for every dimension.

\item Unlike the cylindric case, we show that atomic polyadic algebras of infinite dimensions are completely representable.

\end{enumarab}

This paper also simplifies existing proofs in the literature, like the proof in \cite{ANT}, concerning complete representations of relation 
atom structures, having cylindric basis. Some classical results, like Monk's non-finitizability results for relation and cylindric algebras
are also re-proved.
%Four semingly unrelated notions. Atom canonicity, completions, complete representations, are algebraic notions that apply 
%to varieties, that are $BAO$'s. To every variety $V$ of $BAO$ there corresponds a multi dimensional modal logic $\L_V.$
%The models of $\L_V$ are Kripke frames, and of importance are the atom structures which correspond to the atomic models.

%This paper also contains two results in \cite{OTT} that are mentioned without proofs, referring  instead to a preprint. 
%This is this preprint, expanded, modified and polished containing all proofs.

\section{Omitting types}

In model theory we are typically encountered by questions not only of the form find a model  (prove consistency), but rather 
of the form find a model of a theory that 
does so and so. A typical example, in this regard, is finding a model 
omitting a given set of non-isolated types. In first order logic with $\omega$ variables, this problem is completely settled 
for the countable case. Another, which a variation on Vaught's conjecture is, {\it count} the number of such models.
Vaught's conjecture is regarded as one of  the most important open problem in model theory, if not the most.

In the first case, algebraically we find ourselves looking for non-trivial homomorphisms from locally finite cylindric algebras to set algebras 
that preserve infinitary meets (or types) carrying them to set theoretic intersection, and  by the Henkin-Orey classical 
omitting types theorem we can always find one if the meets considered are not isolated.
When we truncate the dimension to be finite, it is not always the case  algebra in $\RCA_n$ have this property, 
but indeed it is worthwhile characterizing those that does. This is one of the tasks carried out in this paper. 

In the second case, we  find ourselves counting special ultrafilters in locally finite cylindric algebras, that form a dense set of its 
Stone space (This will be discussed in some detail below).
Investigating, using the well developed machinery of algebraic logic, the number of types omitted in theories, 
and the number of pairwise non-isomorphic countable models
for a countable theory that omit a given set of types, 
this paper is interdisciplinary between model theory, set theory and algebraic logic.

There two typical types of investigations in set theory. Both of those will be adressed in this paper. 
The first type consists of theorems demonstrating the independence of mathematical statements. 
This type requires a thorough understanding of mathematics surrounding the 
source of problem in question, reducing the ambient mathematical constructions to combinatorial statements about sets, 
and finally using some method 
(primarily forcing) to show  that such combinatorial statements
are independent.
A second type involves delineating the edges of the independence proofs, giving proof in $ZFC$ of statements 
that on first sight would be suspected of being independent. 
Proofs of this kind is often extremely subtle and surprising; very similar 
statements are independent and it is hard to detect the underlying difference.

We present here results illustrating both types of investigations. 
We show that when we consider countable algebras, omitting $< {}^{\omega}2$ types turns 
out independent, whereas, if the types are maximal the statement is provable. 
Such results will be presented in the context of omitting types in $\L_n$ theories. We find this discrepancy interesting from the purely set theoretic point of 
view when maximality shifts us from the realm of independence to that of 
provability from $ZFC$.

Let us start with the classical omitting types theorem.
One direct consequence of the classical Henkin-Orey omitting types theorem states that if 
$T$ is a consistent theory in a countable language $\L$ and $\Gamma(x_1\ldots x_n)$ is 
realized in every model of $T$, then there is a formula $\phi\in \L$ such that
$\phi$ isolates $\Gamma$ in $T$. $\phi$ is called a witness of $\Gamma$. This follows directly from the contrapositive
of the following theorem, see also \cite{OTT}, \cite{OTT2}:

\begin{theorem}\label{infinite} Let $T$ be a countable consistent theory. Assume that 
$\kappa<covK$, where $covK$ is the least cardinal $\kappa$ such that the real line can be covered by
pairwise disjoint $\kappa$ nowhere-dense sets. Let $(\Gamma_i:i\in \kappa)$ be a set of non-principal types in $T$. 
Then there is a model countable $\M$ of $T$ that omits
all the $\Gamma_i$'s.
\end{theorem}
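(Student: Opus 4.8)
The plan is to run a Baire-category argument on the Stone space of the Lindenbaum--Tarski algebra of $T$, exactly in the spirit of the classical Henkin--Orey proof, but replacing ``countably many dense open sets have nonempty intersection'' by ``fewer than $covK$ many dense open sets have nonempty intersection.'' Concretely, let $\Fm$ be the set of sentences of the Henkinized language $\L^+$ obtained by adding countably many new constants, and form the Lindenbaum algebra $\A=\Fm/{\equiv_T}$; since $\L$ is countable, $\A$ is a countable Boolean algebra, so its Stone space $X=\Ws\A$ (the set of ultrafilters, with the usual clopen basis $\{N_\phi: \phi\in\Fm\}$, $N_\phi=\{F\in X:\phi\in F\}$) is a \emph{Polish space} — compact, metrizable, hence homeomorphic to a closed subset of the Cantor set — and in particular the hypothesis about $covK$ applies to it: fewer than $covK$ many nowhere-dense (equivalently, closures of nowhere-dense) sets cannot cover $X$, so fewer than $covK$ many dense open subsets of $X$ have nonempty — indeed dense — intersection.

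Next I would write down the relevant dense open sets. For the Henkin property: for each sentence $\exists x\,\psi(x)\in\Fm$, the set $H_{\psi}=\{F\in X: \exists x\,\psi\notin F \text{ or } \psi(c)\in F \text{ for some new constant }c\}$ is dense open; there are countably many of these, and $\omega<covK$ so they are harmless to add. For the omitting requirement: for each $i\in\kappa$ and each tuple $\bar c$ of new constants of the appropriate length, the set
\[
D_{i,\bar c}=\{F\in X:\ \neg\gamma(\bar c)\in F\ \text{for some}\ \gamma\in\Gamma_i\}
\]
is open, and it is \emph{dense} precisely because $\Gamma_i$ is non-principal: given any $\phi$ consistent with $T$ in which at most the variables of $\bar c$ occur freely (after the obvious substitution), non-principality says $\phi$ does not isolate $\Gamma_i$, so there is $\gamma\in\Gamma_i$ with $T\not\vdash \phi\to\gamma$, i.e. $\phi\wedge\neg\gamma(\bar c)$ is consistent, which gives a point of $D_{i,\bar c}\cap N_\phi$; hence $D_{i,\bar c}$ meets every nonempty basic clopen set. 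The total number of these dense open sets is $\kappa\cdot\omega=\kappa<covK$. By the $covK$ version of Baire category, the intersection of all the $H_\psi$ and all the $D_{i,\bar c}$ (together with the basic requirement that $F$ be a \emph{complete, consistent} Henkin set, which is automatic for points of $X$) is nonempty; pick $F$ in it. The usual term model $\M$ built from $F$ on the new constants is then a model of $T$, it is countable because there are only countably many new constants, and it omits every $\Gamma_i$: any assignment in $\M$ is named by a tuple $\bar c$, and membership in $D_{i,\bar c}$ gives a $\gamma\in\Gamma_i$ not satisfied by $\bar c$.

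The step I expect to be the real point — as opposed to routine bookkeeping — is the verification that $covK$ is exactly the right cardinal, i.e. that the Baire-category statement ``the union of $<covK$ nowhere-dense subsets of a Polish space is not the whole space'' is available and is tight here; this is where one must be a little careful about which space one works in (the Stone space above is Polish, so $covK$ as defined via the real line transfers to it, all these spaces being Borel-isomorphic), and about the fact that $\kappa<covK$ licenses adding both the $\omega$ Henkin conditions and the $\kappa$ omitting conditions simultaneously, since $\kappa+\omega=\kappa<covK$. The remaining details — that a point of the Stone space is automatically a maximal consistent Henkin theory once the $H_\psi$ are imposed, and the truth lemma for the term model — are entirely standard and I would only sketch them.
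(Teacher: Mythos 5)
Your argument is correct, and its engine is the same as the paper's: replace the countably many dense open sets of the classical Henkin--Orey proof by fewer than $covK$ many, and use the covering number of the meagre ideal on a Stone space to find an ultrafilter meeting all of them. What differs is the construction around that engine. You Henkinize with new constants, work in the Stone space of the sentence algebra of the expanded language, impose the witnessing sets $H_{\psi}$ and the omitting sets $D_{i,\bar c}$ (one per tuple of new constants), and read off a term model. The paper adds no constants at all: it treats $\A=\Fm_T$ as a locally finite cylindric algebra in $\Lf_{\omega}$, encodes the Henkin property as preservation of the joins ${\sf c}_jx=\sum_{i\notin \Delta x}{\sf s}^j_ix$ (spare variables, available by local finiteness, play the role of your witnesses), indexes the omitting conditions by finite transformations $\tau\in {}^{\omega}\omega^{(Id)}$ --- the sets $\bigcap_{x\in X_i}N_{{\sf s}_{\tau}x}$ are nowhere dense because each ${\sf s}_{\tau}$ is a complete Boolean endomorphism, so ${\sf s}_{\tau}$ preserves $\prod X_i=0$ --- and then converts the chosen ultrafilter $F$ directly into a representation $a\mapsto\{\tau:{\sf s}_{\tau}a\in F\}$ on a weak set algebra with base $\omega/E$, where $E$ is read off the diagonal elements; your tuples of constants correspond to its transformations of variables. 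Your version buys the familiar first-order picture with minimal machinery; the paper's algebraic version is the one that transfers verbatim to its later results (finite-variable fragments, algebras in $S_c\Nr_n\CA_{\omega}$, quasi-polyadic algebras without equality), where Henkin constants and equality manipulations are unavailable. Two details to tighten, both at a level the paper itself glosses: justify the transfer of $covK$ to the Stone space not by Borel isomorphism (Borel bijections need not preserve category) but by observing that isolated points lie in every dense open set while on the perfect kernel the covering number of the meagre ideal agrees with that of the real line; and in proving $D_{i,\bar c}$ dense, first existentially quantify away the new constants of $\phi$ not among $\bar c$ before invoking non-principality of $\Gamma_i$.
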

\begin{demo}{Proof} Standard Baire Category argument. Let $\A=\Fm_T$, then $\A\in \Lf_{\omega}$.
Let $X_i=\{\phi/T: \phi\in \Gamma_i\}$. Then we have $\prod X_i=0$ in $\A$. We have by \cite[1.11.6]{HMT1} that 
\begin{equation}\label{t1}
\begin{split} (\forall j<\alpha)(\forall x\in A)({\sf c}_jx=\sum_{i\in \alpha\smallsetminus \Delta x}
{\sf s}_i^jx.)
\end{split}
\end{equation}
Now let $V$ be the weak space $^{\omega}\omega^{(Id)}=\{s\in {}^{\omega}\omega: |\{i\in \omega: s_i\neq i\}|<\omega\}$.
For each $\tau\in V$ for each $i\in \kappa$, let
$$X_{i,\tau}=\{{\sf s}_{\tau}x: x\in X_i\}.$$
Here ${\sf s}_{\tau}$ 
is the unary operation as defined in  \cite[1.11.9]{HMT1}.
For each $\tau\in V,$ ${\sf s}_{\tau}$ is a complete
boolean endomorphism on $\B$ by \cite[1.11.12(iii)]{HMT1}. 
It thus follows that 
\begin{equation}\label{t2}\begin{split}
(\forall\tau\in V)(\forall  i\in \kappa)\prod{}^{\A}X_{i,\tau}=0
\end{split}
\end{equation}
Let $S$ be the Stone space of the Boolean part of $\A$, and for $x\in \A$, let $N_x$ 
denote the clopen set consisting of all
boolean ultrafilters that contain $x$.
Then from \ref{t1}, \ref{t2}, it follows that for $x\in \A,$ $j<\beta$, $i<\kappa$ and 
$\tau\in V$, the sets 
$$\bold G_{j,x}=N_{{\sf c}_jx}\setminus \bigcup_{i\notin \Delta x} N_{{\sf s}_i^jx}
\text { and } \bold H_{i,\tau}=\bigcap_{x\in X_i} N_{{\sf s}_{\bar{\tau}}x}$$
are closed nowhere dense sets in $S$.
Also each $\bold H_{i,\tau}$ is closed and nowhere 
dense.
Let $$\bold G=\bigcup_{j\in \beta}\bigcup_{x\in B}\bold G_{j,x}
\text { and }\bold H=\bigcup_{i\in \kappa}\bigcup_{\tau\in V}\bold H_{i,\tau.}$$
By properties of $covK$, it can be shown $\bold H$ is a countable collection of nowhere dense sets.
By the Baire Category theorem  for compact Hausdorff spaces, we get that $H(A)=S\sim \bold H\cup \bold G$ is dense in $S$.
Accordingly let $F$ be an ultrafilter in $N_a\cap X$.
By the very choice of $F$, it follows that $a\in F$ and  we have the following 
\begin{equation}
\begin{split}
(\forall j<\beta)(\forall x\in B)({\sf c}_jx\in F\implies
(\exists j\notin \Delta x){\sf s}_j^ix\in F.)
\end{split}
\end{equation}
and 
\begin{equation}
\begin{split}
(\forall i<\kappa)(\forall \tau\in V)(\exists x\in X_i){\sf s}_{\tau}x\notin F. 
\end{split}
\end{equation}
Next we form the canonical representation corresponding to $F$
in which satisfaction coincides with genericity. 
To handle equality. we define
$$E=\{(i,j)\in {}^2{\alpha}: {\sf d}_{ij}\in F\}.$$
$E$ is an equivalence relation on $\alpha$.   
$E$ is reflexive because ${\sf d}_{ii}=1$ and symmetric 
because ${\sf d}_{ij}={\sf d}_{ji}.$
$E$ is transitive because $F$ is a filter and for all $k,l,u<\alpha$, with $l\notin \{k,u\}$, 
we have 
$${\sf d}_{kl}\cdot {\sf d}_{lu}\leq {\sf c}_l({\sf d}_{kl}\cdot {\sf d}_{lu})={\sf d}_{ku}.$$
Let $M= \alpha/E$ and for $i\in \omega$, let $q(i)=i/E$. 
Let $W$ be the weak space $^{\alpha}M^{(q)}.$
For $h\in W,$ we write $h=\bar{\tau}$ if $\tau\in V$ is such that
$\tau(i)/E=h(i)$ for all $i\in \omega$. $\tau$ of course may
not be unique.
Define $f$ from $\B$ to the full weak set algebra with unit $W$ as follows:
$$f(x)=\{ \tau\in {}^{\omega}\omega:  {\sf s}_{\tau}x\in F\}, \text { for } x\in \A.$$ 
Then it can be checked that $f$
is a homomorphism 
such that $f(a)\neq 0$ and 
$\bigcap f(X_i)=\emptyset$ for all $i\in \kappa$, hence the desired conclusion.

\end{demo}

%If $\A$ is locally finite, then we refer to $F$ satisfying (1) and (2) above as Henkin ultrafilier. 
The above theorem can be proved using games. Indeed, the connection of games to the Baire category approach 
can be established using the famous Banach-Mazur theorem. An ultrafilter in the above proof is called a Henkn ultrfailter. 
Henkin ultrafilters give rise to representations (models) and carefully chosen one gives models that omit 
a given family of types

\section{Counting Henkin ultrafilers}

This is equivalent to countng models, and it can be used to count non isomorphic countable models for the locally finite
cylindric algebras representing a countable first order theory. This is a non-trivial topic in Model theory. This is Vaughts conjecture.

Let $\A$ be any Boolean algebra. The set of ultrafilters of $\A$ is denoted by $\mathcal{U}(\A)$. 
The Stone topology  makes $\mathcal{U}(\A)$ a compact Hausdorff space; we denote this space by $\A^*$. Recall that the Stone topology has as its basic open sets the sets $\{N_x:x\in A\}$, where
$$N_x=\{\mathcal F\in\mathcal{U}(\A):x\in\mathcal F\}.$$
It is easy to see that if $A$ is countable, then $\A^*$ is \emph{Polish}, (i.e., separable and completely metrizable).

Now, suppose $\A$ is a locally finite cylindric or quasi-polyadic $\omega$-dimensional algebra with a countable universe.
Note that if $T$ is a theory in a countable language with (without) equality, then $CA(T)$, (respectively $QPA(T)$), satisfies these requirements.
Let $$\mathcal{H}(\A)=\bigcap_{i<\omega,x\in A}(N_{-c_ix}\cup\bigcup_{j<\omega}N_{s^i_jx})$$ and, in the cylindric algebraic case, let

$$\mathcal{H}'(\A)=\mathcal{H}(\A)\cap\bigcap_{i\neq j\in\omega}N_{-d_{ij}}.$$
Note, for later use, that $\mathcal{H}(\A)$ and $\mathcal{H}'(\A)$ are $G_\delta$ subsets of $\A^*$, and are nonempty,  as a matter of face it is dense-- this latter fact can be seen,
for example, from Theorem \ref{th1} below -- and are therefore Polish spaces; (see \cite{Kechris}).
Assume $\mathcal F\in \mathcal{H}(\A).$ For any $x\in A$, define the function $\mathrm{rep}_{\mathcal F}$ to be $$\mathrm{rep}_{\mathcal F}(x)=\{\tau\in{}^\omega\omega:s^+_\tau x\in \mathcal F\}.$$
We have the following results due to G. S\'agi and D. Szir\'aki; (see \cite{Sagi}).
\begin{theorem}\label{th1}
If $\mathcal F\in \mathcal{H}'(\A)$, (respectively $\mathcal{H}(\A)$), then $\mathrm{rep}_{\mathcal F}$ is a homomorphism from $\A$ onto an element of $Lf_\omega\cap Cs_\omega^{reg}$, (respectively $LfQPA_\omega\cap Qs_\omega^{reg}$), with base $\omega.$ Conversely, if $h$ is a homomorphism from $\A$ onto an element of $Lf_\omega\cap Cs_\omega^{reg}$, (respectively $LfQPA_\omega\cap Qs_\omega^{reg}$), with base $\omega$, then there is a unique $\mathcal F\in \mathcal{H}'(\A)$, (respectively $\mathcal{H}(\A)$), such that $h=\mathrm{rep}_{\mathcal F}.$
\end{theorem}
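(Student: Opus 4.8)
The plan is to establish the two directions of the correspondence between Henkin ultrafilters in $\mathcal H'(\A)$ (resp.\ $\mathcal H(\A)$) and homomorphisms onto regular, locally finite set algebras with base $\omega$. I will treat the cylindric case in detail; the quasi-polyadic case is entirely parallel, just without the diagonals, which is why $\mathcal H(\A)$ replaces $\mathcal H'(\A)$. Throughout, the key structural identity is the Henkin-Orey style identity $(\ref{t1})$, namely ${\sf c}_j x=\sum_{i\in\alpha\smallsetminus\Delta x}{\sf s}_i^j x$, together with the fact that each ${\sf s}_\tau$ (for $\tau$ in the weak space $^\omega\omega^{(Id)}$) is a boolean endomorphism of $\A$; these let me translate membership of cylindrifications and substitutions in $\mathcal F$ into the combinatorial ``witness'' conditions that encode regularity and local finiteness.

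First I would take $\mathcal F\in\mathcal H'(\A)$ and verify that $\mathrm{rep}_{\mathcal F}$ is a boolean homomorphism: since $x\in\mathcal F$ is a two-valued measure and $s^+_\tau$ is a boolean endomorphism, $\mathrm{rep}_{\mathcal F}$ preserves $\cdot,-,1$; the unit comes out as the weak space $^\omega\omega^{(Id)}$ because $s^+_{Id}=\mathrm{id}$. Next, to see that $\mathrm{rep}_{\mathcal F}$ commutes with cylindrifications, I expand $\mathrm{rep}_{\mathcal F}({\sf c}_j x)$ as $\{\tau:s^+_\tau{\sf c}_j x\in\mathcal F\}$ and use the $\mathcal H'(\A)$ defining condition $N_{-{\sf c}_j x}\cup\bigcup_k N_{{\sf s}^j_k x}$ — applied to the $\mathcal F$-translates under $s^+_\tau$, which remain in $\mathcal H'(\A)$ — to show the right-hand set equals $\{\tau: (\exists k)\, \tau[j/k]$ works$\}$, i.e.\ the concrete cylindrification $C_j$ of $\mathrm{rep}_{\mathcal F}(x)$ in the weak set algebra. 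The diagonals are handled by the equivalence relation $E=\{(i,j):{\sf d}_{ij}\in\mathcal F\}$ exactly as in the proof of Theorem~\ref{infinite}: transitivity follows from ${\sf d}_{kl}\cdot{\sf d}_{lu}\le{\sf c}_l({\sf d}_{kl}\cdot{\sf d}_{lu})={\sf d}_{ku}$ and the filter property; since $\mathcal F$ contains all $-{\sf d}_{ij}$ for $i\ne j$ (this is the extra clause in $\mathcal H'$), $E$ is the identity and the base is genuinely $\omega$ rather than a quotient. That $\mathrm{rep}_{\mathcal F}(\A)\in Lf_\omega$ is because $\A$ is locally finite and homomorphic images of locally finite algebras are locally finite; that it lands in $Cs_\omega^{reg}$ — the regular cylindric set algebras — is the content of the $\mathcal H(\A)$ condition, which says precisely that whenever $s^+_\tau{\sf c}_j x\in\mathcal F$ there is a witnessing substitution instance in $\mathcal F$, the algebraic rendering of regularity. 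Surjectivity onto its image is immediate.

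For the converse, given $h:\A\to\B$ with $\B\in Lf_\omega\cap Cs_\omega^{reg}$ of base $\omega$, I would pick any assignment $\eta\in{}^\omega\omega$ lying in the unit of $\B$ (e.g.\ the identity) and set $\mathcal F=\{x\in A: \eta\in h(x)\}$; this is a boolean ultrafilter since evaluation at a fixed point is a two-valued homomorphism on the set algebra. The regularity of $\B$ and the identity $(\ref{t1})$ together force $\mathcal F\in\mathcal H'(\A)$: membership of ${\sf c}_j x$ unpacks, via $C_j$ in $\B$ and the base being $\omega$, to existence of a $k$ with ${\sf s}^j_k x\in\mathcal F$; and $-{\sf d}_{ij}\in\mathcal F$ for $i\ne j$ because distinct coordinates of $\eta$ give distinct values in base $\omega$. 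Uniqueness is the main thing to nail down: if $\mathcal F,\mathcal F'\in\mathcal H'(\A)$ both induce $h$, then for every $x$ and every $\tau$, $s^+_\tau x\in\mathcal F\iff\tau\in\mathrm{rep}_{\mathcal F}(x)=h(x)=\mathrm{rep}_{\mathcal F'}(x)\iff s^+_\tau x\in\mathcal F'$; taking $\tau=Id$ gives $\mathcal F=\mathcal F'$. I expect the main obstacle to be the bookkeeping in the forward direction showing $\mathrm{rep}_{\mathcal F}$ genuinely commutes with $C_j$ on the weak space: one has to check both inclusions, and the $\supseteq$ direction — producing, from $\tau[j/k]\in\mathrm{rep}_{\mathcal F}(x)$, the conclusion $\tau\in\mathrm{rep}_{\mathcal F}({\sf c}_j x)$ — uses the monotonicity ${\sf s}^j_k x\le {\sf c}_j x$ and the substitution-endomorphism properties of $s^+_\tau$, interacting with how $s^+_\tau$ composes with ${\sf s}^j_k$; getting these commutation laws for the $s^+$ operators on the weak space lined up correctly (rather than the $s$ operators of \cite[1.11.9]{HMT1}) is where care is needed, and it is also the only place the ``regular'' hypothesis on the target algebra is really used.
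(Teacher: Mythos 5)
You should note first that the paper itself offers no proof of this theorem: it is quoted from S\'agi--Szir\'aki, so your argument has to stand on its own. Its skeleton (forward: Boolean part, diagonals via the extra clause of $\mathcal H'$, cylindrifiers via the Henkin clause plus substitution calculus; converse: read off an ultrafilter from $h$ and prove uniqueness by evaluating at the identity) is the right one, and your uniqueness argument is correct. The serious gap is in the existence half of the converse. You take ``any assignment $\eta$'' and set $\mathcal F=\{x\in A:\eta\in h(x)\}$, but you never verify $h=\mathrm{rep}_{\mathcal F}$, which is the heart of the converse and the only place where the hypotheses ``$Lf_\omega\cap Cs^{reg}_\omega$ with base $\omega$'' are genuinely used. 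Moreover only $\eta=Id$ works: for $x\in A$ and $\tau\in{}^\omega\omega$ one has $s^+_\tau x={\sf s}_\sigma x$ for any finite $\sigma$ agreeing with $\tau$ on $\Delta x$, hence $s^+_\tau x\in\mathcal F\iff Id\circ\sigma=\sigma\in h(x)\iff \tau\in h(x)$, the last equivalence holding because $\sigma$ and $\tau$ agree on $\Delta x\supseteq\Delta h(x)$ and $h(x)$ is a \emph{regular} element; this chain is what shows $\mathrm{rep}_{\mathcal F}=h$, and it fails for a general $\eta$ (a non-identity bijection gives the $\eta$-translate of $h$, which is exactly the content of the next theorem about $s^+_\rho\mathcal F$, while a non-injective $\eta$ violates the diagonal clause of $\mathcal H'$ and a non-surjective $\eta$ can violate the witness clause).

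In the forward direction you misidentify both the unit and the role of the hypotheses. Since $\tau$ ranges over all of ${}^\omega\omega$ and $s^+_\tau 1=1\in\mathcal F$, we get $\mathrm{rep}_{\mathcal F}(1)={}^\omega\omega$: the image is a square $Cs_\omega$ with base $\omega$, not a weak set algebra with unit ${}^\omega\omega^{(Id)}$ as you assert; landing in $Cs^{reg}_\omega$ rather than $Ws_\omega$ is precisely what distinguishes this construction (possible because $\A\in Lf_\omega$, so $s^+_\tau$ is defined for infinite $\tau$) from the weak-space representation in the proof of Theorem \ref{infinite}, and it is part of what the theorem claims. Likewise, regularity of the image is not ``the content of the $\mathcal H(\A)$ condition'': it follows simply because $s^+_\tau x$ depends only on $\tau\upharpoonright\Delta x$ together with $\Delta(\mathrm{rep}_{\mathcal F}(x))\subseteq\Delta x$. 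What the $\mathcal H(\A)$ clause buys is the hard inclusion $\mathrm{rep}_{\mathcal F}({\sf c}_ix)\subseteq C_i\,\mathrm{rep}_{\mathcal F}(x)$, obtained by rewriting $s^+_\tau{\sf c}_ix={\sf c}_k s^+_{\tau[i/k]}x$ for a fresh index $k$ and then invoking the witness clause; your appeal to the fact that the translates $\{y:s^+_\tau y\in\mathcal F\}$ ``remain in $\mathcal H'(\A)$'' is essentially equivalent to this statement and needs the same substitution computation, so it cannot serve as its justification. Finally, your closing remark that regularity of the target is ``really used'' in the forward commutation with $C_j$ inverts hypothesis and conclusion: in the forward direction regularity is something you must prove, and in the converse it is assumed and is used exactly at the step $h=\mathrm{rep}_{\mathcal F}$ that your proposal omits.
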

\begin{theorem}
Let $T$ be a consistent first order theory in a countable language with (without) equality. Let $\mathcal{M}_0$ and $\mathcal{M}_1$ be two models of $T$ whose universe is $\omega$. Suppose $\mathcal{F}_0,\mathcal{F}_1\in\mathcal{H}'(CA(T))$, (respectively $\mathcal{H}(QPA(T))$), are such that $\mathrm{rep}_{\mathcal{F}_i}$ are homomorphisms from $CA(T)$, (respectively $QPA(T)$), onto $Cs(\mathcal{M}_i)$, (respectively $Qs(\mathcal{M}_i)$), for $i=0,1$. If %$\rho\in S_\infty$,
$\rho:\omega\longrightarrow\omega$ is a bijection,
then the following are equivalent:
 \begin{enumerate}
 \item $\rho:\mathcal{M}_0\longrightarrow\mathcal{M}_1$ is an isomorphism.
 \item $\mathcal{F}_1=s^+_\rho \mathcal{F}_0=\{s^+_\rho x:x\in \mathcal{F}_0\}.$
\end{enumerate}
\end{theorem}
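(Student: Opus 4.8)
The plan is to reduce both implications to a single dictionary relating the action of the substitution operator $s^+_\rho$ on ultrafilters of $CA(T)$ to the ``precomposition by $\rho$'' action on the concrete set algebras $Cs(\mathcal{M}_i)$. First I would record three preliminary facts. (i) Since $\rho$ is a bijection of $\omega$, $s^+_\rho$ is a Boolean automorphism of $CA(T)$ with inverse $s^+_{\rho^{-1}}$, and substitution operators compose by $s^+_\sigma s^+_\tau=s^+_{\sigma\circ\tau}$; as $\rho$ is not a finite transformation, the fact that $s^+_\rho$ is total and is a (complete) Boolean endomorphism uses that $CA(T)$, being a Lindenbaum--Tarski algebra, lies in $\Lf_\omega$, so each $x$ has finite dimension set $\Delta x$ and $s^+_\rho x$ depends only on $\rho\restr{\Delta x}$, cf.\ \cite[1.11.9--1.11.12]{HMT1}. (ii) By the construction of $\mathrm{rep}_{\mathcal F}$ and of its associated model (as in the proof of Theorem \ref{infinite} and in Theorem \ref{th1}), after identifying the base of $\mathcal{M}_i$ with $\omega$ the map $\mathrm{rep}_{\mathcal{F}_i}$ is the evaluation homomorphism: $\mathrm{rep}_{\mathcal{F}_i}(\phi/T)=\phi^{\mathcal{M}_i}=\{s\in{}^\omega\omega:\mathcal{M}_i\models\phi[s]\}$ for every first order formula $\phi$. (iii) For $\mathcal F\in\mathcal{H}'(CA(T))$ and $x\in CA(T)$ one has $x\in\mathcal F$ iff $\mathrm{Id}\in\mathrm{rep}_{\mathcal F}(x)$, since $s^+_{\mathrm{Id}}$ is the identity.

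The heart of the matter is the identity
\[
\mathrm{rep}_{\mathcal{F}_1}(x)=\rho\circ\mathrm{rep}_{\mathcal{F}_0}(x)\quad(x\in CA(T)),\qquad\text{where } \rho\circ Y:=\{\rho\circ\sigma:\sigma\in Y\},
\]
which I claim is \emph{equivalent} to statement (2). Indeed, $\tau\in\mathrm{rep}_{\mathcal{F}_1}(x)$ iff $s^+_\tau x\in\mathcal{F}_1$; assuming $\mathcal{F}_1=s^+_\rho\mathcal{F}_0$ this means $s^+_{\rho^{-1}}s^+_\tau x=s^+_{\rho^{-1}\circ\tau}x\in\mathcal{F}_0$, i.e.\ $\rho^{-1}\circ\tau\in\mathrm{rep}_{\mathcal{F}_0}(x)$, i.e.\ (as $\rho$ is a bijection) $\tau\in\rho\circ\mathrm{rep}_{\mathcal{F}_0}(x)$. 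Conversely, if the displayed identity holds, then by (iii), $x\in\mathcal{F}_1$ iff $\mathrm{Id}\in\rho\circ\mathrm{rep}_{\mathcal{F}_0}(x)$ iff $\rho^{-1}\in\mathrm{rep}_{\mathcal{F}_0}(x)$ iff $s^+_{\rho^{-1}}x\in\mathcal{F}_0$ iff $x\in s^+_\rho\mathcal{F}_0$, i.e.\ $\mathcal{F}_1=s^+_\rho\mathcal{F}_0$.

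It remains to identify the same displayed identity with statement (1). By (ii), evaluating $\mathrm{rep}_{\mathcal{F}_1}(x)=\rho\circ\mathrm{rep}_{\mathcal{F}_0}(x)$ at $x=\phi/T$ gives $\phi^{\mathcal{M}_1}=\{\rho\circ s:\mathcal{M}_0\models\phi[s]\}$, that is, $\mathcal{M}_0\models\phi[s]\iff\mathcal{M}_1\models\phi[\rho\circ s]$ for all $\phi$ and all $s\in{}^\omega\omega$; specialising to atomic $\phi$ and using the bijectivity of $\rho$, this says precisely that $\rho$ preserves and reflects every basic relation, operation and constant, hence is an isomorphism $\mathcal{M}_0\to\mathcal{M}_1$. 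Conversely, if $\rho$ is an isomorphism then $\mathcal{M}_0\models\phi[s]\iff\mathcal{M}_1\models\phi[\rho\circ s]$ follows for all $\phi$ by induction on $\phi$, so the displayed identity holds (every element of $CA(T)$ is of the form $\phi/T$). Combining the two paragraphs gives $(1)\Leftrightarrow(2)$. The quasi-polyadic case (a theory without equality) goes through verbatim, with $Cs$, $\mathcal{H}'$ replaced by $Qs$, $\mathcal{H}$ and the atomic formulas read in the appropriate relational language.

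The part requiring actual care, as opposed to bookkeeping, is (i)--(ii): one must check that $s^+_\rho$ really acts as a Boolean automorphism of $CA(T)$ despite $\rho$ having infinite support, and that the hypothesised homomorphism $\mathrm{rep}_{\mathcal{F}_i}$ onto $Cs(\mathcal{M}_i)$ is forced to be the canonical evaluation map and not merely some surjection. Both come down to local finiteness of $CA(T)$ together with the explicit form of $\mathrm{rep}_{\mathcal F}$; everything else is a routine manipulation of substitution operators.
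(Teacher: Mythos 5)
Your proposal cannot be matched against a proof in the paper, because the paper gives none: this statement is quoted, together with the preceding representation theorem, as a result of S\'agi and Szir\'aki, with only a citation. Judged on its own, your argument is correct and is the natural one. The reduction of (2) to the identity $\mathrm{rep}_{\mathcal F_1}(x)=\rho\circ\mathrm{rep}_{\mathcal F_0}(x)$ uses only that $s^+_\rho$ is a Boolean automorphism of $CA(T)$ with inverse $s^+_{\rho^{-1}}$ and the composition law $s^+_\sigma s^+_\tau=s^+_{\sigma\circ\tau}$, both of which are available in $\Lf_\omega$ because $s^+_\tau x$ depends only on $\tau\upharpoonright\Delta x$; and the passage from that identity to (1) by evaluating at $x=\phi/T$, specialising to atomic formulas in one direction and doing the routine induction in the other, is sound. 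The trick of recovering $x\in\mathcal F_i$ from $\mathrm{Id}\in\mathrm{rep}_{\mathcal F_i}(x)$ closes the circle correctly.

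The one point to correct is your item (ii). Surjectivity of $\mathrm{rep}_{\mathcal F_i}$ onto $Cs(\mathcal{M}_i)$ does \emph{not} force $\mathrm{rep}_{\mathcal F_i}(\phi/T)=\phi^{\mathcal{M}_i}$, and no amount of local finiteness will make it do so: distinct isomorphic models with universe $\omega$ can have literally the same cylindric set algebra (one unary predicate interpreted as the even numbers in $\mathcal{M}_0$ and as the odd numbers in $\mathcal{M}_1$ gives $Cs(\mathcal{M}_0)=Cs(\mathcal{M}_1)$, since the two generating cylinders are complementary), so a single $\mathrm{rep}_{\mathcal F}$ is then onto both algebras while being the evaluation map of only one model; under that literal reading the theorem itself fails (take $\rho=\mathrm{Id}$ and $\mathcal F_0=\mathcal F_1$). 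The hypothesis must be read, as in the S\'agi--Szir\'aki correspondence between Henkin ultrafilters and models on $\omega$, as saying that $\mathcal{M}_i$ is the model canonically attached to $\mathcal F_i$, i.e.\ exactly that $\mathrm{rep}_{\mathcal F_i}(\phi/T)=\phi^{\mathcal{M}_i}$ for every $\phi$. With that intended reading your (ii) is the hypothesis itself rather than a lemma to be proved, and the remainder of your argument, including the equality-free quasi-polyadic case, goes through unchanged.
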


These last two theorems allow us to study models and count them via corresponding ultrafilters. This approach was initiated by S\'agi.
The main advantage of such an approach is that results proved
for locally finite cylindric algebras transfer {\it mutatis mutandis} to quasi-polyadic algebras (without diagonal elements).
So from the algebraic point of view we do the difficult task only once,
but from the model theoretic point of view we
obtain deep theorems for first order logic {\it without} equality, as well, which are more often than not, not obvious to prove without
the process of algebraisation. There are ceratin results that have an easy metalogical proof when we have equality.
but this proof does not work  in the absence of equality.
However, a purely algebraic proof works.

Now we define an equivalence relation on the Henkin utrafilters of a locally finite cylindric algebra
and  show that it is Borel subset of the product of the Stone space.
This implies that it satisfies the Glimm-Effros dichotomy, and so has either countably many or else continuum many equivalence classes.
The equivalence relation we introduce  corresponds to a non-trivial equivalence relation between models which is weaker than isomorphism.
\begin{definition}[Notation]
Let $\mathcal{F}$ be an ultrafilter of a locally finite (cylindric or quasi-polyadic) algebra $\A$.
For $a\in A$ define $$Sat_\mathcal{F}(a)=\{t|_{\Delta a}: t\in {}^{\omega}\omega,\;s^+_t a\in\mathcal{F}\}.$$
\end{definition}

Throughout, $\A$ is countable. We define an equivalence relation $\mathcal{E}$ 
on the space $\mathcal{H}'(\A)$ (or $\mathcal{H}(\A))$  that turns out to be Borel.

\begin{definition}
Let $\mathcal{E}$ be the following equivalence relation on $\mathcal{H}'(\A)$ (or $\mathcal{H}(\A))$ :\small
$$\mathcal{E}=\{(\mathcal{F}_0,\mathcal{F}_1): (\forall a\in A) (|Sat_{\mathcal{F}_0}(a)|=|Sat_{\mathcal{F}_1}(a)|)\}.$$
\end{definition}

We say that $\mathcal{F}_0,\mathcal{F}_1\in\mathcal{H}'(\A)$(or $\mathcal{H}(\A))$ are 
\emph{distinguishable } if $(\mathcal{F}_0,\mathcal{F}_1)\notin\mathcal{E}.$ 
We also say that two models of a theory $T$ are distinguishable if their corresponding 
ultrafilters in $\mathcal{H}'(CA(T))$(or $\mathcal{H}(QPA(T)))$ are distinguishable. 
That is, two models are distinguishable if they disagree in the number of realizations they have for some formula.
$\mathcal{E}$ is Borel. 
\begin{proof}
Here is an argument. Suppose we have a language $L$ with equality.
First note that if  $L^*=L_0\cup L_1$ where $L_0$ and $L_1$ are disjoint copies of $L$,
then $X_{L^*}\cong X_{L_0}\times X_{L_1}$ (where the spaces $X_L$'s are defined as in \cite{BeckerKechris} page 22).

For each formula $\varphi,$ let $\varphi^*$ be the
sentence $\bigwedge_{n\in\omega}(\exists^n \bar{x})\varphi_0(\bar{x})\leftrightarrow (\exists^n \bar{x})\varphi_1(\bar{x})$
where $\varphi_0,$ $\varphi_1$ are the copies of $\varphi$ in $L_0,L_1$ respectively, and $\exists^n$ is a shorthand for
``there exists at least $n$ tuples such that ..."

It is then immediate that two models $M_0,M_1$ of $L$ are not distinguishable iff the model $M$ of $L^*$ such
that $M|_{L_0}=M_0$ and $M|_{L_1}=M_1$ satisfies $\bigwedge_{\varphi\in L}\varphi^*.$
This means that our equivalence relation between models corresponds to the subset of $X_{L^*}$ of models of
the formula $\bigwedge_{\varphi\in L}\varphi^*.$ Such a subset is Borel by Theorem 16.8 in \cite{Kechris}.
\end{proof}

Vaught's conjecture has been confirmed when we restrict the action on certain subgroups of $G$.
But in this  case there might be isomorphic models that the group $G$ does not 'see' (the isomorphism witnessing this
can be outside $G$) so the equivalence relation is drastically different.

\begin{theorem} Let $G\subseteq S_{\infty}$ be a cli group. Then $|{\cal H}(\A)/E_G|\leq \omega$ or $|{\cal H}(\A)/E_G|=2^{\omega}$
\end{theorem}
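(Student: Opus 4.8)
The plan is to reduce the statement to the classical Glimm--Effros dichotomy (Becker--Kechris) by exhibiting $E_G$ as the orbit equivalence relation of a continuous action of a Polish group that, in the \emph{cli} (complete left-invariant metric) case, is known to admit the dichotomy. First I would make the ambient setting explicit: $\A$ is countable, so by the remarks above $\mathcal H(\A)$ (or $\mathcal H'(\A)$) is a $G_\delta$ subset of the Polish space $\A^*$, hence Polish in its own right. The group $S_\infty$ acts on $\mathcal H(\A)$ via the maps $\mathcal F\mapsto s^+_\rho\mathcal F=\{s^+_\rho x:x\in\mathcal F\}$; one checks this is a continuous (indeed Borel) action of $S_\infty$ on $\A^*$ that restricts to an action on $\mathcal H(\A)$, using that each $s^+_\rho$ is a complete Boolean endomorphism (as in \cite[1.11.12]{HMT1}) so that the Henkin conditions cutting out $\mathcal H(\A)$ are preserved. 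Restricting this action to the subgroup $G\subseteq S_\infty$ gives a continuous $G$-action whose orbit equivalence relation is precisely $E_G$.

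Next I would invoke the structure theory for cli Polish groups: by a theorem in the Becker--Kechris theory (see \cite{BeckerKechris}), every orbit equivalence relation induced by a Borel action of a cli Polish group is \emph{idealistic} (equivalently, one can continuously/Borel-uniformly select invariant probability measures or pseudo-measures on orbit closures), and in particular such orbit equivalence relations satisfy the Glimm--Effros dichotomy: $E_G$ either has $\le\omega$ classes or has exactly $2^\omega$ classes, and in the latter case $E_0$ embeds into it. The key technical point to verify is that $G$ being cli is exactly the hypothesis that makes the $G$-action on the Polish space $\mathcal H(\A)$ fall under this dichotomy; closed subgroups of $S_\infty$ are automatically cli, but the statement allows arbitrary cli subgroups, so one uses the abstract dichotomy for Borel actions of cli groups rather than merely the closed-subgroup case.

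The main obstacle, and where I would spend the most care, is the \emph{Borelness} of the action and of $E_G$: one must confirm that $(\rho,\mathcal F)\mapsto s^+_\rho\mathcal F$ is Borel as a map $G\times\mathcal H(\A)\to\mathcal H(\A)$. Since for each fixed $x\in A$ the set $N_x$ is clopen in $\A^*$ and $s^+_\rho x$ depends on $\rho$ only through its restriction to the finite set $\Delta x$ (because $\A$ is locally finite), the preimage of $N_x$ under the action map is a countable union of ``clopen in $\mathcal H(\A)$'' $\times$ ``basic open in $G$'' rectangles, hence open; so the action is actually continuous, not merely Borel. Then $E_G=\{(\mathcal F_0,\mathcal F_1): \exists\rho\in G\,(s^+_\rho\mathcal F_0=\mathcal F_1)\}$ is analytic, and the cli hypothesis upgrades this to the dichotomy; it is worth remarking (as the text above does) that this $E_G$ is coarser than genuine isomorphism when $G\subsetneq S_\infty$, which is precisely why the conclusion is about $E_G$-classes rather than isomorphism types. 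Finally I would close by noting that the $\mathcal H'(\A)$ (cylindric, with equality) and $\mathcal H(\A)$ (quasi-polyadic, without equality) cases are handled identically, the only difference being the extra clopen constraints $N_{-d_{ij}}$ cutting out $\mathcal H'(\A)$, which are again preserved by the $s^+_\rho$'s.
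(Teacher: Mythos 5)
Your proposal is correct and follows essentially the same route as the paper: the paper's own proof is just a one-line appeal to the known fact that orbit equivalence relations of cli Polish group actions satisfy the Glimm--Effros dichotomy (the topological Vaught conjecture for cli groups), applied to the Polish space ${\cal H}(\A)$. You merely supply the routine details the paper leaves implicit (Polishness of ${\cal H}(\A)$, continuity of the action $(\rho,\mathcal F)\mapsto s^+_\rho\mathcal F$), so the two arguments coincide in substance.
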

\begin{demo}{Proof}  It is known that the number of orbits of $E_G$  satisfies the so-called Glimm Effros Dichotomy.
By known results in the literature on the topological version of
Vaught's conjecture, we have ${\cal H}(\A)/E_G$
is either at most countable or ${\cal H}(\A)/E_G$ contains continuum many non equivalent elements
(i.e non-isomorphic models).
\end{demo}
It is known that the number of orbits of $E=E_{S_{\infty}}$ {\it does not} satisfy the Glimm Effros Dichotomy.
We note that cli groups cover all natural extensions of abelian groups, like nilpotent and solvable groups.
Now we give a topological condition that implies Vaught's conjecture.
Let everything be as above with $G$ denoting a Polish subgroup of $S_{\infty}$.
Give ${\cal H}(\A)/E_G$ the qoutient topology and let $\pi: {\cal H}(\A)\to {\cal H}(\A)/E_G$ be the projection map.
$\pi$ of course depends on $G$, we somtimes denote it by $\pi_G$ to emphasize the dependence.

%{\bf In the next theorem I do not remember the exact condition on $\pi$. Is it always open?
%This also had to do with proving that the $H(A)/E_G$ is Hausdorff?  Please adjust}.

\begin{lemma} $\pi$ is open.
\end{lemma}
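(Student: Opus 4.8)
The plan is to invoke the standard principle that the quotient map of a continuous group action is open, so the whole content lies in checking that the hypotheses apply in our setting. First I would make the action explicit: for $\rho\in G\subseteq S_\infty$ and $\mathcal{F}\in\mathcal{H}(\A)$ set $\rho\cdot\mathcal{F}=s^+_\rho\mathcal{F}=\{s^+_\rho x:x\in\mathcal{F}\}$, so that $E_G$ is exactly the orbit equivalence relation of this action; by the representation theorems of S\'agi and Szir\'aki quoted above (together with the basic properties of the generalized substitution operators in \cite{HMT1}) this map sends $\mathcal{H}(\A)$ into itself (and likewise $\mathcal{H}'(\A)$ into itself), so it is a genuine action of $G$ on $\mathcal{H}(\A)$.

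The key step is to observe that for each fixed $\rho\in G$ the map $\theta_\rho:\mathcal{F}\mapsto s^+_\rho\mathcal{F}$ is a homeomorphism of $\mathcal{H}(\A)$. Indeed $s^+_\rho$ is a complete Boolean automorphism of $\A$ whose inverse is $s^+_{\rho^{-1}}$, and for any $x\in A$ one has $\theta_\rho^{-1}\big(N_x\cap\mathcal{H}(\A)\big)=N_{s^+_{\rho^{-1}}x}\cap\mathcal{H}(\A)$; since the sets $N_x$ form a basis for the Stone topology, this shows $\theta_\rho$ is continuous, and applying the same to $\rho^{-1}$ shows $\theta_{\rho^{-1}}=\theta_\rho^{-1}$ is continuous as well. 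Hence every element of $G$ acts on $\mathcal{H}(\A)$ by a homeomorphism.

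Now the lemma follows by the usual one-line computation. Given an open set $U\subseteq\mathcal{H}(\A)$, its saturation is $\pi^{-1}(\pi(U))=\bigcup_{\rho\in G}\theta_\rho(U)$; each $\theta_\rho(U)$ is open by the previous paragraph, so the union is open, and by the definition of the quotient topology on $\mathcal{H}(\A)/E_G$ this says precisely that $\pi(U)$ is open. Therefore $\pi$ is open. The only thing requiring care — and thus the "main obstacle", such as it is — is the bookkeeping that justifies treating $s^+_\rho$ for a possibly non-finitary bijection $\rho$ as a Boolean automorphism preserving $\mathcal{H}(\A)$; once that is granted the argument is purely formal, and in particular it does not use Polishness of $G$ (that hypothesis is needed only in the surrounding discussion of the Glimm--Effros dichotomy and Vaught's conjecture, not for openness of $\pi$).
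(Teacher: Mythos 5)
Your proof is correct and is essentially the paper's argument: the paper proves openness by computing the saturation of a basic open set directly, obtaining $\pi^{-1}(\pi(N_a))=\bigcup_{\rho\in G}N_{s^+_\rho a}$, which is exactly the content of your observation that each $s^+_\rho$ acts as a homeomorphism with $\theta_\rho^{-1}(N_x)=N_{s^+_{\rho^{-1}}x}$. The packaging via "group acting by homeomorphisms" versus the explicit chain of equalities is only a cosmetic difference.
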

\begin{proof}
To show that $\pi$ is open it is enough to show for arbitrary $a\in \A$ that $\pi^{-1}(\pi(N_a))$ is open. For,
\begin{align*}
\pi^{-1}(\pi(N_a))&=\{F\in\A^*:(\exists F'\in N_a)(F,F')\in E\}\\
&=\{F\in\A^*:(\exists F'\in N_a)(\exists \rho\in G)s^+_\rho F'=F\}\\
&=\{F\in\A^*:(\exists F'\in N_a)(\exists \rho\in G)F'=s^+_{\rho^{-1} }F\}\\
&=\{F\in\A^*:(\exists \rho\in G)s^+_{\rho^{-1} }F\in N_a\}\\
&=\{F\in\A^*:(\exists \rho\in G)a\in s^+_{\rho^{-1} }F\}\\
&=\{F\in\A^*:(\exists \rho\in G)s^+_{\rho }a\in F\}\\
&=\bigcup_{\rho\in G}N_{s^+_\rho a}
\end{align*}
\end{proof}

\begin{theorem} If $\pi$ is closed, then Vaught's conjecture holds.
\end{theorem}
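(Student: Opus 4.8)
The plan is to invoke the standard topological version of Vaught's conjecture framework, where the key dichotomy tool is the Glimm--Effros theorem, and to show that closedness of $\pi = \pi_G$ forces the quotient space $\c H(\A)/E_G$ to be a Polish space, from which the conjecture follows. First I would recall that $\c H(\A)$ (equivalently $\c H'(\A)$ in the cylindric case) is a Polish space, being a $G_\delta$ subset of the Polish space $\A^*$ (this was noted in the text just after Theorem~\ref{th1}), and that $E_G$ is the orbit equivalence relation of the continuous action $\rho \mapsto {\sf s}^+_\rho$ of the Polish group $G \subseteq S_\infty$ on $\c H(\A)$. By the previous lemma, $\pi$ is always open; the hypothesis adds that it is also closed. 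An open, closed, continuous surjection between topological spaces is a quotient map whose target inherits strong separation properties: when $\pi$ is in addition closed, the quotient $\c H(\A)/E_G$ is Hausdorff (indeed, closedness of $\pi$ together with the fact that $E_G$ is an $F_\sigma$, hence Borel, equivalence relation with Polish classes gives that the saturation of any closed set is closed, so points in the quotient are separated by the images of disjoint open saturated sets).

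The second step is to upgrade "Hausdorff" to "Polish" for the quotient. Here I would use the fact that $\c H(\A)/E_G$ is second countable (continuous open image of a second countable space via $\pi$), regular (being Hausdorff and the open-closed image of a Polish, hence normal, space), and that $\pi$ being a closed continuous open surjection from a Polish space yields completeness of a compatible metric on the quotient --- a closed map out of a completely metrizable space onto a metrizable space has completely metrizable image. Alternatively, and perhaps more cleanly, one shows directly that under closedness of $\pi$ the equivalence relation $E_G$ is \emph{smooth}: a closed map realizing the quotient means $E_G$ admits a continuous (hence Borel) selector-like reduction to the identity on a Polish space, so $E_G \leq_B \Delta(\R)$. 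Once $E_G$ is smooth, the orbit equivalence relation cannot be of the complexity $E_0$ (since $E_0$ is the obstruction to smoothness in the Glimm--Effros dichotomy, and $E_0 \not\leq_B \Delta(\R)$), so by the Glimm--Effros dichotomy for Borel (indeed $F_\sigma$) equivalence relations $E_G$ has at most countably many classes --- in particular strictly fewer than $2^{\omega}$, which is precisely what Vaught's conjecture demands for the corresponding family of countable models.

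Finally I would translate back: by Theorem~\ref{th1} and the subsequent theorem relating $\c H'(CA(T))$-ultrafilters to models of $T$ on base $\omega$ up to the $E_G$-action, the number of $E_G$-classes in $\c H(\A)$ counts models of $T$ up to the equivalence induced by $G$; when $G = S_\infty$ this is isomorphism, so "at most countably many $E_G$-classes" is exactly "$T$ has at most countably many countable models up to isomorphism," and in any case the dichotomy rules out the value $2^{\omega}$ unless countability holds. Hence closedness of $\pi$ implies Vaught's conjecture for $T$.

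The main obstacle I anticipate is the middle step: deducing a genuine \emph{Polishness} (or smoothness of $E_G$) from the purely point-set hypothesis that $\pi$ is closed. Openness of $\pi$ is automatic and easy, but closedness is a delicate condition, and one must be careful that "closed continuous open surjection from a Polish space" really does force the quotient to be Polish rather than merely Hausdorff and second countable --- the cleanest route is probably to argue that closedness makes each $E_G$-class, together with a transversal structure, give a Borel reduction to the identity, invoking a selection theorem (e.g. Kuratowski--Ryll-Nardzewski or the fact that closed-graph equivalence relations with a closed quotient map are smooth). Everything else is bookkeeping with the already-established correspondence between Henkin ultrafilters and models.
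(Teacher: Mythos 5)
Your first half (openness of $\pi$ is automatic, closedness upgrades the quotient's separation properties, aim at Polishness of $\mathcal{H}(\A)/E_G$) is in the spirit of the paper's argument: the paper uses that the quotient is second countable because $\pi$ is open, normal hence regular because $\pi$ is closed and $\mathcal{H}(\A)$ is metrizable second countable (hence normal), and then embeds the quotient in $\mathbb{R}^{\omega}$ to get a Polish (in any case analytic) space. But your final counting step contains a genuine error. You argue that closedness makes $E_G$ smooth and then invoke the Glimm--Effros dichotomy to conclude that $E_G$ has ``at most countably many classes.'' Smoothness does not bound the number of classes: the identity relation on $\mathbb{R}$ is smooth and has $2^{\omega}$ classes, and the Glimm--Effros dichotomy (smooth versus $E_0\leq_B E$) says nothing about cardinality of the quotient. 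Moreover you misstate the target: Vaught's conjecture does not demand ``strictly fewer than $2^{\omega}$'' classes; it is the dichotomy that the number of countable models is either $\leq\omega$ or exactly $2^{\omega}$, with no intermediate value (the content lies in excluding, say, $\omega_1$ when CH fails). So even if your smoothness claim were granted, it would neither be provable (many theories do have continuum many models) nor would it yield the conjecture.

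The step you actually need, and the one the paper takes, is a perfect-set argument rather than Glimm--Effros: once the quotient $\mathcal{H}(\A)/E_G$ is seen to be second countable and regular (hence metrizable and, via the closedness of $\pi$, Polish, or at least an analytic space, being the continuous image of the Borel set $\mathcal{H}(\A)$ under a map between Polish spaces), an uncountable such space has the power of the continuum. Hence the number of $E_G$-classes is either at most countable or exactly $2^{\omega}$, which is precisely the Vaught dichotomy for the models coded by Henkin ultrafilters. If you replace your Glimm--Effros paragraph by this perfect-set-property conclusion (Silver's or Burgess's dichotomy would also do the job for the coded equivalence relation), the rest of your outline goes through and coincides with the paper's proof.
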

\begin{demo}{Proof} We have ${\cal H}(\A)$ is Borel subset of $\A^*$, the Stone space of $\A$ and ${\cal H}(\A)/E_G$ is a continuous image of
${\cal H}(\A)$.
Because $\pi$ is open, $H(\A)/E$  is second countable.
Now, since $H(\A)$ is metrizable and second countable, it is normal.
But $\pi$ is closed, and so $H(\A)/E$ is also normal, hence regular. Thus ${\cal H}(\A)/E_G$
can also be embedded as an open set in $\mathbb{R}^{\omega},$ hence it is Polish.
If ${\cal H}(\A)/E_G$ is uncountable, then being the continuous image under a map between two Polish spaces of a Borel set, it is analytic.
Then it has the power of the continuum.
\end{demo}

\subsection{ Number of models omitting a given family of types}

%\subsection{Insert the proof of omitting types}
The way we counted the ultrafilters (corresponding to distinguishable models)
above gives a completely analogous  result when we count ultrafilters corresponding  to models {\it omitting} a countable set of non-isolated types.

Given a countable locally finite algebra $\A$, a non-zero $a\in A$ and a non-principal type
$X\subseteq \Nr_n\A$, so that $\prod X=0$, one constructs a model omitting $X$, by finding a Henkin ultrafilter preserving
the following set of infinitary joins and meets where $x\in A$, $i,j\in \omega$ and $\tau$ is a finite transformation:
$c_ix=\sum s_j^ix,$
and $\prod s_{\tau}X=0.$
Working in the Stone space, one finds an ultrafilter in $N_a$ outside the
nowhere dense sets
$N_{i,x}=S\sim \bigcup N_{s_j^i}$ and
$H_{\tau}=\bigcap_{x\in X} N_{\tau}x.$
Now suppose we want to count the number of distinguishable models omitting a
family $\Gamma=\{\Gamma_i:i<\lambda\}$ ($\lambda<covK)$
of non-isolated types of $T$.

Then $$\mathbb{H}=
\mathcal{H}(CA(T))\mbox{(or }\mathcal{H}'(QPA(T)))\cap\sim  \bigcup_{i\in\lambda,\tau\in W}\bigcap_{\varphi\in \Gamma_i}N_{s_\tau (\varphi/\equiv_T)}$$
(where $W=\{\tau\in{}^\omega\omega:|i:\tau(i)\neq i|<\omega\}$) is clearly (by the above discussion)
the space of ultrafilters corresponding to models of $T$ omitting $\Gamma.$

But then by properties of $covK$ union  $\bigcup_{i\in\lambda}$
can be reduced to a countable union.
We then have $\mathbb{H}$ a $G_\delta$ subset of a Polish space. So $\mathbb{H}$ is
Polish and moreover, $\mathcal{E}'=\mathcal{E}\cap (\mathbb{H}\times \mathbb{H})$
is a Borel equivalence relation on $\mathbb{H}.$ It follows then that the number of distinguishable models omitting $\Gamma$
is either countable or else $2^\omega.$
We readily obtain:
\begin{corollary}
Let $T$ be a first order theory  in a countable language (with or without equality).
If  $T$ has an uncountable set of countable models that omit $< covK$ many non principal types that
are pairwise distinguishable, then actually it has such a set of size $2^{\aleph_0}$.
\end{corollary}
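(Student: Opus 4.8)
The plan is to make explicit the translation that turns this statement into a one-line consequence of the discussion immediately preceding it. Fix the family $\Gamma=\{\Gamma_i:i<\lambda\}$ of non-principal types with $\lambda<covK$, and put $\A=CA(T)$ in the case with equality and $\A=QPA(T)$ in the case without; then $\A$ is a countable, locally finite, $\omega$-dimensional algebra, so its Stone space $S=\A^{*}$ is Polish. Let $\mathbb{H}\subseteq S$ be the set introduced just before the statement: the set of those $\mathcal F\in\mathcal{H}'(\A)$ (respectively $\mathcal F\in\mathcal{H}(\A)$) that lie outside every closed nowhere dense set $\bigcap_{\varphi\in\Gamma_i}N_{s_\tau(\varphi/\equiv_T)}$, $i<\lambda$ and $\tau\in W$. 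By Theorem~\ref{th1} and the way distinguishability of models was defined above, $\mathcal F\mapsto\mathrm{rep}_{\mathcal F}$ identifies $\mathbb{H}$ with the isomorphism types, over the fixed base $\omega$, of the countably infinite models of $T$ that omit every $\Gamma_i$, and under this identification $(\mathcal F_0,\mathcal F_1)\in\mathcal{E}$ holds precisely when the two corresponding models are \emph{not} distinguishable (both assertions reduce to the fact that $|Sat_{\mathcal F}(\varphi/\equiv_T)|$ counts the realizations of $\varphi$ in $M_{\mathcal F}$). Consequently the objects to be counted --- families of pairwise distinguishable countable models of $T$ all omitting $\Gamma$ --- correspond exactly to sets of pairwise $\mathcal E$-inequivalent points of $\mathbb{H}$, so the largest such family has size equal to the number of classes of $\mathcal E':=\mathcal E\cap(\mathbb{H}\times\mathbb{H})$.

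With this dictionary the corollary follows immediately from what has already been established. On the one hand, $\mathbb{H}$ is a $G_\delta$ subset of $S$, hence a Polish space: this is exactly where $\lambda<covK$ enters, being used to absorb the $\lambda$-indexed union $\bigcup_{i<\lambda,\,\tau\in W}\bigcap_{\varphi\in\Gamma_i}N_{s_\tau(\varphi/\equiv_T)}$ into a countable union of closed nowhere dense sets, which together with the countably many closed nowhere dense sets cutting $\mathcal{H}'(\A)$ (respectively $\mathcal{H}(\A)$) out of $S$ leaves a $G_\delta$ complement. On the other hand $\mathcal E$ is a Borel subset of $S\times S$, so $\mathcal E'$ is a Borel equivalence relation on the Polish space $\mathbb{H}$. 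Now apply the Glimm--Effros (Silver) dichotomy for Borel equivalence relations on Polish spaces, exactly as it was invoked for $E$ and $E_G$ earlier in the section: $\mathbb{H}/\mathcal E'$ has either at most $\aleph_0$ classes or exactly $2^{\aleph_0}$ classes. The hypothesis furnishes an uncountable family of pairwise distinguishable countable models of $T$ omitting $\Gamma$, so $\mathbb{H}/\mathcal E'$ is uncountable; it therefore has $2^{\aleph_0}$ classes, and choosing one model per class yields the required set of $2^{\aleph_0}$ pairwise distinguishable countable models of $T$ omitting $\Gamma$.

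Two remarks on loose ends and on where the genuine difficulty sits. First, ``countable model'' is to be read as countably infinite, in keeping with the whole setup of this subsection (the apparatus of Henkin ultrafilters and bases equal to $\omega$); if one insists on admitting finite models, the finite case only makes the conclusion easier, since on finite structures distinguishability coincides with isomorphism, and a pigeonhole argument together with the same dichotomy applied to the closed action of $S_n$ on the compact metric space of $n$-element models of $T$ omitting $\Gamma$ (for the $n$ at which uncountably many occur) again produces $2^{\aleph_0}$ of them. Secondly, there is essentially no difficulty in the present deduction itself --- given the Polish space $\mathbb{H}$ and the Borelness of $\mathcal E$ it is just a restatement of the last line of the preceding paragraph, plus the observation that ``uncountable'' excludes the ``$\le\aleph_0$'' horn of the dichotomy. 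The one step that deserves genuine care, and which I expect to be the real obstacle, is the $covK$-bookkeeping behind ``$\mathbb{H}$ is Polish'': collapsing the $\lambda$-indexed union of nowhere dense sets to a countable one (so that $\mathbb{H}$ comes out $G_\delta$ rather than merely dense) is precisely the point where the covering property of $covK$ is consumed, and everything else is routine.
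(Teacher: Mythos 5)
Your proposal is correct and follows essentially the same route as the paper: the corollary is obtained exactly as you do it, by identifying models of $T$ omitting $\Gamma$ with points of the $G_\delta$ (hence Polish) set $\mathbb{H}$ — with $\lambda<covK$ consumed precisely in collapsing the $\lambda$-indexed union of nowhere dense sets to a countable one — and then applying the Silver/Glimm--Effros dichotomy to the Borel equivalence relation $\mathcal{E}'=\mathcal{E}\cap(\mathbb{H}\times\mathbb{H})$, so that uncountably many pairwise distinguishable models forces $2^{\aleph_0}$ classes. You have merely made explicit the dictionary (distinguishability $\leftrightarrow$ $\mathcal{E}$-inequivalence via $\mathrm{rep}_{\mathcal F}$ and $Sat_{\mathcal F}$) that the paper leaves implicit in the phrase ``we readily obtain.''
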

Using the same reasoning as above conjuncted with Morleys theorem, we get
\begin{theorem}
The number of countable models of a countable theory that omits $< covK$ many types is either $\leq \omega$ or
$\omega_1$ or $^{\omega}2$.
\end{theorem}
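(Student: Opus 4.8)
The plan is to combine the counting dichotomy just established for $\mathbb{H}$ with classical descriptive–set–theoretic trichotomies for the number of models of a theory. First I would set up the ambient space: given a countable theory $T$ and a family $\Gamma=\{\Gamma_i: i<\lambda\}$ with $\lambda<covK$ of non-isolated types, form $\mathbb{H}$ exactly as in the discussion preceding the corollary, so that $\mathbb{H}$ is a $G_\delta$, hence Polish, subset of the Stone space $\mathcal{H}(CA(T))^*$, and $\mathbb{H}$ is precisely the set of Henkin ultrafilters corresponding (via $\mathrm{rep}_{\mathcal F}$) to countable models of $T$ that omit every $\Gamma_i$. The quotient $\mathbb{H}/\mathcal{E}'$, where $\mathcal{E}'=\mathcal{E}\cap(\mathbb{H}\times\mathbb{H})$, counts distinguishable models; but for the present theorem we want the honest isomorphism count, so instead I would use the finer Borel equivalence relation $\cong$ on $\mathbb{H}$ induced by the $S_\infty$-action $\rho\mapsto s^+_\rho$ of the preceding subsection (Theorem characterizing when $\rho$ is an isomorphism of $\mathcal M_0,\mathcal M_1$), restricted to $\mathbb{H}$. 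Since $s^+_\rho$ maps $\mathcal{H}$ into $\mathcal{H}$ and preserves the "omits $\Gamma$" condition whenever $\rho$ fixes all but finitely many coordinates up to the relevant types — more precisely, one checks the $S_\infty$-orbit of a point of $\mathbb{H}$ stays in $\mathbb{H}$ — the restricted action is a Borel $S_\infty$-action on a Polish space, so $\cong\restriction\mathbb{H}$ is an analytic (indeed Borel when $\mathbb{H}$ is Borel) orbit equivalence relation.

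Next I would invoke the topological Vaught conjecture machinery that is already cited in the paper (the Glimm–Effros dichotomy and the results surrounding $E_G$): for a Borel $S_\infty$-action on a Polish space, the number of orbits is $\le\omega$, or exactly $\omega_1$ (in ZFC one cannot rule this out), or exactly $2^\omega$. This is the Burgess trichotomy for orbit equivalence relations of Polish group actions, and it applies verbatim to $\mathbb{H}/\!\cong$. The only supplementary input is to rule out cardinalities strictly between $\omega_1$ and $2^\omega$ and values like $n<\omega$ being excluded — actually finite values are allowed and absorbed into "$\le\omega$" — and to confirm analyticity of the orbit space so that, if uncountable and not of size $\omega_1$, it is a standard Borel quotient with a perfect set of inequivalent points, forcing size $2^\omega$. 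Then conjuncting with Morley's theorem on the number of countable models (which already forbids, for first-order $T$, a count strictly between $\aleph_1$ and $2^{\aleph_0}$ via the absoluteness/analyticity of the isomorphism relation) closes the gap: the count is $\le\omega$, or $\omega_1$, or $2^\omega$.

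The key steps in order: (1) build $\mathbb{H}$ and verify it is Polish and exactly parametrizes countable models of $T$ omitting $\Gamma$, using $\lambda<covK$ to collapse the $\lambda$-union to a countable one as in the preceding paragraph of the paper; (2) transport the $S_\infty$-action $s^+_\rho$ to $\mathbb{H}$ and check it is well defined there and Borel, so that isomorphism of the represented models corresponds to the orbit equivalence relation on $\mathbb{H}$; (3) apply the Glimm–Effros / topological Vaught dichotomy (as already used for $E_G$ in the theorems above) to get $|\mathbb{H}/\!\cong|\le\omega$ or $=2^\omega$ at the level of the dichotomy, and then (4) use Morley's theorem to pin the intermediate possibility to exactly $\omega_1$, yielding the stated trichotomy $\le\omega$, $\omega_1$, or ${}^\omega 2$.

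The main obstacle I expect is step (2): one must be careful that the substitution action $s^+_\rho$ genuinely preserves membership in $\mathbb{H}$, i.e. that if $\mathcal F$ is a Henkin ultrafilter omitting $\Gamma$ then so is $s^+_\rho\mathcal F$ for every permutation $\rho$ of $\omega$ — the "omits $\Gamma_i$" clause involves the sets $\bigcap_{\varphi\in\Gamma_i}N_{s_\tau(\varphi/\equiv_T)}$ over all finite transformations $\tau$, and one needs that precomposing with $\rho$ merely reshuffles the index $\tau$ over $W$ rather than escaping it; this is where the regularity of the set algebras and the fact that $\rho$ is a genuine bijection (so $s^+_\rho$ is a complete Boolean automorphism, not merely an endomorphism) are used. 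Once that invariance and the Borelness of the action are checked, the rest is a citation of known descriptive-set-theoretic dichotomies plus Morley, exactly as in the analogous theorems proved earlier in this section.
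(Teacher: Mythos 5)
Your proposal follows essentially the same route as the paper, whose entire proof is the remark that the theorem follows from ``the same reasoning as above'' (the $covK$-reduction making $\mathbb{H}$ a Polish parameter space for the models omitting $\Gamma$, with isomorphism realized as the orbit equivalence relation of the substitution action $\rho\mapsto s^{+}_{\rho}$) ``conjuncted with Morley's theorem,'' i.e. the Burgess-style trichotomy $\leq\omega$, $\omega_1$, or $2^{\omega}$ for the analytic isomorphism relation, which is exactly what you assemble. One caveat: in your summarized step (3) you should drop the appeal to the Glimm--Effros dichotomy and the claim that the restricted orbit relation is Borel --- the orbit relation of the $S_\infty$-action is in general only analytic, and the Glimm--Effros dichotomy fails for $E_{S_\infty}$ (as the paper itself notes), so it cannot yield ``$\leq\omega$ or $2^{\omega}$''; the Burgess trichotomy you already invoke applies directly to the analytic relation on $\mathbb{H}$ and makes your step (4) redundant.
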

We can also formulate a version of Vaughts conjecture for s called rich languages.
\begin{theorem}\label{2} Let $\A\in Dc_{\alpha}$ be countable simple and finitely generated.
Then the number of non-base isomorphic representations of $\A$ is $2^{\omega}$.
\end{theorem}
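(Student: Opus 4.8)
\begin{demo}{Proof sketch} The plan is to count the representations of $\A$ through the Henkin ultrafilters inducing them, exactly as was done above for distinguishable models. Since $\A$ is simple, every countable representation of $\A$ is (up to base-isomorphism) a genuine square representation with base $\omega$, and by the analogue for $Dc_{\alpha}$ of Theorem \ref{th1} these are faithfully parametrized by the Henkin ultrafilters of $\A$, i.e.\ by the points of a dense $G_\delta$ subset ${\cal H}\subseteq \A^*$; moreover, by the correspondence recorded right after Theorem \ref{th1}, two Henkin ultrafilters induce base-isomorphic representations precisely when they lie in the same orbit of the canonical $S_{\infty}$-action, that is, precisely when they are $E_{S_{\infty}}$-equivalent. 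Thus the number of non-base-isomorphic representations of $\A$ equals $|{\cal H}/E_{S_{\infty}}|$. The upper bound is immediate: $\A$ is countable, so $|\A^*|\le 2^{\omega}$, whence $|{\cal H}|\le 2^{\omega}$ and a fortiori $|{\cal H}/E_{S_{\infty}}|\le 2^{\omega}$.

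For the lower bound one cannot invoke a dichotomy for $E_{S_{\infty}}$ itself: as remarked above, $S_{\infty}$ is not cli and the Glimm--Effros dichotomy genuinely fails for $E_{S_{\infty}}$, this failure being the content of Vaught's conjecture. Instead I would route through the coarser, \emph{Borel} equivalence relation $\mathcal{E}$ of distinguishability. Since base-isomorphic representations have, for every $a\in A$, the same value of $|Sat(a)|$, base-isomorphic representations are in particular indistinguishable; hence $|{\cal H}/E_{S_{\infty}}|\ge |{\cal H}/\mathcal{E}|$. By Silver's theorem (applied in exactly this section to $\mathcal{E}$), $|{\cal H}/\mathcal{E}|$ is either $\le\omega$ or $=2^{\omega}$, so it suffices to exhibit uncountably many pairwise distinguishable representations of $\A$.

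To produce them I would code reals into the realization-counts of a fixed family of low-dimension elements. Using that $\A$ is finitely generated, simple and \emph{rich} (i.e.\ lies in $Dc_{\alpha}$), the plan is to extract a sequence $(a_k)_{k<\omega}$ of pairwise inequivalent elements of bounded dimension which are ``independently stretchable'': for every $g\in(\omega+1)^{\omega}$ there is a Henkin ultrafilter ${\cal F}_g\in{\cal H}$ with $|Sat_{{\cal F}_g}(a_k)|=g(k)$ for all $k$. Each such ${\cal F}_g$ is built by the Baire-category method already employed in this section --- working in $\A^*$, intersecting the closed nowhere dense sets forcing the Henkin property (the meets $\cyl i x=\sum_j \sub i j x$) with open sets that lower-bound or cap each $|Sat(a_k)|$ as prescribed by $g$. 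Since $g\ne g'$ forces $({\cal F}_g,{\cal F}_{g'})\notin\mathcal{E}$, this yields $2^{\omega}$ pairwise distinguishable, hence pairwise non-base-isomorphic, representations; combined with the upper bound this gives the claimed value $2^{\omega}$.

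The main obstacle is the construction of the independently stretchable family $(a_k)$ --- equivalently, the proof that the distinguishability relation on ${\cal H}$ is non-smooth. This is where the hypotheses are used essentially: for an arbitrary countable locally finite $Dc_{\alpha}$ the statement is false (an $\aleph_0$-categorical theory has a unique countable representation), so richness must do real work. The point is that a rich algebra carries over its finitely many generators an infinite ``generic'' supply of pairwise inequivalent elements of bounded dimension --- the elements witnessing the extra relations built into a rich language --- and for these the realization-counts can be prescribed independently; making this precise, and checking that the resulting ${\cal F}_g$ stay Henkin, is the technical heart of the argument, the surrounding descriptive set theory being soft and already in place above. \end{demo}
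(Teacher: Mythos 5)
The decisive step of your argument is missing. Everything up to the reduction is fine (upper bound, and ``base-isomorphic $\Rightarrow$ indistinguishable''), but the theorem's entire content is the lower bound, and there it rests on the ``independently stretchable'' family $(a_k)$ with prescribable realization counts, which you do not construct and yourself flag as the technical heart. Worse, your route commits you to a strictly stronger statement than the one to be proved: $2^{\omega}$ pairwise \emph{distinguishable} (hence pairwise non-isomorphic) square representations with base $\omega$. It is not at all clear that this follows from ``countable, simple, finitely generated, in $\Dc_{\alpha}$''; as you note, counting up to arbitrary bijections of the base is essentially counting models up to isomorphism, and that count can collapse for theories with few countable models, so the hypotheses would have to enter in an essential way that your sketch never makes precise. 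Silver's theorem and the Borel-ness of $\mathcal{E}$ do not help you until the uncountable family of distinguishable ultrafilters actually exists.

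The paper's proof avoids all of this by counting a finer equivalence in a softer setting. It represents over the weak space $V={}^{\alpha}\alpha^{(Id)}$: for \emph{any} Boolean ultrafilter $F$ one sets $h_F(a)=\{\tau\in V: {\sf s}_{\tau}a\in F\}$, which is injective by simplicity; no Henkin genericity, no dense $G_{\delta}$, no prescription of $|Sat(a)|$ is needed. The hypotheses are used only twice, and cheaply: (i) by \cite{HMT1} (2.3.33) a finitely generated algebra in $\Dc_{\alpha}$ cannot be atomic, indeed its Boolean reduct is not hereditarily atomic, so it has $2^{\omega}$ ultrafilters; (ii) because the unit is a weak space, $h_F(\A)$ and $h_G(\A)$ are base-isomorphic iff $G={\sf s}_{\sigma}F$ for a \emph{finitely supported} permutation $\sigma$, so each base-isomorphism class of ultrafilters is countable, and $2^{\omega}$ ultrafilters split into $2^{\omega}$ classes. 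This countability of classes is exactly what your $S_{\infty}$-orbit formulation loses (orbits there can have size $2^{\omega}$), which is what forced you into the distinguishability detour in the first place. To repair your proposal you would either have to supply the stretchable family (a substantial, unproved construction whose truth in this generality is doubtful), or switch, as the paper does, to weak representations and the finite-permutation action, at which point the descriptive-set-theoretic machinery becomes unnecessary.
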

\begin{proof} Let $V={}^{\alpha}\alpha^{(Id)}$ and let $\A$ be as in the hypothesis. Then $\A$ cannot be atomic \cite{HMT1} corollary 2.3.33,
least hereditary atomic. Then it has $2^{\omega}$ ultrafilters.
For an ultrafilter $F$, let $h_F(a)=\{\tau \in V: s_{\tau}a\in F\}$, $a\in \A$.
Then $h_F\neq 0$, indeed $Id\in h_F(a)$ for any $a\in \A$, hence $h_F$ is an injection, by simplicity of $\A$.
Now $h_F:\A\to \wp(V)$; all the $h_F$'s have the same target algebra.
Then  $h_F(\A)$ is base isomorphic to $h_G(\A)$ iff there exists a finite bijection $\sigma\in V$ such that
$s_{\sigma}F=G$.
Define the equivalence relation $\sim $ on the set of ultrafilters by $F\sim G$, if there exists a finite permutation $\sigma$
such that $F=s_{\sigma}G$. Then any equivalence class is countable, and so we have $^{\omega}2$ many orbits, which correspond to
the non base isomorphic representations of $\A$.
\end{proof}
\begin{theorem}
Let $T$ be a countable theory in a rich language, with only finitely many relation symbols,
and $\Gamma =\{\Gamma_i: i\in covK\}$ be non isolated types.
Then $T$ has $2^{\omega}$ weak models that omit $\Gamma$.
\end{theorem}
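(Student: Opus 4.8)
The plan is to run the orbit-counting argument of Theorem~\ref{2} inside the Baire-category construction that underlies Theorem~\ref{infinite}, so that the $2^{\omega}$ weak representations produced all omit $\Gamma$. Set $\A=CA(T)$ (or $QPA(T)$ when $T$ has no equality); then $\A$ is countable and $\A\in Lf_\omega$. Since the signature is relational with only finitely many symbols and each substitution $s^i_j$ is a term (built from $c_i$ and the diagonals), every atomic formula arises from one of the finitely many standard atomic formulas, so $\A$ is finitely generated. The richness hypothesis enters --- and this is the only place it does --- guaranteeing, as in the proof of Theorem~\ref{2} via \cite[2.3.33]{HMT1}, that $\A$ is \emph{not} atomic; without this $\A^{*}$ could be countable and the statement would fail. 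Being non-atomic, $\A$ has a nonzero $b$ with $\A\restriction b$ atomless, so the clopen set $N_{b}\subseteq\A^{*}$ is a perfect Polish space.

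Next I would build the $G_\delta$ of type-omitting Henkin ultrafilters. For each $i$ put $X_{i}=\{\varphi/\equiv_{T}:\varphi\in\Gamma_{i}\}$; since $\Gamma_{i}$ is non-isolated, $\prod X_{i}=0$, and because each $s_{\tau}$ is a \emph{complete} Boolean endomorphism (\cite[1.11.12(iii)]{HMT1}) one gets $\prod s_{\tau}X_{i}=0$ for every $\tau$ in the weak space $W={}^{\omega}\omega^{(Id)}$, exactly as in the proof of Theorem~\ref{infinite}. Hence each obstruction set $\bold H_{i,\tau}=\bigcap_{\varphi\in\Gamma_{i}}N_{s_{\tau}(\varphi/\equiv_{T})}$ is closed and nowhere dense in $\A^{*}$, while $\mathcal H(\A)$ is a dense $G_\delta$ (each defining set $N_{-c_{i}x}\cup\bigcup_{j}N_{s^{i}_{j}x}$ is dense open by \ref{t1}). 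Put
$$\mathbb H=\mathcal H(\A)\setminus\bigcup_{i<covK}\bigcup_{\tau\in W}\bold H_{i,\tau}.$$
By the properties of $covK$, as in the discussion preceding the Corollary in the subsection above on counting models that omit a family of types, the union over $i<covK$ may be replaced by a countable one; so $\mathbb H$ is again a dense $G_\delta$ of $\A^{*}$, hence comeager in the clopen set $N_{b}$. As $N_{b}$ is a perfect Polish space, $\mathbb H\cap N_{b}$ is uncountable and, being comeager, contains a perfect set; thus $|\mathbb H|=2^{\omega}$.

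It then remains to read off the models. For $F\in\mathbb H$, the canonical construction in the proof of Theorem~\ref{infinite} (cf.\ Theorem~\ref{th1}) yields a homomorphism $h_{F}=\mathrm{rep}_{F}$ of $\A$ onto a full weak set algebra with unit $W_{F}={}^{\omega}M_{F}^{(q)}$, where $M_{F}=\omega/E_{F}$, and hence a weak model $\M_{F}\models T$; since $F$ misses every $\bold H_{i,\tau}$ we have $\bigcap_{\varphi\in\Gamma_{i}}h_{F}(\varphi/\equiv_{T})=\emptyset$ for all $i$, i.e.\ $\M_{F}$ omits $\Gamma$. Because $a\in F\iff Id\in h_{F}(a)$, the map $F\mapsto\M_{F}$ is injective, so there are already $2^{\omega}$ distinct weak models of $T$ omitting $\Gamma$; moreover, exactly as in the proof of Theorem~\ref{2}, $\M_{F}$ and $\M_{G}$ can be base-isomorphic only if $G=s_{\sigma}F$ for a finite permutation $\sigma$ of $\omega$, of which there are countably many, so each base-isomorphism class meets $\mathbb H$ in a countable set. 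Since $2^{\omega}$ has uncountable cofinality, we get $2^{\omega}$ pairwise non-base-isomorphic weak models of $T$ omitting $\Gamma$.

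The step that carries the weight is ``$\mathbb H$ is large'', and it needs two ingredients to fit together: the non-atomicity of $\A$, which singles out a clopen slice $N_{b}$ of $\A^{*}$ that is a \emph{perfect} Polish space (without a perfect piece, an uncountable $G_\delta$ may fail to have size $2^{\omega}$ once the obstruction sets are removed), and the $covK$-bookkeeping that collapses the a priori $(<covK)$-indexed family $\{\bold H_{i,\tau}\}$ to a countable one so that ordinary Baire category applies inside $N_{b}$ and keeps $\mathbb H$ comeager there. Everything else --- the nowhere-density of the $\bold H_{i,\tau}$, the construction of $h_{F}$, the recovery of $F$ from $h_{F}$, and the finite-permutation description of base-isomorphism --- is a routine transcription of \ref{t1} and of the proofs of Theorems~\ref{infinite}, \ref{th1}, \ref{2}.
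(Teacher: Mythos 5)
The paper states this theorem without any proof (it is followed immediately by a remark), so the only benchmark is the surrounding machinery, and your assembly --- the Henkin/Baire-category construction of Theorem \ref{infinite}, the set $\mathbb{H}$ of type-omitting ultrafilters from the preceding subsection, and the non-atomicity-plus-countable-orbit count of Theorem \ref{2} --- is clearly the intended route. The routine parts are in order: finitely many relation symbols do make the algebra finitely generated, non-atomicity yields an atomless relativization and hence a perfect clopen piece $N_b$ of the Stone space, the obstruction sets are closed nowhere dense because the types are non-isolated and the $s_{\tau}$ are complete endomorphisms, and base-isomorphism classes are countable because only finitely supported permutations preserve the weak unit.

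The gap is exactly at the step you flag as load-bearing. To extract a perfect set from $\mathbb{H}\cap N_b$ you need $\mathbb{H}$ to be comeager there, and for this you invoke the claim that the union over $i<covK$ of the nowhere dense sets $\mathbf{H}_{i,\tau}$ ``may be replaced by a countable one''. That replacement is governed by the additivity of the meager ideal, not by its covering number: $\mathrm{add}(\mathcal{M})\leq covK$ always, and it is consistent that $\mathrm{add}(\mathcal{M})=\omega_1<covK=2^{\omega}$ (e.g.\ in the Cohen model), in which case a union of $\omega_1<covK$ nowhere dense sets can be non-meager and comeagerness of $\mathbb{H}$ in $N_b$ is simply not available. (The same loose assertion occurs in the paper's own subsection, where $\mathbb{H}$ is claimed to be $G_{\delta}$; but there it feeds a dichotomy, whereas here you make it carry the whole $2^{\omega}$ count.) What the definition of $covK$ does give is only that fewer than $covK$ meager sets cannot cover any nonempty clopen subset of the perfect part of the Stone space; that yields one omitting Henkin ultrafilter below any nonzero element (Theorem \ref{infinite}) and, counting singletons as nowhere dense sets, at least $covK$ many --- but neither $2^{\omega}$ many nor a perfect set. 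So as written your argument proves the theorem only under an additional hypothesis such as MA, where $\mathrm{add}(\mathcal{M})=covK=2^{\omega}$. To get a ZFC proof one needs a finer device, for instance a Borel assignment $f\mapsto P_f$ of perfect sets to points such that for every closed nowhere dense $N$ the set $\{f: P_f\cap N\neq\emptyset\}$ is meager (the internalized form of ``one Cohen real yields a perfect set of Cohen reals''); then a single point avoiding the fewer-than-$covK$ many resulting meager sets produces a whole perfect set of Henkin ultrafilters missing every $\mathbf{H}_{i,\tau}$, and your orbit count finishes the argument. Two smaller points: the statement's ``$i\in covK$'' must in any case be read as fewer than $covK$ many types (as in Theorem \ref{infinite} and the Corollary), since with exactly $covK$ many even the existence of one omitting model is not given by the covering argument --- you pass over this silently; and your parenthetical that an uncountable $G_{\delta}$ may have size $<2^{\omega}$ is false (uncountable Polish spaces contain perfect sets) --- the true role of $N_b$ is to rule out the superatomic case, i.e.\ to make the relevant comeager set uncountable in the first place.
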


Notice that this theorem substantially generalized the main theorem in \ref{OTT}, 
the latter shows that there exists at least one model omiytting non principal types, this theorem says that there are continuum many of 
them.
%When the Polish group is uncountable, finding the number of orbits is still an open question, of which Vaught's
%conjecture is an instance (when the group is the symmetric g

\subsection{Omitting types for finite variable fragments}

For finite variable fragments $\L_n$ for $n\geq 3$, the situation turns out to be drastically different.
But first a definition.
\begin{definition}
Assume that $T\subseteq \L_n$. We say that $T$ is $n$ complete iff for all sentences $\phi\in \L_n$ 
we have either $T\models \phi$ or $T\models \neg \phi$. We say that $T$
is $n$ atomic iff for all $\phi\in \L_n$, there is $\psi\in \L_n$ such that $T\models \psi\to \phi$ and for all $\eta\in \L_n$ either $T\models \psi\to \eta$
or $T\models \psi\to \neg \eta.$
\end{definition}

The next theorem \ref{finite} is proved using algebraic logic in \cite{ANT}, 
using combinatorial techniques depending on Ramsey's theorem. 
%The proof will be sketched in the appendix, since it connects apparently 
%remote areas in (algebraic) logic.
%It shows that the condition of strong subalgebra is essential.

\begin{theorem}\label{finite} Assume that $\L$ is a countable first order language containing a binary relation symbol. For $n>2$ and 
$k\geq 0$,  there are a consistent $n$ complete
and $n$ atomic theory $T$ using only $n$ variables, and a set $\Gamma(x_1)$ using only $3$ variables
(and only one free variable) such that $\Gamma$ is realized in all models of $T$ but each $T$-witness for $T$ uses
more that $n+k$ variables
\end{theorem}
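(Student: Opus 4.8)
The plan is to prove this via the algebraic machinery set up earlier, by constructing a suitable finite-dimensional cylindric algebra and reading off the model-theoretic statement from it. Concretely, I would build, for each $n > 2$ and $k \geq 0$, a finite (hence atomic) algebra $\C \in \CA_n$ that is representable but whose $\Cm\At\C$ is not representable --- or, more precisely for the "witness uses $> n+k$ variables" conclusion, an algebra $\C$ that neatly embeds into $\CA_{n+k}$ but not into $\CA_{n+k+1}$, i.e.\ $\C \in \SNr_n\CA_{n+k} \setminus \SNr_n\CA_{n+k+1}$. The standard source for such algebras is a Monk-style construction: take finitely many atoms coloured so that monochromatic configurations (triangles, or higher-dimensional simplices indexed by $n+k$) are forbidden by the cylindrifier/diagonal structure, with many more atoms than colours so that Ramsey's theorem forces any $(n+k+1)$-dimensional dilation to contain a forbidden monochromatic configuration. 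This is exactly the combinatorial core attributed to \cite{ANT} in the paragraph preceding the statement.

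The translation from algebra to logic then proceeds through the dictionary laid out in the Introduction: an $L_n$ theory $T$ corresponds to the $\CA_n$ $\Fm_T$, the cylindric set algebras $Cs_n$ correspond to models, and a $T$-witness for $\Gamma$ using $m$ variables corresponds to an element of $\Nr_n\Sg^{\CA_m}(\ldots)$ that isolates the type $X = \{\varphi/T : \varphi \in \Gamma\}$. So the key steps are: (i) let $\A$ be the finite $\CA_n$ above; form the $L_n$ theory $T$ whose Lindenbaum--Tarski algebra is $\Sg^{\CA_n}$ of a suitable generating structure with $\A$ as (a quotient related to) its $\Nr_n$; (ii) check $T$ is $n$-complete and $n$-atomic --- $n$-completeness because the algebra involved can be arranged to have no proper non-trivial subalgebra of zero-dimensional elements, $n$-atomicity because $\A$ is atomic and finite; (iii) let $\Gamma(x_1)$ be the non-principal type given by the non-isolated element coming from the failure of representability of $\Cm\At\A$ --- crucially this can be taken in only $3$ variables since the relevant "bad" configuration is a triangle; (iv) observe $\Gamma$ is realized in every model of $T$ because $\A$ is atomic and representable so every representation (= model) realizes every atom, but any witness would yield a representation of a dilation $\A \subseteq \Nr_n\D$ with $\D \in \CA_{n+k+1}$, contradicting $\A \notin \SNr_n\CA_{n+k+1}$.

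The main obstacle I expect is step (iv) together with the precise bookkeeping of variable counts: one must show not merely that $\Cm\At\A$ is non-representable, but that representability fails already at dilation dimension $n+k+1$ while succeeding at $n+k$, and that this dimension gap is exactly what controls the number of variables in a $T$-witness. This requires the Monk construction to be \emph{parametrized} by $k$ --- the number of colours and the Ramsey bound must be tuned so that $(n+k)$-dimensional hyperbases exist but $(n+k+1)$-dimensional ones do not --- and it requires the lemma (essentially the neat-reduct formulation of the omitting-types correspondence) that a $3$-variable type $\Gamma$ has an $m$-variable $T$-witness precisely when the corresponding meet is isolated inside $\Nr_n\CA_m$. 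I would quote the combinatorial existence of the graded family of algebras from \cite{ANT} and concentrate the new work on verifying $n$-completeness, $n$-atomicity, and the realization-in-all-models claim, which is where the interplay between atomicity, the finitely many relation symbols, and the variable-count bound all come together.
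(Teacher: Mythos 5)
There is a genuine gap, and it sits at the heart of your construction. The object the theorem needs (and the one supplied by \cite{ANT}, to which the paper's own proof essentially defers, with a weaker simplified version given later via the basic-matrix construction) is a \emph{countable, infinite, atomic} algebra $\A\in \RCA_n\cap \Nr_n\CA_{n+k}$ --- representable, a \emph{full} neat reduct of some $\B\in\CA_{n+k}$ --- which has \emph{no complete representation}. Your proposed algebra fails this description on both counts. First, finiteness is fatal: a finite $\C$ satisfies $\C=\Cm\At\C$, so ``representable but $\Cm\At\C$ not representable'' is contradictory; moreover a finite algebra has only principal types and, if representable, is automatically completely representable, so no $3$-variable non-principal type $\Gamma$ with the required behaviour can live on it. Second, your ``more precise'' requirement $\C\in \SNr_n\CA_{n+k}\setminus \SNr_n\CA_{n+k+1}$ forces $\C$ to be non-representable (since $\RCA_n=\SNr_n\CA_{\omega}\subseteq \SNr_n\CA_{n+k+1}$), and a non-representable algebra cannot be the Lindenbaum--Tarski algebra $\Fm_T$ of a consistent theory under the semantic consequence $\models$ used in the definitions of $n$-complete and $n$-atomic; so the translation from your algebra to a theory $T$ breaks down before steps (ii)--(iv) can start. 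The Monk/Ramsey gadget you invoke is indeed the right combinatorial source, but it must be packaged as in \cite{ANT}: a relation-algebra atom structure with an $n$-dimensional cylindric basis whose term algebra is representable, atomic, generated by a single $2$-dimensional element (this is what lets $\Gamma(x_1)$ be written with $3$ variables), lies in $\Nr_n\CA_{n+k}$, and is not completely representable.

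The logic of your step (iv) is also reversed. That $\Gamma$ is realized in every model of $T$ is \emph{not} because ``every representation realizes every atom'' --- that would amount to complete representability, which is exactly what fails; it is because a model omitting $\Gamma$ (the type built from the co-atoms) would be an atomic, i.e.\ complete, representation of $\A$, and none exists. And the bound on the number of variables in a witness does not come from $\A\notin \SNr_n\CA_{n+k+1}$ (false for any representable algebra); it comes from the \emph{full} neat reduct property $\A=\Nr_n\B$ with $\B\in\CA_{n+k}$: any putative witness using at most $n+k$ variables, with its free variable among the first $n$, is equivalent modulo the corresponding $(n+k)$-variable theory to an $n$-variable formula, which would make the type principal in $\A$, contradicting $\prod\{-a: a\in\At\A\}=0$. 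So the two pillars of the argument are (a) non-complete-representability, giving realization in all models, and (b) membership in $\Nr_n\CA_{n+k}$ (not mere subalgebra membership, and certainly not non-membership one level up), giving the variable bound; your proposal replaces both by conditions that either trivialize or contradict representability.
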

\begin{demo}{Proof} \cite{ANT} for a full proof and \cite{OTT} for an easier sketch. 
Below we give a simplified version of the proof, but as an expense we obtain a somewaht 
weaker result.
\end{demo}

Algebraisations of finite variable fragments of first order logic with $n$ variables is 
obtained from locally finite algebras by truncating the dimensions at $n$.
Expressed, formally this corresponds to the operation of forming $n$ neat reducts.

\begin{definition}\cite{HMT1} Let $\A\in \CA_{\beta}$ and $\alpha<\beta$, then the {\it $\alpha$ neat reduct of $\A$} is the algebra obtained 
from $\A$ by discarding operations in $\beta\sim \alpha$ and restricting the remaining operations 
to the set consisting only of $\alpha$ dimensional elements. An element is $\alpha$ dimensional if its dimension set, $\Delta x=\{i\in \beta: c_ix\neq x\}$ 
is contained in $\alpha$. Such an algebra is denoted by $\Nr_{\alpha}\A$.
\end{definition}
For a class $K\subseteq \CA_{\beta}$, $\Nr_{\alpha}K=\{\Nr_{\alpha}\A: A\in K\}$. It is easy to verify that $\Nr_{\alpha}K\subseteq \CA_{\alpha}$.

A class of particular importance, is the class $S\Nr_{\alpha}\CA_{\alpha+\omega}$ where $\alpha$ is an arbitrary ordinal; 
here $S$ stands for the operation of forming subalgebras. 
This class turns out to be a variety which coincides with
the class of {\it representable} algebras of dimension $\alpha$.

Another class that is of significance is the class $S_c\Nr_{\alpha}CA_{\alpha+\omega}$. Here $S_c$ 
is the operation of forming {\it complete} subalgebras. 
(A Boolean algebra $\A$ is a complete subalgebra of $\B$, if for all $X\subseteq \A$, whenever $\sum X=1$ in $A$, then 
$\sum X=1$ in $\B)$. This class is important because a countable cylindric algebra of dimension $n$ is completey representable if and only if
$\A\in S_c\Nr_n\CA_{\omega}$, for any $\alpha\geq \omega$ and $\A$ is atomic. 
%We will extend this result to countable infinite dimensions where we change the notion of  representabilty 
This characterization even works for countable dimensions, 
by modifying the notion of complete representation' relativizing the unit to so-called weak units.

And it turns out for finite  variable fragments, that for a theory $T$ to omit types, 
whether countably or uncountably many, a sufficient condition is that 
the cylindric algebra of formulas $\Fm_T$ is in the class $S_c\Nr_n\CA_{\omega}$. 
Furthermore, the condition of  {\it complete} subalgebras, cannot be omitted.

\subsection{Omitting types for finite first order definable extensions of $\L_n$}

A question raised by Jonsson is whether non finite axiomatizability-results for the class $\RCA_n$ $n\geq 3$, can be circumvented by 
adding finitely many {\it first order definable} operations. This has to do with seeking a completeness theorem for $\L_n$.
The was answered negatively by Biro. Now we ask the same question for omitting types, 
and we also get a negative answer. But first we recall what we mean by first order definable.

\begin{definition} 
Let $\Lambda$ be a first order language with countably many 
relation symbols, $R_0, \ldots R_i,\ldots : i\in \omega$ 
each of arity $n$. 
Let $\Cs_{n,t}$ denote the following class of similarity type $t$:
\begin{enumroman}
\item $t$ is an expansion of $t_{\CA_n}.$
\item  $S\Rd_{ca}Cs_{n,t}=\Cs_n.$ In particular, every algebra in $\Cs_{n,t}$ is a boolean 
field of sets with unit $^nU$ say, 
that is closed under cylindrifications and contains diagonal elements.   
\item For any $m$-ary operation $f$ of $t$, there exists a first order formula
$\phi$ with free variables among $\{x_0,\ldots, x_n\}$ 
and having exactly $m,$ $n$-ary relation symbols
$R_0, \ldots R_{m-1}$ such that,
for any set algebra ${\cal A}\in Cs_{n,t}$ 
with base $U$, and $X_0, \ldots X_{m-1}\in {\cal A}$,
we have:
$$\langle a_0,\ldots a_{n-1}\rangle\in f(X_0,\ldots X_{m-1})$$
if and only if  
$${\cal M}=\langle U, X_0,\ldots X_{n-1}\rangle\models \phi[a_0,\ldots a_{n-1}].$$
Here $\cal M$ is the first order structure in which for each $i\leq m$, 
$R_i$ is interpreted as $X_i,$ and $\models$ is the usual satisfiability relation.
Note that cylindrifications and diagonal elements are so definable.
Indeed for $i,j<n$,  $\exists x_iR_0(x_0\ldots x_{n-1})$ 
defines $C_i$ and $x_i=x_j$ defines $D_{ij}.$ 
\item With $f$ and $\phi$ as above,   
$f$ is said to be a first order definable operation with $\phi$ defining $f$, 
or simply a first order definable 
operation, and $\Cs_{n,t}$ is said to be a first order definable
expansion of $\Cs_n.$
\item $\RCA_{n,t}$ denotes the class $SP\Cs_{n,t}$, i.e. the class of all subdirect products 
of algebras
in $\Cs_{n,t}.$ We also refer to 
$\RCA_{n,t}$ as a first order definable expansion of $\RCA_n.$
\end{enumroman}
\end{definition}

\begin{theorem} Let $t$ be a finite expansion of the $CA$ type.
Then there are atomic algebras in $\RCA_{n,t}$ that are not completely representable
\end{theorem}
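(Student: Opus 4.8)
The plan is to push the assertion down to the pure cylindric signature, where a suitable example is already at hand, and then carry it back across the first order definable expansion. Two cheap facts drive the reduction. First, if $\A\in\RCA_{n,t}$ then $\Rd_{ca}\A\in\RCA_n$: since $\Rd_{ca}$ sends subalgebras to subalgebras and products to products, $\Rd_{ca}\A\in SP(\Rd_{ca}\Cs_{n,t})\subseteq SP\Cs_n=\RCA_n$, the middle inclusion being clause (ii) of the definition of $\Cs_{n,t}$. Second, a complete representation of $\A$ as a member of $\RCA_{n,t}$, once we forget the operations of $t$ outside the $\CA_n$ type, is still an injection onto a field of sets carrying all existing suprema to set-theoretic unions, i.e.\ a complete representation of $\Rd_{ca}\A$. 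As $\A$ and $\Rd_{ca}\A$ share their Boolean reduct, one is atomic iff the other is, and $\At\A$ and $\At(\Rd_{ca}\A)$ have the same underlying set. Hence it suffices to produce an atomic $\A\in\RCA_{n,t}$ whose cylindric reduct $\B=\Rd_{ca}\A$ satisfies $\Cm\At\B\notin\RCA_n$: by the standard fact used already in the Introduction -- a complete representation of an atomic algebra induces a representation of its complex algebra -- such a $\B$, and therefore $\A$, cannot be completely representable.

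For the cylindric ingredient I would invoke a weakly but not strongly representable cylindric atom structure. For each $n\geq 3$, by Hodkinson's construction as simplified in \cite{w}, there is a countable atomic $\B\in\RCA_n$ with $\Cm\At\B\notin\RCA_n$ (take $\B$ to be the term algebra of such an atom structure; $\B$ is representable since $\RCA_n$ is a variety), and $\B$ arises concretely as the cylindric set subalgebra of $\wp({}^nM)$ generated by finitely many colour relations over a base structure $M$ built from a graph with suitable chromatic behaviour. The refinement I would use is that $M$ may be taken as homogeneous as one pleases -- $\omega$-categorical, or more to the point with quantifier elimination in the language of the colour relations -- in the spirit of the model theoretic constructions of \cite{MLQ}; this can be arranged without disturbing the combinatorial fact $\Cm\At\B\notin\RCA_n$, which lives on the graph and not on the saturation of $M$.

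Now let $f_1,\dots,f_r$ be the finitely many operations of $t$ lying outside the $\CA_n$ type, $f_j$ being first order definable over set algebras by a formula $\phi_j$ as in the definition of $\Cs_{n,t}$. The claim I would establish is that the concrete algebra $\B\subseteq\wp({}^nM)$ is already closed under $f_1,\dots,f_r$: substituting members of $\B$ -- each definable over $M$ from the colour relations -- into the $\phi_j$'s yields first order definable subsets of ${}^nM$, and by quantifier elimination in $M$ every such subset is again defined by a quantifier-free formula in the colour relations and equality, hence lies in $\B$. Granting this, $\B$ expands to a bona fide member $\A\in\Cs_{n,t}\subseteq\RCA_{n,t}$ with $\Rd_{ca}\A=\B$; then $\A$ is atomic and $\Cm\At(\Rd_{ca}\A)=\Cm\At\B\notin\RCA_n$, and the first paragraph delivers the theorem.

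The genuinely delicate point is this closure claim. One must tune the base $M$ and the generating colour relations finely enough that \emph{every} one of the prescribed first order definable operations, applied to arbitrary elements of $\B$, stays inside $\B$, while at the same time $\At\B$ remains weakly but not strongly representable -- in effect one needs the theory of $M$ to eliminate quantifiers down to the fragment generated by the colours. Calibrating the model theory of the base against the finite list of operations in $t$ is where the work lies; the reduction to the cylindric case, the existence of the bad cylindric example, and the complex algebra argument are all routine or already known.
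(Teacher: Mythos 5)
Your opening reduction (pass to the cylindric reduct, use that a complete representation of an atomic algebra represents $\Cm\At$) is fine and is also how the paper exploits its example. But the heart of your argument --- the closure of the Hodkinson-style algebra under the finitely many first order definable operations --- is exactly the step you leave unproved, and as set up it does not go through. First, the weakly-but-not-strongly representable algebra of Hodkinson's construction (and of its simplification used in this paper) is \emph{not} a cylindric set subalgebra of $\wp({}^nM)$: its unit is the relativized set $W\subsetneq{}^nM$ and its cylindrifiers are relativized to $W$. The agreement of relativized and classical semantics holds only for $L^n$-formulas at tuples of $W$; it does not square the unit. So one cannot expand this algebra to a member of $\Cs_{n,t}$ simply by evaluating the defining formulas $\phi_j$ over $M$, and in any genuinely square representation of $\B$ (whose base is some other model produced abstractly) the quantifier elimination of $M$ is no longer available. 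Second, even over $M$, arranging elimination of quantifiers \emph{down to the fragment generated by the colour relations} simultaneously for all the (bound-variable-hungry) formulas $\phi_j$, while preserving atomicity and $\Cm\At\B\notin\RCA_n$, is a substantive calibration that you explicitly defer; nothing in the cited constructions supplies it.

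The paper avoids this issue entirely by choosing the counterexample with a neat-embedding surplus rather than a model-theoretic one: since $t$ has only finitely many extra operations, there is a $k$ such that every defining formula uses at most $n+k$ variables; the paper then takes an atomic $\A\in\RCA_n\cap\Nr_n\CA_{n+k}$ that is not completely representable (such algebras are produced, for every $k$, by the Andr\'eka--N\'emeti style construction earlier in the paper). Membership in $\Nr_n\CA_{n+k}$ gives closure under every operation first order definable with at most $n+k$ variables for free, so $\A$ expands to an atomic member of $\RCA_{n,t}$, and non-complete-representability of its $\CA$ reduct finishes the proof. If you want to keep your route, you must either replace your base algebra by one that is literally a $\Cs_n$ with square unit and prove the quantifier elimination claim for the specific finite list of operations in $t$, or import the neat reduct property as the paper does; as written, the proposal has a genuine gap at its decisive step.
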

\begin{proof} Let $k\in \omega$ such that all first order definable operations are built up using at most $n+k$ varialbes. 
This $k$ exists, because we have only finitely many of those operations. Let 
$\A\in RCA_n\cap \Nr_nCA_{n+k}$ be an atomic cylindric algebra that
is not completely representable. 
Then $\A$ is closed under all the first order definable operations. But the cylindric reduct of $\A$, is not completely representable, then
$\A$ itself is not.
\end{proof}
 
$\RCA_{n,t}$ is a variety, this can be proved exactly like the $\RCA_n$ case, and in fact it is a 
discriminator variety with discrimintor term $c_0\ldots c_{n-1}$.
(So it is enough to show that it is closed under ultraproducts.)
A result of Biro says that is not finitely axiomatizable. 

The logic corresponding to $\RCA_{n,t}$ has $n$ variables, and it has the usual first order connectives;
(here we view cylindrifiers as unary connectives) and it has one connective for each first order definable operation.
In atomic formulas the variables only appear in their natural order; so that they are {\it restricted}, in the sense of \cite{HMT1} sec 4.3.

Semantics are defined as follows: For simplicity we assume that we have an at most  countable collection of $n$-ary relation symbols
$\{R_i: i\in \omega\}$. 
(We will be considering countable languages anyway, to violate the omitting types theorem).

The inductive definition for the first order (usual) connectives is the usual.
Now given a model $\M=(M, R_i)_{i\in I}$ and a formula $\psi$ of $L_n^+$; with a corresponding connective 
$f_{\phi}$ which we assume is unary to simplify 
matters, and $s\in {}^nM$, then for any formula $\psi$: 
$$\M\models f_{\phi}(\psi)[s]\Longleftrightarrow (\M, \phi^{\M})\models \psi[s].$$

$_rL_n$ is the logic correponding to $\CA_n$, which has only restricted formulas, 
and $L_n$ is that correponding to $\RPEA_n$; the latter of course is a first order extension of the former 
because the substitutions corresponding to transpositions
are first order definable.

\begin{corollary}  
No first order finite extension of $L_n$  enjoys an omitting types theorem. 
\end{corollary}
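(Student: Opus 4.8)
The plan is to reduce the claim directly to the Theorem just proved, namely that any finite expansion $t$ of the $\CA_n$ type admits atomic algebras in $\RCA_{n,t}$ that are not completely representable, and then to invoke the algebraic characterization of omitting types for $\L_n$-theories already recorded in the text: if the omitting types theorem held for the logic corresponding to a variety $V$ of representable $\CA_n$-like algebras, then every atomic algebra in $V$ whose formula algebra is the relevant $\Fm_T$ would be completely representable, since one could omit the family of non-principal types $\{-x : x\in\At\A\}$. Contraposing, the existence of an atomic non-completely-representable algebra of the right shape blocks omitting types.

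First I would fix a finite first order extension $L_n^+$ of $L_n$; by definition it corresponds to a first order definable expansion $\RCA_{n,t}$ of $\RCA_n$ for some finite expansion $t$ of the $\CA_n$ type, and there is a natural bound $n+k$ on the number of variables needed to express all the finitely many defining formulas $\phi$. Next I would take the atomic algebra $\A\in\RCA_{n,t}$ supplied by the preceding Theorem which is not completely representable; as in that proof one can take $\A\in \RCA_n\cap\Nr_n\CA_{n+k}$, atomic, with non-completely-representable cylindric reduct, so that $\A$ is automatically closed under all the first order definable operations of $t$. Then I would realize $\A$ (or more precisely the term algebra $\Tm\At\A$, which lies in $\A$ since $\RCA_{n,t}$ is completely additive) as the Lindenbaum--Tarski algebra $\Fm_T$ of a suitable atomic, complete $L_n^+$-theory $T$, with the atoms of $\A$ corresponding to a type $\Gamma$ in finitely many variables. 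The failure of complete representability of the cylindric reduct of $\A$ means there is no representation of $\A$ carrying the meet $\prod\{-x:x\in\At\A\}=0$ to the empty intersection; translating through the algebra-logic dictionary, no model of $T$ omits $\Gamma$, yet $\Gamma$ is non-principal because $\Tm\At\A$ is atomic and $0$ is not isolated below any nonzero element by a single formula (this is where atomicity of the cylindric reduct together with the non-complete-representability is used). Hence $T$ is a consistent $L_n^+$-theory with a non-principal type realized in every model: omitting types fails for $L_n^+$.

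The main obstacle I expect is the bookkeeping in the last step: making precise that "no complete representation of $\A$" really does translate into "no model of $T$ omits $\Gamma$" in the expanded logic $L_n^+$, rather than merely in $L_n$. One must check that a model of $T$ in the sense of $L_n^+$ is exactly a set algebra in $\Cs_{n,t}$ into which $\Fm_T$ maps homomorphically, so that a model omitting $\Gamma$ would be precisely a complete (on the relevant meet) representation of $\A$ as an algebra in $\RCA_{n,t}$; and then that such a representation restricts to a complete representation of the cylindric reduct, contradicting the choice of $\A$. The only subtlety is that the extra connectives $f_\phi$ are interpreted via the first order definable operations, so the restriction of a $\Cs_{n,t}$-representation to its $\CA_n$-reduct is genuinely a $\Cs_n$-representation of the cylindric reduct, and completeness of the representation is inherited because the defining formulas $\phi$ are finitary and hence the extra operations are completely additive; once this is spelled out, the reduction goes through and the Corollary follows.
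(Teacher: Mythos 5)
Your argument is essentially the paper's own proof: take the atomic, non-completely-representable $\A\in\RCA_{n,t}$ supplied by the preceding theorem, identify it with the Tarski--Lindenbaum algebra $\Fm_T$ of a suitable theory in the expanded logic, and observe that the non-principal type given by (the negations of) the atoms cannot be omitted, since a model omitting it would yield a complete representation of $\A$, hence of its cylindric reduct. The extra bookkeeping you supply -- that a $\Cs_{n,t}$-model of $T$ restricts to a $\Cs_n$-representation of the cylindric reduct and that atomicity gives non-principality -- just spells out what the paper's two-line proof leaves implicit.
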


\begin{proof} Let $\A$ be as above, and let $\Fm_T$ be the corresponding Tarski-Lindenbaum algebra. 
We assume that $\A=\Fm_T$ (not just isomorphic) Let $X=\At\A$ and let $\Gamma=\{\phi: \phi/\equiv\in X\}.$
Then $\Gamma$ cannot be omitted.
\end{proof}
%\begin{demo}{Proof} \cite{M71}.
%\end{demo}

\subsection{Omitting uncountably many types}

Here we give an independence result, concerning omitting a number of non-isolated types.
The we delineating the edges of the independence proofs by considering {\it complete} types,
giving a proof in $ZFC$ of the resulting statement 
that on first sight would be suspected of being independent. 
Proofs of this kind is often extremely subtle and surprising.

Inthe proof of the second item in our next theorem, we will be constructing countable models that omit
possibly uncountable types in $\L_n$, $n\in \omega$. This is theorem is formulated in \cite{OTT} without proof.
Now we provide one:
\begin{definition} 
\begin{enumroman}
\item Let $\kappa$ be a cardinal. Let $OTT(\kappa)$ be the following statement. 
If $\A\in S_c\Nr_n\CA_{\omega}$ is countable and for $i\in \kappa$, $X_i\subseteq A$ are such that $\prod X_i=0$, then for all $a\neq 0$, there exists
a set algebra $\C$ with countable base, $f:\A\to C$ such that $f(a)\neq 0$ and for all $i\in \kappa$, $\bigcap_{x\in X_i} f(x)=0.$
\item Let $OTT$ be the statement that 
$$(\forall k<{}^{\omega}2)(OTT(\kappa))$$
\item Let $OTT_m(\kappa)$ be the statement obtained from $OTT(\kappa)$ by replacing $X_i$ with ``nonprincipal ultrafilter $F_i$"
and $OTT_m$ be the statement 
$$(\forall k<{}^{\omega}2)(OTT_m(\kappa))$$
\end{enumroman}
\end{definition}
Now we formulate and prove our main theorem (this theorem is stated in \cite{OTT}):
\begin{theorem}\label{OT} 
\begin{enumroman}
\item $OTT$ is independent from $ZFC+\neg CH$. 
\item  $OTT_m$ is provable in $ZFC$.
\end{enumroman}
\end{theorem}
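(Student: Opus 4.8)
\textbf{Proof proposal for Theorem \ref{OT}.}

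The plan is to handle the two items separately, since they have quite different flavors. For item (i), the independence of $OTT$ from $ZFC + \neg CH$, I would first show that $OTT(\kappa)$ holds for all $\kappa < \mathfrak{p}$ (or more precisely $\kappa < covK$ as in Theorem \ref{infinite}) by a direct adaptation of the Baire category argument given in the proof of Theorem \ref{infinite}: given $\A \in S_c\Nr_n\CA_\omega$, one uses the fact that $\A$ sits as a \emph{complete} subalgebra of some $\B \in \CA_\omega$ to transfer the meets $\prod X_i = 0$ from $\A$ up to $\B$, then runs the Stone-space argument in $\B$ to find a Henkin ultrafilter avoiding the relevant nowhere-dense sets, and finally restricts the resulting representation back to $\A$. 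Since $Martin$'s axiom makes $covK = 2^\omega$, this shows $MA + \neg CH \vdash OTT$. For the failure direction, I would invoke a model of $ZFC + \neg CH$ in which $covK = \omega_1 < 2^\omega$ (e.g. a Cohen model) together with the standard construction — going back to the classical counterexamples to omitting $covK$-many types — of a countable $\A \in S_c\Nr_n\CA_\omega$ and a family $(X_i : i < covK)$ with $\prod X_i = 0$ that cannot be simultaneously omitted in any countable representation; packaging $covK < 2^\omega$ witnesses then yields $\neg OTT(covK)$, hence $\neg OTT$.

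For item (ii), the $ZFC$-provability of $OTT_m$, the key point is that replacing each $X_i$ by a \emph{nonprincipal ultrafilter} $F_i$ with $\prod F_i = 0$ changes the problem from a category argument into one where we can use much heavier model-theoretic machinery, namely Shelah's results alluded to in the abstract. Here I would proceed as follows. Given countable $\A \in S_c\Nr_n\CA_\omega$ and nonprincipal ultrafilters $(F_i : i < \kappa)$, $\kappa < 2^\omega$, with $\prod F_i = 0$, lift to $\B \in \CA_\omega$ in which $\A$ embeds completely, pass to the locally finite algebra it generates, and read off a countable $L_\omega$ theory $T$ together with $\kappa$-many nonprincipal \emph{maximal} (complete) types $p_i$ over $T$ — maximality of the type is exactly the algebraic statement that $F_i$ is an ultrafilter. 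The task becomes: a countable first-order theory with $< 2^\omega$ many nonprincipal complete types has a countable model omitting all of them. This is where Shelah's theorem on the number of types and countable models enters: if a countable theory has fewer than $2^\omega$ nonprincipal complete types, one can build, by a Henkin construction amalgamating the omission requirements one type at a time along a suitable tree, a countable model realizing only principal complete types among the listed ones — the point being that a \emph{complete} type that is realized in every model is principal, and $< 2^\omega$ many of them can be simultaneously avoided because each imposes only a single "forbidden completion" at each stage rather than a dense-set obstruction. Then restrict the induced set-algebra representation back down to $\A$ to obtain the required $f : \A \to \C$ with $f(a) \neq 0$ and $\bigcap_{x \in F_i} f(x) = 0$.

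The main obstacle I anticipate is item (ii): specifically, making precise the passage from "fewer than $2^\omega$ nonprincipal complete types" to an actual omission of all of them, in a way that both (a) correctly invokes Shelah's counting-of-models theorem rather than just the naive Baire argument (which would only give $< covK$, and $covK$ may be strictly below $2^\omega$), and (b) survives the truncation to $n$ variables, i.e. ensures that the model we build really omits the types as subsets of $\Nr_n\A$ and that the complete-subalgebra hypothesis $\A \in S_c\Nr_n\CA_\omega$ is what licenses the lift-and-restrict step. The delicate part is that for \emph{arbitrary} non-principal types the analogous statement is genuinely independent (item (i)), so the proof of (ii) must use maximality in an essential, non-cosmetic way — the place this bites is in showing that the union over $i < \kappa$ of the "forbidden" sets can, using maximality, be organized so that a single countable model avoids all of them even when $\kappa = \omega_1 < 2^\omega$, which is precisely the content of Shelah's result that I would cite rather than reprove.
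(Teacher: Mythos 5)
Your treatment of item (i) follows essentially the paper's route: consistency comes from Theorem \ref{infinite} together with the fact that Martin's axiom forces $covK=2^{\omega}$, and failure comes from a model of $ZFC+\neg CH$ in which $covK<2^{\omega}$ combined with the counterexample of Example \ref{cov}; the only substantive extra work there is the lifting argument showing that a meet-preserving representation of the neat reduct $\Nr_n\A$ would generate a meet-preserving representation in $\omega$ dimensions, so that the Casanovas--Farre style counterexample really does refute $OTT(covK)$ at the $n$-variable level, and you should flag that step rather than call the construction standard. One factual slip: a Cohen model will not do, since adding Cohen reals makes the covering number of the meager ideal equal to the continuum, i.e. $covK=2^{\omega}$ there; you need, e.g., a random-real model or the iterated-forcing model mentioned in the paper's footnote to get $covK<2^{\omega}$.

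Item (ii) is where your proposal has a genuine gap. The reduction to ``countable theory, $\kappa<2^{\omega}$ nonprincipal maximal types, find one model omitting them all'' is right, but the mechanism you offer --- that a maximal type ``imposes only a single forbidden completion at each stage rather than a dense-set obstruction'' --- is not a mechanism: a Henkin construction has only $\omega$ stages, so one cannot diagonalize against $\omega_1$ many types one at a time, and omitting even a single nonprincipal maximal type is still a density requirement, not a one-shot choice. The argument the paper actually gives, and which you leave to an unspecified citation, has two separate components. First, one constructs $2^{\omega}$ representations (ultrafilter chains built along a finitely branching tree; at the crucial stages, if the current condition is not below an atom one puts some element $\theta$ into one chain and $-\theta$ into the other, and the tree is split often enough) with the property that any maximal type realized in two of them is principal. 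Second, a counting step: if every one of these $2^{\omega}$ representations realized some member of the given family, the sets of realized family members would be pairwise disjoint and nonempty, forcing the family to have size $2^{\omega}$, contradicting $\kappa<2^{\omega}$; hence some representation omits them all, and one then restricts along the complete embedding back to the $n$-dimensional algebra. Maximality is used exactly in the disjointness step (a type realized in two of the specially built models must be principal), not in any per-stage avoidance, so without these two steps spelled out (or a precise reference containing them) your proof of (ii) does not go through.
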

\begin{proof}
\begin{enumroman} 
\item Let $covK$ be the least cardinal $\kappa$ such that the real line can be covered by $\kappa$ many closed disjoint 
nowhere dense sets. It is known that 
$\omega<covK\leq 2^{\omega}$  By theorem \ref{infinite} $< covK$ types can be omitted.
But Martin's axiom implies that $covK =2^{\omega}$. 
So we have consistency. 
The independence is proved using standard iterated forcing to show that it is consistent that $covK$ could be literally anything greater 
than  $\omega$ and $\leq {}^{\omega}2$, and then show that $OTT(covK)$ is false, see example \ref{cov}. 
%(Those two statements will be proved below, by direct modifications of known results
%and methods

\item  The idea is that one can build several models such that they overlap only on isolated types.
One can build {\it two} models so that every maximal type which is realized in both is isolated. Using the jargon of Robinson's finite forcing
implemented via games, the idea
is that one distributes this job of building the two models among experts, each has a role to play, and that all have 
winning strategies. There is no difficulty in stretching the above idea to make the experts build three, four or any finite number of models 
which overlap only at principal types. 
With a pinch of diagonalisation we can extend the number to $\omega$. 

To push it still further to $^{\omega}2$ needs an entirely new idea (due to Shelah), which we will implement.
%This is one of those many places in model theory where we get continuum many models for the same price as two.

Algebraically, we first construct two representations of $\B$ such that if $F$ is an ultrafilter in $B$ that is realized in both representations, then $F$ is
necessarily principal, that is $\prod F$ is an atom generating $F$, then we sketch the idea of how to obtain $^{\omega}2$ many.
We construct two ultrafilters $T$ and $S$ of $\B$ such that (*)
$\forall \tau_1, \tau_2\in {}^{\omega}\omega( G_1=\{a\in \B: {\sf s}_{\tau_1}a\in T\},
G_2=\{a\in \B: s_{\tau_1}a\in S\})\\\implies G_1\neq G_2 \text { or $G_1$ is principal.}$
Note that $G_1$ and $G_2$ are indeed ultrafilters.
We construct $S$ and $T$ as a union of a chain. We carry out various tasks as we build the chains.
The tasks are as in (*), as well as

(**) for all $a\in A$, if $c_ka\in T$, then $s_l^kx\in T$ for $l\notin \Delta x$.

(***) for all $a\in A$ either $a\in T$ or $-a\in T$, and same for $S$.

We let $S_0=T_0=\{1\}$.
There are countably many tasks. Metaphorically we hire countably many experts and give them one task each.
We partition $\omega$ into infinitely many sets and we assign one of these tasks to each expert.
When $T_{i-1}$ and $S_{i-1}$ have been chosen and $i$ is in the set assigned to some expert $E$, then $E$ will construct
$T_i$ and $S_i$.

Let us start with  task $(**)$. The expert waits until she is given a set $T_{i-1}$ which contains $c_ka$ for some $k<\omega$.
 Every time this happen she look for a {\it witness} $l$ which is outside elements in $T_{i-1}$; 
this is possible since the latter is finite, then she sets $T_i=T_{i-1}\cup \{s_k^la\}$. 
Otherwise, she does nothing. This strategy works because her subset
of $\omega$ is infinite, hence contains arbitrarily large numbers. Same for $S_i$.

Now consider the expert who handles task (**). Let $X$ be her subset of $\omega$. Let her list as $(a_i: i\in X)$ all elements of $X$.
When $T_{i-1}$ has been chosen with $i\in X$, she should consider whether $T_{i-1}\cup \{a_i\}$ is consistent. If it is she puts
$T_i=T_{i-1}\cup \{a_i\}$. If not she puts $T_i=T_{i-1}\cup \{-a_i\}$. Same for $S_i$.
%Next consider the expert who deals with the tasks in \ref{t5}. She waits until she is gets a set $T_{i-1}$ which contains ${\sf c}_ka$.
%Every time this happens she chooses $l\notin \Delta a$
%which is not used in $T_{i-1}$, and she puts $T_i=T_{i-1}\cup \{{\sf s}_k^la\}$. Same for $S_i$.

Now finally consider the crucial tasks in (*). Suppose that $X$ contains $i,$ and $S_{i-1}$ and $T_{i-1}$ have been chosen.
Let $e=\bigwedge S_{i-1}$ and $f=\bigwedge T_{i-1}$. We have two cases.
If $e$ is an atom in $B$ then the ultrafilter $F$ containing $e$ is principal so our expert can put $S_i=S_{i-1}$ and $T_i=T_{i-1}$.
If not, then let $F_1$ , $F_2$ be distinct ultrafilters containing $e$. Let $G$ be an ultrafilter containing $e$, and assume that $F_1$ 
is different from $G$.
Let $\theta$ be in $F_1-G$. Then put $S_i=S_{i-1}\cup \{\theta\}$ and $T_i=T_{i-1}\cup \{-\theta\}.$
It is not hard to check that the canonical models, defined the usual way,  corresponding to $S$ and $T$ are as required.

To extend the idea, we allow experts at any stage to introduce a new chain of theories which is a duplicate copy of one of the chains being
constructed. The construction takes the form of a tree where each branch separately will give a chain of conditions.
By splitting the tree often enough the experts can guarantee that there are continuum many branches and hence continuum many
representations. This is a well know method, in model theory, when one gets $^{\omega}2$ many models for the price of two.
There is one expert whose job is to make sure that this property 
is enforcable for each pair of branches.
But she can do this task, because at each step the number of branches is still finite.

\item Assume not. Let ${\bold F}$ be the given set of non principal ultrafilters. Then for all $i<{}^{\omega}2$,
there exists $F$ such that $F$ is realized in $\B_i$. Let $\psi:{}^{\omega}2\to \wp(\bold F)$, be defined by
$\psi(i)=\{F: F \text { is realized in  }\B_i\}$.  Then for all $i<{}^{\omega}2$, $\psi(i)\neq \emptyset$.
Furthermore, for $i\neq j$, $\psi(i)\cap \psi(j)=\emptyset,$ for if $F\in \psi(i)\cap \psi(j)$ then it will be realized in
$\B_i$ and $\B_j$, and so it will be principal.  This implies that $|\bold F|={}^{\omega}2$ which is impossible.

\end{enumroman}
%\end{proof}
\end{proof}

\begin{example}\label{cov} Let $n>1$. To show that $OTT(cov K)$ could be false, we adapt 
an example in \cite{CF} p.242. Another example can be found in \cite{N} but also needs a bit of a modification, 
since we have only finitely many 
variables. Fix $n\in \omega$. There the example is constructed for $L_{\omega,\omega}$ to adapt 
to $L_n$ some care is required. 
Let $T$ be a theory such that for this given $n$, in $S^n(T)$, the stone space of $n$ types,  the isolated points are not dense.
(In \cite{CF}, a theory $T$ is chosen which does not have a prime model. This implies that there is an $n$ such that the isolated types in 
$S_n(T)$ are not dense; in here we need a fixed $n$, so not any theory without a 
prime model will do). 
It is easy to construct such theories, for any fixed $n$.(For example the theory of random graphs the isolated types are not dense for any $n$). 

Let $X$ be the space $S^0(T)$ of all complete $0$ types which are consistent with $T$. For an ordinal $\alpha$, let $X^{(\alpha)}$ 
be the $\alpha$-iterated Cantor-Bendixon derivative of $X$. Recall that for ordinal numbers $\alpha$ the $\alpha$ 
Cantor-Bendixon derivative of a topological space is defined by 
transfinite induction 
\begin{itemize}
\item $X^0=X$
\item $X^{\alpha+1}=[X^{\alpha}]'$
\item $X^{\beta}=\bigcap_{\alpha\in \beta} X_{\alpha}$. 
\end{itemize}
The transfinite sequence of Cantor-Bendixson derivatives of $X$ must eventually stop. 
The smallest ordinal $\alpha$ such that $X^{\alpha+1}=X^{\alpha}$
is called the Cantor-Bendixon  rank of $X$.The language is countable, there is some $\alpha<\omega_1$ 
such that $X^{(\alpha)}=X^{(\alpha+1)}$
and $X\setminus X^{\alpha}$ is countable. $X^{\alpha}$ is a perfect set and 
therefore it is homeomorphic to the Cantor space $^{\omega}2$ or it is empty.
We associate a set $P_{\infty}$ of $\leq covK$ many types with $X^{\alpha}$. Assume that $X^{\alpha}$ is non-empty, since it is a 
closed set in $X$, 
there is some extension $T_{\infty}$ of $T$
such that in $X$ 
$$X^{\alpha}=\bigcap_{\sigma\in T^{\infty}}[\sigma].$$
Hence the space $S^0(T_{\infty})$ is homeomorphic to $X^{\alpha}$ and to $^{\omega}2$. Then there are $Y_{\beta} (\beta<covK)$ 
closed nowhere dense sets in $S^0(T_{\infty})$
such that
$$S^0(T_{\infty})=\bigcup_{\beta<covK}Y_{\beta}.$$type $p_{\beta}$ such that in $S^{0}(T_{\infty})$
$$Y_{\beta}=\bigcap_{\sigma\in p_{\beta}}[\sigma].$$
As $Y_{\beta}$ is nowhere dense $p_{\beta}$ is non principal in $T_{\infty}$. Assuming, without loss, that $T_{\infty}\subseteq p_{\beta}$ we get that 
$p_{\beta}$ is non 
principal in $T$.
Set
$$P_{\infty}=\{p_{\beta}:\beta<covK\}.$$
Let us consider the $0$ types in $X\setminus X^{\alpha}$. These are complete consistent extensions of $T$.
For every $T'\in X\setminus X^{\alpha}$ we shall define a set $P_{T'}$ of $\leq covK$ many $n$ types that are not omitted in $T'$.
If $T'$ is not a finite extension of $T$, set $P_{T'}=\{T'\}$. Otherwise, in $S^n(T')$ the isolated types are not dense. 
Hence there is some non-empty $Y\subseteq
S^n(T')$ clopen and perfect. Now we can cover $Y$ with a family of $covK$ many closed nowhere dense sets of $n$ types. 
Since $Y$ is clo-open in $S^{n}(T')$, these sets are closed nowhere dense sets in $S^{n}(T')$, 
so we obtain a family of $covK$ many non principal $n$ types that cannot be omitted. We may assume that 
$T'\subseteq p$ for every $p\in P_T'$ and therefore every type in $P_T'$ is non principal in $T$.
Define
$$P=P_{\infty}\cup\bigcup\{P_{T'}: T'\in X\setminus X^{\alpha}\}.$$  
Now $P$ is a family of non-principal types $|P|=covK$ that cannot be omitted.

Let $\A=\Fm/T$ and for $p\in P$ let $X_p=\{\phi/T:\phi\in p\}$. Then $X_p\subseteq \Nr_n\A$, and $\prod X_p=0$.
However for any $0\neq a$, there is no set algebra $\C$ with countable base 
$M$ and $g:\A\to \C$ such that $g(a)\neq 0$ and $\bigcap_{x\in X_i}f(x)=\emptyset$.
But in principle, if we take the $n$ neat reduct, representations preserving meets can exist. We exclude this possibility by showing that 
such representations necessarily lift to 
all $\omega$ dimensions. Let $\B=\Nr_n\A$. Let $a\neq 0$. Assume, seeking a contradiction, that there exists 
$f:\B\to \D'$ such that $f(a)\neq 0$ and $\bigcap_{x\in X_i} f(x)=\emptyset$. We can assume that
$B$ generates $\A$ and that $\D'=\Nr_n\D$ where $\D\in \Lf_{\omega}$. Let $g=\Sg^{\A\times \D}f$. We will show  
that $g$ is a one to one function with domain $\A$ that preserves the $X_i$'s which is impossible (Note that by definition $g$ is a homomorphism). 
We have
$$Dom g=Dom\Sg^{\A\times \D}f=\Sg^{\A}Domf=\Sg^{\A}\Nr_{n}\A=\A.$$
By symmetry it is enough to show that $g$ is a function.  We first prove the following (*)
 $$ \text { If } (a,b)\in g\text { and }  {\sf c}_k(a,b)=(a,b)\text { for all } k\in \omega\sim n, \text { then } f(a)=b.$$
Indeed,
$$(a,b)\in \Nr_{n}\Sg^{\A\times \D}f=\Sg^{\Nr_{n}(\A\times \D)}f=\Sg^{\Nr_{n}\A\times \Nr_{n}\D}f=f.$$
Here we are using that $\A\times \D\in \Lf_{\omega}$, so that  $\Nr_{n}\Sg^{\A\times \D}f=\Sg^{\Nr_{n}(\A\times \D)}f.$
Now suppose that $(x,y), (x,z)\in g$.
Let $k\in \omega\sim n.$ Let $\Delta$ denote symmetric difference. Then
$$(0, {\sf c}_k(y\Delta z))=({\sf c}_k0, {\sf c}_k(y\Delta z))={\sf c}_k(0,y\Delta z)={\sf c}_k((x,y)\Delta(x,z))\in g.$$

Also,
$${\sf c}_k(0, {\sf c}_k(y\Delta z))=(0,{\sf c}_k(y\Delta z)).$$ 
Thus by (*) we have  $$f(0)={\sf c}_k(y\Delta z) \text { for any } k\in \omega\sim n.$$
Hence ${\sf c}_k(y\Delta z)=0$ and so $y=z$.
We conclude that there exists a countable $\B\in \Nr_n\CA_{\omega}$ and $(X_i:i<covK)$ such that $\prod X_i=0$ but there is no representation that
preserves the $X_i$'s. In more detail. Give any $a\in \B$, if $a$ is non zero, $\C$ is a set algebra with countable base 
and $f:\B\to \C$ is a homomorphism such 
that $f(a)\neq 0$, then there exists $i<covK$, such that $\bigcap_{x\in X_i} f(x)\neq \emptyset.$
Therefore $OTT$ is false in a model of $ZFC+\neg CH$.
\footnote{Another model in which $covK=\omega_2$ and $^{\omega}2=\omega_3$ is due to Bukovski \cite{Miller} 
who does it by starting with a model of $2^{\omega}=\omega_3$, and then doing an $\omega_2$ iteration.
At each step of the iteration he does an $\omega_3$ iteration making $MA$ true. 
Alternatively we could start with a model of $2^{\omega}=\omega_3$ and then do an $\omega_2$ iteration with $\bold D$ 
where $\bold D=\{(n,f): n<\omega, f\in {}^{\omega}\omega\}$
the order for forcing an eventually dominant real.}
\end{example}

%Assume that $\A\s
%ubseteq \Nr_n\B$ where $\B\in \Lf_{\omega}$. Then, since $A$ is countable, we can assume that $\B$ is countable.
%If not, just take $\B'=\Sg^{\B}\A$, so that $\B'$ is countable and $\A\subseteq \Nr_n\B'$, and replace $\B$ by $\B'$.

%\end{theorem}
Now we omit uncountably many types in uncountable languages.
We use the following result of Shelah's. But first a piece of notation:
For a model $M$, and $\bar{a}\in {}^{<\omega}M$, $\tp(\bar{a})$ is the set of formulas satified by $\bar{a}$ 
In our theorem, addressing $L_n$, all types considered will have of cource a uniform upper bound of free variables, namely $n$.

\begin{lemma} Suppose that $T$ is a theory, 
$|T|=\lambda$, $\lambda$ regular, then there exist models $M_i: i<\chi={}^{\lambda}2$, each of cardinality $\lambda$, 
such that if $i(1)\neq i(2)< \chi$, $\bar{a}_{i(l)}\in |M_{i(l)}|$, $l=1,2,$, $\tp(\bar{a_{l(1)}})=\tp(\bar{a_{l(2)}})$, 
then there are $p_i\subseteq \tp(\bar{a_{l(i)}}),$ $|p_i|<\lambda$ and $p_i\vdash \tp(\bar{a_ {l(i)}}).$
\end{lemma}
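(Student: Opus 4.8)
The statement is a model-theoretic result of Shelah asserting that a theory $T$ of regular cardinality $\lambda$ has $^{\lambda}2$ models, each of size $\lambda$, which are pairwise ``almost orthogonal'' in the sense that any type realized in two of them by tuples having the same full type is already isolated over a fragment of size $<\lambda$. The plan is to prove this by a tree construction generalizing the classical ``two models for the price of one'' argument sketched above in the proof of Theorem \ref{OT}(i), item (ii). First I would fix a binary tree $^{<\lambda}2$ of approximations: to each node $\eta\in {}^{<\lambda}2$ we attach a consistent theory $T_\eta\supseteq T$ in an expanded language with $\lambda$-many new constants (a Henkin-style construction), with $T_\eta\subseteq T_{\eta'}$ whenever $\eta\subseteq\eta'$, and with $T_{\eta^\frown 0}$, $T_{\eta^\frown 1}$ chosen to be explicitly contradictory — i.e. there is a sentence $\theta_\eta$ with $\theta_\eta\in T_{\eta^\frown 0}$ and $\neg\theta_\eta\in T_{\eta^\frown 1}$. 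At limit stages we take unions along branches. Each branch $b\in {}^{\lambda}2$ then yields, via the Henkin constants, a model $M_b$ of $T$ of cardinality $\lambda$ (using $|T|=\lambda$ and regularity of $\lambda$ to keep everything of size $<\lambda$ at each stage and $\lambda$ at the end).

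The heart of the matter is to build the splitting so as to \emph{force} the almost-orthogonality conclusion. Here I would use a bookkeeping argument over the $^{\lambda}2$ branches: since $\lambda$ is regular, enumerate in $\lambda$ steps all ``tasks'' of the form (pair of incomparable nodes, pair of new-constant tuples $\bar a, \bar b$ of the same length $<\omega$, a formula $\varphi$) — there are $\lambda$-many such tasks since the language and the constant-set have size $\lambda$. When a task $(\bar a,\bar b,\varphi)$ comes up at a node $\eta$ where the two branches are being separated, the expert responsible ensures that \emph{if} $\bar a$ and $\bar b$ are destined to have the same type in the two eventual branches, then $\varphi(\bar a)$ is decided by a small ($<\lambda$) fragment: concretely, whenever $T_\eta$ does not yet isolate $\tp(\bar a)$ over its current ($<\lambda$-sized) fragment, the expert picks a formula $\theta_\eta$ lying in $\tp(\bar a)$ relative to one completion but not the other, and puts $\theta_\eta$ into $T_{\eta^\frown 0}$, $\neg\theta_\eta$ into $T_{\eta^\frown 1}$. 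This guarantees: \emph{if} $\bar a_{i(1)}$ in $M_{i(1)}$ and $\bar a_{i(2)}$ in $M_{i(2)}$ end up with $\tp(\bar a_{i(1)})=\tp(\bar a_{i(2)})$, then along the common part of the branches every relevant splitting was resolved \emph{before} a genuine disagreement could appear, which is exactly to say the common type was pinned down by the fragment $p_i=T_\eta\restriction(\text{constants of }\bar a)$ of size $<\lambda$, and $p_i\vdash \tp(\bar a_{l(i)})$.

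The main obstacle — and the reason this is ``an entirely new idea (due to Shelah)'' as the text says — is the bookkeeping at limit stages of cofinality $<\lambda$ or $=\lambda$: one must ensure that the $\lambda$-many tasks are all served along \emph{every} pair of branches, while simultaneously keeping each $T_\eta$ of size $<\lambda$ and consistent, and keeping the number of branches exactly $^{\lambda}2$. Regularity of $\lambda$ is precisely what makes the fragments $p_i$ of size $<\lambda$ — a union of $<\lambda$-many sets each of size $<\lambda$ stays of size $<\lambda$. I would therefore set up the construction with a careful indexing ensuring that (a) every branch passes through $\lambda$-many splitting nodes, (b) on the set of nodes below any given splitting node, only $<\lambda$-many constants and formulas have been mentioned, and (c) the consistency of $T_\eta$ is maintained by a routine Henkin/compactness verification at successor steps and by the union being a chain of consistent theories at limits. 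The delicate point to get exactly right is matching the ``same type'' hypothesis on $\bar a_{i(1)},\bar a_{i(2)}$ to the branch-separation point: one must argue that if the two types never diverge, then by construction the expert in charge \emph{must} have already isolated the type over a small fragment at the last node the branches shared, which is where the explicit choice of $\theta_\eta$ as a type-separating formula does the work. I will not carry out the routine Henkin verifications or the exact cardinal arithmetic here; the argument follows the pattern of Shelah's many-model constructions, adapted so that the ``$<\lambda$ fragment isolates the type'' clause replaces the usual ``models are non-isomorphic'' clause.
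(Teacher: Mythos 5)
You should first note that the paper does not actually prove this lemma at all: its ``proof'' is the citation to Shelah's \emph{Classification Theory}, Theorem 5.16. So your sketch is necessarily a different route, and the question is whether it works. It does not, as written: the bookkeeping at the heart of your argument has a genuine cardinality and locality gap. You claim the tasks (pair of incomparable nodes, pair of constant-tuples, formula) number only $\lambda$; in fact the tree ${}^{<\lambda}2$ has $2^{<\lambda}$ nodes and the final object has $2^{\lambda}$ branches, so the pairs of branches to be controlled number $2^{\lambda}>\lambda$, while the construction has only $\lambda$ stages. Worse, for a fixed pair of branches the tuples $\bar a_{i(1)}$, $\bar a_{i(2)}$ are in general named by constants introduced \emph{after} the branches split, and after the splitting node there is no node of the tree that lies on both branches; so there is no stage of your construction at which an ``expert'' can see both tuples and insert $\theta(\bar a_{i(1)})$ into one theory and $\neg\theta(\bar a_{i(2)})$ into the other. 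This is exactly the obstruction the paper itself flags when it says that passing from finitely (or countably) many models to $2^{\omega}$, let alone $2^{\lambda}$, ``needs an entirely new idea (due to Shelah)'': the countable argument works because at each of $\omega$ stages only finitely many branch-stubs exist and every pair of stubs can be revisited cofinally often, and that feature simply does not scale to $2^{\lambda}$ branches in $\lambda$ stages.

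Your closing clause — that the type must have been ``pinned down by the fragment at the last node the branches shared'' — is also not something your construction forces: $\tp(\bar a_{i(1)})$ is only determined by the whole branch, and nothing in the successor-step splitting guarantees that failure of divergence of the two types entails $<\lambda$-isolation; that implication is the entire content of the lemma. To repair this one needs a mechanism that controls every type realized in $M_i$ by a ``small'' ($<\lambda$-sized) support built into the model uniformly, and then arranges the $2^{\lambda}$ models so that supports coming from different models are incompatible unless the type is already implied by such a support — this is what Shelah's machinery (Ehrenfeucht--Mostowski-style constructions over suitably chosen trees/orders, rather than a per-pair Henkin bookkeeping) provides. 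So the proposal, while correctly identifying the shape of the statement and the role of regularity of $\lambda$ in keeping fragments small, reproduces the naive tree argument that the lemma is specifically designed to go beyond, and the missing step is precisely the one it defers to ``the pattern of Shelah's many-model constructions.''
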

\begin{proof} \cite{Shelah} Theorem 5.16
\end{proof}

\begin{theorem}\label{uncountable} Let $\A=S_c\Nr_n\CA_{\omega}$. Assume that $|A|=\lambda$, where $\lambda$ is an ucountable 
cardinal. assume that  $\kappa< {}^{\lambda}2$,
and that $(F_i: i<\kappa)$ is a system of non principal ultrafilters.
Then there exists
a set algebra $\C$ with base $U$ such that $|U|\leq \lambda$, $f:\A\to \C$ such that $f(a)\neq 0$ and for all $i\in \kappa$, $\bigcap_{x\in X_i} f(x)=0.$

\end{theorem}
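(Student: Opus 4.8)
The plan is to reduce the statement to a model-theoretic one about omitting types in a first-order theory of cardinality $\lambda$, and then to feed it Shelah's Lemma above in place of the ``experts'' construction used in the proof of Theorem~\ref{OT}(ii).

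First I would exploit $\A\in S_c\Nr_n\CA_\omega$: fix $\D\in\CA_\omega$ with $\A\subseteq_c\Nr_n\D$ and, replacing $\D$ by $\Sg^{\D}A$, assume $\D$ is locally finite (each of its elements is a term in finitely many $n$-dimensional generators) and, the language being finite, $|D|=\lambda$. Being locally finite, $\D$ is representable and can be presented as the cylindric algebra of formulas $\Fm_T$ of a consistent theory $T$ with $|T|=\lambda$, which we take regular as Shelah's Lemma demands. The key point: each non-principal ultrafilter $F_i$ of $\A$ has $\prod^{\A}F_i=0$, and since $\A$ is a \emph{complete} subalgebra of $\Nr_n\D$, which is itself a complete subalgebra of the locally finite $\D$, this infimum is still $0$ in $\D$; hence the set $\Gamma_i$ of $n$-ary formulas corresponding to $F_i$ is a non-isolated type of $T$. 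To force $f(a)\neq 0$ I would adjoin $n$ fresh constants $c_0,\dots,c_{n-1}$ and the sentence $\alpha(\bar c)$ where $\alpha/T=a$; a routine check (non-isolation survives naming parameters that the type does not ``see'') shows that each $\Gamma_i$ stays non-isolated in the resulting theory $T^{+}$, while every model of $T^{+}$ realizes $a$ through $\bar c$.

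Next I would apply Shelah's Lemma to $T^{+}$, obtaining models $M_j$ for $j<\chi={}^{\lambda}2$, each of cardinality $\lambda$, with the property that any complete type realized in two distinct $M_j$'s is generated by fewer than $\lambda$ of its formulas. Put $\psi(j)=\{\,i<\kappa: M_j\text{ realizes }\Gamma_i\,\}$. If $\psi(j_0)=\emptyset$ for some $j_0$, then the canonical representation of $\Fm_{T^{+}}$ over $M_{j_0}$, restricted to $\A$, is a homomorphism $f\colon\A\to\wp({}^{\omega}M_{j_0})$ with $f(a)=\alpha^{M_{j_0}}\neq\emptyset$, with $\bigcap_{x\in F_i}f(x)=\bigcap_{\varphi\in\Gamma_i}\varphi^{M_{j_0}}=\emptyset$ for every $i<\kappa$, and with base of cardinality $\leq\lambda$ --- exactly the desired set algebra. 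So everything reduces to producing such a $j_0$; as in the proof of $OTT_m$, it suffices to show each $\Gamma_i$ is realized in at most one $M_j$, for then $\bigcup_{i<\kappa}\{\,j: i\in\psi(j)\,\}$ has cardinality at most $\kappa<\chi$ and any $j_0$ outside it works.

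The hard part will be exactly this last claim. Shelah's Lemma produces a contradiction from ``realized in two models'' only for a \emph{complete} type that is not generated by $<\lambda$ of its formulas, whereas an ultrafilter of $\A$ decides only the elements of $A$ and ``non-principal'' literally says no more than ``not generated by a single element''. The crux is thus to pass from the possibly partial $\Gamma_i$ to the complete non-isolated types lying over it, and to rule out complete $<\lambda$-generated types over any $F_i$; it is here that the regularity of $\lambda$, the maximality (completeness) of the types as in the abstract's formulation, a quantifier-elimination hypothesis on $T$, and again the fact that $\A$ embeds \emph{completely}, have to be brought in, and the bookkeeping is delicate because once $\lambda>\omega$ the notion ``$<\lambda$-generated'' sits strictly between ``isolated'' and ``not omittable''. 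Granting the at-most-one-model claim, the pigeonhole step and the pullback to $\A$ are routine.
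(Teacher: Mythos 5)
Your skeleton is the paper's own: embed $\A$ completely into a locally finite $\B\in\CA_{\omega}$ of power $\lambda$ generated by $A$, read $\B$ as $\Fm_T$ for a first order theory $T$ with $|T|=\lambda$, invoke Shelah's lemma to produce ${}^{\lambda}2$ models of $T$, run the disjointness/counting argument from the proof of Theorem \ref{OT} to find one model in which none of the given objects is realized, and pull the resulting representation of $\B$ back to $\A$ along $s\mapsto s\cup Id$ (your device of adjoining constants to secure $f(a)\neq 0$ is a harmless variant; the paper simply takes the representation of $\B$ injective and restricts). However, the step you explicitly ``grant'' -- that each $F_i$ is realized in at most one of the Shelah models -- is the entire content of the theorem, so as written the proposal has a genuine gap precisely there.

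The paper closes that step by using the hypothesis you discarded when you replaced $F_i$ by the partial type $\Gamma_i$ and then contemplated all of its completions: the $F_i$ are \emph{ultrafilters}, and the proof treats them outright as maximal $n$-types of $T$. For a maximal type, any two realizations, in any two of the models, realize the \emph{same} complete type, so Shelah's lemma applies verbatim: if some $F_i$ were realized in $M_{j_1}$ and $M_{j_2}$ with $j_1\neq j_2$, it would be generated by fewer than $\lambda$ of its formulas, and this is exactly what ``non principal'' is being used to exclude in this uncountable setting (whence the regularity assumption on $\lambda$, and whence the corollaries speak of maximal non-isolated types and of theories admitting elimination of quantifiers, which force $\Fm_T\in\Nr_n\CA_{\omega}$ so that ultrafilters of the algebra really are complete $n$-types). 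Once each $F_i$ can be realized in at most one model, $\kappa<{}^{\lambda}2$ yields a model omitting all of them, exactly as in Theorem \ref{OT}, and the pullback to $\A$ is as you describe. So your diagnosis of where the difficulty sits is accurate, and your worry about the gap between ``principal'' and ``generated by fewer than $\lambda$ formulas'' is a legitimate one about the intended reading of the hypothesis; but the missing idea relative to the paper is that one must apply Shelah's dichotomy directly to the maximal types $F_i$ themselves rather than to unspecified complete types lying over a partial $\Gamma_i$ -- without that, the at-most-one-model claim you defer does not follow, and no proof has been given.
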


\begin{proof} Let $\A\subseteq_c \Nr_n\B$, where $\B$ is $\omega$ dimensional, locally finite and has the same cardinality as $\A$. 
This is possible by taking $\B$ to be the subalgebra of which $\A$ is a strong neat reduct generated by  $A$, and noting that we gave countably many 
operations. 
The $F_i$'s correspond to maximal $n$ types in the theory $T$ corresponding to $\B$, that is, the first order theory $T$ such that $\Fm_T\cong \B$. 
Applying Shelah result of the existence of $^{\lambda}2$ representations of $\B$ and restricting to ultrafilters (maximal types)  in $\Nr_n\B$, 
together  with argument (ii) above, gives a a representation with base $\M$, of the big algebra $\B$, via an injective homomorphism $g$,
omitting the given maximal types. For a sequence $s$ with finite length let $s^+=s\cup id$.
Define $f:\A\to \wp({}^{n}\M)$ via $a\mapsto \{s\in {}^nM: s\cup Id\in f(a)\},$ then clearly $f$ is as desired.

%model that omits the types.From Shelah's proof 
%\cite{Shelah} using the same ideas as above.
\end{proof}
Since $\Nr_n\CA_{\omega}\subseteq S_c\Nr_n\CA_{\omega}$, we immediately get:
\begin{corollary}
Let $\A\in \Nr_n\CA_{\omega}$ be infinite such that $|A|=\lambda$, $\lambda$ is  a regular cardinal.
Let $\kappa<2^{\lambda}$. Let $(X_i:i\in \kappa)$ be a family of non-principal ultrafilters of $\A$.
Then there exists a representation $f:\A\to \wp(^nX)$ such that $\bigcap_{x\in X_i}f(x)=\emptyset$
for all $i\in \kappa$.
\end{corollary}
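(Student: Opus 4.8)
I would derive the Corollary directly from Theorem~\ref{uncountable} (when $\lambda$ is uncountable) together with Theorem~\ref{OT}(ii) (when $\lambda=\omega$), the only genuine extra work being to upgrade the one-point conclusion of those results to a faithful representation carried on a single square unit of size $\le\lambda$. First note the trivial inclusion $\Nr_n\CA_{\omega}\subseteq S_c\Nr_n\CA_{\omega}$ --- every algebra is a complete subalgebra of itself --- so the hypothesis $\A\in\Nr_n\CA_{\omega}$ gives $\A\in S_c\Nr_n\CA_{\omega}$. Each $X_i$ being a non-principal ultrafilter of $\A$ is exactly the condition $\prod^{\A}X_i=0$ (as recorded in the formulation of $OTT_m$), so $(X_i:i<\kappa)$ is a legitimate system of non-principal ultrafilters in the sense required: if $\lambda$ is uncountable and regular, Theorem~\ref{uncountable} applies; if $\lambda=\omega$, then $\kappa<2^{\omega}={}^{\omega}2$ and $OTT_m(\kappa)$ holds in $ZFC$ by Theorem~\ref{OT}(ii).

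Next, for each nonzero $a\in A$ apply the relevant theorem to get a homomorphism $f_a:\A\to\C_a$ into a cylindric set algebra with square base ${}^nU_a$, $|U_a|\le\lambda$, with $f_a(a)\ne 0$ and $\bigcap_{x\in X_i}f_a(x)=0$ for all $i<\kappa$. The product $f=\prod_{0\ne a\in A}f_a:\A\to\prod_a\C_a$ is injective, since the $f_a$ separate points (apply $f_{c}$ to $c=a\triangle b$ for $a\ne b$), and each intersection $\bigcap_{x\in X_i}f(x)$ is empty because it is empty in every factor; so $f$ represents $\A$ on the generalized cylindric set algebra with unit $\bigsqcup_a{}^nU_a$ and omits all the $X_i$. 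As $|A|\le\lambda$ and every $|U_a|\le\lambda$ with $\lambda$ infinite, the base $X:=\bigsqcup_a U_a$ has size $\le\lambda$; folding the disjoint union of $n$-dimensional squares into one square ${}^nX$ in the usual way (all units share the dimension $n$) yields the desired $f:\A\to\wp({}^nX)$.

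The only delicate point is exactly this packaging: Theorem~\ref{uncountable} hands back $f(a)\ne 0$ only for a single prescribed $a$, and one must iterate it over all nonzero elements, keeping the total base of cardinality $\le\lambda$ while still preserving every meet $\prod X_i$ --- which the product construction above arranges, regularity (indeed mere infinitude) of $\lambda$ entering only through the bound on $|X|$ and through the hypotheses of the cited theorems. Beyond this the argument is routine, so, as the surrounding text indicates, the Corollary is essentially immediate.
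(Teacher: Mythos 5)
Your core derivation coincides with the paper's: the paper proves this corollary simply by observing $\Nr_n\CA_{\omega}\subseteq S_c\Nr_n\CA_{\omega}$ and quoting Theorem \ref{uncountable}, exactly as you do (and your remark that the case $\lambda=\omega$ needs Theorem \ref{OT}(ii) rather than Theorem \ref{uncountable} is a sensible patch of a point the paper glosses over, as is the observation that ``non-principal ultrafilter'' is the same as $\prod X_i=0$). The paper does not attempt your further upgrade: it reads the conclusion exactly as in Theorem \ref{uncountable}, i.e.\ a homomorphism onto a set algebra with base of size $\leq\lambda$ which is nonzero on a prescribed element and kills each $\prod X_i$, so no product over all nonzero $a$ is taken.

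The genuine gap is in your final packaging step: there is no ``usual way'' of folding a disjoint union of $n$-dimensional squares into a single square ${}^nX$. For finite $n$, every cylindric set algebra with square unit is simple: if $\emptyset\neq y\subseteq{}^nX$ then ${\sf c}_0{\sf c}_1\cdots{\sf c}_{n-1}y={}^nX$, so the only zero-dimensional elements of $\wp({}^nX)$ are $0$ and $1$. Your product $\prod_{a}\C_a$ (equivalently the generalized set algebra on $\bigsqcup_a{}^nU_a$) contains each square ${}^nU_a$ as a nontrivial zero-dimensional element as soon as there is more than one factor, and any unital homomorphism must send a zero-dimensional element to a zero-dimensional element, hence to $0$ or ${}^nX$; so no injective homomorphism of that product, nor of any non-simple $\A\in\Nr_n\CA_{\omega}$, into $\wp({}^nX)$ can exist. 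Consequently you must either read ``representation'' as the paper (implicitly) does -- a homomorphism nonzero at a given $a$, with base of size $\leq\lambda$ -- in which case your product construction is unnecessary and the corollary really is immediate from Theorem \ref{uncountable} plus the inclusion; or, if you want injectivity, stop at the generalized set algebra whose unit is the disjoint union of the squares ${}^nU_a$ (or add the hypothesis that $\A$ is simple, in which case a single $f_a$ is already injective). The reduction to one square, as you state it, is false.
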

%The last theorem is a new  omitting types theorem adressing the uncountable case for $\L_n$. Before we give a logical counterpart
%of the above theorems we review briefly the notion of quantifier elimination. Quantifier elimination is a concept that occurs
%in mathematical logic model theory, and 
%theoretical computer science. 
One way of classifying  formulas is by the amount of quantification. 
Formulae with less depth of quantifier alternation are thought of as simpler and the quantifier free formulae as the simplest. 
Recall that a  theory  has quantifier elimination if for every formula $\alpha$  there exists a formula $\alpha_{QF}$
without quantifiers which is equivalent to it (modulo the theory). 

%Quantifier elimination is particularly useful in proving that a given theory is decidable.
%Examples of theories that have been shown decidable using quantifier elimination are 
%Presburger arithmetic, real closed fields, atomless Boolean algebras, term algebras, dense linear orders, and random graphs.
 
Now one metalogical reading of the last two theorems is 

\begin{theorem} Let $T$ be an $\L_n$ consistent theory that admits elimination of quantifiers. 
Assume that $|T|=\lambda$ is a regular cardinal.
Let $\kappa<2^{\lambda}$. Let $(\Gamma_i:i\in \kappa)$ be a set of non-principal maximal types in $T$. Then there is a model $\M$ of $T$ that omits
all the $\Gamma_i$'s
\end{theorem}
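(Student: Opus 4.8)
The plan is to translate this first-order statement into the algebraic framework established above and then invoke Theorem~\ref{uncountable} (or its Corollary). The key observation is that quantifier elimination is precisely the hypothesis needed to guarantee that the cylindric algebra of formulas $\Fm_T$ lands in the class $\Nr_n\CA_{\omega}$, rather than merely in $S_c\Nr_n\CA_{\omega}$. So the proof is really a bookkeeping argument connecting the metalogical data $(T,\{\Gamma_i\})$ to the algebraic data $(\A,\{X_i\})$ to which the earlier theorem applies.

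First I would form the Tarski--Lindenbaum algebra $\A=\Fm_T$, the algebra of $L_n$-formulas modulo provable equivalence in $T$, with the usual cylindric operations (cylindrifications for existential quantifiers, diagonals for equality). Since $|T|=\lambda$ and the language is of size $\lambda$, we get $|A|=\lambda$. The crucial step is to argue that $\A\in\Nr_n\CA_{\omega}$: build the $\omega$-dimensional locally finite algebra $\B=\Fm_{T'}$, where $T'$ is $T$ regarded as a theory in the full countable-variable (here $\omega$-variable) language, and observe that $\A$ neatly embeds into $\B$. Quantifier elimination ensures this embedding is onto $\Nr_n\B$: every $\omega$-dimensional element of $\B$ whose dimension set is $\subseteq n$ is, modulo $T$, equivalent to a formula provably equivalent to one in the $n$ variables $x_0,\dots,x_{n-1}$ --- because any $L_{\omega,\omega}$-formula over this signature with free variables among $x_0,\dots,x_{n-1}$ is $T$-equivalent to a quantifier-free one, hence to an $L_n$-formula. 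Thus $\A\cong\Nr_n\B$ with $\B$ locally finite, so $\A\in\Nr_n\CA_{\omega}$.

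Next I would package the types: for each non-principal maximal type $\Gamma_i(x_0,\dots,x_{n-1})$ in $T$, set $F_i=\{\phi/{\equiv_T}:\phi\in\Gamma_i\}$. Maximality of $\Gamma_i$ makes $F_i$ an ultrafilter of the Boolean reduct of $\A$ (in fact concentrated in $\Nr_n\A$), and non-principality of $\Gamma_i$ translates to $\prod F_i=0$ in $\A$, i.e. $F_i$ is a non-principal ultrafilter. Now Theorem~\ref{uncountable} (via its Corollary, using $\Nr_n\CA_{\omega}\subseteq S_c\Nr_n\CA_{\omega}$) applies, since $\kappa<2^{\lambda}$ and $\lambda$ is regular: there is a representation $f:\A\to\wp(^nU)$ with $|U|\le\lambda$ and $\bigcap_{x\in F_i}f(x)=\emptyset$ for all $i$. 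Finally I would unwind this representation into a model: the base set $U$ carries a structure $\M$ (interpreting each relation symbol $R$ by $f(R/{\equiv_T})$), $f$ being a representation makes $\M\models T$, and $\bigcap_{x\in F_i}f(x)=\emptyset$ says exactly that no tuple of $\M$ realizes $\Gamma_i$, so $\M$ omits every $\Gamma_i$.

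The main obstacle I expect is the step $\A\in\Nr_n\CA_{\omega}$ --- specifically, verifying carefully that quantifier elimination yields the \emph{surjectivity} of the neat embedding onto $\Nr_n\B$ rather than only an embedding into it (which always holds and would only give $\A\in S\Nr_n\CA_{\omega}$, insufficient here). One must be attentive that quantifier elimination is relative to $T$ and that the elimination produces a formula in \emph{the same} finitely many variables, so that genuinely $n$-dimensional elements of $\B$ pull back into $\A$; this is where the hypothesis is used essentially, and it is what upgrades the $S_c\Nr_n\CA_{\omega}$ hypothesis of Theorem~\ref{uncountable} to the cleaner conclusion. The translation of ``non-principal maximal type'' to ``non-principal ultrafilter'' and back, and the routine check that a representation omitting the $F_i$'s yields a model omitting the $\Gamma_i$'s, are standard and I would only sketch them.
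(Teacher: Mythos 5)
Your proposal is correct and follows essentially the same route as the paper: pass to $\A=\Fm_T$, use quantifier elimination to show the natural map onto $\Nr_n\Fm_{T_\omega}$ is surjective so that $\A\in\Nr_n\CA_{\omega}$, translate the maximal non-principal types into non-principal ultrafilters, and invoke the algebraic omitting-types theorem for $S_c\Nr_n\CA_{\omega}$ (Theorem \ref{uncountable} and its corollary). Your write-up in fact supplies more of the surjectivity and unwinding details than the paper's very terse proof does, but the argument is the same.
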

\begin{demo}{Proof} If $\A=\Fm_T$ denotes the cylindric algebra corresponding to $T$, then since $T$ admits elimination of quantifiers, then
$\A\in \Nr_n\CA_{\omega}$. This follows from the following reasoning. Let $\B=\Fm_{T_{\omega}}$ be the locally finite cylindric algebra
based on $T$ but now allowing $\omega$ many variables. Consider the map $\phi/T\mapsto \phi/T_{\omega}$. 
Then this map is from $\A$ into $\Nr_n\B$. But since $T$ admits elimination of quantifiers the map is onto.
The Theorem now follows.
\end{demo}

We now give another natural omitting types theorem for certain uncountable languages.
Let $L$ be an ordinary first order language with 
a list $\langle c_k\rangle$ of individual constants
of order type $\alpha$. $L$ has no operation symbols, but as usual, the
list of variables is of order type $\omega$. Denote by $Sn^{L_{\alpha}}$ the set
of all $L$ sentences, the subscrpt $\alpha$ indicating that we have $\alpha$ many constants Let $\alpha=n\in \omega$. 
Let $T\subseteq Sn^{L_0}$ be consistent. Let $\M$  be an $\L_0$  model of $T$.  Then any $s:n\to M$ defines an expansion of $\M$ to $L_n$ which we denote by $\M{[s]}$.  
For $\phi\in L_n$ let $\phi^{\M}=\{s\in M^n: \M[s]\models \phi\}$. Let 
$\Gamma\subseteq Sn^{L_{n}}$. 
The question we adress is: Is there a model $\M$ of $T$ such that for no expansion $s:n\to M$ we have 
$s\in  \bigcap_{\phi\in \Gamma}\phi^{M}$. 
Such an $\M$ omits $\Gamma$. Call $\Gamma$ principal over $T$ if there exists $\psi\in L_n$ consistent with $T$ such that $T\models \psi\to \Gamma.$ 
Other wise $\Gamma$ is non principal over T.

\begin{theorem}  Let $T\subseteq Sn^{L_0}$ be consistent and assume that $\lambda$ is a regular cardinal, and $|T|=\lambda$. 
Let $\kappa<2^{\lambda}$. Let $(\Gamma_i:i\in \kappa)$ be a set of non-principal maximal types in $T$. 
Then there is a model $\M$ of $T$ that omits
all the $\Gamma_i$'s
That is, there exists a model $\M\models T$ such that is no $s:n\to \M$
such that   $s\in \bigcap_{\phi\in \Gamma_i}\phi^{\M}$. 
\end{theorem}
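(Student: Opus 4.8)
\begin{demo}{Proof (sketch)}
The plan is to read this off as the model-theoretic face of Theorem~\ref{uncountable}, the one point that needs care being that here, unlike in the quantifier-elimination theorem above, the cylindric algebra of $T$ lands neatly inside a locally finite $\omega$-dimensional algebra with no further hypothesis on $T$. Concretely, I would first algebraise: identifying the $n$ constants $c_0,\dots,c_{n-1}$ with the first $n$ variables, the $T$-provable-equivalence classes of sentences of $L_n$ form a cylindric algebra $\A=\Fm_T$ of dimension $n$, with $\cyl i$ existential quantification over $c_i$ and $\diag ij$ the class of $c_i=c_j$. Let $\B=\Fm_{T_\omega}$ be the locally finite $\omega$-dimensional algebra of $T$ built from $L_0$-formulas in the variables $x_0,x_1,\dots$. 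The map taking the class of $\phi(c_0,\dots,c_{n-1})$ to the class of $\phi(x_0,\dots,x_{n-1})$ is a well-defined injective cylindric homomorphism of $\A$ into $\B$ (a sentence about the new constants being provable from $T$ exactly when the universal closure of the corresponding formula is), and its range is precisely $\Nr_n\B$, because a class of $\B$ has dimension $\subseteq n$ iff it has a representative all of whose free variables lie among $x_0,\dots,x_{n-1}$. Hence $\A\cong\Nr_n\B\in\Nr_n\CA_\omega$ with $\B$ locally finite and $|B|=|A|=\lambda$; no quantifier elimination is used, the role of that hypothesis being played here by the fact that $L$ has only $n$ constants, which is exactly the object handed to us.

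Next I would translate the types into ultrafilters. Since each $\Gamma_i$ is a \emph{maximal} type consistent with $T$, the set $F_i=\{\phi(c_0,\dots,c_{n-1})/T:\phi\in\Gamma_i\}$ is an ultrafilter of $\A$, and $\Gamma_i$ being \emph{non-principal over $T$} says exactly that no nonzero element of $\A$ lies below every member of $F_i$, i.e.\ $\prod^{\A}F_i=0$; so the $F_i$ are non-principal ultrafilters of $\A$. Now I would apply Theorem~\ref{uncountable} to $\A\in\Nr_n\CA_\omega$: with $\A=\Nr_n\B$, $|A|=\lambda$ regular, $\kappa<2^{\lambda}$ and $(F_i:i<\kappa)$ the given non-principal ultrafilters, its proof --- via Shelah's Lemma on the existence of ${}^{\lambda}2$ pairwise ``type-separated'' models of $T$ together with argument~(ii) in the proof of Theorem~\ref{OT} --- delivers a representation $g:\B\to\wp({}^{\omega}M)$ with $|M|\le\lambda$ in which none of the maximal types $F_i$ is realised, that is, $\bigcap_{x\in F_i}g(x)=\emptyset$ for all $i<\kappa$. (For $\lambda=\aleph_0$ this is already Theorem~\ref{OT}(ii).)

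Finally I would extract the model. A representation $g$ of the locally finite algebra $\B=\Fm_{T_\omega}$ is the same data as an $L_0$-structure $\M$ on the base $M$ with $\M\models T$ and $g(\phi/T)=\{a\in{}^{\omega}M:\M\models\phi[a]\}$. Suppose, for a contradiction, that for some $i$ there were $s:n\to M$ with $s\in\bigcap_{\phi\in\Gamma_i}\phi^{\M}$, the $n$-ary sets $\phi^{\M}$ being as in the setup. Extend $s$ to any $a\in{}^{\omega}M$. Since every $\phi\in\Gamma_i$ has free variables among $x_0,\dots,x_{n-1}$ and $\M[s]\models\phi$, we get $\M\models\phi[a]$, i.e.\ $a\in g(\phi/T)$, for all $\phi\in\Gamma_i$; hence $a\in\bigcap_{x\in F_i}g(x)$, contradicting the previous paragraph. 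So $\M$ omits every $\Gamma_i$, which is the desired conclusion.

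I expect the only real obstacle to be bookkeeping rather than mathematics: all the depth is imported from Theorem~\ref{uncountable}, i.e.\ from Shelah's construction, and what must be watched in the present formulation is the dictionary between $\A$ and $\B$ above and, in particular, that non-principality of the maximal types $\Gamma_i$ over $T$ is precisely the hypothesis consumed by the counting step inside the proof of Theorem~\ref{uncountable}.
\end{demo}
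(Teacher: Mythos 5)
Your proposal is correct and follows essentially the paper's own route: algebraise $T$, identify the algebra built from the $n$ constants with $\Nr_n$ of a locally finite $\omega$-dimensional Lindenbaum--Tarski algebra of the same cardinality, turn the maximal non-principal types into non-principal ultrafilters, and invoke Theorem \ref{uncountable} (the Shelah-based omitting types theorem), finally reading the representation back as a model omitting the $\Gamma_i$'s. The only cosmetic difference is that you realise the big algebra as $\Fm_{T_\omega}$ with the constants traded for the first $n$ variables, whereas the paper adjoins $\omega$ fresh constants and works with sentence algebras; the dictionary and the key lemma consumed are the same.
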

\begin{demo}{Proof}
Let $T\subseteq Sn^{L_0}$ be consistent. Let $\M$ be an $\L_0$  model of $T$. 
For $\phi\in Sn^L$ and $k<\alpha$
let $\exists_k\phi:=\exists x\phi(c_k|x)$ where $x$ is the first variable
not occuring in $\phi$. Here $\phi(c_k|x)$ is the formula obtained from $\phi$ by 
replacing all occurences of $c_k$, if any, in $\phi$ by $x$.
Let $T$ be as indicated above, i.e $T$ is a set of sentences in which no constants occur. Define the
equivalence relation $\equiv_{T}$ on $^2Sn^L$ as follows
$$\phi\equiv_{T}\psi \text { iff } T\models \phi\equiv \psi.$$
Then, as easily checked $\equiv_{T}$ is a 
congruence relation on the algebra 
$$\Sn=\langle Sn,\land,\lor,\neg,T,F,\exists_k, c_k=c_l\rangle_{k,l<n}$$  
We let $\Sn^L/T$ denote the quotient algebra.
In this case, it is easy to see that $\Sn^L/T$ is a $\CA_n$, in fact is an $\RCA_n$.
Let $L$ be as described in  above. But now we denote it $L_n$, 
the subsript $n$ indicating that we have $n$-many 
individual constants. Now enrich $L_{n}$ 
with countably many constants (and nothing else) obtaining
$L_{\omega}$. 
Recall that both languages, now, have a list of $\omega$ variables. 
For $\kappa\in \{n, \omega\}$ 
let $\A_{\kappa}=\Sn^{L_{k}}/{T}$. 
For $\phi\in Sn^{L_n}$, let $f(\phi/T)=\phi/{T}$. 
Then, as easily checked $f$ is an embedding  of $\A_{n}$ 
into $\A_{\omega}$. Moreover $f$ has the additional property that
it maps $\A_{n}$, into (and onto) the neat $n$ reduct of $\A_{\beta}$,
(i.e. the set of $\alpha$ dimensional elements of $A_{\beta}$). 
In short, $\A_{n}\cong \Nr_{n}\A_{\omega}$. Now agin putting $X_i=\{\phi/T: \phi\in \Gamma_i\}$ and using that the 
$\Gamma_i$'s are maximal non isolated, it follows that the 
$X_i's$ are non-principal ultrafilters
Since $\Nr_n\CA_{\omega}\subseteq S_c\Nr_n\CA_{\omega}$, then our result follows, also from Theorem 1.    
\end{demo}

%\begin{corollary} Let $T$ be an $L_n$ theory that admits elimination of quantifiers. Then

\section{Complete representability}

The notion of complete representation has turned out to be an intensely interesting topic in algebraic logic \cite{OTT}, \cite{HH}, for an overview,
as well as for undefined notions in what follows. It is known that $L_2$ enjoys a Vaughts theorem, but if we delete equality, 
and introduce substitutions, we lose this property; there are atomic theories, with no atomic models.
This is, to the best of our knowledge a new result. We start with providing a proof of algebraic version of 
the positive result  inspired by modal logic, the proof is due to Hirsch and Hodkinson:

\begin{theorem} Any atomic $\RCA_2$ is completely representable
\end{theorem}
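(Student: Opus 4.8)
The plan is to build a \emph{complete} representation of $\A$ by a step-by-step (``network'') construction over its atom structure. The key feature of dimension $2$ is that a partial network is constrained only by conditions relating pairs of cells that share a coordinate; there is no analogue of the associativity/triangle conditions that obstruct (complete) representability in dimensions $\geq 3$, so networks extend freely.

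First I would reduce to the case that $\A$ is simple. Since $\A$ is atomic, the Boolean algebra $Z=\{z\in A:\cyl0 z=\cyl1 z=z\}$ of zero-dimensional elements is atomic, its atoms being exactly the elements $z_a=\cyl0\cyl1 a$ ($a\in\At\A$), and distinct atoms of $Z$ are disjoint. For each atom $z$ of $Z$ the relativization $\A\restr z$ is a simple atomic $\CA_2$ (simple because its zero-dimensional part is $\{0,z\}$) and $\At(\A\restr z)=\{a\in\At\A:z_a=z\}$, so the atoms of $\A$ are partitioned among these components. A disjoint union of complete representations of the $\A\restr z$ is then a complete representation of $\A$: it is injective because every nonzero element of $\A$ dominates an atom and so is nonzero in the appropriate component, and it is complete since a disjoint union of complete representations is complete and the atoms match up. Henceforth assume $\A$ simple; note $\diag01\neq0$ (as $\cyl0\diag01=1$), so $\A$ has diagonal atoms.

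Now I would build an increasing chain of finite networks, a network being a finite set $W$ of nodes with a labelling $N\colon W\times W\to\At\A$ satisfying $N(x,x)\leq\diag01$; $x=y$ whenever $N(x,y)\leq\diag01$; and $N(x,y)\leq\cyl1 N(x,y')$ and $N(x,y)\leq\cyl0 N(x',y)$ for all nodes (cells in a common ``row'' agree on the first coordinate, cells in a common ``column'' on the second). Starting from one node with a diagonal-atom label, I repeatedly repair, in a fair way, the two kinds of defect: a \emph{witness defect} --- a cell $(x,y)$ and an atom $b$ with $N(x,y)\leq\cyl0 b$ but no node $z$ with $N(z,y)=b$, and the symmetric version for $\cyl1$ --- repaired by adjoining one new node and labelling all cells incident to it; and a \emph{realization defect} --- an atom $a$ occurring as no label --- repaired by adjoining two new nodes carrying the diagonal atoms $\cyl1 a\cdot\diag01$ and $\cyl0 a\cdot\diag01$, joined by an edge labelled $a$. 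Taking the union of the chain and setting $h(x)=\{(u,v):N(u,v)\leq x\}$ for $x\in A$ yields a set algebra on which $h$ is injective (all atoms realized), a $\CA_2$-homomorphism (the cylindrifier and diagonal equations hold cell-by-cell, using complete additivity of $\cyl0,\cyl1$ and fairness for the nontrivial inclusion), and complete (every base point carries an atom label, so the images of the atoms cover the unit).

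\textbf{The main obstacle} is to check that each adjunction of a node can actually be carried out consistently, and this is exactly where dimension $2$ enters. Labelling the cells incident to a new node one at a time, the constraint on the next label has the shape ``$\leq$ (a meet of $\cyl1$'s of already-chosen row-mates) $\cdot$ (a meet of $\cyl0$'s of existing column-mates)''; by row- and column-consistency each meet collapses to a single term, so the constraint reads $N(x,y)\leq\cyl1 b\cdot\cyl0 c$, and in a \emph{simple} $\CA_2$ this is nonzero whenever $b,c$ are (apply $\cyl0$: $\cyl0(\cyl1 b\cdot\cyl0 c)=\cyl0\cyl1 b\cdot\cyl0 c=\cyl0 c\neq0$), so atomicity supplies a label below it. (The diagonal labels used above are well defined by the complementary fact that distinct diagonal atoms have distinct $\cyl0$'s, which follows from the identity $\diag01\cdot\cyl0(\diag01\cdot x)=\diag01\cdot x$.) Since no constraint ever involves three cells at once, nothing can go wrong; and as the chain has length $|A|+\aleph_0$, no cardinality hypothesis on $\A$ is needed. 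The only remaining labour is the routine bookkeeping that schedules the repairs so that the limit network is defect-free.
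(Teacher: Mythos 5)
Your route is genuinely different from the paper's: the paper argues abstractly that $\RCA_2$ is conjugated and Sahlqvist-axiomatizable, hence closed under completions, so the equations hold in $\At\A$, which is then a bounded morphic image of a disjoint union of square frames, and the inverse of that bounded morphism is a complete embedding. Your reduction to the simple case via the atoms of the zero-dimensional part is fine, and your central observation is also correct: the row and column constraints on a fresh cell each collapse to a single term (all $\cyl1$'s of row-mates coincide, likewise the $\cyl0$'s of column-mates), and $\cyl1 b\cdot\cyl0 c\neq 0$ in a simple $\CA_2$.

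There is, however, a concrete gap at exactly the point you declare routine. Your networks must satisfy ``$N(x,y)\le\diag01$ implies $x=y$'' (this is indispensable: it is what makes $h(\diag01)$ the true diagonal of the base), so when you label a cell $(z,w)$ with $z\neq w$ the real constraint is ``$\le\cyl1 b\cdot\cyl0 c\cdot-\diag01$'', and the simplicity computation does not show \emph{this} is nonzero. It can be zero: $\cyl1 b\cdot\cyl0 c\le\diag01$ is possible, and (since $\cyl1 b\cdot\diag01$ is an atom and $\diag01\cdot\cyl{i}(\diag01\cdot x)=\diag01\cdot x$) it forces $N(w,w)=\cyl1 b\cdot\diag01$ and, by a quasi-equation valid in square set algebras and hence in $\RCA_2$, $\cyl1 b\cdot\cyl0 b=b$ --- intuitively, the domain of $b$ is a single point already occupied by the old node $w$. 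In that situation every atom admissible for $(z,w)$ lies below $\diag01$ and your node-addition fails as described. The construction can be rescued, but only by an extra argument you do not give: in the offending configuration one shows $N(w,y_0)\le\cyl1 b\cdot\cyl0 b=b$, so the old node $w$ already witnesses the atom and the ``defect'' being repaired was not genuine (similarly, if the defect atom is below $\diag01$ it is automatically witnessed by $N(y_0,y_0)$, so no new node is wanted). The same oversight actually breaks your realization repair as written: if the unrealized atom $a$ has $\cyl1 a\cdot\cyl0 a=a$ and some old node carries $N(w,w)=\cyl1 a\cdot\diag01$, your two fresh nodes cannot be wired to $w$ without placing a diagonal atom on an off-diagonal cell; the repair must reuse $w$ (in fact, with simplicity and fair witness repair, realization of all atoms comes for free and this repair type can be dropped). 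So the skeleton is sound and the theorem is reachable along your lines, but the diagonal bookkeeping is genuine mathematical content, and as written the proof has a hole precisely there.
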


\begin{proof} This is a nice proof inspired by modal logic taken from Hirsch and Hodkinson. 
Let $\A\in RCA_2$ be atomic. As $\RCA_2$ is conjugated and can be defined by Sahlqvist equations it is closed under completions. So
the equations are valid in $At\A$. But $At\A$ is a bounded morphic image of a disjoint union of square frames $F_i$ $(i\in I)$. 
Each $\Cm F_i$ is a full 2 dimension cylindric set
algebra. The inverse of the bounded morphism is an embedding from $\A$ into
$\prod \Cm F_i$ that preserves all meets
\end{proof}
The same can be proved for polyadic {\it equality} algebras of dimension $2$. 
Furthermore, there is a common belief that the same results holds for quasi polyadic algebras without equality.
Here we show that it is {\it not} the case; for indeed such a variety is not aditive, least conjugated.
We show that in the absence of diagonal elements, atomic algebras of dimension $2$ atomic algebras might not be completely representable.
However, we first give a simpler example showing that the omiting types theorem fails in the corresponding logic, namely, 
single non-principal types in countable languages may not be 
omitted. The proof works for every dimension, infinite included.
%We give the example for $n=2$, and then we show how it extends to higher dimensions.

\begin{example}

It suffices to show that there is an algebra $\A$, and a set $S\subseteq A$, such that $s_0^1$ does not preserves $\sum S$.
For if $\A$ had a representation as stated in the theorem, this would mean that $s_0^1$ is completely additive in $\A$.

For the latter statement, it clearly suffices to show that if $X\subseteq A$, and $\sum X=1$,
and there exists  an injection $f:\A\to \wp(V)$, such that $\bigcup_{x\in X}f(x)=V$,
then for any $\tau\in {}^nn$, we have $\sum s_{\tau}X=1$. So fix $\tau \in V$ and assume that this does not happen.
Then there is a $y\in \A$, $y<1$, and
$s_{\tau}x\leq y$ for all $x\in X$.
(Notice that  we are not imposing any conditions on cardinality of $\A$ in this part of the proof).
Now
$$1=s_{\tau}(\bigcup_{x\in X} f(x))=\bigcup_{x\in X} s_{\tau}f(x)=\bigcup_{x\in X} f(s_{\tau}x).$$
(Here we are using that $s_{\tau}$ distributes over union.)
Let $z\in X$, then $s_{\tau}z\leq y<1$, and so $f(s_{\tau}z)\leq f(y)<1$, since $f$ is injective, it cannot be the case that $f(y)=1$.
Hence, we have
$$1=\bigcup_{x\in X} f(s_{\tau}x)\leq f(y) <1$$
which is a contradiction, and we are done.
Now we turn to constructing the required  counterexample, which is an easy adaptation of a 
construction due to Andr\'eka et all in \cite{AGMNS} to our present situation.
We give the detailed construction for the reader's conveniance.

Let $\B$ be an atomless Boolean set algebra with unit $U$, that has the following property:

For any distinct $u,v\in U$, there is $X\in B$ such that $u\in X$ and $v\in {}\sim X$.
For example $\B$ can be taken to be the Stone representation of some atomless Boolean algebra.
The cardinality of our constructed algebra will be the same as $|B|$.
Let $$R=\{X\times Y: X,Y\in \B\}$$
and
$$A=\{\bigcup S: S\subseteq R: |S|<\omega\}.$$
Then indeed we have $|R|=|A|=|B|$. 

We claim that $\A$ is a subalgebra of $\wp(^2U)$.

Closure under union is obvious. To check intersections, we have:
$$(X_1\times Y_1)\cap (X_2\times Y_2)=(X_1\cap X_2) \times (Y_1\cap Y_2).$$
Hence, if $S_1$ and $S_2$ are finite subsets of $R$, then
$$S_3=\{W\cap Z: W\in S_1, Z\in S_2\}$$
is also a finite subset of $R$ and we have
$$(\bigcup S_1)\cap (\bigcup S_2)=\bigcup S_3$$
For complementation:
$$\sim (X\times Y)=[\sim X\times U]\cup [U\times \sim Y].$$
If $S\subseteq R$ is finite, then
$$\sim \bigcup S=\bigcap \{\sim Z: Z\in S\}.$$
Since each $\sim Z$ is in $A$, and $A$ is closed under intersections, we conclude that
$\sim \bigcup S$ is in $A$.
We now show that it is closed under substitutions:
$$S_0^1(X\times Y)=(X\cap Y)\times U, \\ \ S_1^0(X\times Y)=U\times (X\cap Y)$$
$$S_{01}(X\times Y)=Y\times X.$$
Let
$$D_{01}=\{s\in U\times U: s_0=s_1\}.$$
We claim that the only subset of $D_{01}$ in $\A$ is the empty set.

Towards proving this claim, assume that $X\times Y$ is a non-empty subset of $D_{01}$.
Then for any $u\in X$ and $v\in Y$ we have $u=v$. Thus $X=Y=\{u\}$ for some $u\in U$.
But then $X$ and $Y$ cannot be in $\B$ since the latter is atomless and $X$ and $Y$ are atoms.
Let
$$S=\{X\times \sim X: X\in B\}.$$
Then
$$\bigcup S={}\sim D_{01}.$$
Now we show that
$$\sum{}^{\A}S=U\times U.$$
Suppose that $Z$ is an upper bound different from $U\times U$. Then $\bigcup S\subseteq Z.$ Hence
$\sim D_{01}\subseteq Z$, hence $\sim Z=\emptyset$, so $Z=U\times U$.
Now $$S_{0}^1(U\times U) =(U\cap U)\times U=U\times U.$$
But
$$S_0^1(X\times \sim X)=(X\cap \sim X)\times U=\emptyset.$$
for every $X\in B$.
Thus $$S_0^1(\sum S)=U\times U$$
and
$$\sum \{S_{0}^1(Z): Z\in S\}=\emptyset.$$

For $n>2$, one takes $R=\{X_1\times\ldots\times X_n: X_i\in \B\}$ and the definition of $\A$ is the same. Then,
in this case, one takes $S$ to be
$X\times \sim X\times U\times\ldots\times U$
such that $X\in B$. The proof survives verbatim.
%\end{proof}

By taking $\B$ to be countable, then $\A$ can be countable, and so it violates the omitting types theorem.
\end{example}

Now let $SA_n$ be the cylindrifier free reducts of polyadic algebras. Then:

\begin{theorem}\label{counter2} For every finite $n\geq 2$, there exists an atomic algebra in 
$SA_n$ that is not completely representable.
\end{theorem}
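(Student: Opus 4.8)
The plan is to reuse the set-algebra construction given in the immediately preceding example, but now viewing the constructed algebra $\A$ as a member of $\SA_n$ rather than of a polyadic algebra; the only extra ingredient needed is the observation already established there, namely that the substitution operation $\sub{0}{1}$ fails to be completely additive on $\A$. First I would recall that in the preceding example we built an atomless Boolean set algebra $\B$ on a base $U$ separating points, set $R=\{X_1\times\cdots\times X_n: X_i\in \B\}$ and $A=\{\bigcup S: S\subseteq R,\ |S|<\omega\}$, and checked that $\A=\langle A,\cup,\cap,\sim,\sub{i}{j},\sub{ij},\ldots\rangle$ is a subalgebra of $\wp({}^nU)$ closed under all the substitution operations — hence $\A\in \SA_n$, since $\SA_n$ is by definition the cylindrifier-free reduct of polyadic algebras and carries exactly these operations.

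Next I would make $\A$ atomic: this is where a small additional argument beyond the previous example is required, since there $\B$ was taken atomless precisely to kill certain subsets of the diagonal, and atomlessness of $\B$ does not by itself give atomicity of $\A$. The cleanest route is to note that $\A$ is generated by the rectangles $X_1\times\cdots\times X_n$ with $X_i\in\B$, and to exhibit a plentiful supply of atoms of $\A$ below any nonzero element; alternatively, one chooses $\B$ to be not merely atomless but the interval algebra (or the Stone representation of the countable atomless Boolean algebra) arranged so that the finite unions of rectangles form an atomic Boolean algebra — e.g.\ by a back-and-forth/refinement argument showing every nonempty finite union of rectangles contains a minimal nonempty one in $A$. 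Once atomicity of $\A$ is in hand, the final step is immediate: if $\A$ had a complete representation $f:\A\to\wp(V)$, then $f$ would preserve all existing sums, and in particular $f$ would carry the sum $\sum{}^{\A}S = {}^nU$ (for $S=\{X\times\sim X\times U\times\cdots\times U: X\in \B\}$) to a set-theoretic union, forcing $\sub{0}{1}$ to distribute over that sum; but the example shows $\sub{0}{1}(\sum S)={}^nU$ while $\sum\{\sub{0}{1}Z: Z\in S\}=\emptyset$, so $\sub{0}{1}$ is not completely additive on $\A$, and a completely additive operation is exactly what a complete representation would demand (this is the general principle, proved at the start of the example, that in a complete representation every term operation distributes over arbitrary joins). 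Hence $\A$ is an atomic algebra in $\SA_n$ with no complete representation.

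The main obstacle is the atomicity requirement: the preceding example was engineered to make $\sub{0}{1}$ non-completely-additive and did not need $\A$ to be atomic, so I expect the bulk of the work here to be in choosing $\B$ (and verifying) so that the algebra $A$ of finite unions of $n$-dimensional rectangles over $\B$ is atomic while retaining atomlessness of $\B$ itself — the two ``atom'' conditions pull in opposite directions and must be reconciled by a careful choice of $\B$ together with a refinement argument on rectangles. Everything else — closure under the $\SA_n$ operations, the computation of $\sum S$ and of $\sub{0}{1}$ on it, and the general lemma that complete representability forces complete additivity — is already available from the example and needs only to be cited.
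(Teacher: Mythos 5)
You have correctly isolated the main obstacle, but the gap there is fatal rather than merely delicate: the rectangle algebra of the preceding example can never be made atomic while $\B$ stays atomless, so no choice of $\B$ and no refinement argument on rectangles can carry out your plan. Indeed, every nonzero element of that $\A$ is a finite union of rectangles and hence lies above some nonempty rectangle $X_1\times\cdots\times X_n$ with all $X_i\in\B$ nonzero; atomlessness of $\B$ gives $\emptyset\neq Y\subsetneq X_1$ with $Y\in\B$, and then $Y\times X_2\times\cdots\times X_n$ is a nonzero element of $\A$ strictly below that rectangle. So no nonzero element of $\A$ can be an atom: $\A$ is atomless, outright. On the other hand you cannot give up atomlessness of $\B$, because it is exactly what guarantees that the only member of $\A$ contained in the diagonal $D_{01}$ is $\emptyset$, which is what forces $\sum^{\A}S$ to be the unit and hence makes ${\sf s}_0^1$ fail to be completely additive. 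The two requirements are not in tension to be reconciled; they are contradictory, so the proposed route collapses at its central step.

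The paper abandons the rectangle construction and builds a different algebra that is atomic by design. Take a partition $(Q_k)_{k\in\omega}$ of ${}^nU$ into symmetric relations with $Q_0=D_{01}$ (for instance $U=\omega$ and $Q_k=\{s\in{}^n\omega: s_0\neq s_1,\ \sum_i s_i=k\}$ for $k\geq 1$), fix a non-principal ultrafilter $F$ on $\mathbb{Z}^+$, and let $\A=\{R_X: X\subseteq \mathbb{Z}^+\}$ where $R_X=\bigcup\{Q_k:k\in X\}$ if $X\notin F$ and $R_X=\bigcup\{Q_k:k\in X\cup\{0\}\}$ if $X\in F$; the ultrafilter thus hides the diagonal block $Q_0$ inside the ``large'' elements only. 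This family is closed under the $\SA_n$ operations, it is atomic with atoms $Q_k=R_{\{k\}}$ for $k\geq 1$ (no singleton is in $F$), and ${\sf s}_0^1 Q_k=\emptyset$ for every atom while ${\sf s}_0^1$ of the unit $R_{\mathbb{Z}^+}$ is the unit; since the atoms sum to the unit, ${\sf s}_0^1$ is not completely additive over $\At\A$. Your final step is then correct and is exactly how the paper concludes: the general principle established in the preceding example (a complete representation forces ${\sf s}_0^1$ to distribute over the sum of the atoms) yields that this atomic algebra has no complete representation. What your proposal is missing is a workable atomic example, and the ultrafilter construction on a partition containing the diagonal is the genuinely new idea needed to supply it.
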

%\begin{theorem}
%We start with $n=2$.
\begin{proof} It is enough, in view of the previous theorem,  to construct an atomic algebra such that $\Sigma_{x\in \At\A} s_0^1x\neq 1$.
In what follows we produce such an algebra. 
(This algebra will be uncountable, due to the fact that it is infinite and complete, so it cannot be countable.
In particular, it cannot be used to violate the omitting types theorem, the most it can say is that the omitting types theorem fails 
for uncountable languages, which is not too much of a surprise).

Let $\mathbb{Z}^+$ denote the set of positive integers.
Let $U$ be an infinite set. Let $Q_n$, $n\in \omega$, be a family of $n$-ary relations that form  partition of $^nU$ 
such that $Q_0=D_{01}=\{s\in {}^nU: s_0=s_1\}$. And assume also that each $Q_n$ is symmetric; for any $i,j\in n$, $S_{ij}Q_n=Q_n$. 
For example one can $U=\omega$, and for $n\geq 1$, one sets 
$$Q_n=\{s\in {}^n\omega: s_0\neq s_1\text { and }\sum s_i=n\}.$$
%For every $k\in \omega$, define the following $n$-ary relation on $\mathbb{Z}$: 
%$$Q_k=\{(x-k,x\ldots,x)\circ q:x\in\mathbb{Z},\; q\in S_n\}.$$ It is clear that $Q_0$ is the 
%identity relation on $\mathbb{Z}$ and for every $k\neq l,$ $Q_k\cap Q_l=\phi$. Also it is clear that all $Q_k$ are symmetric.
These even works for weak set algebras of infinite dimension. In this case, one takes
Let $(Q_n: n\in \omega)$ be a sequence $\alpha$-ary relations such that

\begin{enumroman}

\item $(Q_n: n\in \omega)$ is a partition of
$$V={}^{\alpha}\alpha^{(\bold 0)}=\{s\in {}^{\alpha}\alpha: |\{i: s_i\neq 0\}|<\omega\}.$$

%\item $Q_0$ is the identity relation on $V$ that is $Q_0=\{s\in V: \forall k,l\in \{i\in \alpha: s_i\neq 0\}, s_k=s_l\}.$

%\item Each $Q_n$ has $U$ as its domain and range

\item Each $Q_n$ is symmetric.
\end{enumroman}
%Such a sequence exits. For example, take $U=\Z$, the set of integers, let $\N$ be the set of positive numbers,
%and let $P$ be the predecessor function. For $n\in \N$, take $Q_n=P\circ..P$ n times.
Take $Q_0=\{s\in V: s_0=s_1\}$, and for each $n\in \omega\sim 0$, take $Q_n=\{s\in {}^{\alpha}\omega^{({\bold 0})}: s_0\neq s_1, \sum s_i=n\}.$
(Note that this is a finite sum).
Clearly for $n\neq m$, we have $Q_n\cap Q_m=\emptyset$, and $\bigcup Q_n=V.$
Furthermore, obviously each $Q_n$ is symmetric, that is  $S_{[i,j]}Q_n=Q_n$ for all $i,j\in \alpha$
Now fix $F$ a non-principal ultrafilter on $\mathcal{P}(\mathbb{Z}^+)$. For each $X\subseteq \mathbb{Z}^+$, define
\[
 R_X =
  \begin{cases}
   \bigcup \{Q_k: k\in X\} & \text { if }X\notin F, \\
   \bigcup \{Q_k: k\in X\cup \{0\}\}      &  \text { if } X\in F
  \end{cases}
\]

Let $$\A=\{R_X: X\subseteq \mathbb{Z}^+\}.$$
Notice that $\A$ is uncountable. Then $\A$ is an atomic set algebra with unit $R_{\mathbb{Z}^+}$, and its atoms are $R_{\{k\}}=Q_k$ for $k\in \mathbb{Z}^+$.
(Since $F$ is non-principal, so $\{k\}\notin F$ for every $k$).
We check that $\A$ is indeed closed under the operations.
Let $X, Y$ be subsets of $\mathbb{Z}^+$. If either $X$ or $Y$ is in $F$, then so is $X\cup Y$, because $F$ is a filter.
Hence
$$R_X\cup R_Y=\bigcup\{Q_k: k\in X\}\cup\bigcup \{Q_k: k\in Y\}\cup Q_0=R_{X\cup Y}$$
If neither $X$ nor $Y$ is in $F$, then $X\cup Y$ is not in $F$, because $F$ is an ultrafilter.
$$R_X\cup R_Y=\bigcup\{Q_k: k\in X\}\cup\bigcup \{Q_k: k\in Y\}=R_{X\cup Y}$$
Thus $A$ is closed under finite unions. Now suppose that $X$ is the complement of $Y$ in $\mathbb{Z}^+$.
Since $F$ is an ultrafilter exactly one of them, say $X$ is in $F$.
Hence,
$$\sim R_X=\sim{}\bigcup \{Q_k: k\in X\cup \{0\}\}=\bigcup\{Q_k: k\in Y\}=R_Y$$
so that  $\A$ is closed under complementation (w.r.t $R_{\mathbb{Z}^+}$).
We check substitutions. Transpositions are clear, so we check only replacements. It is not too hard to show that
\[
 S_0^1(R_X)=
  \begin{cases}
   \emptyset & \text { if }X\notin F, \\
   R_{\mathbb{Z}^+}      &  \text { if } X\in F
  \end{cases}
\]

Now
$$\sum \{S_0^1(R_{k}): k\in \mathbb{Z}^+\}=\emptyset.$$
and
$$S_0^1(R_{\mathbb{Z}^+})=R_{\mathbb{Z}^+}$$
$$\sum \{R_{\{k\}}: k\in \mathbb{Z}^+\}=R_{\mathbb{Z}^+}=\bigcup \{Q_k:k\in \mathbb{Z}^+\}.$$
Thus $$S_0^1(\sum\{R_{\{k\}}: k\in \mathbb{Z}^+\})\neq \sum \{S_0^1(R_{\{k\}}): k\in \mathbb{Z}^+\}.$$
Our next theorem gives a plathora of algebras that are not completely representable. Any algebra which shares the atom structure of $\A$ 
constructed above cannot have a complete representation. Formally:
\end{proof}

\begin{theorem} There are atomic $PA_2$s that are not completely representable; 
furthermore thay can be countable. However, the class of atomic $PA_2$ is elementary
\end{theorem}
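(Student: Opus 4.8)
\noindent\emph{Proof proposal.} For the first assertion the plan is to re-use the atom structure of the algebra $\A$ built in the proof of Theorem~\ref{counter2}, but arranged so that it carries the full $PA_2$ signature (cylindrifiers included) and admits a \emph{countable} incarnation. First I would fix a partition $(Q_n:n\in\omega)$ of ${}^2U$ for a suitable countable $U$, with $Q_0=\{s\in{}^2U:s_0=s_1\}$, each $Q_n$ symmetric, and each $Q_n$ projecting fully onto both coordinates, so that $\cyl0 Q_n=\cyl1 Q_n={}^2U$ for every $n$; e.g.\ with $U=\mathbb{Z}$ one may take $Q_n=\{(a,a+n):a\in\mathbb{Z}\}\cup\{(a,a-n):a\in\mathbb{Z}\}$ for $n\ge 1$. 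With such a partition the set algebra $\A=\{R_X:X\subseteq\mathbb{Z}^+\}$, defined exactly as in Theorem~\ref{counter2} via a non-principal ultrafilter $F$ on $\wp(\mathbb{Z}^+)$, is closed not only under the Boolean operations and the substitutions $\sub01,\sub10,{\sf s}_{01}$ --- which was checked there --- but also under $\cyl0$ and $\cyl1$, since on $\A$ each $\cyl i$ sends every non-empty element to the unit and $\emptyset$ to $\emptyset$; hence $\A$ is a genuine polyadic set algebra of dimension $2$ without diagonals. Then, instead of the (uncountable) $\A$, I would pass to $\B=\Tm\At\A$, the subalgebra generated by the atoms $Q_k$ ($k\in\mathbb{Z}^+$); it consists of the finite joins of atoms together with their complements (the ``cofinite'' elements $R_X$), hence is countable and atomic, and being a subalgebra of a polyadic set algebra it lies in $PA_2$ and is representable. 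It is not completely representable: as in the proof of Theorem~\ref{counter2}, a complete representation would force $\sub01$ to be completely additive, yet in $\B$ one has $\sum_{x\in\At\B}\sub01 x=0$ (each $\sub01 Q_k=\emptyset$, as $Q_k$ misses the diagonal) while $\sub01\bigl(\sum_{x\in\At\B}x\bigr)=\sub01 1=1$, since the join of all atoms of $\B$ is the unit $R_{\mathbb{Z}^+}$ and $\mathbb{Z}^+\in F$.

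For the second assertion --- reading it, in the light of the abstract, as the claim that the class of \emph{completely representable} atomic $PA_2$s is elementary, indeed finitely axiomatisable --- the plan is the standard one: produce a finite list of first-order axioms and show it captures the class. The axioms are: (a) the $PA_2$ axioms; (b) atomicity, which is expressed by a single first-order sentence (below every non-zero element there is an atom, ``is an atom'' being itself first-order); and (c) for each of the operators $\cyl0,\cyl1,\sub01,\sub10$ the first-order ``atom condition'' asserting complete additivity relative to the atoms: for all $a$ and every atom $y$, if $y\le O(a)$ then there is an atom $x\le a$ with $y\le O(x)$. (The transposition ${\sf s}_{01}$ is a Boolean automorphism, hence automatically completely additive, so needs no axiom.) Each of (a)--(c) is a single first-order sentence, so the class they define is finitely axiomatisable; and that a completely representable atomic algebra satisfies (a)--(c) is immediate by reading the conditions off from the representation. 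The content lies in the converse.

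To prove the converse I would construct a complete representation of an arbitrary atomic $PA_2$ satisfying (a)--(c) directly on a base built from $\At\A$: assign to each atom a non-empty set of pairs, the assignments pairwise disjoint, and extend the assignment stage by stage, using (c) at each stage to keep it coherent with $\cyl0,\cyl1$ and the substitutions, and the $PA_2$ axioms (a) to keep substitutions compatible with cylindrification. The main obstacle --- and the precise reason this holds for $PA_2$ while the analogous class for $\RCA_n$ ($n\ge 3$) and for $\RRA$ is \emph{not} elementary --- is to show that no \emph{infinitary} coherence requirement of Lyndon type ever interferes, i.e.\ that the finitely many conditions in (c) already suffice for the step-by-step construction to go through. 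Here I would exploit the two special features of the present setting: the absence of diagonal elements, so there are no forced identifications among coordinates, and the dimension being only $2$, so there are no ``triangle'' (three-coordinate) coherence constraints --- those first arise in dimension $3$ and are exactly what makes complete representability non-elementary in the higher-dimensional cylindric and relational cases. Concretely I expect to show that every finite partial assignment respecting (c) extends to a larger one, so that an $\omega$-step enumeration yields a complete representation with countable base when $\A$ is countable and, in general, one with base of size $|\A|$; this closes the characterisation and hence proves elementarity.
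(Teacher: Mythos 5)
Your first assertion is handled essentially as the paper handles it: the paper's proof of this theorem consists exactly of your modification of Theorem \ref{counter2} (choose the partition so that each $Q_n$ has full domain and range, whence every non-zero element cylindrifies to the unit and the set algebra is a genuine $PA_2$), and your extra step of passing to the countable term algebra $\Tm\At\A$ is a correct way of discharging the countability claim, which the paper asserts but does not spell out; the failure $\sum_{x\in \At\B}s_0^1x=0\neq 1=s_0^1(\sum\At\B)$ indeed rules out a complete representation by the argument given before Theorem \ref{counter2}.

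For the second assertion your reading of the (garbled) statement agrees with the paper's intent -- it is the class of \emph{completely representable} atomic $PA_2$s that is to be finitely axiomatized -- and your atom conditions play the same role as the paper's sentences $\psi_{i,j}$ ($y\neq 0\to\exists x(\At(x)\wedge s_i^jx\neq 0\wedge s_i^jx\leq y)$, which encode $\sum_{x\in\At\A}s_i^jx=1$); the necessity direction is as you say, and your observation that the transposition needs no axiom is right (the cylindrifier conditions are even redundant, since ${\sf c}_i$ is self-conjugate, hence completely additive). The gap is in the sufficiency direction, which is the whole content of the elementarity claim: you only announce that ``every finite partial assignment respecting (c) extends'', and this extension lemma is precisely what has to be proved; the heuristic remarks about dimension $2$ and the absence of diagonals explain why one \emph{expects} no Lyndon-type obstruction, but they do not verify that a partial labelling can always be extended consistently with ${\sf c}_0,{\sf c}_1,s_0^1,s_1^0$ simultaneously, which is exactly where the atom conditions must be invoked. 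The paper closes this step by a different and shorter route: from $\psi_{i,j}$ it extracts $\sum_{x\in\At\A}s_\tau x=1$ for every transformation $\tau$ (transpositions being automatic), and then re-runs the modal-theoretic argument used for atomic $\RCA_2$ -- the relevant equations hold in the atom structure, which is a bounded morphic image of a disjoint union of square frames, and the resulting embedding into a product of full set algebras preserves all existing meets. So either carry out your step-by-step extension lemma in full, or replace it by the observation that, once the atom conditions restore the needed additivity over atoms, the earlier completion/bounded-morphism argument applies verbatim.
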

\begin{proof}

We modify the above proof as follows. Take each $Q_n$ so it has both domain and range equal to $U$. 
This is possible; indeed it is easy to find such a 
partition of ${}^2U$.
Let $\At(x)$ be the first order formula expressing that $x$ is an atom. That is $\At(x)$ is the formula
$x\neq 0\land (\forall y)(y\leq x\to y=0\lor y=x)$. For distinct $i,j<n$ let $\psi_{i,j}$ be the formula:
$y\neq 0\to \exists x(\At(x)\land s_i^jx\neq 0\land s_i^jx\leq y).$ Let $\Sigma$ be obtained from $\Sigma_n$ by adding $\psi_{i,j}$
for every distinct $i,j\in n$.
We show that $CSA_n={\bf Mod}(\Sigma)$. Let $\A\in CSA_n$. Then, by theorem \ref{converse}, we have
$\sum_{x\in X} s_i^jx=1$ for all $i,j\in n$. Let $i,j\in $ be distinct. Let $a$ be non-zero, then $a.\sum_{x \in X}s_i^jx=a\neq 0$,
hence there exists $x\in X$, such that
$a.s_i^jx\neq 0$, and  so $\A\models \psi_{i,j}$.
Conversely, let $\A\models \Sigma$. Then for all $i,j\in n$, $\sum_{x\in X} s_i^jx=1$. Indeed, assume that for some distinct
$i,j\in n$, $\sum_{x\in X}s_i^jx\neq 1$.
Let $a=1-\sum_{x\in X} s_i^jx$. Then $a\neq 0$. But then there exists $x\in X$, such that $s_i^jx.\sum_{x\in X}s_i^jx\neq 0$
which is impossible. But for distinct $i, j\in n$, we have  $\sum_{x\in X}s_{[i,j]}X=1$ anyway, and so $\sum s_{\tau}X=1$, for all
$\tau\in {}^nn$, and so it readily follows that $\A\in CRA_n.$ So if $\A$ models $\Sigma$, then it is completely additive, and the above modal theoretic
proof works.
\end{proof}
When we do not have cylindrifiers the above example works for all dimension {\it infinite} included, using instead weak set algebras.
Indeed, we have

\begin{theorem}\label{counterinfinite2} There is an atomic $\A\in SA_{\alpha}$ such that $\A$ is not completely representable.
\end{theorem}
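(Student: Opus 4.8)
The plan is to carry over the finite-dimensional construction from Theorem~\ref{counter2} to the infinite-dimensional (in fact weak) setting, exploiting the fact that without cylindrifiers the only relevant operations that can fail to be completely additive are the replacement substitutions $\sub{0}{1}$ (transpositions being self-dual, hence always additive). By the criterion established in the first example above, it suffices to produce an atomic $\A\in \SA_{\alpha}$ together with a set $X\subseteq A$ with $\sum X=1$ for which $\sub{0}{1}$ does \emph{not} preserve this supremum; any such algebra cannot be completely representable, since a complete representation would force complete additivity of all substitution operations.

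First I would fix an ordinal $\alpha$ and take the weak space $V={}^{\alpha}\alpha^{(\bold 0)}=\{s\in {}^{\alpha}\alpha: |\{i: s_i\neq 0\}|<\omega\}$, exactly as in the displayed construction preceding Theorem~\ref{counter2}. I would then reuse the sequence $(Q_n: n\in \omega)$ of symmetric $\alpha$-ary relations partitioning $V$, with $Q_0=\{s\in V: s_0=s_1\}$ and $Q_n=\{s\in {}^{\alpha}\omega^{(\bold 0)}: s_0\neq s_1,\ \sum_i s_i=n\}$ for $n\geq 1$; these are well defined since the sums involved are finite, they are pairwise disjoint with union $V$, and each is symmetric under all transpositions $\sub{[i,j]}{}$. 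Next, fixing a non-principal ultrafilter $F$ on $\wp(\mathbb{Z}^+)$, I would define $R_X$ for $X\subseteq\mathbb{Z}^+$ by the same case split (adjoining $Q_0$ precisely when $X\in F$) and set $\A=\{R_X: X\subseteq\mathbb{Z}^+\}$. The verification that $\A$ is a Boolean subalgebra of $\wp(V)$ closed under all transpositions is identical to the finite-dimensional case; closure under $\sub{0}{1}$ follows once one checks $\sub{0}{1}(R_X)=\emptyset$ if $X\notin F$ and $\sub{0}{1}(R_X)=R_{\mathbb{Z}^+}$ if $X\in F$, which in turn rests on the facts that $\sub{0}{1}$ maps $Q_0$ (the set of $s$ with $s_0=s_1$) onto all of $V$ and annihilates no $Q_n$ with $n\geq1$ only modulo the diagonal --- here one uses that $Q_n$ for $n\geq1$ lives off the diagonal $s_0\neq s_1$, so its image under identifying coordinates $0,1$ overlaps every $Q_m$, and the ultrafilter bookkeeping collapses the union to $R_{\mathbb{Z}^+}$ or to $\emptyset$ accordingly.

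Granting this, $\A$ is atomic with atoms $R_{\{k\}}=Q_k$ for $k\in\mathbb{Z}^+$ (each singleton lies outside the non-principal $F$), and $\sum\{R_{\{k\}}: k\in\mathbb{Z}^+\}=R_{\mathbb{Z}^+}=1_{\A}$, yet $\sum\{\sub{0}{1}(R_{\{k\}}): k\in\mathbb{Z}^+\}=\emptyset$ while $\sub{0}{1}(1_{\A})=\sub{0}{1}(R_{\mathbb{Z}^+})=R_{\mathbb{Z}^+}\neq\emptyset$ (since $\mathbb{Z}^+\in F$). Thus $\sub{0}{1}$ fails to distribute over this supremum, so by the opening example $\A\in\SA_{\alpha}$ is not completely representable, which is the assertion of Theorem~\ref{counterinfinite2}.

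I do not anticipate a genuine obstacle so much as a bookkeeping point: the main thing to get right is that the weak-space partition $(Q_n)$ behaves under $\sub{0}{1}$ exactly as the $^nU$-partition did in the finite case, i.e.\ that one really can arrange each $Q_n$ ($n\geq1$) to be symmetric, disjoint from the diagonal, and to have the property that $\sub{0}{1}$ sends it into $\bigcup_{m}Q_m\setminus Q_0$ in a way compatible with the ultrafilter definition of $R_X$ --- the choice $Q_n=\{s: s_0\neq s_1,\ \sum_i s_i=n\}$ does this, but one should double-check finiteness of the defining sums on the weak space and the behaviour of $\sum_i s_i$ under identifying two coordinates. Once that is pinned down, every other step is verbatim from the proof of Theorem~\ref{counter2}, so the write-up can legitimately say "the proof survives verbatim, now with $V$ the weak space in place of $^{\alpha}U$."
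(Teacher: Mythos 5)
Your proposal matches the paper's own proof: the paper likewise takes the weak space $V={}^{\alpha}\alpha^{(\bold 0)}$, the same symmetric partition $(Q_n)$ with $Q_0$ the diagonal piece and $Q_n=\{s: s_0\neq s_1,\ \sum_i s_i=n\}$, the same non-principal ultrafilter defining the $R_X$'s, and then notes that one proceeds exactly as in the finite-dimensional Theorem~\ref{counter2}, i.e.\ the failure of complete additivity of ${\sf s}_0^1$ on the atoms rules out a complete representation. Your bookkeeping remarks (finiteness of the sums on the weak space, symmetry, disjointness from the diagonal) are exactly the points the paper's verbatim-transfer claim rests on, so the argument is correct and essentially identical.
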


\begin{proof} First it is clear that if $V$ is any weak space, then $\wp(V)\models \Sigma$.
Let $(Q_n: n\in \omega)$ be a sequence $\alpha$-ary relations such that

\begin{enumroman}

\item $(Q_n: n\in \omega)$ is a partition of
$$V={}^{\alpha}\alpha^{(\bold 0)}=\{s\in {}^{\alpha}\alpha: |\{i: s_i\neq 0\}|<\omega\}.$$

%\item $Q_0$ is the identity relation on $V$ that is $Q_0=\{s\in V: \forall k,l\in \{i\in \alpha: s_i\neq 0\}, s_k=s_l\}.$

%\item Each $Q_n$ has $U$ as its domain and range

\item Each $Q_n$ is symmetric.
\end{enumroman}
%Such a sequence exits. For example, take $U=\Z$, the set of integers, let $\N$ be the set of positive numbers,
%and let $P$ be the predecessor function. For $n\in \N$, take $Q_n=P\circ..P$ n times.
Take $Q_0=\{s\in V: s_0=s_1\}$, and for each $n\in \omega\sim 0$, take $Q_n=\{s\in {}^{\alpha}\omega^{({\bold 0})}: s_0\neq s_1, \sum s_i=n\}.$
(Note that this is a finite sum).
Clearly for $n\neq m$, we have $Q_n\cap Q_m=\emptyset$, and $\bigcup Q_n=V.$
Furthermore, obviously each $Q_n$ is symmetric, that is  $S_{[i,j]}Q_n=Q_n$ for all $i,j\in \alpha$.

Now fix $F$ a non-principal ultrafilter on $\mathcal{P}(\mathbb{Z}^+)$. For each $X\subseteq \mathbb{Z}^+$, define
\[
 R_X =
  \begin{cases}
   \bigcup \{Q_n: n\in X\} & \text { if }X\notin F, \\
   \bigcup \{Q_n: n\in X\cup \{0\}\}      &  \text { if } X\in F
  \end{cases}
\]
Let $$\A=\{R_X: X\subseteq \mathbb{Z}^+\}.$$
Then $\A$ is an atomic set algebra, and its atoms are $R_{\{n\}}=Q_n$ for $n\in \mathbb{Z}^+$.
(Since $F$ is non-principal, so $\{n\}\notin F$ for every $n$.
Then one proceeds exactly as in the finite dimensional case, theorem \ref{counter2}.
\end{proof}
The latter answers an implicit question of Hodkinson's.
Another result, where weak models offer solace is:
This is a new theorem, giving a characterization of completely representable 
algebras on weak unit, and raises a natural question, namely, is the class of infinite dimensional representable 
on weak units elementary.

\begin{theorem} Let $\A\in S_c\Nr_{\alpha}\CA_{\alpha+\omega}$ be countable. 
Then $\A$ is atomic iff $\A$ has a complete representation on weak models
\end{theorem}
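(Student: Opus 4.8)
The plan is to treat the two implications separately, since they are of quite different depth. The implication from complete representability to atomicity is the soft one and uses neither countability nor the neat-embedding hypothesis: if $h\colon\A\to\wp(W)$ is an injection onto a weak set algebra with weak unit $W$ that carries all existing joins to unions (equivalently, existing meets to intersections), then for any $0\neq a$ we may pick $w\in h(a)$ and consider the ultrafilter $F=\{x\in A:w\in h(x)\}\ni a$; completeness of $h$ forces $F$ to be principal, its generator being an atom below $a$. So $\A$ is atomic. (This is essentially proposition-level material from \cite{HH}.)

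For the converse I would first reduce ``complete representation'' to ``atomic representation''. Call a representation $h$ of an atomic algebra onto a weak set algebra \emph{atomic} if $\bigcup_{a\in\At\A}h(a)$ is the entire weak unit. For atomic $\A$ this is equivalent to $h$ being complete: given $S\subseteq A$ with $s_0=\sum^{\A}S$ existing, the inclusion $\bigcup_{x\in S}h(x)\subseteq h(s_0)$ is clear, and conversely if $w\in h(s_0)$ one uses atomicity to get an atom $a$ with $w\in h(a)$; then $a\wedge s_0\neq0$ gives $a\leq s_0$, and if $a$ were disjoint from every member of $S$ then $-a$ would be an upper bound of $S$, forcing $s_0\leq-a$ and contradicting $0\neq a\leq s_0$, so $a\leq x$ for some $x\in S$ and $w\in h(x)$. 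Thus it suffices to build an atomic weak representation of $\A$. The second reduction trims the ambient dimension: fix $\B_0\in\CA_{\alpha+\omega}$ with $\A\subseteq_c\Nr_{\alpha}\B_0$ and set $\B=\Sg^{\B_0}A$. Then $\B$ is countable, is generated by $A$, and still has $\A$ as a complete subalgebra of its $\alpha$-neat reduct (the standard argument, as in the proof of Theorem~\ref{uncountable}); moreover $\B$ is dimension-complemented, so $\Nr_{\alpha}\B$ is a complete subalgebra of $\B$, and therefore $\sum^{\A}\At\A=1$ propagates, via $\A\subseteq_c\Nr_{\alpha}\B\subseteq_c\B$, to $\sum^{\B}\At\A=1$, i.e. $\prod^{\B}\{-a:a\in\At\A\}=0$.

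The core of the proof — and the step I expect to be the real obstacle — is a step-by-step (Baire-category) construction of a Henkin ultrafilter $H$ of the countable algebra $\B$ that simultaneously is ``rich'', in that $\cyl k x\in H$ implies $\sub k l x\in H$ for some $l\notin\Delta x$ (available because $\cyl k x=\sum_l\sub k l x$, \cite[1.11.6]{HMT1}, and $\B$ has infinitely many spare dimensions), and \emph{omits} the meet above, i.e. $a\in H$ for some atom $a\in\At\A$ — together with all the $\sub{}{\tau}$-shifts of these requirements, $\tau$ ranging over finite transformations of $\alpha+\omega$. Since $\A$, hence $\At\A$, is countable and only countably many dimensions and finite transformations occur, this proceeds exactly as in the proof of Theorem~\ref{infinite}, the infinitely many extra dimensions of $\B$ being what make each richness requirement enforceable at a finite stage. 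From $H$ one reads off the canonical weak representation $g$ of $\B$ (put $E=\{(i,j):\diag i j\in H\}$, $M=(\alpha+\omega)/E$, $q=\mathrm{id}/E$, and $g(b)=\{\tau:\sub{}{\bar\tau}b\in H\}$ on the weak space ${}^{(\alpha+\omega)}M^{(q)}$); by the choice of $H$, $g$ has full weak unit and $\bigcap_{a\in\At\A}g(-a)=\emptyset$. Restricting $g$ to $\A$ and cutting down to the first $\alpha$ coordinates yields, because $\A\subseteq_c\Nr_{\alpha}\B$, an injective $\CA_{\alpha}$-homomorphism $h\colon\A\to\wp({}^{\alpha}M^{(q\restriction\alpha)})$ onto a weak set algebra whose atom images cover the unit — an atomic, hence (by the first reduction) complete, representation of $\A$ on a weak model. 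It is precisely here that countability of $\A$ is used, and the construction collapses for uncountable $\A$, in accordance with the omitting-types phenomena discussed around Theorem~\ref{finite}.
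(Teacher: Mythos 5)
Your proposal is correct and follows essentially the same route as the paper: pass to $\B=\Sg^{\B'}A$, which is dimension-complemented and still satisfies $\sum^{\B}\At\A=1$, pick (by the countable Baire-category/Henkin argument of Theorem \ref{infinite}) an ultrafilter preserving the joins ${\sf c}_kx=\sum_l{\sf s}^k_lx$ and $\sum{\sf s}_{\tau}\At\A=1$, and read off the weak set representation $a\mapsto\{\tau:{\sf s}_{\tau}a\in F\}$, which is atomic and hence complete. The extra material you include (the soft converse, the atomic-iff-complete reduction, and the explicit cut-down from $\alpha+\omega$ to $\alpha$ coordinates) just fills in steps the paper leaves implicit.
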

\begin{proof} Let $\A\subseteq_c\Nr_{\omega}\B'$, where $\B'\in CA_{\omega+\omega}$
Let $X=\At\A$. Then $\sum ^{\B'}X=1$. Let $\B=\Sg^{\B'}\A$, then $\B\in Dc_{\omega}$, and
$\sum^{\B}X=1$. Let $V={}^{\omega}\omega^{(Id)}=\{\tau \in {}^{\omega}\omega: |\{i\in \omega: \tau(i)\neq i\}|<\omega\}$.
Then for every $\tau\in V$, $s_{\tau}^{\B}X=1$. Also, in $\B$  we have $c_kx=\sum s_k^lx$. Let $F$ be an ultrafilter that preserves
these joins, and define
$$f:\A\to \wp(V)$$ via 
$$a\mapsto \{\tau \in V: s_{\tau}a\in F\}.$$
Clearly $f$ is a complete representation.
\end{proof}

%\section{The solution of two problems of Robin Hirsch}

We will show in example \ref{countable} that maximality cannot be omitted by showing that 
there exists an uncountable atomic simple algebra $\A\in \Nr_n\CA_{\omega}$ which is not completely representable. 
This example is sketched in a remark 31, p. 688, 
by Robin Hirsch; here we give the detailed proof, and also 
show that this construction answers a question raised by Hirsch in the same paper, see  p.674, we quote 
' Whether this characterization works for uncountable algebras remain unknown.'(End of quote) 
Here we show it does not. This theorem was also mentioned in \cite{OTT} without proof. 
We shall need the following form of the Erdos-Rado cominatorial principle (Extending Ramsey's theorem to uncountable sets)

\begin{theorem}(Erdos-Rado)
If $r\geq 2$ is finite, $k$  an infinite cardinal, then
$$exp_r(k)^+\to (k^+)_k^{r+1}$$

where $exp_0(k)=k$ and inductively $exp_{r+1}(k)=2^{exp_r(k)}$.
\end{theorem}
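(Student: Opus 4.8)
The final statement is the Erdős–Rado partition theorem, stated as: if $r\geq 2$ is finite and $k$ is an infinite cardinal, then $\exp_r(k)^+\to (k^+)_k^{r+1}$, where $\exp_0(k)=k$ and $\exp_{r+1}(k)=2^{\exp_r(k)}$. This is a classical combinatorial result and the natural plan is to prove it by induction on $r$, using the standard "tree of ever-shrinking homogeneous-candidate sets" argument. I would first isolate the base case $r=1$, i.e. $(2^k)^+\to(k^+)_k^2$, and then bootstrap.

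The plan for the base case is as follows. Let $|S|=(2^k)^+$ and fix a colouring $c:[S]^2\to k$. Build, by transfinite recursion of length at most $(2^k)^+$, a sequence $\langle a_\xi:\xi<\mu\rangle$ of distinct elements of $S$ together with a decreasing chain of subsets $S=S_0\supseteq S_1\supseteq\cdots$ so that at stage $\xi$ one picks $a_\xi\in S_\xi$, then splits $S_\xi\setminus\{a_\xi\}$ according to the "colour type of $a_\xi$": for $x,y\in S_\xi\setminus\{a_\xi\}$ put $x\sim y$ iff $c(\{a_\xi,x\})=c(\{a_\xi,y\})$, a partition into at most $k$ pieces, and let $S_{\xi+1}$ be the largest piece (take intersections at limits of the "current working set" along a fixed cofinal branch). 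The bookkeeping shows each $S_\xi$ has size $>2^k$ as long as fewer than $(2^k)^+$ steps have elapsed, since at each successor step we divide by at most $k$ and $(2^k)^{<(2^k)^+}$-style cardinal arithmetic keeps the set large; hence the recursion runs for $(2^k)^+$ steps. Now each $a_\xi$ has an associated function $f_\xi:\{a_\eta:\eta<\xi\}\to k$ recording $c(\{a_\eta,a_\xi\})$, extended arbitrarily; since there are only $2^k$ such functions but $(2^k)^+$ ordinals $\xi$, by pigeonhole there is a set $H$ of size $(2^k)^+\geq k^+$ on which the relevant colour $c(\{a_\eta,a_\xi\})$ depends only on $\min(\eta,\xi)$, and then a further pigeonhole on that "colour as a function of the smaller index" over $H$ yields a subset of size $k^+$ that is genuinely monochromatic.

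For the inductive step I would reduce $\exp_{r+1}(k)^+\to(k^+)_k^{r+2}$ to $\exp_r(k)^+\to(k^+)_k^{r+1}$ by the same tree construction, but now at stage $\xi$ the element $a_\xi$ is split off using the colouring of $(r+1)$-subsets of the remaining set that contain $a_\xi$ — i.e. one passes to the "link colouring" $c_{a_\xi}:[S_\xi\setminus\{a_\xi\}]^{r+1}\to k$ given by $c_{a_\xi}(e)=c(e\cup\{a_\xi\})$ — and one arranges that all such link colourings agree in a coherent way. Running the recursion for $\exp_{r+1}(k)^+=(2^{\exp_r(k)})^+$ steps (the size bound at each step being controlled because the number of possible "colour types" is $\leq k^{(\text{small})}\leq 2^{\exp_r(k)}$, using the induction that $\exp_r(k)\geq k$ so $k^{\exp_r(k)}=2^{\exp_r(k)}$), one extracts a set $A=\{a_\xi:\xi<\exp_r(k)^+\}$ of size $\exp_r(k)^+$ on which the colour of an $(r+2)$-set is determined by its least element together with the colouring of the top $(r+1)$ elements by a single fixed colouring $d:[A]^{r+1}\to k$. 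Apply the induction hypothesis to $d$ on $A$ to get a $d$-homogeneous $H\subseteq A$ of size $k^+$; discarding the least element of $H$ (harmless, as $k^+$ is infinite) gives a set on which $c$ restricted to $(r+2)$-subsets is constant.

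The main obstacle — and the part deserving the most care rather than cleverness — is the cardinal-arithmetic bookkeeping in the recursion: verifying that the "working set" stays strictly larger than $\exp_r(k)$ through all $\exp_{r+1}(k)^+$ stages, which requires the estimate that at each stage we partition into at most $\exp_r(k)$-many (in fact $\leq k$ at a single split, but $\leq \exp_r(k)$ cumulatively over the types seen so far) classes and that $\exp_{r+1}(k)$ is regular-enough for the limit-stage intersections along a branch to behave. I would present this as the one genuine lemma (a "capturing"/"end-homogeneous set" lemma: from a colouring of $[\mu^+]^{r+1}$ with $\mu=\exp_r(k)$ one extracts an end-homogeneous set of size $\mu^+$ where $\mu=2^{\exp_{r-1}(k)}$), prove that lemma carefully, and then the reduction to the induction hypothesis is a short pigeonhole. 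Everything else is routine and I would not grind through it in the paper.
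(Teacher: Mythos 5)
The paper itself contains no proof of this statement: it is the classical Erd\H{o}s--Rado partition theorem, quoted as a black box and then used (via the ``rado'' colouring argument) to rule out complete representations of the uncountable algebra of Example \ref{countable}. So there is no in-paper argument to compare with; the only question is whether your sketch is itself a correct proof. Your overall strategy -- induction on $r$, extracting an end-homogeneous set and then applying the inductive hypothesis to the induced colouring on $(r+1)$-sets -- is indeed the standard route, and your counting heuristic for the inductive step (the number of colour types over a set of size $exp_r(k)$ is at most $k^{exp_r(k)}=2^{exp_r(k)}$, which is exactly where $exp_{r+1}$ enters) is the right one.

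However, the execution of the key recursion has genuine gaps. First, the base-case construction ``split $S_\xi$ by the colour to $a_\xi$ and keep the largest piece, intersect at limits'' does not survive limit stages: a strictly decreasing chain of length $(2^k)^+$ of sets each of size $(2^k)^+$ can have empty intersection, and no cardinal-arithmetic bookkeeping repairs this. The standard fixes are either the ramification tree -- keep \emph{all} colour classes, index the nodes by sequences in ${}^{<k^+}k$, and observe that there are at most $k^{<k^+}=k^{k}=2^{k}<(2^{k})^{+}$ nodes, so some point of $S$ survives along a branch of length $k^+$, and that branch is end-homogeneous by construction -- or the capturing-set (elementary-submodel) argument, in which one fixes a set $X$ of size $2^k$ realizing all colour types over its small subsets, picks $z\notin X$, and builds the sequence inside $X$ so that each new element copies the type of $z$ over its predecessors. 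Second, your intermediate pigeonhole ``there are only $2^k$ functions $f_\xi$'' is false as stated: the domains of the $f_\xi$ grow to size $2^k$, so there are up to $2^{2^{k}}$ such functions. In the tree (or capturing) argument no such pigeonhole is needed -- end-homogeneity, i.e.\ that $c(\{a_\eta,a_\xi\})$ depends only on $\eta$, is automatic from membership in the colour classes -- and the only pigeonhole required is the final one, applied to the map $k^+\to k$ recording the colour along the branch. The same repair is needed in your inductive step, where the recursion must again be organized through type-capturing rather than a single largest-piece chain; once the end-homogeneous set $A$ of size $exp_r(k)^+$ with its induced colouring $d$ on $(r+1)$-sets is in hand, your reduction to the inductive hypothesis is correct (provided you are consistent about whether the coordinate rendered irrelevant is the largest or the smallest element of each $(r+2)$-set).
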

The above partition symbol describes the following statement. If $f$ is a coloring of the $r+1$ 
element subsets of a set of cardinality $exp_r(k)^+$
in $k$ many colors, then there is a homogeneous set of cardinality $k^+$+ 
(a set, all whose $r+1$ element subsets get the same $f$-value).

%Example \ref{countable} also show that Vaughts theorem on exsitence of atomic models fail for $\L_n$ when $n\geq 3$
%when theories have cardinality $\omega_1$, i.e just at the first uncountable cardinal; 
%this consequence will be explained in the second item of the example.
Our cylindric algebra will be based on the atom structure of a relation algebra that has an $\omega$-dimensional cylindric base.
So we review some notions from relation and cylindric algebras, concerning atom structures and networks (which are graphs labelled by atoms).

%We also assume familiarity with the notion of networks over a relation algebra atom structure, and that of 
%$n$ - dimensional cylindric bases.

A \textit{relation atom structure} $\Upsilon=(A, Id, \breve{}, C)$
consists of a non-empty set $A$ of atoms, a unary function
$\breve{}:A\to A$ and a ternary relation $C$ satisfying
\begin{itemize}
\item $a=b$ $\longleftrightarrow  \exists e\in A$ such that $
 e \in Id~ \land~ (a,e,b) \in C$,
\item $(a,e,b) \in C \implies (\breve{a}, c, b) \in C$ and $(c, \breve{b}, a) \in
C$,
\item $(a,b,c) \in C\land (c,d,g) \in C\implies \exists f\in A$ such that $(a,f,g) \in C\land
(b,d,f) \in C$.
\end{itemize}
Let $\A=(A, +, ., -, 0,1, Id, \breve{}, ;)$ be an atomic  relation
algebra. Let $At \A$ denote the set of atoms. Then this determines
an atom structure
$$\At \A=(At \A, Id, \breve{} ,C)$$
whose domain is the set of atoms of $A$, the identity $Id=\{e\in At
\A: e\leq 1'\}$, converse is the restriction of $~\breve{}~$ on the
atoms of $\A$, and the ternary relation $C$ is defined by
$$ (a,b,c) \in C\longleftrightarrow a;b\geq c, \text { for all atoms }a,b,c.$$
A triple in $C$ is said to be a \textit{consistent triple}.
\\

Conversely from an atom structure $\Upsilon=(A, Id, \breve{}, C)$ it
is possible to define an \textit{atomic relation algebra} $\Cm
\Upsilon$ whose domain is the power set of $A$, the boolean
operations $\cup$, complement in $A$, and whose non-boolean
operations are defined by
$$1'=Id,$$
$$\breve{r}=\{\breve{a}: a\in r\},$$
$$r;s=\{c: \exists a\in r, \exists b\in s, (a,b,c) \in C\}.$$
$\Cm\Upsilon$ is called the complex algebra. The set of forbidden
triples of $\Upsilon$ is $A\times A\times A\smallsetminus C$, i.e.,
it is the complement of the consistent triples. Listing the
forbidden triples specifies the composition part of the atom
structure.

\begin{definition}
Let $\A$ be a relation algebra. An $\A$-labelled graph $N=(N_1,
N_2)$ consists of a set $N_1$ of nodes, and a map $N_2:N_1\times
N_1\to \A.$ $N$ is called an $\A$ network, if for all nodes
$x,y,z\in N_1$:
\begin{itemize}
\item $N_2(x,x)\leq 1'$,
\item $N_2(x,y)\cdot (N_2(x,z);N_2(z,y))\neq 0$.
\end{itemize}
\end{definition}

\begin{definition} { \ }

\begin{enumerate}

\item An $\A$ network is atomic if $N(x,y)$ is an atom for all $x,y\in N.$
For an atomic $\A$ network it is not difficult to show that the
following hold for $x,y,z\in N$:

\begin{itemize}

\item $N(x,y)=N(y,x)$,

\item $N(x,y)\leq N(x,z);N(z,y)$,

\item $N(x,x)=1'.(N(x,y);N(y,x))$.

\end{itemize}
For networks $N$ and $M$, we write $M\equiv_{x_1\ldots x_m}N$ if
$M(\bar{a})=N(\bar{a})$ for all $\bar{a}\in {}^2(n-\{x_1,\ldots,
x_m\}).$

\item Let $n\leq \omega$. An $n$-dimensional cylindric basis for $\A$ is a set $\cal M$
of atomic networks $N$ with nodes $(N)=n$ such that

(a) For $a\in At \A$ there is $N\in \cal M$ with $N(0,1)=a.$

(b) If $N\in \cal M$ and $x,y<n$, and $a,b\in At \A$, $N(x,y)\leq
a;b$ and $x,y\neq z<n$, then there is some $M\in \cal M$ such that
$M\equiv_zN$, $M(x,z)=a$ and $M(z,y)=b$.

(c) If $M, N\in \cal M$ $x,y<n$, $x\neq y$ and $M\equiv_{xy}N$, then
there is some $L\in \cal M$ such that $M\equiv_x L\equiv_yN.$
%Here $M\equiv_xN$  means that $M(\bar{a})=N(\bar{b})$
%for all $\bar{a}, \bar{b}\in {}^2(n-\{x\}).$
%and $M\equiv_{xy}N$  means that $M(\bar{a})=N(\bar{b})$ for all
%$\bar{a}, \bar{b}\in {}^2(n-\{x,y\}).$
\end{enumerate}
\end{definition}
Maddux calls networks basic matrices. 
A relation algebra can be specified either by specifying the consistent triples of atoms or the forbbiden triples, a forbidden triple is one that
is not consistent.

%This exaple is sketced in a rematk in robin. here we give a complete prrof. Also this constructions answers a question in the same paper! we will also indicate this.
\begin{example}\label{countable}

\begin{enumarab}

\item We define an atomic relation algebra $\A$ with uncountably many
atoms. Let $\kappa$ be an infinite cardinal. This algebra will be used to construct cylindric algebras of dimension 
$n$ showing that countability is essential in the above characterization.  

The atoms are $1', \; a_0^i:i<2^{\kappa}$ and $a_j:1\leq j<
\kappa$, all symmetric.  The forbidden triples of atoms are all
permutations of $(1',x, y)$ for $x \neq y$, \/$(a_j, a_j, a_j)$ for
$1\leq j<\kappa$ and $(a_0^i, a_0^{i'}, a_0^{i^*})$ for $i, i',
i^*<2^{\kappa}.$  In other words, we forbid all the monochromatic
triangles.  

Write $a_0$ for $\set{a_0^i:i<2^{\kappa}}$ and $a_+$ for 
$\set{a_j:1\leq j<\kappa}$. Call this atom
structure $\alpha$.  

Let $\A$ be the term algebra on this atom
structure; the subalgebra of $\Cm\alpha$ generated by the atoms.  $\A$ is a dense subalgebra of the complex algebra
$\Cm\alpha$. We claim that $\A$, as a relation algebra,  has no complete representation.

Indeed, suppose $\A$ has a complete representation $M$.  Let $x, y$ be points in the 
representation with $M \models a_1(x, y)$.  For each $i<\omega_1$ there is a 
point $z_i \in M$ such that $M \models a_0^i(x, z_i) \wedge a_1(z_i, y)$.  

Let $Z = \set{z_i:i<2^{\kappa}}$.  Within $Z$ there can be no edges labelled by 
$a_0$ so each edge is labelled by one of the $\kappa$ atoms in 
$a_+$.  The Rado Erdos theorem forces the existence of three points 
$z^1, z^2, z^3 \in Z$ such that $M \models a_j(a^1, z^2) \wedge a_j(z^2, z^3) 
\wedge a_j(z^3, z_1)$, for some single $j<\kappa$.  This contradicts the 
definition of composition in $\A$.

Let $S$ be the set of all atomic $\A$-networks $N$ with nodes
 $\omega$ such that\\ $\set{a_i: 1\leq i<\omega,\; a_i \mbox{ is the label 
of an edge in }
 N}$ is finite.
Then it is straightforward to show $S$ is an amalgamation class, that is for all $M, N 
\in S$ if $M \equiv_{ij} N$ then there is $L \in S$ with 
$M \equiv_i L \equiv_j N.$  
Hence the complex cylindric algebra $\Ca(S)\in \CA_\omega$.

Now let $X$ be the set of finite $\A$-networks $N$ with nodes
$\subseteq\omega$ such that 
\begin{enumerate}
\item each edge of $N$ is either (a) an atom of
$\c A$ or (b) a cofinite subset of $a_+=\set{a_j:1\leq j<\kappa}$ or (c)
a cofinite subset of $a_0=\set{a_0^i:i<2^{\kappa}}$ and
\item $N$ is `triangle-closed', i.e. for all $l, m, n \in nodes(N)$ we
have $N(l, n) \leq N(l,m);N(m,n)$.  That means if an edge $(l,m)$ is
labelled by $1'$ then $N(l,n)= N(m,n)$ and if $N(l,m), N(m,n) \leq
a_0$ then $N(l,n).a_0 = 0$ and if $N(l,m)=N(m,n) =
a_j$ (some $1\leq j<\omega$) then $N(l,n).a_j = 0$.
\end{enumerate}
For $N\in X$ let $N'\in\Ca(S)$ be defined by 
\[\set{L\in S: L(m,n)\leq
N(m,n) \mbox{ for } m,n\in nodes(N)}\]
For $i,\omega$, let $N\restr{-i}$ be the subgraph of $N$ obtained by deleting the node $i$.
Then if $N\in X, \; i<\omega$ then $\cyl i N' =
(N\restr{-i})'$.
The inclusion $\cyl i N' \subseteq (N\restr{-i})'$ is clear.

Conversely, let $L \in (N\restr{-i})'$.  We seek $M \equiv_i L$ with
$M\in N'$.  This will prove that $L \in \cyl i N'$, as required.
Since $L\in S$ the set $X = \set{a_i \notin L}$ is infinite.  Let $X$
be the disjoint union of two infinite sets $Y \cup Y'$, say.  To
define the $\omega$-network $M$ we must define the labels of all edges
involving the node $i$ (other labels are given by $M\equiv_i L$).  We
define these labels by enumerating the edges and labelling them one at
a time.  So let $j \neq i < \omega$.  Suppose $j\in nodes(N)$.  We
must choose $M(i,j) \leq N(i,j)$.  If $N(i,j)$ is an atom then of
course $M(i,j)=N(i,j)$.  Since $N$ is finite, this defines only
finitely many labels of $M$.  If $N(i,j)$ is a cofinite subset of
$a_0$ then we let $M(i,j)$ be an arbitrary atom in $N(i,j)$.  And if
$N(i,j)$ is a cofinite subset of $a_+$ then let $M(i,j)$ be an element
of $N(i,j)\cap Y$ which has not been used as the label of any edge of
$M$ which has already been chosen (possible, since at each stage only
finitely many have been chosen so far).  If $j\notin nodes(N)$ then we
can let $M(i,j)= a_k \in Y$ some $1\leq k < \omega$ such that no edge of $M$
has already been labelled by $a_k$.  It is not hard to check that each
triangle of $M$ is consistent (we have avoided all monochromatic
triangles) and clearly $M\in N'$ and $M\equiv_i L$.  The labelling avoided all 
but finitely many elements of $Y'$, so $M\in S$. So
$(N\restr{-i})' \subseteq \cyl i N'$.

Now let $X' = \set{N':N\in X} \subseteq \Ca(S)$.
Then the subalgebra of $\Ca(S)$ generated by $X'$ is obtained from 
$X'$ by closing under finite unions.
Clearly all these finite unions are generated by $X'$.  We must show
that the set of finite unions of $X'$ is closed under all cylindric
operations.  Closure under unions is given.  For $N'\in X$ we have
$-N' = \bigcup_{m,n\in nodes(N)}N_{mn}'$ where $N_{mn}$ is a network
with nodes $\set{m,n}$ and labelling $N_{mn}(m,n) = -N(m,n)$. $N_{mn}$
may not belong to $X$ but it is equivalent to a union of at most finitely many 
members of $X$.  The diagonal $\diag ij \in\Ca(S)$ is equal to $N'$
where $N$ is a network with nodes $\set{i,j}$ and labelling
$N(i,j)=1'$.  Closure under cylindrification is given.
Let $\c C$ be the subalgebra of $\Ca(S)$ generated by $X'$.
Then $\A = \Ra(\c C)$.
Each element of $\A$ is a union of a finite number of atoms and
possibly a co-finite subset of $a_0$ and possibly a co-finite subset
of $a_+$.  Clearly $\A\subseteq\Ra(\c C)$.  Conversely, each element
$z \in \Ra(\c C)$ is a finite union $\bigcup_{N\in F}N'$, for some
finite subset $F$ of $X$, satisfying $\cyl i z = z$, for $i > 1$. Let $i_0,
\ldots, i_k$ be an enumeration of all the nodes, other than $0$ and
$1$, that occur as nodes of networks in $F$.  Then, $\cyl 
{i_0} \ldots
\cyl {i_k}z = \bigcup_{N\in F} \cyl {i_0} \ldots
\cyl {i_k}N' = \bigcup_{N\in F} (N\restr{\set{0,1}})' \in \A$.  So $\Ra(\c C)
\subseteq \A$.
$\A$ is relation algebra reduct of $\c C\in\CA_\omega$ but has no
complete representation.
Let $n>2$. Let $\B=\Nr_n \c C$. Then
$\B\in \Nr_n\CA_{\omega}$, is atomic, but has no complete representation.

\item We have $\A\in \Nr_n\CA_{\omega}$ such that $|A|=\omega_1$, $\A$ is atomic, $A$ is simple and $\A$ has no complete representation.
Let $X=At\A$, the set of atoms of $\A$. Then $\sum X=1$. But $\A$ is not completely representable, 
this means that for any isomorphism $f:\A\to \B$, where $\B$ is a  set algebra with base $U$, we have 
$\bigcup_{x\in X} f(x)\neq {}^nU$. Let $T$ be an $L_n$ theory such that $\A\cong \Fm_T$. Then $T$ is an atomic theory, but it has not atomic model $\M$.
$T$ is an atomic theory because for evey $\psi$ consistent with $T$, there is an $\psi,$ such that $\psi/T$ is a an atom and
$\vdash \psi\to \phi$. On the other hand, if $\M$ were an atomic model of $T$, then 
the map $\Psi:\Fm_T\to Cs_{n}^{\M}$ defined by $\phi_T\mapsto \phi^{\M}$ would establish a complete
representation, which is a contradiction.
\end{enumarab} 
\end{example}

A theorem of Johnson says that if $\A$ is a cylindric algebra of dimension $n$  generated by a set $X$, such that $\Delta x\neq n$, for all $x\in X$,
has a representable diagonal free reduct, then the algebra itself is representable. Our next theorem shows that a similar situation holds 
when we replace representable by completely
representable. For $\A\in CA_n$, $\Rd_{df}\A$ denotes its diagonal free reduct.

For an $n$ dimensional cylindric algebra $c_{(n)}$ stands for the term $c_0\ldots c_{n-1}$.
The next theorem is due to Ian Hodkinson, based on ideas of Johnson. 

\begin{theorem} Let $\D$ be a cylindric algebra of dimension $n\geq 3$, that is generated by the set $\{x\in D: \Delta x\neq n\}.$
Then if $\Rd_{df}\D$ is completely representable, then so is $\D$.
\end{theorem}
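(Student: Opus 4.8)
\begin{demo}{Proof} The plan is to run Johnson's argument (the theorem quoted just above) in the complete-representation setting, keeping track of suprema so that a \emph{complete} representation comes out. Since $\Rd_{df}\D$ is completely representable it is atomic, hence so is $\D$. Fix a complete representation $h\colon\Rd_{df}\D\to\wp(V)$ with $V$ a disjoint union of $n$-dimensional cartesian squares; \emph{completeness} means $h(z)=\bigcup\{h(a):a\in\At\D,\ a\leq z\}$ for all $z\in D$, equivalently that every $s\in V$ lies in $h(a_s)$ for a unique atom $a_s\in\At\D$. Working inside one component we may assume $V={}^nU$. The sole obstruction to $h$ being a $\CA_n$ representation is that the images $h(\mathsf{d}_{ij})$ of the diagonals need not be the true diagonals $\Delta_{ij}=\{s\in{}^nU:s_i=s_j\}$. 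One first observes that, since $\mathsf{d}_{ii}=1$, $\mathsf{d}_{ij}=\mathsf{d}_{ji}$ and $\mathsf{d}_{ik}\cdot\mathsf{d}_{kj}\leq\mathsf{d}_{ij}$ hold in $\D$ (and $h$ is at least a Boolean homomorphism), for each $s\in{}^nU$ the set $E(s)=\{(i,j):s\in h(\mathsf{d}_{ij})\}$ is an equivalence relation on $n$; the task is to change the base so as to force $E(s)$ to become genuine coordinate-equality. The generation hypothesis provides the room for this: since $\Delta x$ records exactly which cylindrifications fix $x$, every generator $x$ satisfies $\mathsf{c}_kx=x$ for some $k<n$, so $h(x)$ is independent of its $k$th coordinate.

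The second step is the base change, namely Johnson's construction applied to $h$. One produces a base $W$ (built from $U$, guided by the sets $h(\mathsf{d}_{ij})$ and the equivalences $E(s)$) together with a map $g\colon\D\to\wp({}^nW)$, obtained by composing $h$ with the induced transformation of set algebras, such that: (a) this transformation commutes with the Boolean operations and with every cylindrification $\mathsf{c}_i$, so that $g$ is a $\Df_n$-homomorphism; (b) $g(\mathsf{d}_{ij})=\{t\in{}^nW:t_i=t_j\}$ for all $i,j<n$; and (c) because each generator ignores a coordinate, $g$ agrees with $h$ on the generating set $G=\{x\in D:\Delta x\neq n\}$ modulo the relabelling, so $g$ is well defined on $\D=\Sg^{\D}G$ and remains injective. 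Items (a)--(c) say that $g$ is a (possibly relativized) $\CA_n$ representation of $\D$; if $W$ does not already furnish a square unit, one passes to a disjoint union of squares by the standard unfolding of a cylindric relativized representation.

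The third step is the point not present in Johnson's argument: verifying that $g$ is \emph{complete}. It suffices to show that every $t\in{}^nW$ lies in $g(a)$ for some atom $a\in\At\D$. Under the projection $W\to U$ used in the base change, $t$ maps to a tuple $s\in{}^nU$, and by completeness of $h$ we have $s\in h(a_s)$ for an atom $a_s$; since the base change alters only the diagonal incidences and neither the Boolean nor the cylindric content, one checks that $t\in g(a_s)$. Hence $g(z)=\bigcup\{g(a):a\in\At\D,\ a\leq z\}$ for every $z\in D$, so $\D$ is completely representable.

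The main obstacle is the second step: making Johnson's base change explicit enough to see, simultaneously, that it achieves (a), (b), (c) and that it transfers completeness as in the third step --- in particular that the adjustment of the diagonals can be carried out uniformly over all tuples without ever pushing a point of $W$ outside the range of the atoms. This is exactly where $n\geq 3$ and the hypothesis that $\D$ be generated by elements of dimension $<n$ enter; as with Johnson's original theorem, the hypothesis cannot simply be dropped.
\end{demo}
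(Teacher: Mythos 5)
There is a genuine gap: the step that actually proves the theorem is the one you explicitly set aside. You correctly locate the obstruction (the images $h({\sf d}_{ij})$ need not be coordinate equality) and correctly guess that a Johnson-style change of base is needed, but your ``second step'' only postulates a base $W$ and a map $g$ with properties (a)--(c), and you then concede that producing it is ``the main obstacle.'' That construction is the content of the proof, and it is where the two hypotheses are consumed. In the paper's argument (for $\D$ simple) one does not introduce a per-tuple equivalence on the index set $n$; instead one first normalizes the representation using ${\sf c}_0\cdots{\sf c}_{n-1}\prod_{i<j}{\sf d}_{ij}=1$ to choose, for each base point $u$, a tuple in $h(\prod{\sf d}_{ij})$ through $u$, which replaces the rectangular base $\prod_i U_i$ by a single set $U$ in such a way that every tuple with a repeated entry lies in the corresponding $h({\sf d}_{ij})$. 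One then defines an equivalence relation on the \emph{base}, $a\sim b$ iff $(a,b)=(s_i,s_j)$ for some $\bar s\in h({\sf d}_{ij})$ (this is independent of $i,j$), and considers $E=\{d\in D: h(d)\text{ is a union of }\sim\text{-classes}\}$. The set $E$ contains every generator --- this is exactly where $\Delta x\neq n$ is used --- and is closed under the Boolean operations, the cylindrifications, and (by completeness of $h$) arbitrary existing sums; hence $E=D$ because $\D=\Sg^{\D}\{x:\Delta x\neq n\}$. Only now can one quotient the base by $\sim$, and the induced map is well defined on all of $\D$, sends each ${\sf d}_{ij}$ to true equality, and inherits completeness from $h$ because the quotient map is surjective on tuples. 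Your sketch never verifies the crucial fact that \emph{every} element of $D$ (not just the generators) has image saturated under the relevant equivalence, which is what makes the quotient a homomorphism on $\D$; asserting (a)--(c) assumes precisely this.

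A secondary but real problem is your reduction ``working inside one component we may assume $V={}^nU$.'' Restricting $h$ to one component of the unit gives only a representation of a homomorphic image of $\Rd_{df}\D$, so injectivity is lost and you cannot simply say $g$ ``remains injective.'' The paper handles this by projecting onto each factor $Q_k$ of the product representation, transporting the diagonal elements along the projection to obtain cylindric algebras $\C_k$ that are simple (via the discriminator term ${\sf c}_0\cdots{\sf c}_{n-1}x$) and still completely generated by their elements of dimension set $\neq n$, applying the simple case to each, and then recombining the resulting complete representations over all $k\in K$. Without some such device your argument only completely represents a quotient of $\D$, not $\D$ itself.
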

\begin{proof} First suppose that $\D$ is simple, and let $h: \D\to \wp(V)$ be a complete representation, where $V=\prod_{i<n}U_i$ for sets $U_i$.
We can assume that $U_i=U_j$ for all $i,j<n$, and if $s\in V$, $i,j<n$ and $a_i=a_j$ then $a\in h(d_{ij}$. Indeed, let $\delta =\prod d_{ij}\in \D$. 
As $C$ is a cylndric algebra, we have $c_{(n)}\delta =1$, so for each $u\in U_i$ there is an $s\in h(\delta)$ with $a_i=u$. 
So there exists a function $s_i: U_i\to h(\delta)$ such that $(s_i(u))_i=u$ for each $u\in U_i$.

Let $U$ be the disjoint union of the $U_i$s. Let $t_i:U\to U_i$ be the surjection defined by $t_i(u)=(s_j(u))_i$.
Let $g: \D\to \wp(^nU)$ be defined via 
$$d\mapsto \{s\in {}^nU: (t_0(a_0),\ldots, t_{n-1}(a_{n-1}))\in h(d)\}.$$
Then $g$ is a complete representation of $\D.$ Now suppose $s\in {}^nU$, satisfies $s_i=s_j$ with $a_i\in U_k$, say, where $k<n$. 
Let $\bar{b}=s_k(a_i)=s_k(a_j)\in h(\delta).$ Then $t_i(a_i)=b_i$ and $t_j(a_j)=b_j$, so $(t_i(a_i): i<n)$ 
agrees with $\bar{b}$ on coordinates $i,j$. Since $\bar{b}\in h(\delta)$ and 
$\Delta d_{ij}=\{i,j\}$, then $(t_i(a_i): i<n)\in h(d_{ij}$ and so $s\in g(d_{ij}),$ as required.

Now define $\sim_{ij}=\{(a_i, a_j): \bar{a}\in h(d_{ij})$. Then it easy to 
check that $\sim_{01}=\sim_{i,j}$ is an equivalence relation on $U$. For 
$s,t\in {}^nU$, define $s\sim t$, if $s_i\sim t_i$ for each $i<n$, then $\sim$ is an equivalence relation
on $^nU$. Let
$$E=\{d\in D: h(d)\text { is a union of $\sim$ classes }\}.$$
Then $$\{d\in D: \Delta d\neq n\}\subseteq E.$$

Furthermore, $E$ is the domain of a complete subalgebra of $\C$. 
Let us check this. We have $\{0,1, d_{ij}: i,j<n\}\subseteq E$, since $\Delta 0=\Delta 1=\emptyset$ 
and $\Delta d_{ij}=\{i,j\}\neq n$ (as $n\geq 3$).
If $h(d)$ is a union of $\sim$ classes, then so
is $^nU\setminus h(d)= h(-d)$. If $S\subseteq E$ and $\sum S$ exists in $\D$, then because $h$ is complete representation 
we have $h(\sum^{\D}S)=\bigcup h[S]$, a union of $\sim$ classes
so $\sum S\in E$. Hence $E=C$. Now define $V=U/\sim_{01}$, and
define $g:\C\to \wp(^nV)$ via
$$c\mapsto \{(\bar{a}/\sim_{01}): \bar{a}\in h(c).$$
Then $g$ is a complete representation.

Now we drop the assumption that $\D$ is simple. Suppose that $h:\D\to \prod_{k\in K} Q_k$ is a complete representation. 
Fix $k\in K$, let $\pi_k: Q\to Q_k$ be the canonical projection, and let 
$\D_k=rng(\pi_k\circ h)$. We define diagonal elements in $\D_k$ by $d_{ij}=\pi_k(h^{\C}(d_{ij}))$. This expands $\D_k$ 
to a cylindric-type algebra $\C_k$ that is a homomorphic image of $\C$, and hence
is a cylindric algebra with diagonal free reduct $\D_k$. Then the inclusion map $i_k:\D_k\to Q_k$ is a complete
representation of $D_k$. 
Since obviously
$$\pi_k[h[\{c\in C: \Delta c\neq n\}]\subseteq \{c\in C_k: \Delta c\neq n\}$$ 
and $\pi_k, h$ preserve arbitrary sums, then $C_k$ is completely generated by $\{c\in C_k: \Delta c\neq n\}$. 
Now $c_{(n)}x$ is a discriminator term in $Q_k$, so $D_k$ is simple.
So by the above $\C_k$ has complete represenation $g_k:\C_k\to Q_k'$. Define
$g: \C\to \prod_{k\in K}Q_k'$ via
$$g(c)_k=g_k(\pi_k(h(c))).$$
Then $g$ defines a complete representation.
\end{proof}

Let $\B$ be the algebra constructed in example \ref{countable}; that is $\B\in \Nr_n\CA_{\omega}$ and $\B$ has no complete representation.
Since $\B$ is generated by its $\Ra$ reduct, that is its $2$ dimensional elements, then it follows that the diagonal free reduct of $\A$ is not 
completely representable, see theorem below.

A classical theorem of Vaught for first order logic says that countable atomic theories have countable atomic models,
such models are necessarily prime, and a prime model omits all non principal types.
We have a similar situation here:

\begin{theorem} Let $f:\A\to \wp(V)$ be an atomic representation of $\A\in CA_n$.
Then for any given family $(Y_i:i\in I)$ of subsets of $\A$,  if $\prod Y_i=0$ for all $i\in I$, then we have
$\bigcap_{y\in Y_i} f(y)=\emptyset$ for all $i\in I$.
\end{theorem}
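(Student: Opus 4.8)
The plan is to argue by contradiction, using the single distinctive feature of an \emph{atomic} representation: the images of the atoms of $\A$ exhaust the unit $V$, i.e. $\bigcup_{x\in\At\A}f(x)=V$, equivalently every point of $V$ lies in $f(a)$ for some atom $a$. So I would begin by assuming the conclusion fails: there is an index $i\in I$ and a point $v\in V$ with $v\in\bigcap_{y\in Y_i}f(y)$.

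The first step is to produce a witnessing atom: by atomicity of the representation choose $a\in\At\A$ with $v\in f(a)$. The second step is to show $a$ is a lower bound of $Y_i$. Fix $y\in Y_i$. Since $f$ is a $\CA_n$-homomorphism into $\wp(V)$, in particular a Boolean homomorphism, we have $f(a)\cap f(y)=f(a\cdot y)$, and this set contains $v$, hence is non-empty; as $f(0)=\emptyset$ this forces $a\cdot y\neq 0$. Because $a$ is an atom, $a\cdot y\neq 0$ gives $a\cdot y=a$, that is $a\leq y$. As $y\in Y_i$ was arbitrary, $a$ is a lower bound of $Y_i$ in $\A$, and $a\neq 0$.

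The final step is to contradict the hypothesis: $\prod Y_i=0$ says the infimum of $Y_i$ in $\A$ is $0$, so every lower bound of $Y_i$ is $\leq 0$; applying this to $a$ yields $a=0$, contradicting that $a$ is an atom. Hence $\bigcap_{y\in Y_i}f(y)=\emptyset$, and since $i$ was arbitrary the theorem follows.

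I do not expect a genuine obstacle here; the argument is the algebraic shadow of the model-theoretic fact that a prime (atomic) model omits every non-principal type. The only point demanding care is fixing the reading of ``atomic representation'' as precisely the covering condition $\bigcup_{x\in\At\A}f(x)=V$ — this is what licenses the choice of the atom $a$ above — together with the routine facts that a representation preserves finite meets and sends $0$ to $\emptyset$, and that a non-zero element below every member of a set $Y$ witnesses $\prod Y\neq 0$.
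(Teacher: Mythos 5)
Your proof is correct and is essentially the paper's own argument in contrapositive form: the paper passes to the complements $Z_i=\{-y:y\in Y_i\}$ with $\sum Z_i=1$ and shows every atom lies below some $-y$, so $V=\bigcup_{x\in\At\A}f(x)\subseteq\bigcup_{z\in Z_i}f(z)$, while you take a point $v$ in the intersection, pick an atom $a$ with $v\in f(a)$, and conclude $a$ is a non-zero lower bound of $Y_i$, contradicting $\prod Y_i=0$. Both rest on the same two facts — the covering condition $\bigcup_{x\in\At\A}f(x)=V$ and the atom-below-element step — so no further comment is needed.
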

\begin{proof} Let $i\in I$. Let $Z_i=\{-y: y\in Y_i\}$. Then $\sum Z_i=1$. Let $x$ be an atom. Then $x.\sum Z_i=x\neq 0$.
Hence there exists $z\in Z_i$, such that
$x.z\neq 0$. But $x$ is an atom, hence $x.z=x$ and so $x\leq z$. We have shown that for every atom $x$, there exists $z\in Z_i$ such that $x\leq z$.
It follows immediately that
$V=\bigcup_{x\in \At\A}f(x)\leq \bigcup_{z\in Z_i} f(z)$, and so $\bigcap_{y\in Y_i} f(y)=\emptyset,$
and we are done.
\end{proof}

Our last theorem shows that the omitting types fails in countable complete $L_n$ theories in a very strong sense.
theories chosen can be complete, atomic, having no atomic models.

\begin{corollary} For each finite $n\geq 3$, there exists a simple countable atomic representable polyadic equality 
algebra of dimension $n$ whose diagonal free reduct  not completely representable
\end{corollary}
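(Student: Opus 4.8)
The plan is to obtain the required algebra as a countable analogue of the algebra $\B$ of Example~\ref{countable}. First I would fix a \emph{countable} atomic relation algebra $\A$ that is symmetric (every atom is its own converse), integral ($1'$ is an atom), not completely representable, but whose atom structure is strongly representable, so that $\At\A$ carries an $\omega$-dimensional cylindric basis $S$ and $\Ca(S)\in\CA_\omega$. I would then let $\mathcal C=\Sg^{\Ca(S)}\A$ be the subalgebra of $\Ca(S)$ generated by (the image of) $\A$; since $\A$ is countable and $\CA_\omega$ has countably many operations, $\mathcal C$ is countable, and exactly as in Example~\ref{countable} one gets $\Ra\mathcal C=\A$ with $\mathcal C$ generated by its $\Ra$-reduct, so that $\B:=\Nr_n\mathcal C$ is a countable atomic simple algebra (the verification of simplicity being as in Example~\ref{countable}) lying in $\Nr_n\CA_\omega\subseteq\RCA_n$ and generated by its $2$-dimensional elements. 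Because every atom of $\A$ is symmetric, $S$ is closed under permutations of its nodes, so the finite transposition substitutions are everywhere defined on $\Ca(S)$; hence $\mathcal C$ is the $\CA$-reduct of some $\mathcal C^{+}\in\QEA_\omega$, and $\B^{+}:=\Nr_n\mathcal C^{+}\in\Nr_n\QEA_\omega\subseteq\RPEA_n$ is a $\QEA_n=\PEA_n$ with the same (countable, atomic, simple) Boolean reduct as $\B$.

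Next I would argue that $\B^{+}$ is the algebra we want. Its relation algebra reduct is $\Ra\B^{+}=\Ra\mathcal C=\A$, which has no complete representation, and a complete representation of $\B^{+}$ --- or of its $\CA$-reduct $\B$ --- would restrict to a complete representation of $\A$; so $\B$ is not completely representable. As $n\geq3$ and $\B$ is generated by elements of dimension set $\neq n$, the theorem of Hodkinson (after Johnson) proved above applies to $\B$: if $\Rd_{df}\B$ were completely representable then $\B$ would be too, a contradiction. Hence $\Rd_{df}\B$ is not completely representable, and a fortiori neither is the diagonal-free reduct of $\B^{+}$, which has $\Rd_{df}\B$ as a reduct. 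This gives a countable atomic simple representable $\PEA_n$ whose diagonal-free reduct has no complete representation.

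The hard part is the very first step: producing a single countable relation algebra that is at once symmetric, integral, not completely representable, and has a strongly representable atom structure (so that the $\omega$-dimensional cylindric basis, hence $\Ca(S)\in\CA_\omega$, is genuinely available). The Erd\"os--Rado / Ramsey device of Example~\ref{countable} does not supply this, since it forces either uncountably many atoms, as there, or only finitely many ``positive'' colours, and in the latter case Ramsey's theorem already rules out any atomic network on $\omega$ nodes, so not even a cylindric basis --- let alone a representation --- survives. The failure of complete representability must instead be arranged game-theoretically, with $\exists$ winning every finite-round atomic game but losing the $\omega$-round one while the atom structure stays strongly representable; countable atomic relation algebras of this kind are available in the circle of constructions of Hirsch and Hodkinson, and the remaining work is to choose one that is also symmetric and integral and to check, as in Example~\ref{countable}, that $\Ra\mathcal C=\A$ and that $\B$ is generated by its two-dimensional part.
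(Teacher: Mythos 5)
There is a genuine gap, and it sits exactly where you flag it yourself: the ``very first step''. Your whole argument is conditional on the existence of a countable, symmetric, integral relation algebra $\A$ that is not completely representable but whose atom structure both is strongly representable and carries an $\omega$-dimensional cylindric basis $S$ with $\Ra\bigl(\Sg^{\Ca(S)}\A\bigr)=\A$. That existence claim is of essentially the same strength as the corollary you are asked to prove, and you do not establish it: you correctly observe that the Erd\H{o}s--Rado device of Example~\ref{countable} cannot be made countable, and then appeal vaguely to ``the circle of constructions of Hirsch and Hodkinson'' without naming a construction or checking any of the required properties. Note also that several of the steps you treat as routine are not automatic: strong representability of $\At\A$ (i.e.\ representability of $\Cm\At\A$) does not by itself yield an $\omega$-dimensional cylindric basis, and the identities $\Ra\C=\A$ and ``$\Nr_n\C$ is generated by its two-dimensional elements'' were verified in Example~\ref{countable} by hand, using the very special shape of the atoms (cofinite subsets of the two colour classes); in your general setting these verifications are left open. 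So as written the proposal is a reduction of the corollary to an unproved statement, not a proof.

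For comparison, the paper takes a different and shorter route: it starts from a countable atomic representable polyadic (equality) algebra whose $\Df$-reduct is already known not to be completely representable --- such algebras come out of the graph/model-theoretic constructions in the later sections, via the theorem that for an algebra of dimension $n\geq 3$ generated by elements with dimension set $\neq n$, complete representability of the diagonal-free reduct would force complete representability of the algebra itself --- and then handles simplicity by a decomposition argument: the simple components are the relativizations to ${\sf c}_{(n)}a$ for atoms $a$, and if every component had a complete representation one could glue these (disjoint union of the bases, $h(\alpha)=\bigcup_a h_a(\alpha\cdot {\sf c}_{(n)}a)$) into a complete representation of the whole algebra, a contradiction; hence some simple component is as required. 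If you want to salvage your relation-algebra route, the paper's own ``first construction'' (the countable symmetric integral $\R$ with an $n$-dimensional cylindric basis, whose term algebra $\C=\Tm(\B_n\R)$ is a countable simple atomic representable $\CA_n$ generated by a single two-dimensional element and whose relation-algebra reduct has no complete representation) supplies exactly the kind of countable base object you need; but that citation and the attendant checks are precisely what is missing from your write-up.
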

\begin{demo}{Proof} Let $\A$ be a countable atomic representable polyadic algebra, that is not necessarily simple, satisfying that its $Df$ reduct is not 
completely representable. Such algebras exist, see the next section. 
Consider the elements $\{{\sf c}_na: a\in At\A'\}$. Then every simple component $S_a$ of $\A$ can be
obtained by relativizing to ${\sf c}_na$ for an atom $a$. 
Then one of the $S_a$'s should have no complete representation . Else for each atom $a$ $S_a$ has a complete representation $h_a$.
From those one constructs a complete representation for $\A$. 
The domain of the representation will be the disjoint union of the domains of $h_a$, and now represent $\A$ by
$$h(\alpha)=\bigcup_{a\in AtA}\{h(\alpha\cdot {\sf c}_na)\}.$$

\end{demo}

\section{Complete representability of polyadic algebras of infinite dimension}

\begin{theorem} Let $\alpha$ be an infinite ordinal. Let $\A\in \PA_{\alpha}$ be atomic. 
Then $\A$ has a complete representation.
\end{theorem}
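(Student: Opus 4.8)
\begin{demo}{Proof}
The idea is to reduce the problem to the classical representation theory of polyadic algebras of infinite dimension --- whose Henkin/witness construction goes through precisely because $\alpha$ is infinite and all substitution operations ${\sf s}_\tau$, $\tau\colon\alpha\to\alpha$, are available --- and to observe that, for an \emph{atomic} algebra, the construction can be steered by the principal ultrafilters on atoms, which are completely prime. Complete primeness of these ultrafilters is exactly what forces the representation to be atomic, hence complete.

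I begin with the standard reduction: if $h\colon\A\to\wp(V)$ represents an atomic algebra, then $h$ is complete iff it is \emph{atomic}, i.e.\ $\bigcup_{x\in\At\A}h(x)=V$; for if $v\in\bigcap_{s\in S}h(s)$ and $v\in h(x)$ with $x$ an atom, then $x\cdot s\neq 0$, so $x\leq s$, for all $s\in S$, whence $x\leq\prod S$ and $v\in h(\prod S)$. Next, we may assume $\A$ is \emph{complete}: replacing $\A$ by $\Cm\At\A$ is harmless, since $\PA_\alpha$ is closed under completions (atom--canonical) for $\alpha$ infinite, the two algebras have the same atoms, and an atomic representation of $\Cm\At\A$ restricts to one of $\A$. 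In $\Cm\At\A$ all suprema exist, and --- being a complex algebra --- all its non--Boolean operations, in particular the substitutions ${\sf s}_\tau$, are completely additive; hence ${\sf s}_\tau\big(\sum\At\A\big)=\sum_{x\in\At\A}{\sf s}_\tau x$, i.e.\ $\sum_{x\in\At\A}{\sf s}_\tau x=1$ for every $\tau$.

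Now fix an atom $a$ and let $F_a=\{x:x\geq a\}$; as $a$ is an atom this is an ultrafilter, and it is completely prime, since meet distributes over arbitrary existing joins: if $\sum S\geq a$ then $a=\sum_{s\in S}(a\cdot s)$, forcing $a\cdot s=a$, i.e.\ $s\in F_a$, for some $s\in S$. In particular $F_a$ preserves all joins entering the existential axioms of $\PA_\alpha$ (those expressing that ${\sf c}_J x$ is the supremum of the ${\sf s}_\sigma x$ with $\sigma$ fixing $\alpha\setminus J$), as well as the joins $\sum_{x\in\At\A}{\sf s}_\tau x=1$ above. Run the usual polyadic canonical--model construction over $F_a$: using the infinitely many spare coordinates of $\alpha$ to supply witnesses, one obtains a base $U_a$ and a map $h_a\colon\A\to\wp({}^{\alpha}U_a)$, $h_a(x)=\{\bar u:x\in G_{\bar u}\}$, where $G_{\bar u}$ is the completely prime ultrafilter obtained from $F_a$ by a substitution at the point $\bar u$. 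The Boolean and substitution clauses for $h_a$ are immediate from the $\PA_\alpha$ identities (${\sf s}_\sigma{\sf s}_\tau={\sf s}_{\sigma\circ\tau}$, and so on) and the ultrafilter property. The clause $h_a({\sf c}_J x)={\sf C}_J h_a(x)$ --- in particular for \emph{infinitary} $J$ --- is where complete primeness of the $G_{\bar u}$ and the infinitude of $\alpha$ are used, and this is the step I expect to be the main obstacle: it amounts to re--running, in the present setting, the witness step of the Daigneault--Monk representation theorem. Granting it, $h_a$ is a homomorphism, and it is atomic: for each $\bar u$, complete primeness of $G_{\bar u}$ applied to $\sum_{x\in\At\A}(\text{value of }x\text{ at }\bar u)=1$ yields an atom $x$ with $\bar u\in h_a(x)$; hence $\bigcup_{x\in\At\A}h_a(x)={}^{\alpha}U_a$ and $h_a$ is a complete representation of $\A$.

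Finally, put $V=\bigsqcup_{a\in\At\A}{}^{\alpha}U_a$ and $h=\bigsqcup_{a\in\At\A}h_a\colon\A\to\wp(V)$, so that $h$ maps $\A$ into a subdirect product of the square polyadic set algebras $\wp({}^{\alpha}U_a)$; this is a representation. It is injective: for $x\neq 0$ pick an atom $a\leq x$, so that the identity point of $h_a$ lies in $h_a(x)$ (because $x\in F_a$) and $h(x)\neq 0$. It is atomic: $\bigcup_{x\in\At\A}h(x)=\bigsqcup_a\bigcup_{x\in\At\A}h_a(x)=\bigsqcup_a{}^{\alpha}U_a=V$ by the previous paragraph. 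By the opening reduction, $h$ is a complete representation of $\A$.
\end{demo}
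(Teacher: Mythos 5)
There is a genuine gap, and it sits exactly where the real work of this theorem lies. Your second reduction --- ``we may assume $\A$ is complete: replacing $\A$ by $\Cm\At\A$ is harmless, since $\PA_{\alpha}$ is closed under completions, and all operations of the complex algebra, in particular the ${\sf s}_{\tau}$, are completely additive'' --- is unjustified and essentially question-begging. First, atom-canonicity of $\PA_{\alpha}$ (for the full infinitary signature, with ${\sf c}_{(\Gamma)}$ for arbitrary $\Gamma\subseteq\alpha$) is not a known triviality and is nowhere proved by you. Second, even granting it, the canonical map $a\mapsto\{x\in\At\A: x\leq a\}$ is a homomorphism onto a subalgebra of $\Cm\At\A$ only if the operations of $\A$, in particular the substitutions, are already completely additive on $\A$; so you are assuming precisely the complete additivity you then ``deduce'' from the complex algebra. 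This cannot be waved through: the paper's own examples show that for cylindrifier-free reducts (and for $PA_2$, and for weak set algebras in all infinite dimensions) substitution operations of replacement type need \emph{not} be completely additive, and atomic algebras there need not be completely representable. The content of the present theorem is that the presence of all (infinitary) cylindrifications rescues the situation, and that has to be proved, not presupposed.

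The second gap is that you explicitly defer the witness step (``Granting it, $h_a$ is a homomorphism\ldots''), but that step, together with the additivity issue above, is the entire proof. The paper handles both by passing to a minimal Daigneault--Monk dilation $\B\in\PA_{\mathfrak n}$ with $\A=\Nr_{\alpha}\B$, where $\mathfrak n$ is chosen large relative to the local degree (your ``infinitely many spare coordinates of $\alpha$'' are not enough in general; the base of the representation is $\mathfrak n$, not $\alpha$): there one has ${\sf c}_{(\Gamma)}p=\sum\{{\sf s}_{\bar\tau}p:\tau\restriction\alpha\smallsetminus\Gamma=Id\}$, one shows $\B$ is atomic, and --- the heart of the argument --- one proves by a case analysis on $\tau$ (onto versus not onto, using a right inverse $\sigma$, the spare dimensions, a generalized cylindrification ${\sf c}_{(\Gamma)}$ with $\Gamma\subseteq\mathfrak n\smallsetminus\alpha$, and the polyadic axioms) that for every nonzero $a$ there is an atom $x$ with ${\sf s}_{\tau}x\leq a$, hence $\sum_{x\in\At\A}{\sf s}_{\bar\tau}x=1$ for every $\tau\in{}^{\alpha}\mathfrak n$. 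Only then does the choice of a principal ultrafilter (your ``completely prime'' observation, which is the one part of your plan that matches the paper: isolated points of the Stone space avoid the relevant nowhere dense sets) yield a representation $f(a)=\{\tau:{\sf s}_{\bar\tau}^{\B}a\in F\}$ that is atomic, hence complete. As written, your argument replaces this substitution manipulation by an appeal to a completion that you have not shown exists as a polyadic algebra and into which you have not shown $\A$ embeds, so the proof does not go through.
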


\begin{demo}{Proof} Let $c\in A$ be non-zero. We will find a set $U$ and a homomorphism 
from $\A$ into the set algebra with universe $\wp(^{\alpha}U)$ that preserves arbitrary suprema
whenever they exist and also satisfies that  $f(c)\neq 0$. $U$ is called the base of the set algebra. 
Let $\mathfrak{m}$ be the local degree of $\A$, $\mathfrak{c}$ its effective cardinality and 
$\mathfrak{n}$ be any cardinal such that $\mathfrak{n}\geq \mathfrak{c}$ 
and $\sum_{s<\mathfrak{m}}\mathfrak{n}^s=\mathfrak{n}$. The cardinal 
$\mathfrak{n}$ will be the base of our desired representation.

Now there exists $\B\in \PA_{\mathfrak{n}}$ 
such that $\A\subseteq \Nr_{\alpha}\B$ and $A$ generates $\B$. 
The local degree of $\B$ is the same as that of $\A$, 
in particular each $x\in \B$ admits a support of cardinality $<\mathfrak{m}$. Furthermore, $|\mathfrak{n}\sim \alpha|=|\mathfrak{n}|$ and
for all $Y\subseteq A$, we have $\Sg^{\A}Y=\Nr_{\alpha}\Sg^{\B}Y.$ 
All this can be found in \cite{DM}, see the proof of theorem 1.6.1 therein; in such a proof, 
$\B$ is called a minimal dilation of $\A$. 
Without loss of generality, we assume that $\A=\Nr_{\alpha}\B$,  since $\Sg^{\A}A=\Nr_{\alpha}\Sg^{\B}A=\Nr_{\alpha}\B.$ (In the last equality 
we are using that $A$ generates $\B$).
Hence $\A$ is first order interpretable in $\B$. In particular, any first order sentence (e.g. the one expressing that $\A$ is atomic)
of the language of $\PA_{\alpha}$ translates effectively to a sentence $\hat{\sigma}$ of the language of $\PA_{\beta}$ such that for all 
$\C\in \PA_{\beta}$, we have
$\Nr_{\alpha}\C\models \sigma\longleftrightarrow \C\models \hat{\sigma}$. 
Here the languages are uncountable, even if $\A$ is countable and has countable dimension,  so we use effective in a loose sense, 
but it roughly means that there is an effective 
procedure or algorithm that does 
this translation. Since $\A=\Nr_{\alpha}\B$ and $\A$ is atomic, it 
follows that $\B$ is also atomic.
Let $\Gamma\subseteq \alpha$ and $p\in \A$. Then in $\B$ we have, see \cite{DM} the proof of theorem 1.6.1,  
\begin{equation}\label{tarek1}
\begin{split}
{\sf c}_{(\Gamma)}p=\sum\{{\sf s}_{\bar{\tau}}p: \tau\in {}^{\alpha}\mathfrak{n},\ \  \tau\upharpoonright \alpha\sim\Gamma=Id\}.
\end{split}
\end{equation}
Here, and elsewhere throughout the paper,  for a transformation $\tau$ wth domain $\alpha$ and range included in $\mathfrak{n}$,
$\bar{\tau}=\tau\cup Id_{\mathfrak{n}\sim \alpha}$. 
Let $X$ be the set of atoms of $\A$. Since $\A$ is atomic, then  $\sum^{\A} X=1$. By $\A=\Nr_{\alpha}\B$, we also have $\sum^{\B}X=1$.
We will further show that
for all $\tau\in {}^{\alpha}\mathfrak{n}$ we have,
\begin{equation}\label{tarek2}
\begin{split}
\sum {\sf s}_{\bar{\tau}}^{\B}X=1.
\end{split}
\end{equation}
It suffices to show that for every $\tau\in {}^{\alpha}\alpha$ and $a\neq 0\in \A$, there exists $x\in X$, such that ${\sf s}_{\tau}x\leq a.$
This will show that for any $\tau\in {}^{\alpha}\alpha$, the sum  $\sum{\sf s}_{\tau}^{\A}X$ 
is equal to the top element in $\A$, which is the same as that of $\B$. 
The required will then follow since for $Y\subseteq A$, we have
$\sum^{\A}Y=\sum ^{\B}Y$ by $\A=\Nr_{\alpha}\B$, and for all $x\in A$ and $\tau\in {}^{\alpha}\alpha$, we have 
${\sf s}_{\tau}^{\A}x={\sf s}_{\bar{\tau}}^{\B}x$, since $\B$ is a dilation of $\A$.

Our proof proceeds by certain non-trivial manipulations of substitutions. 
Assume that $\tau\in {}^{\alpha}\alpha$ and non-zero $a\in A$ are given. Suppose for the time being that $\tau$ is onto. 
We define a right inverse $\sigma$ of $\tau$ the usual way. That is for $i\in \alpha$ choose $j\in \tau^{-1}(i)$ and set $\sigma(i)=j$. 
Then $\sigma\in {}^{\alpha}\alpha$ and in fact $\sigma$ is one to one. 
Let $a'={\sf s}_{\sigma}a$. Then $a'\in A$ and  $a'\neq 0$, for if it did, then by polyadic axioms (7) and (8), we would get 
$0={\sf s}_{\tau}a'={\sf s}_{\tau\circ \sigma}a=a$ 
which is not the case, since $a\neq 0$.
Since $\A$ is atomic, then there exists an atom $x\in X$ such that $x\leq a'$.
Hence using that substitutions preserve the natural order on the boolean algebras in question, 
since they are boolean endomorphisms, and axioms (7) and (8)
in the polyadic axioms, we obtain
$${\sf s}_{\tau}x\leq {\sf s}_{\tau}a'={\sf s}_{\tau}{\sf s}_{\sigma}a={\sf s}_{\tau\circ \sigma}a={\sf s}_{Id}a=a.$$
Here all substitutions are evaluated in the algebra $\A$, and we are done in this case.
Now assume that $\tau$ is not onto. Here we use the spare dimensions of $\B$. 
By a simple cardinality argument, baring in mind that $|\mathfrak{n}\sim \alpha|=|\mathfrak{n}|$, 
we can find $\bar{\tau}\in {}^{\mathfrak{n}}\mathfrak{n}$, such that 
$\bar{\tau}\upharpoonright \alpha=\tau$, and 
$\bar{\tau}$ is onto. We can also assume that $\bar{\tau}\upharpoonright (\mathfrak{n}\sim \alpha)$ is one to one since 
$|\mathfrak{n}\sim \alpha|=|\mathfrak{n}\sim Range(\tau)|=\mathfrak{n}.$
Notice also that we have $\bar{\tau}^{-1}(\mathfrak{n}\sim \alpha)\cap \alpha=\emptyset$.  
Indeed if $x\in \bar{\tau}^{-1}(\mathfrak{n}\sim \alpha)\cap \alpha$, then $\bar{\tau}(x)\notin \alpha$, while $x\in \alpha$, 
which is impossible, since $\bar{\tau}\upharpoonright \alpha=\tau$, 
so that $\bar{\tau}(x)=\tau(x)\in \alpha$.
Now let $\sigma\in {}^{\mathfrak{n}}\mathfrak{n}$ be a right inverse of $\tau$ (on $\mathfrak{n}$), so that
for chosen $j\in \bar{\tau}^{-1}(i)$, we have $\sigma(i)=j.$
We distinguish between two cases:

(i) $\sigma(\alpha)\subseteq \alpha$.
Let $a'={\sf s}_{\sigma}^{\B}a={\sf s}_{\sigma\upharpoonright \alpha}^{\A}a.$ The last equation holds because $\B$ is a dilation of $\A$.
Then $a'\in A$, since $\sigma(\alpha)\subseteq \alpha$. Also $a'\neq 0$, by same reasoning as above.
Now there exists an atom $x\in X$, such that $x\leq a'$. A fairly straightforward computation, using that $\B$ is a dilation of $\A$,
together with the fact that substitutions preserve order, and axioms 
(7) and (8)
in the polyadic axioms applied to $\B$, gives
$${\sf s}_{{\tau}}^{\A}x\leq {\sf s}_{{\tau}}^{\A}a'={\sf s}_{\bar{\tau}}^{\B}a'= {\sf s}_{\bar{\tau}}^{\B}{\sf s}_{\sigma}^{\B}a=
{\sf s}_{\bar{\tau}\circ \sigma}^{\B}a={\sf s}_{Id}^{\B}a=a,$$
which finishes the proof in the first case.

(ii) $\sigma(\alpha)$ is not contained in $\alpha.$ 

We proceed as follows. Let $a'={\sf s}_{\sigma}^{\B}a\in \B$, then by the same reasoning as above, $a'\neq 0$. 
But $a'\in \B$, hence, there exists $\Gamma\subseteq \mathfrak{n}\sim \alpha$, such that 
$a''={\sf c}_{(\Gamma)}a'\in \A,$ since $\A=\Nr_{\alpha}\B$. But  $a'\leq a''$, so $a''$ is also a non-zero element in $\A$. 
Let $x\in X$ be an atom below $a''$. Then we have
$${\sf s}_{\tau}^{\A}x\leq {\sf s}_{\bar{\tau}}^{\B}a''={\sf s}_{\bar{\tau}}^{\B}{\sf c}_{(\Gamma)}^{\B}a'=
{\sf s}_{\bar{\tau}}^{\B}{\sf c}_{(\Gamma)}^{\B}{\sf s}_{\sigma}a=
{\sf c}_{(\Delta)}^{\B}{\sf s}_{\bar{\tau}}^{\B}{\sf s}_{\sigma}^{\B}a.$$
Here $\Delta=\bar{\tau}^{-1}\Gamma$, and the last equality follows from axiom (10) in the polyadic axioms noting that
$\bar{\tau}\upharpoonright \Delta$ is one to one since  
$\bar{\tau}\upharpoonright (\mathfrak{n}\sim \alpha)$ is one to one and  
$\Delta\subseteq \mathfrak{n}\sim \alpha$. The last inclusion holds by noting that $\bar{\tau}^{-1}(\mathfrak{n}\sim \alpha)\cap \alpha=\emptyset,$ 
$\Gamma\subseteq \mathfrak{n}\sim \alpha,$ and $\Delta=\bar{\tau}^{-1}\Gamma.$
But then going on, we have also by axioms (7) and (8) of the polyadic axioms
$${\sf c}_{(\Delta)}^{\B}{\sf s}_{\bar{\tau}}^{\B}{\sf s}_{\sigma}^{\B}a= 
{\sf c}_{(\Delta)}^{\B}{\sf s}_{\bar{\tau}\circ \sigma}^{\B}a={\sf c}_{(\Delta)}^{\B}{\sf s}_{Id}^{\B}a={\sf c}_{(\Delta)}^{\B}a=a.$$
The last equality follows from the fact that $\Delta\subseteq \mathfrak{n}\sim \alpha,$ 
and $a$ is $\alpha$-dimensional, that is $a\in \A=\Nr_{\alpha}\B.$  
We have proved that ${\sf s}_{\tau}^{\A}x\leq a$, and so we are done with the second slightly more difficult case, as well.

Let $S$ be the Stone space of $\B$, whose underlying set consists of all boolean ulltrafilters of 
$\B$. Let $X^*$ be the set of principal ultrafilters of $\B$ (those generated by the atoms).  
These are isolated points in the Stone topology, and they form a dense set in the Stone topology since $\B$ is atomic. 
So we have $X^*\cap T=\emptyset$ for every nowhere dense set $T$ (since principal ultrafilters, which are isolated points in the Stone topology,
lie outside nowhere dense sets). 
For $a\in \B$, let $N_a$ denote the set of all boolean ultrafilters containing $a$.
Now  for all $\Gamma\subseteq \alpha$, $p\in B$ and $\tau\in {}^{\alpha}\mathfrak{n}$, we have,
by the suprema, evaluated in (1) and (2):
\begin{equation}\label{tarek3}
\begin{split}
G_{\Gamma,p}=N_{{\sf c}_{(\Gamma)}p}\sim \bigcup_{{\tau}\in {}^{\alpha}\mathfrak{n}} N_{s_{\bar{\tau}}p}
\end{split}
\end{equation}
and
\begin{equation}\label{tarek4}
\begin{split}
G_{X, \tau}=S\sim \bigcup_{x\in X}N_{s_{\bar{\tau}}x}.
\end{split}
\end{equation}
are nowhere dense. 
Let $F$ be a principal ultrafilter of $S$ containing $c$. 
This is possible since $\B$ is atomic, so there is an atom $x$ below $c$; just take the 
ultrafilter generated by $x$.
Then $F\in X^*$, so $F\notin G_{\Gamma, p}$, $F\notin G_{X,\tau},$ 
for every $\Gamma\subseteq \alpha$, $p\in A$
and $\tau\in {}^{\alpha}\mathfrak{n}$.
Now define for $a\in A$
$$f(a)=\{\tau\in {}^{\alpha}\mathfrak{n}: {\sf s}_{\bar{\tau}}^{\B}a\in F\}.$$
Then $f$ is a homomorphism from $\A$ to the full
set algebra with unit $^{\alpha}\mathfrak{n}$, such that $f(c)\neq 0$. We have $f(c)\neq 0$ because $c\in F,$ so $Id\in f(c)$. 
The rest can be proved exactly as in \cite{super}; the preservation of the boolean operations and substitutions is fairly 
straightforward. Preservation of cylindrifications
is guaranteed by the condition that $F\notin G_{\Gamma,p}$ for all $\Gamma\subseteq \alpha$ and all $p\in A$. (Basically an elimination 
of cylindrifications, this condition is also used in \cite{DM}
to prove the main representation result for polyadic algebras.)
Moreover $f$ is an 
atomic representation since $F\notin G_{X,\tau}$ for every $\tau\in {}^{\alpha}\mathfrak{n}$, 
which means that for every $\tau\in {}^{\alpha}\mathfrak{n}$, 
there
exists $x\in X$, such that
${\sf s}_{\bar{\tau}}^{\B}x\in F$, and so $\bigcup_{x\in X}f(x)={}^{\alpha}\mathfrak{n}.$ 
We conclude that $f$ is a complete  representation by Lemma \ref{r}.
\end{demo}
Contrary to cylindric algebras, we have:
\begin{corollary} The class of completely representable polyadic algebras of infinite dimension is elementary
\end{corollary}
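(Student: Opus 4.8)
The plan is to identify the completely representable members of $\PA_{\alpha}$ with the atomic ones, and then to observe that atomicity of the Boolean reduct is a single first-order property, so that the class is carved out of the variety $\PA_{\alpha}$ by exactly one extra axiom.

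One inclusion is precisely the theorem just proved: every atomic $\A\in\PA_{\alpha}$ has a complete representation. For the converse I would use the standard fact that a completely representable algebra is atomic. Concretely, recall (Lemma \ref{r}) that a complete representation of $\A$ may be taken to be an \emph{atomic} one, i.e.\ an injective homomorphism $h:\A\to\wp({}^{\alpha}U)$ with $\bigcup_{x\in\At\A}h(x)={}^{\alpha}U$. Given such an $h$ and any nonzero $a\in A$, the set $h(a)$ is nonempty (since $h$ is injective), so it meets $h(x)$ for some $x\in\At\A$; then $h(a\cdot x)=h(a)\cap h(x)\neq\emptyset$, hence $a\cdot x\neq 0$, hence $a\cdot x=x$, so $x\leq a$. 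Thus every nonzero element of $\A$ lies above an atom, and $\A$ is atomic. Combining the two directions, $\A\in\PA_{\alpha}$ is completely representable if and only if $\A$ is atomic.

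It then remains to see that the class of atomic members of $\PA_{\alpha}$ is elementary. Let $\At(y)$ denote the first-order formula
$$y\neq 0 \ \wedge\ \forall z\,\bigl(z\cdot y=z\rightarrow(z=0\vee z=y)\bigr),$$
and let $\sigma$ be the sentence $\forall x\,\bigl(x\neq 0\rightarrow\exists y\,(\At(y)\wedge y\cdot x=y)\bigr)$. For any $\A\in\PA_{\alpha}$ we have $\A\models\sigma$ exactly when $\A$ is atomic; note that $\sigma$ mentions only the Boolean operations, so it is a bona fide first-order sentence even in the (possibly uncountable) similarity type of $\PA_{\alpha}$. Since $\PA_{\alpha}$ is a variety, it is axiomatized by a set $\Sigma$ of equations, and by the previous paragraph the class of completely representable polyadic algebras of dimension $\alpha$ is axiomatized by $\Sigma\cup\{\sigma\}$; in particular it is elementary, and indeed closed under ultraproducts and ultraroots.

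I do not expect a real obstacle here: all the work sits inside the preceding theorem, whose content is precisely that, in the polyadic setting, atomicity \emph{suffices} for complete representability. This is exactly what fails for cylindric algebras of finite dimension $n\geq 3$, where complete representability is equivalent to atomicity \emph{together with} membership in $S_c\Nr_n\CA_{\omega}$, a condition that is not first-order; no such supplementary condition is needed here, and atomicity by itself --- unlike, say, completeness or superatomicity --- is plainly first-order. The only minor points to check are the easy direction (completely representable $\Rightarrow$ atomic) recorded above, and the harmless remark that $\sigma$ is still a single first-order sentence when $\alpha$ is infinite.
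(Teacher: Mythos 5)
Your proposal is correct and matches the paper's argument: the paper's proof consists of exactly the observation that, given the preceding theorem (atomic $\Rightarrow$ completely representable) and the easy converse, the class coincides with the atomic members of $\PA_{\alpha}$, and atomicity is expressible by a single first-order sentence. Your write-up merely makes explicit the converse direction and the sentence $\sigma$, which the paper leaves implicit.
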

\begin{demo}{Proof} Atomicity can be expressed by a first order sentence.
\end{demo}

\section{ Non elementary classes related to complete representations and neat reducts}

\begin{theorem} Let $\K$ be any of cylindric algebra, polyadic algebra, with and without equality, or Pinter's substitution algebra.
We give a unified model theoretic construction, to show the following:
\begin{enumarab}
\item For $n\geq 3$ and $m\geq 3$, $\Nr_n\K_m$ is not elementary, and $S_c\Nr_n\K_{\omega}\nsubseteq \Nr_n\K_m.$
\item For any $k\geq 5$, $\Ra\CA_k$ is not elementary
and $S_c\Ra\CA_{\omega}\nsubseteq \Ra\CA_k$. 
\end{enumarab}
\end{theorem}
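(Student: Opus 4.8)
The plan is to give a single model-theoretic construction — based on rainbow graphs, or equivalently on a labelled graph in the style of Hodkinson, simplified as in \cite{w} — that simultaneously witnesses all the stated non-elementarity and non-containment claims. The core object is a pair of structures $\Gamma$, $\Delta$ (graphs, or relational structures) that are elementarily equivalent, indeed $L_{\infty\omega}$-equivalent after suitable back-and-forth systems are set up, but differ in one crucial second-order-like feature: one of them, say $\Gamma$, has a certain colouring/chromatic property (finite colouring, or no infinite clique of a forbidden colour), while $\Delta$ does not. From each of $\Gamma$, $\Delta$ one builds, via the standard construction of a cylindric (polyadic, etc.) algebra from a graph, atom structures $\At_\Gamma$, $\At_\Delta$, and then the term algebras $\A_\Gamma = \Tm\At_\Gamma$ and $\A_\Delta = \Tm\At_\Delta$. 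The key payoff one arranges is: $\A_\Gamma \in \Nr_n\K_m$ (in fact $\A_\Gamma \in \Nr_n\K_\omega$, so it sits in $S_c\Nr_n\K_\omega$ being atomic with the obvious dense-in-its-completion property), whereas $\A_\Delta \notin \Nr_n\K_m$ for the given $m$ — the failure being detected exactly by the bad colouring behaviour of $\Delta$. Since $\A_\Gamma \equiv \A_\Delta$ (the elementary equivalence of the graphs lifts to the algebras, as the algebra construction is first-order uniform in the graph), this shows $\Nr_n\K_m$ is not closed under $\equiv$, hence not elementary, and at the same time exhibits an algebra in $S_c\Nr_n\K_\omega$ (namely $\A_\Gamma$, or rather a suitable member witnessing the construction from the ``bad'' side) that is not in $\Nr_n\K_m$, giving $S_c\Nr_n\K_\omega \nsubseteq \Nr_n\K_m$.

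For the unified treatment across $\CA$, $\PA$, $\PEA$ and Pinter's $\SC$, the right framework is to do everything at the level of the relation-algebra-like atom structure with an $\omega$-dimensional cylindric (or polyadic) basis, as set up in the excerpt's discussion of cylindric bases $\mathcal M$ and $\Ca(\mathcal M)$, and then take $n$-neat-reducts. The cylindrifier and substitution operations are defined from the graph in the same way in all four cases; only the presence or absence of diagonals and of full substitutions changes, and the obstruction argument — ``a complete/neat embedding into an $m$-dimensional algebra would force a homomorphism respecting the bad colouring, contradiction'' — goes through verbatim since it only uses the cylindrifier structure present in every $\K$ containing all cylindrifiers. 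So one writes the proof once, for $\CA$, and remarks that the reduct operations $\Rd$ to the other signatures preserve membership in $\Nr_n$ and the relevant games, as in \cite{w}, \cite{HHbook}.

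For part (2), one runs the analogous construction one level down in dimension: take the relation algebra $\Ra$-reduct rather than an $n$-neat reduct. Here the magic number $5$ enters because representability of a relation algebra is governed by the existence of $n$-dimensional hyperbases / $\CA_n$ with $\Ra$-reduct, and the rainbow/graph obstruction that survives down to $\Ra\CA_k$ first bites at $k=5$ (the analogue of the $n\geq 3$ threshold in the cylindric case, shifted by the usual ``$3$ points of a network $\leftrightarrow$ a triple'' bookkeeping). One builds $\B_\Gamma = \Ra(\mathcal C_\Gamma)$ with $\mathcal C_\Gamma \in \CA_\omega$ so $\B_\Gamma \in \Ra\CA_\omega \subseteq S_c\Ra\CA_\omega$, and $\B_\Delta$ with $\B_\Delta \notin \Ra\CA_k$, while $\B_\Gamma \equiv \B_\Delta$; this is exactly parallel to the construction in \cite{r} for a related class, and in fact this theorem is announced in the introduction as ``the $CA$ analogue of a deep result in \cite{r}'', so the relation-algebra half is close to \cite{r} itself and the cylindric half is the new contribution.

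The main obstacle is the negative direction: proving rigorously that $\A_\Delta \notin \Nr_n\K_m$ (and $\B_\Delta \notin \Ra\CA_k$). The positive side, $\A_\Gamma \in \Nr_n\K_\omega$, is the routine ``there are enough networks'' / amalgamation-class verification already sketched for a similar algebra in Example~\ref{countable} (checking that the set of finite networks on $\Gamma$ with nodes in $\omega$ forms a cylindric basis, so that $\Ca$ of it lies in $\CA_\omega$ and the $n$-neat reduct recaptures $\A_\Gamma$). The negative side requires a genuine argument: assume $\A_\Delta \subseteq_c \Nr_n\C$ for some $\C \in \K_m$; one then plays the relevant $m$-round (or $\omega$-round, $m$-pebble) \ef/atomic-network game on $\A_\Delta$ — using that \pe\ has a \ws\ in the complete/neat-embedding game characterizing $\Nr_n\K_m$ membership for atomic algebras — and extracts from a \ws\ a system of consistent networks over $\Delta$ whose labels, read back through the graph colouring, produce a proper colouring of $\Delta$ (or an excluded monochromatic configuration that $\Delta$ cannot avoid), contradicting the chosen bad-colouring property of $\Delta$. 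Making the translation ``\ws\ in the game $\Rightarrow$ good colouring of the graph'' precise — i.e. isolating exactly which graph-theoretic invariant is forced and checking it is exactly the one where $\Gamma$ and $\Delta$ differ while remaining elementarily equivalent — is the technical heart, and it is here that the probabilistic Erd\H{o}s-graph input (arbitrarily large girth and chromatic number, as used in \cite{strong}, \cite{hv}) or the rainbow-signature bookkeeping must be invoked to guarantee such a pair $\Gamma \equiv \Delta$ exists in the first place.
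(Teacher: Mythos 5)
Your proposal takes a genuinely different route from the paper, and for this particular statement the route has a fatal gap. The paper does not use rainbow or Erd\H{o}s graphs, chromatic numbers, or atomic-network games here. Its proof is a cardinality twist in the style of \cite{MLQ}: one builds a structure $\M$ with quantifier elimination in a signature with uncountably many $n$-ary relation symbols (coding the signature into a Fra\"{\i}ss\'e limit via an auxiliary $4$-ary relation and passing to an elementary extension), takes $\A\cong\Nr_n\A_{\omega}$ as the \emph{full} neat reduct of the resulting locally finite set algebra, interprets $\A$ (and its $\Ra$ reduct) in a product of Boolean relativisations $\prod_{u\in V}\A_u$ indexed by a finite atom structure, and then obtains the bad algebra $\B$ by replacing one uncountable component $\A_u$ by a countable complete elementary Boolean subalgebra. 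Then $\B\equiv\A$ and $\B$ is a complete subalgebra of $\A$, so $\B\in S_c\Nr_n\K_{\omega}$ (resp.\ $S_c\Ra\CA_{\omega}$), while a term $\tau_m$ (resp.\ $\tau_k$) using spare dimensions, which preserves uncountability in set algebras but is dominated by a base-signature term $\tau$ whose value is trapped in the countable component $B_{Id}$, shows $\B\notin\Nr_n\K_m$ (resp.\ $\B\notin\Ra\CA_k$); for $\CA$ and its relatives one spare dimension suffices, and $k\geq 5$ enters only because $\tau_k$ must fail to be RA-definable.

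The concrete gap in your plan is the negative direction. Extracting a good colouring of $\Delta$ from a winning strategy of \pe\ in a neat-embedding/network game can only certify non-membership in classes of the form $S\Nr_n\K_m$ or $S_c\Nr_n\K_m$, i.e.\ classes closed under (complete) subalgebras, since such winning strategies pass to complete subalgebras. But the algebra that witnesses $S_c\Nr_n\K_{\omega}\nsubseteq\Nr_n\K_m$ must itself lie in $S_c\Nr_n\K_{\omega}$, so \pe\ wins every such game on it and no colouring or Ramsey-type contradiction can arise: failure to be a \emph{full} neat reduct is not a local consistency phenomenon, and any invariant inherited by complete elementary subalgebras is in principle unable to detect it --- which is exactly why the paper resorts to a cardinality invariant of a definable relativised piece. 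Two further problems: your graph-pair argument would at best yield non-elementarity of $\Nr_n\K_m$ for $m\geq n+2$ (where square/flat-representation obstructions bite), missing the strongest case $m=n+1$ covered by the theorem; and the asserted transfer of elementary equivalence from the graphs to the term algebras is not automatic (it holds for specific ultraproduct pairs, not for arbitrary $\Gamma\equiv\Delta$). What you sketch is essentially the method of the paper's later game-theoretic result that no class between $\Nr_n\CA_{\omega}$ and $S_c\Nr_n\CA_{n+2}$ is elementary, not a proof of the present statement.
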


%\section{ On relation algebra reducts of cylindric algebras}

In the abstract $S_c$ stands for the operation of forming {\it complete} subalgebras. 
The relation algebra part formulated in the abstract reproves a result of Hirsch in \cite{r}, and answers a question of his 
posed in {\it op.cit}.
For $\CA$ and its relatives the idea is very much like that in \cite{MLQ}, the details implemented, in each separate case, 
though are significantly distinct, because we look for terms not in the clone of operations
of the algebras considered; and as much as possible, we want these to use very little spare dimensions.

The relation algebra part is more delicate. We shall construct a relation algebra $\A\in \Ra\CA_{\omega}$ with a complete subalgebra $\B$, 
such that $\B\notin \Ra\CA_k$, and $\B$ is elementary equivalent to $\A.$ (In fact, $\B$ will be an elementary subalgebra of $\A$.)

Roughly the idea is to use an uncountable cylindric algebra in $\Nr_3\CA_{\omega}$, hence representable, and a finite atom structure
of another cylindric algebra.
%The latter will also be a set algebra.
We construct a finite product of the the uncountable cylindric  algebra; the product will be indexed by the atoms of the atom structure; 
the $\Ra$ reduct of the former will be as desired; it will be a full $\Ra$ reduct of an $\omega$ dimensional algebra and it has a 
complete elementary equivalent subalgebra not in 
$\Ra\CA_k$. This is the same idea for $\CA$, but in this case, and the other cases of its relatives, one spare dimension suffices.
 
This subalgebra is obtained by replacing one of the components of the product with an elementary 
{\it countable} algebra. First order logic will not see this cardinality twist, but a suitably chosen term 
$\tau_k$ not term definable in the language of relation algebras will, witnessing that the twisted algebra is not in $\Ra\CA_k$. 
For $\CA$'s and its relatives, as mentioned in the previous paragraph, we are lucky enough to have $k$ just $n+1,$
proving the most powerful result.

We concentrate on relation algebras. Let $\tau_k$ be an $m$-ary term of $\CA_k$, with $k$ large enough, 
and let $m\geq 2$ be its rank. We assume that $\tau_k$ is not definable
of relation algebras (so that $k$ has to be $\geq 5$); such terms exist.
Let $\tau$ be a term expressible in the language of relation algebras, such that
$\CA_k\models \tau_k(x_1,\ldots x_m)\leq \tau(x_1,\ldots x_m).$ (This is an implication between two first order formulas using $k$-variables).
Assume further that  whenever $\A\in {\bf Cs}_k$ (a set algebra of dimension $k$) is uncountable, 
and $R_1,\ldots R_m\in A$  are such that at least one of them is uncountable, 
then $\tau_k^{\A}(R_1\ldots R_m)$ is uncountable as well. 
(For $\CA$'s and its relatives, $k=n+1$, for $\CA$'s and $\Sc$s, it is a unary term, 
for polyadic algebras it also uses one extra dimension, it is a generalized composition hence it is a binary term.)
%Let $\G$ be a finite group, and $\Cm\G$ be the group relation algebra on $\G$. Then $\Cm\G\cong \wp(\G\times \G)$; 
%this is a complete representation, so  $\Cm\G\in S_c\Ra\CA_{\omega}$; but even more, we have $\Cm\G\in \Ra\CA_{\omega}.$ 
%If $\G=S_n$, we readily infer, that  for any $u\in {}^nn$, $\{u\}\in \Cm S_n$.

%For $u_1\ldots u_m\in {}^33$ and $\sigma$ a relation algebra term 
%we write $\sigma(u_1\ldots u_l)$ for $\sigma^{\R}(\{u_1\},\ldots \{u_n\})$, 
%the latter evaluated in the group relation algebra $\R=\Cm S_3$.
\begin{lemma} Let $V=(\At, \equiv_i, {\sf d}_{ij})_{i,j<3}$ be a finite cylindric atom structure, 
such that $|\At|\geq {}^33.$  
Let $L$ be a signature consisting of the unary relation 
symbols $P_0,P_1,P_2$ and
uncountably many tenary predicate symbols. 
For $u\in V$, let $\chi_u$
be the formula $\bigwedge_{u\in V}  P_{u_i}(x_i)$.  
Then there exists an $L$-structure $\M$ with the following properties:
\begin{enumarab}

\item $\M$ has quantifier elimination, i.e. every $L$-formula is equivalent
in $\M$ to a boolean combination of atomic formulas.

\item The sets $P_i^{\M}$ for $i<n$ partition $M$,

\item For any permutation $\tau$ on $3,$
$\forall x_0x_1x_2[R(x_0,x_1,x_2)\longleftrightarrow R(x_{\tau(0)},x_{\tau(1)}, x_{\tau(2)}],$
\item $\M \models \forall x_0x_1(R(x_0, x_1, x_2)\longrightarrow 
\bigvee_{u\in V}\chi_u)$, 
for all $R\in L$,

\item $\M\models  \exists x_0x_1x_2 (\chi_u\land R(x_0,x_1,x_2)\land \neg S(x_0,x_1,x_2))$ 
for all distinct tenary $R,S\in L$, 
and $u\in V.$

\item For $u\in V$, $i<3,$ 
$\M\models \forall x_0x_1x_2
(\exists x_i\chi_u\longleftrightarrow \bigvee_{v\in V, v\equiv_iu}\chi_v),$

\item For $u\in V$ and any $L$-formula $\phi(x_0,x_1,x_2)$, if 
$\M\models \exists x_0x_1x_2(\chi_u\land \phi)$ then 
$\M\models 
\forall x_0x_1x_2(\exists x_i\chi_u\longleftrightarrow 
\exists x_i(\chi_u\land \phi))$ for all $i<3$
%\item For all $r_1, r_2\in G$, $\M\models \exists x_2[\exists x_0(R_{r_1}(x_0, x_1)\land x_0=x_1)\land 
%\exists x_1(R_{r_2}(x_0, x_1)\land x_0=x_2)\longleftrightarrow R_{r_1+r_2}(x_0, x_1)]$
%\item $\forall x_0x_1x_2[ \phi_{\tau_k}(\chi_{u_1},\ldots \chi_{u_m})(x_0,x_1,x_2)\longrightarrow \chi_{\tau(u_1,\ldots u_{m})}(x_0,x_1,x_2)]$

\end{enumarab}
\end{lemma}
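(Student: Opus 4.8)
\emph{Proof plan.} The plan is to realise $\M$ as a (generalized) Fra\"\i ss\'e limit, so that quantifier elimination (1) and the homogeneity statement (7) come essentially for free from ultrahomogeneity, while (2)--(6) are forced into the amalgamation class by fiat. First I would fix the class $\mathcal K$ of all \emph{finite} $L$-structures $N$ satisfying: (i) $\{P_0^N,P_1^N,P_2^N\}$ is a partition of the domain of $N$; (ii) every ternary $R^N$ ($R\in L$) is invariant under all permutations of its three arguments; and (iii) whenever some ternary relation holds of a triple $\bar a$ in $N$, the colour $u(\bar a)$ --- the unique $u$ with $a_i\in P^N_{u_i}$ for $i<3$ --- is a node of $V$. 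All three conditions are universal, so $\mathcal K$ is closed under substructures; it contains the empty structure and a one-element structure of each colour, so it is nonempty.

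Next I would verify the joint embedding and amalgamation properties, which I expect to hold via \emph{free amalgamation}: given $N_0\hookrightarrow N_1$ and $N_0\hookrightarrow N_2$ in $\mathcal K$, build $N_1\sqcup_{N_0}N_2$, keeping all colours, letting a ternary relation hold on a triple exactly when that triple already lies inside (the copy of) $N_1$, or inside $N_2$, and held there, and then symmetrising. The only axiom of $\mathcal K$ not visibly inherited is (iii), and it is, since mixed triples carry no label. Thus $\mathcal K$ has amalgamation, and joint embedding is the instance $N_0=\emptyset$. Because $L$ may be uncountable, I cannot invoke classical Fra\"\i ss\'e theory directly; instead I would construct $\M$ by a transfinite recursion of length $|L|$, at successive stages realising every one-point $\mathcal K$-extension of each finite substructure produced so far (the familiar bookkeeping, as in the Hodkinson-style constructions simplified in \cite{w}). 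This yields an $L$-structure $\M$ of cardinality $|L|+\aleph_0$ that is $\mathcal K$-\emph{rich} --- every one-point $\mathcal K$-extension of a finite substructure of $\M$ is realised in $\M$ --- and, by a transfinite back-and-forth using richness, ultrahomogeneous: every isomorphism between finite substructures extends to an automorphism of $\M$.

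Then I would read off (1)--(7). Properties (2), (3), (4) are exactly the defining conditions of $\mathcal K$, hence hold in $\M$ since any offending tuple lies in a finite substructure. For (5): given distinct ternary $R,S$ and $u\in V$, the three-point structure with $a_i\in P_{u_i}$ in which only the permutation-orbit of $(a_0,a_1,a_2)$ carries $R$ belongs to $\mathcal K$, so it embeds into $\M$, witnessing $\exists x_0x_1x_2(\chi_u\wedge R\wedge\neg S)$. For (6): each $P_j^{\M}$ is nonempty, so $\exists x_i\chi_u$ holds of a tuple iff its two coordinates other than $i$ lie in the corresponding $P_{u_j},P_{u_k}$; since the $P_j$ partition and cover $M$, this is precisely $\bigvee_{v\equiv_i u}\chi_v$, using that $v\equiv_i u$ in $V$ means $v$ and $u$ agree off coordinate $i$. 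Finally (1) and (7): by induction on the (necessarily finite) formula $\phi$, ultrahomogeneity together with richness gives that the truth of $\phi$ on a tuple depends only on the isomorphism type of the finite substructure it generates, restricted to the finitely many symbols occurring in $\phi$, the quantifier step being settled by richness; this is quantifier elimination, and (7) is the corresponding assertion that solvability of $\chi_u\wedge\phi$ propagates under the coordinate-$i$ projection because the one-point extensions needed to produce a projected witness already lie in $\mathcal K$.

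The hard part will be twofold. First, pinning down the constraints defining $\mathcal K$ so that \emph{all} seven properties fall out together --- in particular, deciding how ternary labels interact with degenerate (repeated-coordinate) triples and with the diagonal nodes ${\sf d}_{ij}$ of $V$, and ensuring $V$ is closed under each $\equiv_i$ so that the disjunction in (6) stays inside $V$. Second, the genuinely uncountable signature: the class has too many isomorphism types of finite structures for the usual Fra\"\i ss\'e machinery, so the limit has to be built by hand as a transfinite union with bookkeeping that secures the richness property while keeping $|\M|=|L|$. Property (7), being the most delicate homogeneity assertion, is where this genericity of $\M$ is used most heavily and where the precise formulation will deserve the most care.
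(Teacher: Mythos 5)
Your proposal is correct in outline but takes a genuinely different route from the paper. The paper never builds a generic structure in the uncountable signature at all: it codes the uncountably many ternary symbols as elements of an auxiliary unary sort $P_3$ via a single $4$-ary relation $X$ (so $X(\bar a,R^*)$ plays the role of $R(\bar a)$), runs the classical countable Fra\"\i ss\'e construction for the finite signature $\{P_0,\ldots,P_3,X\}$ to obtain a countable homogeneous $\N$ with quantifier elimination, passes to an elementary extension $\N^*$ in which $P_3$ has cardinality $|L|$, and then decodes, transferring properties (1)--(7) through the translation $\phi\mapsto\phi^*$ with parameters from $P_3^{\N^*}$; in particular (7) is proved there by exhibiting a partial isomorphism fixing those parameters and invoking homogeneity. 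You instead build the uncountable-signature generic directly by transfinite bookkeeping over a free amalgamation class, which is legitimate, and your richness argument can indeed replace the automorphism step in (7). What the coding buys the paper is that homogeneity and quantifier elimination are off-the-shelf facts about a finite relational signature; what your route buys is the absence of any translation, but two points of your sketch must then be supplied rather than asserted. First, ultrahomogeneity of $\M$ in the full signature does not by itself give finitary quantifier elimination when $L$ is uncountable: you must run the back-and-forth with partial maps that are isomorphisms only for the finitely many symbols occurring in $\phi$ together with $P_0,P_1,P_2$, and it is exactly free amalgamation (new points can be assigned no relations outside that finite sublanguage) that gives this restricted system the extension property; this needs to be stated and proved, not folded into ``ultrahomogeneity together with richness''. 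Second, the degenerate-triple issue you flag is not optional: unless $\mathcal K$ explicitly forbids a ternary relation from holding on a triple with a repeated entry, property (7) literally fails in your structure (take $\phi$ to be $R(x_0,x_1,x_1)$, which does not mention $x_2$: it is satisfiable together with some $\chi_u$, yet fails at suitably coloured pairs, so the right-to-left direction of the biconditional breaks), so you should add $R(x_0,x_1,x_2)\rightarrow \bigwedge_{i<j<3}x_i\neq x_j$ to the axioms of $\mathcal K$; note that the paper's own proof of (7) uses this convention silently when it asserts that colours and inequality are ``the only relations'' on the reduced tuples. With these two points made explicit, your verification of (2)--(6) and the richness-based proof of (7) go through and yield the lemma.
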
 
\begin{demo}{Proof}  
We cannot apply Frassie's theorem to our signature, because it is uncountably infinite. What we do 
instead is that we introduce a new $4$ -ary relation symbol, that will be used to code the uncountably many tenary
relation symbols. Let $\cal L$ be the relational signature containing unary relation symbols
$P_0,\ldots, P_{3}$ and a $4$-ary relation symbol $X$. 
Let $\bold K$ be the class of all finite 
$\cal L$-structures $\D$ satsfying
\begin{enumarab}
\item The $P_i$'s are disjoint : $\forall x \bigvee_{i<j<4}(P_i(x)\land \bigwedge_
{j\neq i}\neg P_j(x)).$ 
\item $\forall x_0 x_1x_2x_3(X(x_0, x_1, x_2, x_3)\longrightarrow P_{3}(x_{3})\land \bigvee_{u\in V}\chi_u).$ 
%\item $\forall x_0x_1x_2[ \phi_{\tau_k}(\chi_{u_1},\ldots \chi_{u_m})(x_0,x_1,x_2)\longrightarrow \chi_{\tau(u_1,\ldots u_{m})}(x_0,x_1,x_2)]$
\end{enumarab}
Then $\bold K$ contains countably many 
isomorphism types. Also it is easy to check that $\bold K$ is closed under
substructures and that $\bold K$ has the the amalgamation Property
From the latter it follows that it has the Joint Embedding Property.
Then there is a  countably infinite homogeneous $\cal L $-structure
$\N$ with age $\bold K$.  $\N$ has quantifier elimination, and obviously, so does 
any elementary extension of $\N$. 
$\bold K$ contains structures 
with arbitrarily large $P_{3}$-part, so $P_{3}^{\N}$ is infinite.
Let $\N^*$ be an elementary extension of $\N$ such that $|P_3|^{\N^*}|=|L|$, and
fix a bijection $*$ from the set of binary relation symbols of $L$ to $P_{3}^{\cal \N^*}.$
Define an $L$-structure $\M$ with domain 
$P_0^{\N^*}\cup P_1^{\N^*}\cup P_2^{\N^*}\cup \ldots P_{3}^{\N^*}$, by:
$P_i^{\M}=P_i^{\N^*}$ for $i<3$ and for
binary $R\in L$, and $\tau\in S_3$ 
$$\M\models R(\tau\circ \bar{a})\text { iff }{\N^*}\models X(\bar{a},R^*).$$ 
If $\phi(\bar {x})$ is any $L$-formula, let $\phi^*(\bar {x},\bar {R})$ be the 
$\cal L$-formula with parameters
$\bar {R}$ from $\N^*$ obtained from $\phi$ by replacing each atomic subformula 
$R(\bar{x})$ by $X(\bar{x},R^*)$ and relativizing quantifiers to $\neg P_n$, 
that is replacing $(\exists x)\phi(x)$ and $(\forall x)\phi(x)$ 
by $(\exists x)(\neg P_3(x)\to \phi(x))$
and $(\forall x)(\neg P_3(x)\to \phi(x)),$ respectively.
A straightforward induction on complexity of formulas
gives that for $\bar {a}\in \M$  
$$\M\models \phi(\bar {a})\text { iff } {\N^*}\models \phi^*(\bar {a},\bar {R}).$$

We show that $\M$ is as required. For quantifier elimination, if $\phi(\bar {x})$ is an
$L$-formula , then $\phi^*(\bar {x} ,\bar {R}^*)$ is equivalent in $\N^*$ 
to a quantifier free $\cal L$-formula
$\psi(\bar {x},\bar {R^*})$. 
Then replacing $\psi$'s atomic subformulas 
$X(x,y,z,R^*)$ by $R(x,y,z)$,
replacing all $X(t_0,\cdots t_3)$ not of this form by $\bot$ , replacing subformulas $P_3(x)$
by $\bot$, and $P_i(R^*)$ by $\bot$ if $i<3$ and $\top$ if $i=3$, gives a quantifier free $L$
-formula $\psi$ equivalent in $\M$ to $\phi$. 
(2) follows from the definition of satisfiability.
\par Let $$\sigma=\forall x(\neg P_3(x)\longrightarrow \bigvee_{i<3}
(P_i(x)\land \bigwedge_{j\neq i}\neg P_j(x))).$$
Then $K\models \sigma$, so $\M\models \sigma$ and ${\N^*}\models \sigma.$
It follows from the definition that $\M$ satisfies (3); (4) is similar.
\par For (5), let $u\in V$ and let $r,s\in P_3^{\M}$ be distinct. 
Take a finite $\cal L$-structure
$D$ with points $a_i\in P_{u_i}^D(i<3)$ and distinct $r',s'\in P_3^D$ with 
$$D\models X(a_0,a_1,a_2,r')\land \neg X(a_0,a_1,a_2,s').$$ 
Then $D\in K$, so $D$ embeds into $\M$. 
By homogeneity, we can assume that the embedding
takes $r'$ to $r$ and $s'$ to $s$. Therefore 
$${\M}\models \exists \bar {x}(\chi_u\land X(\bar {x},r)\land \neg X(\bar {x},s)),$$ 
where $\bar {x}=\langle x_0,x_1,x_2\rangle.$
Since $r,s$ were arbitrary and $\N^*$ is an elementary extension
of $\M$, we get that  
$${\N*}\models \forall yz
(P_3(y)\land P_3(z)\land y\neq z\longrightarrow \exists \bar {x}(\chi_u\land 
X(\bar {x},y)\land \neg (X(\bar {x},z))).$$
The result for $\M$ now follows.
\par Note that it follows from (4,5) that $P_i^{\M}\neq \emptyset$ for each $i<3$.
So it is clear that 
$$\M\models \forall x_0x_1x_2(\exists x_i\chi_u\longleftrightarrow 
\bigvee_{v\in {}V, v\equiv_i u}\chi_v);$$ giving
(6).

\par Finally consider $(7)$. Clearly, it is enough to show that for any
$\cal L$-formula $\phi(\bar {x})$ with parameters
$\bar {r}\in P_3^{\M}, u\in S_3, i<3$, we have 
$${\M}\models \exists \bar {x}(\chi_u\land \phi)\longrightarrow \forall \bar{x}
(\exists x_i(\chi_u\longrightarrow \exists x_i(\chi_u\land \phi)).$$
For simplicity of notation assume $i=2.$
Let $\bar {a} ,\bar {b}\in \M$ with 
$${\M}\models (\chi_u\land \phi)(\bar {a})\text { and } {\M}\models \exists x_2(\chi_u(\bar {b})).$$
We require 
$${\M}\models \exists x_2(\chi_u\land \phi)(\bar {b}).$$
\par It follows from the assumptions that
$${\M}\models P_{u_0}(a_0)\land P_{u_1}(a_1)\land a_0\neq a_1,\text { and }
{\M}\models P_{u_0}(b_0)\land P_{u_1}(b_1)\land b_0\neq b_1.$$
These are the only relations on $a_0a_r\bar {r} $ and on $b_0b_1\bar {r}$
(cf. property (4) of Lemma), so 
$$\theta^{-}=\{(a_0,b_0)(a_1,b_1) (r_l,r_l): l<|\bar {r}|\}$$ 
is a partial isomorphism
of $\M$. By homogeneity, it is induced by an automorphism $\theta$ of $\M$.
Let $c=\theta(\bar {a})=(b_0,b_1,\theta(a_2))$.
Then ${\M}\models (\chi_u\land \phi)(\bar {c}).$
Since $\bar {c}\equiv_2 \bar {b}$, we have 
${\M}\models \exists x_2(\chi_u\land \phi)(\bar {b})$
as required. 
%\end{demo}   
\end{demo}

\begin{lemma}\label{term}
\begin{enumarab} 

\item For $\A\in \CA_3$ or $\A\in \SC_3$, there exist
a unary term $\tau_4(x)$ in the language of $\SC_4$ and a unary term $\tau(x)$ in the language of $\CA_3$
such that $\CA_4\models \tau_4(x)\leq \tau(x),$
and for $\A$ as above, and $u\in \At={}^33$, 
$\tau^{\A}(\chi_{u})=\chi_{\tau^{\wp(^nn)}(u).}$ 

\item For $\A\in \PEA_3$ or $\A\in \PA_3$, there exist a binary
term $\tau_4(x,y)$ in the language of $\SC_4$ and another  binary term $\tau(x,y)$ in the language of $\SC_3$
such that $PEA_4\models \tau_4(x,y)\leq \tau(x,y),$
and for $\A$ as above, and $u,v\in \At={}^33$, 
$\tau^{\A}(\chi_{u}, \chi_{v})=\chi_{\tau^{\wp(^nn)}(u,v)}.$

%\begin{enumarab}

\item Let $k\geq 5$.
Then there exist a term $\tau_k(x_1,\ldots x_m)$ in the language of $\CA_k$ and a term $\tau(x_1,\ldots, x_m)$ in the language of $\RA$,
expressible in $\CA_3$, such that
$\CA_k\models \tau_k(x_1,\ldots x_m)\leq \tau(x_1,\ldots x_m),$
and for  $\A$ as above, and $u_1,\ldots u_m\in \At$, 
$\tau^{\A}(\chi_{u_1},\ldots \chi_{u_m})=\chi_{\tau^{\Cm\At}(u_1,\ldots u_m)}.$ 
%\item There exist $u_1,\ldots u_m, v\in {}^33$ such that
%$\tau^{\R}(\{u_1\},\ldots \{u_m\})=v$.
\end{enumarab}
\end{lemma}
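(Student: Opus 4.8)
The three items are proved by the same device. One first writes down the terms; one then checks by a finite computation inside the complex algebra $\Cm\At=\wp({}^33)$ that the ``long'' term -- $\tau_4$ in items (1) and (2), $\tau_k$ in item (3) -- restricts to a map carrying $m$-tuples of atoms to atoms, and that the coarser $3$-dimensional bound $\tau$ agrees with it on atoms; finally one transports this computation to $\A$ along an embedding of $\Cm\At$ into $\A$ furnished by the previous lemma. For item (3) the terms $\tau_k,\tau$ are exactly those fixed in the paragraph before the lemma: $\tau_k$ an $m$-ary $\CA_k$-term that is not definable over relation algebras (whence $k\ge 5$) and preserves uncountability of set-algebra elements, and $\tau$ a relation-algebra term, expressible in $\CA_3$, with $\CA_k\models\tau_k\le\tau$. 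For item (1) one takes for $\tau_4$ the $\SC_4$-term that realizes the transposition substitution ${\sf s}_{[0,1]}$ on a genuinely $3$-dimensional element, using the spare index $3$ as a register (a composite of replacement substitutions followed by ${\sf c}_3$), and for $\tau$ a suitable $\CA_3$-term dominating ${\sf s}_{[0,1]}\chi_u$ uniformly in $u$ (e.g. one built from ${\sf c}_0,{\sf c}_1$); the inequality $\CA_4\models\tau_4\le\tau$ is then routine. Item (2) is identical, with the $\SC_4$-term realizing the relevant polyadic generalized substitution on pairs of $3$-dimensional elements, bounded above by a $\SC_3$-term.

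\textbf{The transfer embedding.} Properties (2)--(7) of $\M$ in the previous lemma were tailored so that $h\colon\Cm\At\to\A$, $S\mapsto\bigvee_{u\in S}\chi_u$, is an embedding in the relevant signature onto a finite subalgebra of $\A$, with $\chi_u=h(\{u\})$. Property (2) says the $\chi_u$ partition the unit, so $h$ is a Boolean embedding. Property (6), $\exists x_i\chi_u\leftrightarrow\bigvee_{v\equiv_iu}\chi_v$, is precisely preservation of the cylindrifications ${\sf c}_i$ ($i<3$): ${\sf c}_i^{\Cm\At}$ closes a set of atoms under $\equiv_i$, and by additivity of ${\sf c}_i$ both sides give the same join after applying $h$. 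In the polyadic(-equality) cases property (3) gives preservation of the transposition substitutions; the remaining operations (replacement substitutions, and the diagonals where they occur) are matched by their definitions in $\A$ together with (4), while (5) and (7) -- via nonemptiness of the $\chi_u$ and homogeneity/quantifier elimination -- ensure $h$ is injective and introduces no further identifications. As $\A$ is generated over $h[\Cm\At]$ by the uncountably many ternary predicates, $h$ is in fact a complete embedding.

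\textbf{Conclusion.} Working in $\Cm\At=\wp({}^33)$, a direct computation from the explicit action of ${\sf c}_i$, the replacement substitutions and the transpositions on singletons of ${}^33$ shows that $\tau_4^{\Cm\At}$ (respectively $\tau_k^{\Cm\At}$) sends $(\{u_1\},\dots,\{u_m\})$ to a singleton $\{w\}$; set $w=\tau^{\Cm\At}(u_1,\dots,u_m)$. Since $\CA_4\models\tau_4\le\tau$ (respectively $\CA_k\models\tau_k\le\tau$), the element $\tau^{\Cm\At}(\{u_1\},\dots,\{u_m\})$ is nonzero and lies above the atom $w$, hence equals $\{w\}$; so $\tau$ and the long term agree on atoms. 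Applying the homomorphism $h$,
\[ \tau^\A(\chi_{u_1},\dots,\chi_{u_m})=h\big(\tau^{\Cm\At}(\{u_1\},\dots,\{u_m\})\big)=h(\{w\})=\chi_w=\chi_{\tau^{\Cm\At}(u_1,\dots,u_m)}, \]
which is the assertion (with $\wp({}^33)$ in place of $\Cm\At$ in items (1),(2), these being the same algebra).

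\textbf{Main obstacle.} The delicate point is item (3): producing an explicit $m$-ary $\CA_k$-term $\tau_k$ that is at once not relation-algebra definable, uncountability-preserving on set algebras, and atom-to-atom on ${}^33$ with a relation-algebra-expressible $\CA_3$-upper bound $\tau$, and then carrying out the substitution/cylindrification bookkeeping along the $k-3$ spare dimensions to verify both the atom-to-atom property and $\tau_k\le\tau$. A secondary subtlety is the polyadic-equality instance of the transfer step, where one must check separately that $h$ commutes with every transposition substitution and with the diagonals; this is exactly where properties (3) and (4) of the previous lemma are invoked.
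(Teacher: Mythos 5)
Your items (1)–(2) follow the paper's route in spirit, but the write-up has two genuine defects, the more serious of which is item (3). There your entire proof is ``take the terms fixed in the paragraph before the lemma''; but that paragraph only \emph{postulates} such terms (``such terms exist \dots let $\tau$ be a term expressible in the language of relation algebras such that $\CA_k\models\tau_k\leq\tau$''), and the lemma is exactly where they must be produced and the atom-to-atom identity verified. Your closing ``main obstacle'' paragraph concedes precisely the content of item (3), so that item is restated, not proved. For comparison, the paper's own proof at least names a concrete candidate for $\tau_k$ — an $m$-ary term generalizing Johnson's $Q$'s, coding a grid of witnesses $\exists z_0\ldots z_{n-1}\bigl(z_0=x\land z_1=y\land z_2=z\land\bigwedge_{i,j,l<n}R_{ij}(z_i,z_j,z_l)\bigr)$ — though it too stops short of exhibiting the relation-algebra bound $\tau$ (the text admits it is still being worked out). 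So item (3) cannot be discharged by a back-reference; it needs an explicit construction and an explicit verification of $\tau_k\leq\tau$ and of the action on atoms.

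For (1)–(2) the paper simply writes the terms down ($\tau_4(x)={}_3s(0,1)x$ and $\tau(x)={\sf s}_1^0{\sf c}_1x\cdot{\sf s}_0^1{\sf c}_0x$ for $\CA/\SC$; $\tau_4(x,y)={\sf c}_3({\sf s}_3^1{\sf c}_3x\cdot{\sf s}_3^0{\sf c}_3y)$ and $\tau(x,y)={\sf c}_1({\sf c}_0x\cdot{\sf s}_1^0{\sf c}_1y)\cdot{\sf c}_1x\cdot{\sf c}_0y$ for $\PA/\PEA$) and cites \cite{FM}, \cite{MLQ} for the computation, which is a direct evaluation in $\M$. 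Your ``transfer embedding'' is not sound as stated: the map $h(S)=\bigvee_{u\in S}\chi_u$ is \emph{not} an embedding of $\Cm\At$ into $\A$ in the cylindric or polyadic-equality signature, since it fails on the diagonals — ${\sf d}_{ij}^{\A}$ is strictly below $h(D_{ij})=\bigvee\{\chi_u:u_i=u_j\}$ — and every candidate $\tau$ is built from substitutions ${\sf s}^i_j={\sf c}_i({\sf d}_{ij}\cdot{-})$, so ``the diagonals are matched by their definitions together with (4)'' does not do the work. What is true, and what must be checked by hand in $\M$ (using that the $P_i$ are nonempty and partition $M$, and the cylindrifier property (6)), is that $h$ commutes with ${\sf c}_i$, with the derived operations ${\sf s}^i_j$, and with the transpositions on the relevant elements; equivalently one verifies directly that $\tau^{\A}(\chi_u)=\chi_{u\circ[0,1]}$, which is the computation the paper delegates to \cite{FM}, \cite{MLQ}. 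Finally, ``$\tau_4^{\Cm\At}$'' is ill-typed: $\tau_4$ (and $\tau_k$) involves indices $\geq 3$ and cannot be evaluated in the $3$-dimensional algebra $\Cm\At=\wp({}^33)$; the atom-to-atom computation must be done for $\tau$ itself, while $\CA_4\models\tau_4\leq\tau$ (resp. $\CA_k\models\tau_k\leq\tau$) is a separate verification in the higher-dimensional variety, which for item (3) you have not supplied.
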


\begin{proof} 

\begin{enumarab}

\item For all reducts of polyadic algebras, these terms are given in \cite{FM}, and \cite{MLQ}.
For cylindric algebras $\tau_4(x)={}_3 s(0,1)x$ and $\tau(x)=s_1^0c_1x.s_0^1c_0x$.
For polyadic algebras, it is a little bit more complicated because the former term above is definable.
In this case we have $\tau(x,y)=c_1(c_0x.s_1^0c_1y).c_1x.c_0y$, and $\tau_4(x,y)=c_3(s_3^1c_3x.s_3^0c_3y)$.

\item For relation algebras, we take the term corresponding to the following generalization of Johnson's $Q$'s. 
Given $1\leq n<\omega$ and $n^2$ tenary relations
we define
$$Q(R_{ij}: i,j<n)(x_0, x_2, x_3)\longleftrightarrow$$ 
$$\exists z_0\ldots z_{n-1}(z_0=x\land z_1=y\land 
z_2=z\land \bigwedge_{i,j,l<n} 
R_{ij}(z_i, z_j, z_l).$$
{\bf Still working out the required term $\tau$}
\end{enumarab}
\end{proof}

\begin{theorem}
\begin{enumarab} 
\item There exists $\A\in \Nr_3\QEA_{\omega}$
with an elementary equivalent cylindric  algebra, whose $\SC$ reduct is not in $\Nr_3\SC_4$.
Furthermore, the latter is a complete subalgebra of the former.
 
\item There exists a relation algebra $\A\in \Ra\CA_{\omega}$, with an elementary equivalent relation algebra not in $\Ra\CA_k$.
Furthermore, the latter is a complete subalgebra of the former.
\end{enumarab}
\end{theorem}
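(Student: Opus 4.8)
The plan is to upgrade the two lemmas just proved to the pair of algebras the theorem demands, following \cite{r} for the relation-algebra clause and \cite{MLQ} for the cylindric-type clause. For part (1) one starts from the $L$-structure $\M$ of the previous lemma; for part (2) from its analogue built over a finite relation atom structure, with binary relations in place of the ternary ones. Let $\c C$ be the algebra of $\M$-definable relations in the appropriate number of free variables, carrying the $\QEA_3$ (resp.\ $\RA$) operations. Quantifier elimination for $\M$ together with the homogeneity built into its construction (clauses (1), (6), (7) of the lemma, exactly as in \cite{MLQ}) shows that $\c C$ is the \emph{full} neat $3$-reduct of a locally finite $\c C_\omega\in\QEA_\omega$, i.e.\ $\c C=\Nr_3\c C_\omega$, and in the relation case that $\c C=\Ra\c D_\omega$ for some $\c D_\omega\in\CA_\omega$. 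By clauses (4), (5), $\c C$ is atomic and each colour element $\chi_u$ ($u\in\At V$) splits into uncountably many atoms.

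Next, let $\A=\prod_{w\in\At V}\c C$ be the finite product of copies of $\c C$, its non-Boolean operations twisted along the finite structure of $V$ in the standard way. Since $\Nr_3$ and $\Ra$ commute with finite products, the previous paragraph gives $\A\in\Nr_3\QEA_\omega$ (resp.\ $\A\in\Ra\CA_\omega$). Now fix one index $w_0$, use the downward \ls\ theorem (legitimate since the $\QEA$, resp.\ $\RA$, signature is finite) to pick a countable elementary subalgebra $\c C_0\preceq\c C$, and let $\B$ be the subalgebra of $\A$ obtained by replacing the $w_0$-th factor by $\c C_0$. A Feferman--Vaught argument over the finite index set shows $\B\preceq\A$, hence $\B\equiv\A$. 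For the clause ``$\B\subseteq_c\A$'': suprema in a finite product are taken coordinatewise, so it is enough to take $\c C_0$ with $\c C_0\subseteq_c\c C$; the countable elementary substructure must be chosen with this closure in mind, and also so that in one colour block $\chi_{v_0}$ of the $w_0$-th factor only countably many atoms survive while a second block $\chi_{u_0}$ stays uncountable, where $v_0=\tau^{\wp({}^33)}(u_0)$ for the term $\tau$ of the term lemma and $u_0\neq v_0$. A large enough $V$ (this is where $|\At V|\geq{}^33$ is used) together with a careful choice of $\c C_0$ satisfies all of these at once.

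To finish, suppose for contradiction that $\Rd_\SC\B=\Nr_3\D$ for some $\D\in\SC_4$ (resp.\ $\B=\Ra\E$ for some $\E\in\CA_k$). Since $\CA_4\models\tau_4(x)\leq\tau(x)$, with $\tau$ a term of $\CA_3$ (an $\SC_3$-term in the cases at hand) and $\tau_4$ a term of $\SC_4$ (resp.\ of $\CA_k$), and since every atom of $\B$ below $\chi_{u_0}$ is a $3$-dimensional element of $\D$, the term lemma applied in each factor gives $\tau_4^\D(\chi_{u_0})\leq\tau^\D(\chi_{u_0})=\tau^\B(\chi_{u_0})=\chi_{v_0}$. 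But $\tau_4$ was chosen precisely so that, evaluated in $\D$ on the uncountable block $\chi_{u_0}$, it yields an element carrying uncountably many atoms --- this is the ``detecting uncountability'' property isolated before the lemmas, transported through a representation of $\D$ (which, as in the construction of \cite{MLQ}, may be taken representable). This contradicts $\tau_4^\D(\chi_{u_0})\leq\chi_{v_0}$, as $\chi_{v_0}$ has only countably many atoms below it in $\B$. Hence $\Rd_\SC\B\notin\Nr_3\SC_4$ (resp.\ $\B\notin\Ra\CA_k$); and since $\B\subseteq_c\A\in\Nr_3\QEA_\omega$, the same example also witnesses $S_c\Nr_3\K_\omega\nsubseteq\Nr_3\K_4$ and $S_c\Ra\CA_\omega\nsubseteq\Ra\CA_k$.

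Two steps carry the real weight. The first is the proof that $\c C$ is a genuinely \emph{full} neat reduct (resp.\ full $\Ra$-reduct): this is the quantifier-elimination-and-homogeneity computation, and it is exactly what keeps the product inside $\Nr_3\QEA_\omega$ (resp.\ $\Ra\CA_\omega$). The second, and the more serious for part (2), is that the relation-algebra term $\tau$ of the term lemma still has to be pinned down --- one wants to assemble it from the generalized Johnson operators $Q(R_{ij}:i,j<n)$ so that $\CA_k\models\tau_k(\bar x)\leq\tau(\bar x)$ and so that $\tau_k$ sends at least one uncountable argument to an uncountable value; producing this $\tau$, and checking both properties (as well as the representability of the putative dilation used in the last step), is the delicate point. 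A lesser but genuine nuisance is the complete-subalgebra clause, where a bare application of downward \ls\ does not suffice and $\c C_0$ must be engineered to absorb the relevant suprema.
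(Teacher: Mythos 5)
Your overall strategy is the paper's: build $\C$ from the homogeneous model $\M$ so that quantifier elimination makes it a \emph{full} neat reduct (resp.\ full $\Ra$-reduct), make a cardinality twist that first-order logic cannot see, and derive the contradiction from $\tau_k\leq\tau$ together with the uncountability-detecting property of $\tau_k$ and the fact that the twisted algebra has a countable block below $\chi_{\tau(u_1,\ldots,u_m)}$. The final contradiction paragraph and the remark that the relation-algebra term $\tau$ is the delicate point both match the paper. But the middle step --- the actual production of the pair $\B\subseteq_c\A$ with $\B\equiv\A$ --- is where your argument has a genuine gap. The paper does \emph{not} form a product of copies of $\C$ indexed by $\At V$ with ``twisted'' operations. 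It keeps the single algebra $\A$ of $\M$-definable relations, shows that its \emph{Boolean} reduct decomposes as $\prod_{u\in V}\A_u$ where $\A_u=\{x\in A: x\leq\chi_u^{\M}\}$ are relativizations (these are only Boolean algebras, not copies of $\C$), and then proves that the $\CA$- (resp.\ $\Ra$-) structure of $\A$ is recoverable by a quantifier-free, two-dimensional, first-order interpretation in the expanded Boolean structure $\P=(\prod_{u\in V}\A_u,1_u,{\sf d}_{ij})$; the formula $\eta_i$ defining cylindrifications via the closed terms $t_S$ is exactly this step. The twist then consists of replacing the single component indexed by $\tau^{\Cm\At}(u_1,\ldots,u_m)$ by a countable elementary \emph{complete} Boolean subalgebra of that relativization, leaving all other components untouched; since the resulting structure $Q$ is elementarily equivalent to $\P$ and the interpretation is first order, the interpreted algebra $\B$ is elementarily equivalent to $\A$ (and $\Bl\B\subseteq_c\Bl\A$). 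Your proposal replaces all of this by ``$\A=\prod_{w\in\At V}\C$ with operations twisted in the standard way'' plus downward L\"owenheim--Skolem and Feferman--Vaught, and that does not work as stated.

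Concretely, two things break. First, your conditions on $\C_0$ are contradictory: a countable elementary subalgebra of $\C$ cannot have an uncountable colour block $\chi_{u_0}$, yet your contradiction needs, inside the \emph{same} twisted algebra, uncountable blocks $\chi_{u_1},\ldots,\chi_{u_m}$ together with a countable block $\chi_{\tau(u_1,\ldots,u_m)}$. What is actually required is a ``mixed'' subalgebra of $\A$ --- full on all blocks except one, countable on the block hit by $\tau$ --- and the nontrivial point is precisely that this mixed set is closed under the operations and elementarily equivalent to $\A$; Feferman--Vaught over a coordinatewise product does not deliver this, because the mixed algebra is not a direct factor replacement in a product of $\QEA_3$'s (the cylindrifications and the term $\tau$ move elements \emph{between} blocks). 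Second, if you retreat to a genuine coordinatewise product of copies of $\C$ with one factor replaced by a countable $\C_0\preceq\C$ (where Feferman--Vaught does apply), then the term argument collapses: operations act coordinatewise, so within the countable factor $\tau$ maps countable blocks to countable blocks and no cardinality clash arises. So the interpretation of $\A$ in the expanded product $\P$ (or some substitute for it) is not an optional refinement but the load-bearing step, and it is missing from your proof.
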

\begin{proof} Let $\L$ and $\M$ as above. Let 
$\A_{\omega}=\{\phi^M: \phi\in \L\}.$ 
Clearly $\A_{\omega}$ is a locally finite $\omega$-dimensional cylindric set algebra.
For the first part, we prove the theorem for $\CA$; and its relatives.
%The algebra $\A$  is a relation algebra; indeed it is closed under converse 
%because of condition (iii) and under
%composition by condition (viii), in the previous lemma.

Then $\A\cong \Nr_3\A_{\omega}$, the isomorphism is given by 
$$\phi^{\M}\mapsto \phi^{\M}.$$
Quantifier elimination in $\M$ guarantees that this map is onto, so that $\A$ is the full $\Ra$ reduct.

For $u\in {}V$, let $\A_u$ denote the relativisation of $\A$ to $\chi_u^{\M}$
i.e $$\A_u=\{x\in A: x\leq \chi_u^{\M}\}.$$ $\A_u$ is a boolean algebra.
Also  $\A_u$ is uncountable for every $u\in V$
because by property (iv) of the above lemma, 
the sets $(\chi_u\land R(x_0,x_1,x_2)^{\M})$, for $R\in L$
are distinct elements of $\A_u$.  

Define a map $f: \Bl\A\to \prod_{u\in {}V}\A_u$, by
$$f(a)=\langle a\cdot \chi_u\rangle_{u\in{}V}.$$

Here, and elsewhere, for a relation algebra $\C$, $\Bl\C$ denotes its boolean reduct.
We will expand the language of the boolean algebra $\prod_{u\in V}\A_u$ by constants in 
such a way that
the relation algebra reduct of $\A$ becomes interpretable in the expanded structure.
For this we need.

Let $\P$ denote the 
following structure for the signature of boolean algebras expanded
by constant symbols $1_{u}$ for $u\in V$ 
and ${\sf d}_{ij}$ for $i,j\in 3$: 
We now show that the relation algebra reduct of $\A$ is interpretable in $\P.$  
For this it is enough to show that 
$f$ is one to one and that $Rng(f)$ 
(Range of $f$) and the $f$-images of the graphs of the cylindric algebra functions in $\A$ 
are definable in $\P$. Since the $\chi_u^{\M}$ partition 
the unit of $\A$,  each $a\in A$ has a unique expression in the form
$\sum_{u\in {}V}(a\cdot \chi_u^{\M}),$ and it follows that 
$f$ is boolean isomorphism: $bool(\A)\to \prod_{u\in {}V}\A_u.$
So the $f$-images of the graphs of the boolean functions on
$\A$ are trivially definable. 
$f$ is bijective so $Rng(f)$ is 
definable, by $x=x$. For the diagonals, $f({\sf d}_{ij}^{\A})$ is definable by $x={\sf d}_{ij}$.

Finally we consider cylindrifications for $i<3$. Let $S\subseteq {}V$ and  $i<3$, 
let $t_S$ be the closed term
$$\sum\{1_v: v\in {}V, v\equiv_i u\text { for some } u\in S\}.$$
Let
$$\eta_i(x,y)=\bigwedge_{S\subseteq {}V}(\bigwedge_{u\in S} x.1_u\neq 0\land 
\bigwedge_{u\in {}V\smallsetminus S}x.1_u=0\longrightarrow y=t_S).$$
We claim that for all $a\in A$, $b\in P$, we have 
$$\P\models \eta_i(f(a),b)\text { iff } b=f({\sf c}_i^{\A}a).$$
To see this, let $f(a)=\langle a_u\rangle_{u\in {}V}$, say. 
So in $\A$ we have $a=\sum_ua_u.$
Let $u$ be given; $a_u$ has the form $(\chi_i\land \phi)^{\M}$ for some $\phi\in L^3$, so
${\sf c}_i^A(a_u)=(\exists x_i(\chi_u\land \phi))^{\M}.$ 
By property (vi), if  $a_u\neq 0$, this is 
$(\exists x_i\chi_u)^M$; by property $5$,
this is $(\bigvee_{v\in {}V, v\equiv_iu}\chi_v)^{\M}.$
Let $S=\{u\in {}V: a_u\neq 0\}.$
By normality and additivity of cylindrifications we have,
$${\sf c}_i^A(a)=\sum_{u\in {}V} {\sf c}_i^Aa_u=
\sum_{u\in S}{\sf c}_i^Aa_u=\sum_{u\in S}(\sum_{v\in {}V, v\equiv_i u}\chi_v^{\M})$$
$$=\sum\{\chi_v^{\M}: v\in {}V, v\equiv_i u\text { for some } u\in S\}.$$
So $\P\models f({\sf c}_i^{\A}a)=t_S$. Hence $\P\models \eta_i(f(a),f({\sf c}_i^{\A}a)).$
Conversely, if $\P\models \eta_i(f(a),b)$, we require $b=f({\sf c}_ia)$. 
Now $S$ is the unique subset of $V$ such that 
$$\P\models \bigwedge_{u\in S}f(a)\cdot 1_u\neq 0\land \bigwedge_{u\in {}V\smallsetminus S}
f(a)\cdot 1_u=0.$$  So we obtain 
$$b=t_S=f({\sf c}_i^Aa).$$
The rest is the same as in \cite{MLQ} while for other relatives, the idea implemented is also the same;
one just uses the corresponding terms as in lemma \ref{term}.

For relation algebras we proceed as follows:
Now the $\Ra$ reduct of $\A$ is a generalized reduct of $\A$, hence $\P$ is first order interpretable in $\Ra\A$, as well.
 It follows that there are closed terms $1_{u, v},d_{i,j}$ and a formula $\eta$ built out of these closed terms such that 
$$\P\models \eta(f(a), b, c)\text { iff }b= f(a\circ c),$$
where the composition is taken in $\Ra\A$.

We have proved that $\Ra\A$ is interpretable in $\P$.
Furthermore it is easy to see that the interpretation 
is two dimensional and quantifier free.

For each $u\in V$, choose any countable boolean elementary 
complete subalgebra of $\A_{u}$, $\B_{u}$ say.
%Thus $\B_{Id}\preceq \A_{Id}$, and is also a complete subalgebra.
Le $u_i: i<m$ be elements in $V$ and let
$$Q=(\prod_{u_i: i<m}\A_{u_i}\times \B_{\tau^{\Cm\At}(u_1,\ldots u_m)}\times \prod_{u\in {}V\smallsetminus \{u_1,\ldots u_m, \tau^{\Cm\At\A}(u_1,\ldots u_m)\}} 
\A_u), 1_{u,v},d_{ij})_{u,v\in {}V,i,j<3}\equiv$$  
$$(\prod_{u\in V} \A_u, 1_{u,v}, {\sf d}_{ij})_{u\in V, i,j<3}=P.$$

Let $\B$ be the result of applying the interpretation given above to $Q$.
Then $\B\equiv \Ra\A$ as relation  algebras, furthermore $\Bl\B$ is a complete subalgebra of $\Bl\A$. 
%(Indeed $\B\preceq \A$. 
%Therefore $\B\in RCA_3$; it is easy to see that
%$B$ is simple the latter expressible by a first order formula
%so that it is actually a $Cs_3$.)
Assume for contradiction that $\B=\Ra\D$ with $\D\in \CA_k$. Let $u_1,\ldots u_m\in V$ be such that 
$\tau_k^{\D}(\chi_{u_1},\ldots \chi_{u_n})$, is uncountable in $\D$. 

Because $\B$ is a full $\RA$ reduct, 
this set is contained in $\B.$ 

For simplicity assume that $\tau^{\Cm\At}(u_1\ldots u_m)=Id.$
On the other hand for $x_i\in B$, with $x_i\leq \chi_{u_i}$, we have 
$$\tau_k^{\D}(x_1,\ldots x_m)\leq \tau(x_1\ldots x_m)\in \tau(\chi_{u_1},\ldots {\chi_{u_m}})=\chi_{\tau(u_1\ldots u_m)}=\chi_{Id}.$$

But this is a contradiction, since  $B_{Id}=\{x\in B: x\leq \chi_{Id}\}$ is  countable.

\end{proof}

\section{Weakly representable atom structures that are not strongly representable}

There are two types of constructions of atom structures as in the title, the Andreka-N\'emeti construction,  
and the Hodkinson's style (model theoretic) style construction.
In the first, for any $n\geq 3$, for any pre assigned $k\in \omega$,  
a relation algeba atom structure which has an $n$ dimensional cylindric basis, 
such that the term algebra over the
cylindric atom structure of  the basic matrices is representable, and is in $\Nr_nCA_{n+k},$ but the complex algebra is not. 
Furthermore, the relation algebra atom structure itself is only weakly representable. 

In the second construction, one move backwards.
Hodkinson constructs, by model theoretic methods that are not too difficult,
an atomic representable cylindric algebra, whose atom structure is isomorphic to the cylindric algebra consisting 
of basic matrices over a relation atom structure. 
Both structures are again only weakly representable.

In the first case, we will simply the construction. In this second case, we will also simplify the (rainbow) construction, which we did before in \cite{w},
but our real gain here , is that the complex algebra over the cylindric atom structure, consisting of basic matrices,
which is the completion of $\A$, is not only not representable, but in fact it is not in $S\Nr_n\CA_{n+k}$ for any $k\in \omega$.
From this we show that each of the varieties$S\Nr_n\CA_{n+k}$ ($k\geq 2)$ is not closed under completions.

\subsection{First construction}

Here we give a simplified version of the construction in \cite{1}. Also several new consequences are implemented.
Basically this is the same argument used in opcit, except that is considerably simplified and hence is more streamlined, and we believe easier to 
grasp. Indeed the idea underlying some construction maybe simple, 
when they first occur to the reserach, but then trying to get the most general result, 
things become apprently much more complicated, though in esence the idea is the same.
Another thing is the first solution to a problem is usualy not the simplest.

Throughout, $H$ is a fixed  
infinite set on which we will impose necessary conditions as we proceed.  In fact,  
$H$ denotes the set of non-identity atoms in our future 
algebras  (i.e. all the algebras we construct in a while.) 
$$F=H\cup \{Id\}$$ is the universe
of our intended future atom structure $\cal F.$ 

%Let $I$, $J$ be sets, and let 
%$\langle H^P: P\in I\rangle,$ 
%and 
%$\langle E^W: W\in J\rangle$ 
%be two partitions
%of $H$. 

%Now we implement a concrete plan. 
%We provide a concrete $H$ that satisfies the desired conditions listed as $(1^*)-(6^*)$ 
%above.  
Let $I$ be a finite set with $|I|\geq 6$. Let $J$ be the set of all $2$ element
subsets of $I$, and let
$$H=\{a_i^{P,W}: i\in \omega, P\in I, W\in J, P\in W\}.$$
For $P\in I$, let 
$$H^P=\{a_i^{P,W}: i\in \omega, W\in J, P\in W\}.$$
For $W\in J,$ let
$$E^W=\{a_i^{P,W}: i\in \omega, P\in W\}.$$
%This is illustrated in figure 1 below:
This way we have defined our {\it two} partitions of $H$.
For $i,j,k\in \omega$ $e(i,j,k)$ abbreviates that $i,j,k$ are {\it evenly distributed}, i.e.
$$e(i,j,k)\text { iff } (\exists p,q,r)\{p,q,r\}=\{i,j,k\}, r-q=q-p$$
For example $3,5,7$ are evenly distributed, but $3,5,8$ are not. 

All atoms are self-converse. We define the consistent triples as follows
Let $i,j,k\in \omega$, $P,Q,R\in I$ and $S,Z,W\in J$ such that
$P\in S$, $Q\in Z$ and $R\in W$. Then the triple
$(a_i^{P,S},a_j^{Q,Z}, a_k^{R,W})$ is consistent iff 
either 
\begin{enumroman}
\item $S\cap Z\cap W=\emptyset,$
or 
\item $e(i,j,k)\&\{P,Q,R\}|\neq 1.$
\end{enumroman}
It is easy to check that $e$ hence $\bold T$ is symmetric. 
Therefore $;$ satisfies the triangle
rule. Now choose a (finite) relation algebra $\bold M$ with atoms $I\cup \{1d\}$ such that
for all $P,Q\in I$, $P\neq Q$ we have
$$P;P=\{Q\in I: Q\neq P\}\cup \{Id\}\text { and } P;Q=H$$
Such an $\bold M$ exists. Such algebras are constructed by Maddux. It is known that
$\bold M$ cannot be represented on finite sets.
\begin{enumroman}
\item $\Cm{\cal F}$ is a relation algebra that is not representable.
\item ${\R}$ the term algebra over $\cal F$ is representable.
In other words, $\cal F$ is an example of a weakly representable atom structure
that is not strongly representable.
\end{enumroman}
\begin{proof} 
\begin{enumarab}
\item Non representabiliy uses the first partition of $H$. Note that $;$ is defined on $\Cm({\cal F})$ so  that 
$$H^P; H^Q=\bigcup \{H^Z: Z\leq P; Q\in {\bold M}\}.$$
So $\bold M$ is isomorphic to a subalgebra of $Cm F$. But $\Cm F$ 
can only be represented o infinite sets, while $\M$ only on finite ones, hence we are done.

\item The representability of the term algebra uses the second partition. 
It can be checked that $R=\{X\subseteq F: X\cap E^W\in Cof(E^W), \forall W\in J\}$. 
For any $a\in F$ and $W\in J$, let
$$U^a=\{X\in R: a\in X\}$$
and
$$U^W=\{X\in R: |Z\cap E^W|\geq \omega\}$$
Let
$$Uf=\{U^a: a\in F\}\cup \{U^W: W\in J: |E^W|\geq \omega \}$$.
$Uf$ denotes the set of ultrafilters of $\cal R$, that include at least one non-principal 
ultrafilter, that is an element of the form $U^W$.  
We call $(G,l)$ a  {\it consistent coloured graph} if $G$ is a set , 
$l:G\times G\to Uf$
such that for all $x,y,z\in G$, the following hold:

\begin{enumroman}

\item $l(x,y)=U^{Id}$ iff $x=y,$

\item $l(x,y)=l(y,x)$

\item The triple $(l(x,y),l(x,z),l(y,z))$ is consistent.

\end{enumroman}

We say that a consistent coloured graph $(G,l)$ is complete if for all 
$x,y\in G$, and $F,K\in Uf$, whenever 
$(l(x,y),F,K)$ is consistent, then there is a node $z$ such that
$l(z,x)=F$ and $l(z,y)=K$.
%Consistent coloured graphs can 
%be used to buid up representations either in a step by step manner or by games \cite{HH}.  
We will build a complete consistent graph step-by-step. So assume (inductively) 
that$(G,l)$ is a consistent coloured graph and $(l(x,y), F, K)$ is a consistent triple.
We shall extend $(G,l)$ with a new point $z$ such that $(l(x,y), l(z,x), l(z,y)) =(l(x,y),G,K).$
Let $z\notin G.$ We define $l(z,p)$ for $p\in G$ as follows:
$$l(z,x)=F$$
$$l(z,y)=K, \text { and if } p\in G\smallsetminus \{x,y\}, \text { then }$$
$$l(z,p)=U^W\text { for some }W\in J' \text { such that both }$$
$$(U^W, F, l(x,p))\text { and } (U^W, K, l(y,p))\text { are consistent }.$$
Such a $W$ exists by our assumptions (i)-(iii).
Conditions (i)-(ii) guarantee that this extension is again a consistent coloured graph.

We now show that any non-empty complete coloured graph $(G,l)$ gives a representation 
for $\cal R.$ For any $X\in R$ define
$$rep(X)=\{(u,v)\in G\times G: X\in l(u,v)\}$$
We show that 
$$rep:{\cal R}\to R(G)$$ 
is an embedding. $rep$ is a boolean homomorphism
because all the labels are ultrafilters. 
$$rep(Id)=\{(u,u): u\in G\},$$  
and for all $X\in R$,
$$rep(X)^{-1}=rep(X).$$
The latter follows from the first condition in the definition of a consistent coloured graph.
From the second condition in the definition of a consistent coloured graph, we have:
$$rep(X);rep(Y)\subseteq rep(X;Y).$$
Indeed, let $(u,v)\in rep(X), (v,w)\in rep(Y)$ I.e. $X\in l(u,v), Y\in l(v,w).$
Since $(l(u,v), l(v,w), l(u,w))$ is consistent, then $X;Y\in l(u,w)$, i.e. $(u,w)\in 
rep(X;Y).$
On the other hand, since $(G,l)$ is complete and because (i)-(ii) hold, we have:
$$rep(X;Y)\subseteq rep(X);rep(Y),$$
because $(G,l)$ is complete and because (i) and (ii) hold.
Indeed, let $(u,v)\in rep(X;Y)$. Then $X;Y\in l(u,v)$. 
We show that there are $F,K\in Uf$ such that
$$X\in F, Y\in K \text { and } (l(u,v), F,K)\text { is consistent }.$$
We distinguish between two cases:

{\bf Case 1}. $l(u,v)=U^a$ for some $a\in F$. By $X;Y\in U^a$ we have $a\in X;Y.$
Then there are $b\in X$, $c\in Y$ with $a\leq b;c.$ Then $(U^a, U^b, U^c)$ is consistent.

{\bf Case 2.} 
$l(u,v)=U^W$ for some $W\in J'$. Then $|X;Y\cap E^W|\geq \omega$ 
by $X;Y\in U^W$.
Now if both $X$ and $Y$ are finite, then there are $a\in X$, $b\in Y$ with 
$|a;b\cap E^W|\geq \omega$.
Then $(U^W, U^a, U^b)$ is consistent by (i). Assume that one of $X,Y$, say $X$ is infinite.
Let $S\in J'$ such that $|X\cap E^S|\geq \omega$ and let $a\in Y$ be arbitrary. 
Then $(U^W, U^S, U^a)$ is consistent by (ii)
and $X\in U^S, Y\in U^a.$

Finally, $rep$ is one to one because $rep(a)\neq \emptyset$ for all $a\in A$. 
Indeed $(u,v)\in rep(Id)$ for any
$u\in G$. Let $a\in H$. Then $(U^{Id}, U^a, U^a)$ is consistent, so there is a $v\in G$ with 
$l(u,v)=U^a$. Then $(u,v)\in rep(a).$

\end{enumarab}

\end{proof}

%\subsubsection*{ Atom structures giving rise to cylindric atom structures}

We define the atom structure like we did before. The basic matrices of the atom structure above form a 
$3$ dimensional cylindric algebra. We want an 
$n$ dimensional one.  Our previous construction of the atom structure
satisfies $(\forall a_1\ldots a_n3_1\ldots b_3\in H)(\exists W\in J')(a_1;b_1)\cap (a_3;b_3)\in U^W.$
We strengthen this condition to 
$$(\forall a_1\ldots a_nb_1\ldots b_n\in H)(\exists W\in J')(a_1;b_1)\cap (a_n;b_n)\in U^W.$$
Accordingly instead of $H$ we write $H_n$. 
This condition will entail that the set of {\it all} $n$ by $n$ 
basic matrices of ${\R}_n$ forms a cylindric bases as we proceed
to show:
Let $n>2$ be given. Let $I$ be a finite set with $|I|\geq 2n+2$. 
Let $J$ be the set of all $2$ element
subsets of $I$, and let
$$H_n=\{a_j^{P,W}: j\in \omega, P\in I, W\in J, P\in W\}.$$
For $P\in I$, let 
$$H^P=\{a_j^{P,W}: j\in \omega, W\in J, P\in W\}.$$
For $W\in J,$ let
$$E^W=\{a_j^{P,W}: j\in \omega, P\in W\}.$$
This way we have defined our two partitions of $H_n$.
For $i,j,k\in \omega$ $e(i,j,k)$ abbreviates that $i,j,k$ are {\it evenly distributed}, i.e.
$$e(i,j,k)\text { iff } (\exists p,q,r)\{p,q,r\}=\{i,j,k\}, r-q=q-p$$
%For example $3,5,7$ are evenly distributed, but $3,5,8$ are not. 

\begin{definition}
We define the consistent triples as before
Let $i,j,k\in \omega$, $P,Q,R\in I$ and $S,Z,W\in J$ such that
$P\in S$, $Q\in Z$ and $R\in W$. Then we set
$(a_i^{P,S},a_j^{Q,Z}, a_k^{R,W})$ is consistent iff 
\begin{enumarab}
\item $S\cap Z\cap W=\emptyset$,
or
\item $e(i,j,k)\text {and }|\{P,Q,R\}|\neq 1.$
\end{enumarab}
\end{definition}

Now we have:

Let $n>2$ be finite. Let 
$H_n$ be as specified above and let ${\R}_n$ be the relation algebra 
based
on $F=H_n\cup \{Id\}$. Then the following hold:
\begin{athm}{Theorem 2}
Suppose that $n$ is a finite ordinal with $n>2$ and $k\geq 0$.
There is a countable 
symmetric integral representable 
relation algebra ${\R}$
such that
\begin{enumroman}
\item Its completion, i.e. the complex algebra of its atom structure is 
not representable, so $\R$ is representable but not completely representable 
\item $\R$ is generated by a single element.
\item The (countable) set $\B_n{\R}$ of all $n$ by $n$ basic matrices over $\R$ 
constitutes an $n$-dimensional cylindric basis. 
Thus $\B_n{\R}$ is a cylindric atom structure 
and the full complex algebra $\Cm(\B_n{\R})$ 
with universe the power set of $\B_n{\R}$
is an $n$-dimensional cylindric algebra 
\item The {\it term algebra} over the atom structure 
$\B_n{\R}$, which is the countable subalgebra of $\Cm(\B_n{\R})$ 
generated by the countable set of 
$n$ by $n$ basic matrices, $\C=\Tm(B_n \R)$ for short,
is a countable representable $\CA_n$, but $\Cm(\B_n)$ is not representable. 
\item Hence $\C$ is a simple, atomic representable but not completely representable $\CA_n$
\item $\C$ is generated by a single $2$ dimensional element $g$, the relation algebraic reduct
of $\C$ does not have a complete representation and is also generated by $g$ as a relation algebra, and 
$\C$ is a sub-neat reduct of some simple representable $\D\in \CA_{n+k}$ 
such that the relation algebraic reducts of $\C$ and $\D$
coincide.  
%If it did, then $R_n$ will also have a complete representation, which is not the case. 
\end{enumroman}
\end{athm}

%\begin{theorem} Let  $H_n$, $P$ and $Q$ be as described above.
%Recall that $H_n$ is the set of non-identity atoms. 

%\subsection{More Implementations}
Here we include more examples. 
%It is not hard to check that they satisfy $(1^*)-(6^*)$.

\begin{example}

Let $l\in \omega$, $l\geq 2$, and let $\mu$ be a non-zero cardinal. Let $I$ be a finite set,
$|I|\geq 3l.$ Let 
$$J=\{(X,n): X\subseteq I, |X|=l,n<\mu\}.$$
Let $H$ be as before, i.e.
$$H=\{a_i^{P,W}: i\in \omega, P\in I, W\in J\}.$$
Define
$(a_i^{P,S,p}, a_j^{Q,Z,q}, a_k^{R,W,r})$ is consistent ff 

$S\cap Z\cap W=\emptyset$ or 
$ e(i,j,k) \text { and } |\{P,Q,R\}|\neq 1.$

Pending on $l$ and $\mu$, let us call these atom structures ${\cal F}(l,\mu).$
Then the example in section 2 is just 
${\cal F}(2,1).$

If $\mu\geq \omega$, then $J$ as defined in section 2 would be infinite, 
and $Uf$ will be a proper subset of the ultrafilters.
It is not difficult to show that if $l\geq \omega$ 
(and we relax the condition that $I$ be finite), then
$\Cm{\cal F}(l,\mu)$ is completely representable, 
and if $l<\omega$ then $Cm{\cal F}(l,\mu)$ is non-representable.

%Hirsch and Hodkinson prove that the class of 
%completely representable $RRA$'s is not elementary.
%We now have the machinery to prove:

%\end{enumarab}
\end{example}

\begin{corollary}
\begin{enumarab}
\item  The classes $\RRA$ is not finitely axiomatizable.
\item  The elementary 
closure of the class ${\bf CRA}$ is not finitely axiomatizable.
\end{enumarab}
\end{corollary}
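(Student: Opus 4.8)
The plan is to prove both parts at one stroke by a Monk‑style ultraproduct argument based on the family ${\cal F}(l,1)$ of the last example. Fix a non‑principal ultrafilter $D$ on the set of integers $\ge 2$ and put $\A_l=\Cm({\cal F}(l,1))$. By the example each $\A_l$ is a relation algebra that is \emph{not} representable; since complete representability implies representability, and both $\RRA$ and the elementary closure $\overline{\CRA}$ of $\CRA$ are contained in $\RRA$ ($\RRA$ being an elementary class — indeed a variety by Tarski — that contains $\CRA$), we get $\A_l\notin\RRA$ and $\A_l\notin\overline{\CRA}$ for every finite $l\ge 2$.

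The heart of the proof is the claim that $\A:=\prod_{l/D}\A_l$ is \emph{completely} representable. First, atomicity of a relation algebra is expressible by a first–order sentence, so by \Los's theorem $\A$ is atomic, its atoms being the classes $(\{a_l\})_l/D$ with $a_l$ an atom of $\A_l$. Let ${\cal S}=\prod_{l/D}{\cal F}(l,1)$ be the ultraproduct of the underlying relation atom structures, and consider $e:\A\to\Cm({\cal S})$ defined by $e((X_l)_l/D)=\{(a_l)_l/D:\{l:a_l\in X_l\}\in D\}$. By \Los's theorem $e$ is a Boolean embedding, and since the operations $Id,\breve{}\,,{;}$ of a complex algebra are defined purely from the first–order data $Id,\breve{}\,,C$ of the atom structure, the standard ``$\exists$ inside versus outside the filter'' juggling shows that $e$ respects them; thus $e$ is an embedding of relation algebras. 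It is moreover \emph{complete}: if $S\subseteq A$ has $\sum^{\A}S=1$, then, $\A$ being atomic and any atom disjoint from all members of $S$ giving an upper bound $\ne 1$, every atom of $\A$ lies below a member of $S$, so $\bigcup_{s\in S}e(s)$ contains every atom of ${\cal S}$ and hence is the unit of $\Cm({\cal S})$. Finally ${\cal S}$ is an atom structure of the shape ${\cal F}(l',\mu')$ in which the length parameter $l'=(l)_l/D$ has become infinite (the set family $J$ is now a family of infinite subsets of an infinite index set, so the Ramsey–type obstruction to finite representations vanishes); hence $\Cm({\cal S})$ is completely representable by the corresponding assertion of the example. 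Restricting a complete representation of $\Cm({\cal S})$ along the complete embedding $e$ of the atomic algebra $\A$ yields a complete representation of $\A$. Therefore $\A\in\CRA\subseteq\overline{\CRA}\subseteq\RRA$.

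Both non‑finite‑axiomatizability statements now follow by the same contradiction. The class $\RRA$ is a variety, hence elementary, and $\overline{\CRA}$ is elementary by construction; each consists of relation algebras. If $\RRA$ — respectively $\overline{\CRA}$ — were finitely axiomatizable, then, conjoining the finitely many axioms into one sentence $\sigma$, the complementary class of relation algebras \emph{outside} it would be axiomatized within $\RA$ by $\neg\sigma$, hence closed under ultraproducts by \Los's theorem. But $\A_l\models\neg\sigma$ for every $l$, while $\A=\prod_{l/D}\A_l\models\sigma$ because $\A\in\RRA\cap\overline{\CRA}$ — a contradiction. This establishes (1) and (2).

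The main technical work, and the only place where genuine care is required, is the middle paragraph: verifying that $e$ is a complete embedding of relation algebras (the composition clause needs the filter juggling, completeness needs the atom‑covering observation), and — more delicately — confirming that the ultraproduct atom structure ${\cal S}=\prod_{l/D}{\cal F}(l,1)$ really does satisfy the hypotheses under which the complete‑coloured‑graph construction of the example produces a complete representation of $\Cm({\cal S})$, i.e.\ that every condition isolating the ``$l\ge\omega$'' regime survives passage to the ultraproduct with $l$ replaced by the nonstandard value $(l)_l/D$. Once that is in place, everything else is routine.
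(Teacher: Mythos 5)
Your proposal follows essentially the same route as the paper: a Monk-style argument based on the algebras $\Cm{\cal F}(l,1)$, which lie outside $\RRA$ (hence outside the elementary closure of ${\bf CRA}$, since ${\bf CRA}\subseteq\RRA$ and $\RRA$ is elementary), while a non-principal ultraproduct is completely representable via the ``$l\geq\omega$'' case of the example, so neither complement is closed under ultraproducts and \Los\ gives both non-finite-axiomatizability claims. The only point where you go beyond the paper is the worthwhile verification that $\prod_{D}\Cm{\cal F}(l,1)$ embeds completely into $\Cm(\prod_{D}{\cal F}(l,1))$; the delicate step you flag but do not carry out -- that the ultraproduct atom structure really falls under the example's complete-representability assertion for the infinite-parameter regime -- is exactly the step the paper itself asserts without proof.
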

\begin{demo}{Proof} 
Let ${\cal D}$ be a non-
trivial ultraproduct of ${\cal F}(i,1)$, $i\in \omega$. Then $Cm{\cal D}$
is completely representable. 
Thus ${\R}(i,1)$ are $RRA$'s 
without a complete representation while their ultraproduct has a complete representation.
Also $Cm{\cal D}(i,1)$ $i\in \omega$ 
are non representable with a completely representable ultraproduct.
This yields the desired result.
\end{demo}
We note that a variation on the theme, usiraltion atom structures with $n$ dimensional cylindric bases,
yields the analogous result for cylindric algebras.
In more detail

\begin{corollary} Let $2<n<\omega$
\begin{enumarab}
\item  The classes $\RCA_n$ is not finitely axiomatizable.
\item  The elementary 
closure of the class  of completely representable $\CA_n$ 
is not finitely axiomatizable.
\end{enumarab}
\begin{demo} Let $n\geq 3$.  For $k\geq 0$, let $\B_k\in RCA_n\cap \Nr_n\CA_{n+k}$ 
such that $\B$ has no complete representation. Then $\B_k=\Nr_n\C_k$, with  $C_k\in \CA_{n+k}$. 
Let $C_n^+\in \CA_{\omega}$ 
be such that $\Rd_{n+k}\C_k^+=C_k$. Then $\B=\prod \B_k/F=\prod Nr_nC_k/F=\Nr_n\prod C_k^+/F$. 
Then $\B$ is completely representable. 
\end{demo}

\end{corollary}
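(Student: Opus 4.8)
The plan is to realize both items at once from a single sequence of ``bad'' cylindric algebras whose ultraproduct over $\omega$ is ``good'', and then to invoke the soft fact that a finitely axiomatizable elementary class has a complement that is again elementary, in particular closed under ultraproducts. Fix the finite $n$ with $2<n<\omega$. For each finite $i\geq 2$ choose an index set $I_i$ with $|I_i|\geq 3i$ and $|I_i|\geq 2n+2$, form the relation atom structure ${\cal F}(i,1)$ of the Example with parameters $l=i$, $\mu=1$ built over $I_i$, pass to the associated $n$-dimensional cylindric basis $\B_n{\cal F}(i,1)$, and set $\D_i=\Cm(\B_n{\cal F}(i,1))\in\CA_n$. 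Exactly as in the relation-algebra analysis above, the first partition of the atoms embeds a Maddux-type integral relation algebra $\mathbf{M}_i$ (with $P;P=\{Q:Q\neq P\}\cup\{Id\}$ and $P;Q=H$) into the relation-algebra part of $\D_i$; since $\mathbf{M}_i$ has only finite representations whereas any representation of $\D_i$ would have to be infinite, $\D_i\notin\RCA_n$.

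Next I would take a non-principal ultrafilter $F$ on $\omega$ and analyse $\prod_i\D_i/F$. Writing $S_i=\B_n{\cal F}(i,1)$, the properties ``$N$ is an $n\times n$ basic matrix'' and the coloured-graph completion conditions are first order for fixed $n$, so $\prod_iS_i/F\cong\B_n{\cal D}$ where ${\cal D}=\prod_i{\cal F}(i,1)/F$; and in ${\cal D}$ both parameters --- the size of $I$ and the value $l$ --- are infinite (since $|I_i|,i\to\infty$ along $F$), so by \Los's theorem ${\cal D}$ satisfies the extension/amalgamation schemas that, for ${\cal F}(l,\mu)$ with $l\geq\omega$, force complete representability. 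Thus $\Cm(\prod_iS_i/F)=\Cm(\B_n{\cal D})$ is completely representable. Now $\prod_i\D_i/F=\prod_i\Cm(S_i)/F$ is atomic, its atoms being precisely the classes $(\{a_i\})/F$, with atom structure $\prod_iS_i/F$, and it embeds as a dense $\CA_n$-subalgebra of $\Cm(\prod_iS_i/F)$ with the same set of atoms; so the restriction of the complete representation of $\Cm(\B_n{\cal D})$ to it is again a complete representation, whence $\prod_i\D_i/F\in\CRA_n\subseteq\RCA_n$.

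From here both statements follow by the same device. Since $\RCA_n$ is a variety it is closed under ultraproducts; were it finitely axiomatizable, $\CA_n\setminus\RCA_n$ would be elementary and hence ultraproduct-closed, contradicting $\D_i\notin\RCA_n$ for all $i$ while $\prod_i\D_i/F\in\RCA_n$. This proves (1). For (2), observe that $\CRA_n\subseteq\RCA_n$ and $\RCA_n$ is elementary, so the elementary closure of $\CRA_n$ is contained in $\RCA_n$; therefore each $\D_i$, being non-representable, lies outside that elementary closure, while $\prod_i\D_i/F\in\CRA_n$ lies inside it. As the elementary closure of $\CRA_n$ is by definition an elementary class, if it were finitely axiomatizable its complement in $\CA_n$ would be elementary, hence ultraproduct-closed --- again a contradiction. (Alternatively one may run (2) with the representable-but-not-completely-representable term algebras $\Tm(\B_n{\cal F}(i,1))$ in place of $\D_i$, as in the sketch above, but that route additionally requires the game/Lyndon-condition characterization of the elementary closure of $\CRA_n$ in order to place those term algebras outside it, so the non-representable $\D_i$ give the cleaner argument.)

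The step I expect to be the genuine obstacle is the ultraproduct bookkeeping of the second paragraph: one must make precise that a non-principal ultraproduct of the atom structures ${\cal F}(i,1)$ --- equivalently of the $\B_n{\cal F}(i,1)$ --- satisfies exactly the first-order consequences that render its complex algebra completely representable (i.e.\ that the passage ``$l,|I|\to\infty$'' is faithfully captured by \Los's theorem, matching the $l\geq\omega$ case of the Example), together with the comparison between $\Cm(\prod_iS_i/F)$ and $\prod_i\Cm(S_i)/F$ (density, coincidence of atoms, compatibility with cylindrifications and diagonals). Everything else --- the non-representability of each $\D_i$ via the embedded Maddux algebra, and the purely model-theoretic implication ``complement not closed under ultraproducts $\Rightarrow$ not finitely axiomatizable'' --- is routine given the constructions already in place.
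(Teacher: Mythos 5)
Your argument is correct in outline, but it takes a genuinely different route from the proof printed here, so let me compare the two. The printed demo takes, for each $k$, a \emph{representable} $\B_k\in \RCA_n\cap\Nr_n\CA_{n+k}$ with no complete representation and uses the commutation of $\Nr_n$ with ultraproducts, $\prod \Nr_n\C_k/F=\Nr_n\prod\C_k^+/F$, to place the ultraproduct inside $\Nr_n\CA_{\omega}$ and then declares it completely representable; the burden is carried by the neat-embedding machinery, and the witnesses are all representable algebras. You instead use the non-representable complex algebras $\Cm(\B_n{\cal F}(i,1))$ with the parameter $l=i$ growing, so a single sequence settles both items: non-representability of each witness gives item (1) directly, and since the elementary closure of the class of completely representable $\CA_n$'s is contained in $\RCA_n$, the very same witnesses lie outside that closure, which is exactly what the ultraproduct criterion for item (2) requires (mere failure of complete representability, as used for the $\B_k$ in the printed demo, does not by itself put an algebra outside the elementary closure, so your choice of witnesses is the cleaner one). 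Your route also sidesteps a delicate point in the printed argument: the ultraproduct there is uncountable, and the paper's own Example shows that an uncountable atomic algebra in $\Nr_n\CA_{\omega}$ need not be completely representable, so the final sentence of the printed demo needs more than is stated. What you pay for these advantages is the step you yourself flag: that the ultraproduct atom structure behaves like ${\cal F}(l,\mu)$ with $l\geq\omega$, so that its complex algebra, and the complex algebra over its $n$-dimensional basic-matrix structure, is completely representable, together with the identification of $\At(\prod\Cm(S_i)/F)$ with $\prod S_i/F$ and the restriction of the complete representation to the dense subalgebra. The first of these is precisely the assertion the paper itself makes without proof in the preceding $\RRA$ corollary (and invokes for $\CA_n$ only as a ``variation on the theme''), so on that point your proposal is at the same level of rigor as the text; the genuinely new piece you would still have to supply is the transfer of complete representability from the relation-algebra complex algebra over the limit atom structure to the cylindric complex algebra over its basic matrices.
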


%\begin{demo}{Proof} This proves theorem..above. We leave this to the reader. 
%\end{demo}

\subsection{Completions of sub neat reducts}

We use a construction of Hirsch and Hodkinson \cite{HHbook}. We follow the notation in op.cit. 
%$\RA$ stands for the class of relation algebras and
%$\CA_n$ stands for the class of cylindric algebras of dimension $n$.
$\Ra\CA_n$ stands for the class of relation algebra reducts of $\CA_n$.
%$\RCA_n$ stands for the class of representable $\CA_n$'s and for $k<n$, $\Nr_k\CA_n$ stands for 
%the class of neat $k$ reducts of algebras in $\CA_n$.
It is known that $\RCA_n={\bf S}\Nr_n\CA_{\omega}$ for any finite $n$ and that for $k\in \omega$ and $n>2$, 
$\RCA_n\subset {\bf S}\Nr_n\CA_{n+k+1}\subset 
{\bf S}\Nr_n\CA_{n+k}$ \cite{HHbook}. 

Let $\RA_n$ be the class of subalgebras of atomic relation algebras having $n$ dimensional relational basis.
Then ${\bf S}\Ra\CA_n\subseteq \RA_n$ \cite{HHbook}. 
The full complex algebra of an atom structure $S$
will be denoted by $\Cm S$, and the term algebra by $\Tm S.$
$S$ could be a relation atom structure or a cylindric atom structure.

\begin{theorem} Let $n\geq 3$. Assume that for any simple atomic relation algebra $\cal A$ with atom structure $S$, 
there is a cylindric atom structure $H$ such that:
\begin{enumarab}
\item If $\Tm S\in \RRA$, then $\Tm H\in \RCA_n$.
\item $\Cm S$ is embeddable in $\Ra$ reduct of $\Cm H$.
\end{enumarab}
Then for all $k\geq 3$, $S\Nr_n\CA_{n+k}$ is not closed under completions.

\end{theorem}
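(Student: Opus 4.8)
The statement is conditional: assuming a transfer device that turns a simple atomic relation algebra $\mathcal A$ with atom structure $S$ into a cylindric atom structure $H$ satisfying (1) and (2), we must conclude that $\SS\Nr_n\CA_{n+k}$ is not closed under completions for every $k\geq 3$. The plan is to feed into this hypothesis a single well-chosen relation algebra, namely (a relation algebra built from) the atom structures ${\cal F}$, or rather ${\cal F}(l,\mu)$, constructed in the previous subsection --- the Andr\'eka--N\'emeti style weakly-but-not-strongly representable atom structures. Concretely, fix $k\geq 3$ and choose the parameters (the index set $I$ with $|I|$ large enough in terms of $n$ and $k$, and $l,\mu$) so that the term algebra $\Tm S$ of the resulting relation atom structure $S$ is representable, i.e. $\Tm S\in\RRA$, while its completion $\Cm S$ is not merely non-representable but in fact $\Cm S\notin\SS\Ra\CA_{n+k}$. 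That strengthening of non-representability --- pushing from ``not in $\RRA$'' to ``not in $\SS\Ra\CA_{n+k}$'' --- is exactly the ``real gain'' the author advertises at the start of the section, and it is where the delicate counting with $|I|\ge 2n+2$ (or the analogue for the sharper bound) enters; I would cite the Theorem~2 / Example material just above for the representability of $\Tm S$ and adapt the non-representability argument (the Maddux subalgebra $\bold M$ embeds into $\Cm S$, and $\bold M$ has no representation on the sets available in $n+k$ dimensions) to get the sharper conclusion.

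Now apply the hypothesis to this $S$: it yields a cylindric atom structure $H$ with $\Tm H\in\RCA_n$ (by (1), since $\Tm S\in\RRA$) and with $\Cm S$ embeddable in $\Ra(\Cm H)$ (by (2)). Put $\mathcal C=\Tm H$. Then $\mathcal C$ is a countable atomic cylindric algebra in $\RCA_n=\SS\Nr_n\CA_{\omega}\subseteq\SS\Nr_n\CA_{n+k}$. Its Dedekind--MacNeille completion is $\Cm(\At\mathcal C)=\Cm H$, because $\RCA_n$ (indeed $\CA_n$) is completely additive so $\Tm H$ is the term subalgebra of $\Cm H$ and $\Cm H$ is its minimal completion. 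So it suffices to show $\Cm H\notin\SS\Nr_n\CA_{n+k}$. Suppose for contradiction $\Cm H\in\SS\Nr_n\CA_{n+k}$; then (using that $\SS\Ra\CA_{m}$ behaves well under this and that relation-algebra reducts commute with the neat-reduct/subalgebra operations in the expected way --- the analogue of the inclusions $\SS\Ra\CA_n\subseteq\RA_n$ and the reduct facts quoted from \cite{HHbook}) the $\Ra$-reduct $\Ra(\Cm H)$ lies in $\SS\Ra\CA_{n+k}$. Since $\Cm S$ embeds into $\Ra(\Cm H)$ by clause (2), and $\SS\Ra\CA_{n+k}$ is closed under subalgebras, we get $\Cm S\in\SS\Ra\CA_{n+k}$, contradicting our choice of $S$. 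Hence $\Cm H\notin\SS\Nr_n\CA_{n+k}$, so $\mathcal C\in\SS\Nr_n\CA_{n+k}$ is an atomic algebra whose completion is outside the class, and $\SS\Nr_n\CA_{n+k}$ is not closed under completions.

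The main obstacle --- the step that actually carries mathematical content rather than bookkeeping --- is establishing the sharpened non-representability: that for the chosen relation atom structure $S$ one has $\Cm S\notin\SS\Ra\CA_{n+k}$, not just $\Cm S\notin\RRA$. For this I would look closely at how the Maddux algebra $\bold M$ with atoms $I\cup\{Id\}$ sits inside $\Cm{\cal F}$ via $H^P;H^Q=\bigcup\{H^Z: Z\le P;Q \text{ in }\bold M\}$: any member of $\SS\Ra\CA_{n+k}$ has an $(n+k)$-dimensional relational (hyper)basis, which forces, via the partition structure and the ``evenly distributed'' composition rule, a monochromatic configuration of a size controlled by $|I|$ and by $n+k$; choosing $|I|$ large enough relative to $n+k$ makes this impossible. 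A secondary, more routine obstacle is verifying that passing to $\Ra$-reducts is compatible with the operators $\SS$ and $\Nr$ in the precise form needed here, i.e. that $\Cm H\in\SS\Nr_n\CA_{n+k}$ genuinely forces $\Ra(\Cm H)\in\SS\Ra\CA_{n+k}$; this is standard but should be stated carefully, citing \cite{HHbook}. Finally, one should double-check that the hypothesis's clause (1) applies, i.e. that $\Tm S\in\RRA$ for the chosen parameters, which is precisely what the Theorem~2 material in the preceding subsection delivers, and that $\Cm H$ really is the completion of $\Tm H$ (complete additivity of $\CA_n$).
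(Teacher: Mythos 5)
Your overall reduction is exactly the paper's: feed a suitable relation atom structure $S$ into the hypothesis, observe that $\Tm H\in\RCA_n={\bf S}\Nr_n\CA_{\omega}\subseteq{\bf S}\Nr_n\CA_{n+k}$ while the completion of $\Tm H$ is $\Cm H$ (complete additivity/atomicity), and then rule out $\Cm H\in{\bf S}\Nr_n\CA_{n+k}$ because that would put $\Ra\Cm H$, and hence its subalgebra $\Cm S$ (clause (2)), inside ${\bf S}\Ra\CA_{n+k}$. That part is fine and is what the paper does. The genuine gap is at the step you yourself single out as the main obstacle: you never establish that your chosen $S$ (an ${\cal F}(l,\mu)$-type structure from the first construction) satisfies $\Cm S\notin{\bf S}\Ra\CA_{n+k}$. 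What that construction actually proves is only $\Cm{\cal F}\notin\RRA$, via the embedded Maddux algebra $\bold M$ and a finite-versus-infinite base comparison; since $\RRA={\bf S}\Ra\CA_{\omega}\subseteq{\bf S}\Ra\CA_{n+k}$, failure of representability gives no information at all about membership in ${\bf S}\Ra\CA_{n+k}$, and the base-counting argument does not transfer to relativized ($(n+k)$-flat or $(n+k)$-square) representations. Your heuristic that one should take $|I|$ ``large enough relative to $n+k$'' is not an argument and, I believe, mislocates the difficulty: what is needed is a Ramsey argument applied to an $(n+k)$-flat representation, using a finite partition of the unit into monochromatic elements $P$ with $(P;P)\cdot P=0$ --- precisely the technique the paper carries out later, but for the graph-based algebra $\Cm\alpha(\G)$ (showing it is not in ${\bf S}\Ra\CA_{n+2}$), not for ${\cal F}$. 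As written, the crucial input to your proof is a sketch of a plan, not a proof.

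The paper avoids this entirely by quoting Hirsch--Hodkinson \cite{HHbook} (Lemmas 17.34--17.36): there is a relation atom structure $S$ (of a simple atomic relation algebra) with $\Tm S\in\RRA$ but $\Cm S\notin\RA_6$, hence $\Cm S\notin{\bf S}\Ra\CA_6$ since ${\bf S}\Ra\CA_6\subseteq\RA_6$. Because $n\geq 3$ and $k\geq 3$ give $n+k\geq 6$, one has ${\bf S}\Ra\CA_{n+k}\subseteq{\bf S}\Ra\CA_6$, so this single example disposes of all $k\geq 3$ (and all $n\geq 3$) at once, whereas your plan needs a separate, and unproven, sharpened example for each $k$. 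To repair your write-up, either cite that result for the existence of $S$, or genuinely carry out the flat-representation Ramsey argument for your ${\cal F}$-structures; without one of these the proof does not go through.
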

\begin{demo}{Proof} Let $S$ be a relation atom structure such that $\Tm S$ is representable while $\Cm S\notin \RA_6$.
Such an atom structure exists \cite{HHbook} Lemmas 17.34-17.36 and are finite. 
It follows that $\Cm S\notin {\bf S}\Ra\CA_n$.
Let $H$ be the $\CA_n$ atom structure provided by the hypothesis of the previous theorem.  
Then $\Tm H\in \RCA_n$. We claim that $\Cm H\notin {\bf S}\Nr_n\CA_{n+k}$, $k\geq 3$. 
For assume not, i.e. assume that $\Cm H\in {\bf S}\Nr_n\CA_{n+k}$, $k\geq 3$.
We have $\Cm S$ is embeddable in $\Ra\Cm H.$  But then the latter is in ${\bf S}\Ra\CA_6$
and so is $\Cm S$, which is not the case.
 \end{demo}
 
Monk and Maddux constructs such an $H$ for $n=3$ and Hodkinson constructs  an $H,$ but $H$ does not satisfy (2).

Now we prove, in a different way that classes of subneat reducts are not closed under completions. the condition theorem above. 
We use the model theoretic methods in \cite{Hodkinson} and \cite{w}, 
to construct strongly representable atom structures that are not weakly representable.
Our chosen graph is yet simpler than the one used in \cite{w}, and we prove an even  stronger result namely that for each $k\geq 2$
the variety $S\Nr_n\CA_{n+k}$ is not closed under completions, hence is it also not atom-canonical.
This answers a question of Hirsch and Hodkinson. When the proofs are completely analogous to profs in \cite{w} 
(the esence of the construction is the same) we refer to \cite{w}.

In this section by a graph we shall mean a pair $\G=(G,E)$ where $G$ is a non-empty set of vertices 
and $E\subseteq G\times G$ is an irreflexive and
symmetric binary relation on $G$. A pair $(x,y)\in E$ is called an edge of $\G$. Fix finite $N\geq n(n-1)/2$. 
 $\G$ will denote the graph $\G=(\N,E)$ with nodes $\N$ and $i,l$ is an edge i.e $(i,l)\in E$ if  
$0<|i-l|<N$. The graph used in \cite{w} is an countable union of cliques, denote by $N\times \omega$. 
%Another one is the one used in \cite{HHbook} based on $\mathbb \Z$.

\begin{definition}
A \textit{labelled graph} is an undirected graph $\Gamma$ such that
every edge ( \textit{unordered} pair of distinct nodes ) of $\Gamma$
is labelled by a unique label from $(\G \cup \{\rho\}) \times n$, where
$\rho \notin \G$ is a new element. The colour of $(\rho, i)$ is
defined to be $i$. The \textit{colour} of $(a, i)$ for $a \in \G$ is
$i$.
\end{definition}

We will write $ \Gamma (x, y)$ for the label of an edge $ (x, y)$ in
the labelled graph $\Gamma$. Note that these may not always be
defined: for example, $ \Gamma (x, x)$ is not.\\
If $\Gamma$ is a labelled graph, and $ D \subseteq \Gamma$, we write
$ \Gamma \upharpoonright D$ for the induced subgraph of $\Gamma$ on
the set $D$ (it inherits the edges and colours of $\Gamma$, on its
domain $D$). We write $\triangle \subseteq \Gamma$ if $\triangle$ is
an induced subgraph of $\Gamma$ in this sense.

\begin{definition}
Let $ \Gamma, \triangle$ be labelled graphs, and $\theta : \Gamma
\rightarrow \triangle$ be a map. $\theta$ is said to be a
\textit{labelled graph embedding}, or simple an \textit{embedding},
if it is injective and preserves all edges, and all colours, where
defined, in both directions. An \textit{isomorphism} is a bijective
embedding.
\end{definition}

Now we define a class $\GG$ of certain labelled graphs.

\begin{definition}
The class $\GG$ consists of all complete labelled graphs $\Gamma$ (possibly
the empty graph) such that for all distinct $ x, y, z \in \Gamma$,
writing $ (a, i) = \Gamma (y, x)$, $ (b, j) = \Gamma (y, z)$, $ (c,
l) = \Gamma (x, z)$, we have:\\
\begin{enumarab}
\item $| \{ i, j, l \} > 1 $, or
\item $ a, b, c \in \G$ and $ \{ a, b, c \} $ has at least one edge
of $\G$, or
\item exactly one of $a, b, c$ -- say, $a$ -- is $\rho$, and $bc$ is
an edge of $\G$, or
\item two or more of $a, b, c$ are $\rho$.
\end{enumarab}
Clearly, $\GG$ is closed under isomorphism and under induced
subgraphs.
\end{definition}

\begin{theorem}
There is a countable labelled graph $M\in \GG$ with the following
property:\\
$\bullet$ If $\triangle \subseteq \triangle' \in \GG$, $|\triangle'|
\leq n$, and $\theta : \triangle \rightarrow M$ is an embedding,
then $\theta$ extends to an embedding $\theta' : \triangle'
\rightarrow M$.
\end{theorem}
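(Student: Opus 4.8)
The plan is to construct $M$ as a countable homogeneous-like structure via a step-by-step (back-and-forth) chain argument, building a union of an $\omega$-chain of finite labelled graphs in $\GG$ that takes care of every possible one-point extension demand. First I would verify the amalgamation-type property that makes this work: if $\triangle \subseteq \triangle' \in \GG$ with $|\triangle'| \le n$, $\triangle \subseteq \Gamma$ for some finite $\Gamma \in \GG$, and (the only nontrivial case) $\triangle' = \triangle \cup \{v\}$ is a one-point extension, then one can find a finite $\Gamma^+ \in \GG$ with $\Gamma \subseteq \Gamma^+$ and an embedding $\theta': \triangle' \to \Gamma^+$ extending the inclusion $\triangle \hookrightarrow \Gamma$. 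Concretely, one adjoins a single new node $w$ to $\Gamma$, sets $\Gamma^+(w,x) = \triangle'(v,x)$ for $x \in \triangle$, and for the remaining nodes $x \in \Gamma \setminus \triangle$ one must choose labels $\Gamma^+(w,x) \in (\G \cup \{\rho\}) \times n$ so that every triangle $\{w, x, y\}$ (with $x,y \in \Gamma$) lies in $\GG$. The key observation is that choosing the first coordinate of $\Gamma^+(w,x)$ to be $\rho$ for all such $x$, and the second coordinate to be some fixed colour $i$, makes clause (3) or (4) of the definition of $\GG$ apply to any triangle involving $w$ and at least one node outside $\triangle$ — so consistency reduces to checking triangles entirely inside $\triangle' = \triangle \cup \{v\}$, which hold since $\triangle' \in \GG$; a little care is needed when two of the three nodes lie in $\triangle$ but this is exactly where one uses that $\G$ has enough edges (the requirement $N \ge n(n-1)/2$, combined with $|\triangle'| \le n$).

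Given this one-point extension property, I would then run the standard construction: enumerate, in an $\omega$-sequence with infinitely many repetitions, all "tasks" of the form (a finite labelled graph $\triangle \subseteq \triangle' \in \GG$ with $|\triangle'| \le n$, together with a partial embedding $\theta: \triangle \to$ "the graph built so far"). Starting from $M_0 = \emptyset$ (or a single node), at stage $k$ we are handed a task; if $\theta$ really is an embedding of $\triangle$ into $M_k$, we apply the one-point extension property (iterating it at most $n$ times to handle $\triangle' \setminus \triangle$, which has at most $n$ points) to obtain $M_{k+1} \supseteq M_k$, finite, in $\GG$, together with an extension $\theta': \triangle' \to M_{k+1}$ of $\theta$; otherwise set $M_{k+1} = M_k$. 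Put $M = \bigcup_{k<\omega} M_k$. Since $\GG$ is closed under induced subgraphs and any finite induced subgraph of $M$ already sits inside some $M_k \in \GG$, we get $M \in \GG$; and $M$ is countable since each $M_k$ is finite. Finally, the extension property in the theorem statement holds: given $\triangle \subseteq \triangle' \in \GG$ with $|\triangle'| \le n$ and an embedding $\theta: \triangle \to M$, the image $\theta(\triangle)$ is a finite subgraph of $M$, hence contained in some $M_k$; the task $(\triangle, \triangle', \theta)$ is enumerated at some stage $\ge k$, and at that stage the construction produces the desired extension $\theta': \triangle' \to M$.

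The main obstacle I anticipate is the combinatorial heart of the one-point extension lemma: verifying that labels can always be assigned to the edges from the new vertex $w$ to the "old" vertices of $\Gamma$ without creating a forbidden triangle, i.e. a triangle violating all four clauses of the definition of $\GG$. The danger case is a triangle $\{w, x, y\}$ with $x, y \in \triangle$ (so their labels are already fixed by $\triangle'$) — here clauses (3),(4) may not be available if $\triangle'(v,x)$ and $\triangle'(v,y)$ are both "proper" $\G$-colours, and one must rely on clause (1) (distinct second coordinates) or clause (2) (the triple $\{a,b,c\}$ of $\G$-elements contains a $\G$-edge). This is precisely the point where the rainbow-style bookkeeping of second coordinates (the $n$ "colours") and the density/edge-richness of $\G$ (guaranteed by $N \ge n(n-1)/2$ together with $|\triangle'| \le n$) must be combined — essentially the same verification carried out in \cite{w} and \cite{Hodkinson} for their graphs, and since our $\G$ is chosen to be simpler, I expect the argument there to transfer with the obvious modifications; where it is routine I would simply cite \cite{w}.

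\begin{proof}[Proof sketch]
Build $M = \bigcup_{k<\omega}M_k$ as the union of an increasing chain of finite graphs in $\GG$, constructed by a step-by-step argument that, using the one-point amalgamation property of $\GG$ described above (which relies on assigning label $\rho$ to edges into previously-unconstrained nodes, together with $N\ge n(n-1)/2$), discharges at some stage every extension task $(\triangle,\triangle',\theta)$ with $|\triangle'|\le n$. Then $M\in\GG$ since $\GG$ is closed under induced subgraphs, $M$ is countable, and the required extension property holds because any embedding $\theta:\triangle\to M$ has finite image, hence lands inside some $M_k$, and the corresponding task is handled at a later stage. Full details are as in \cite{w} and \cite{Hodkinson}.
\end{proof}
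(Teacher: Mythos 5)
Your overall architecture --- prove a one-point extension (amalgamation) lemma for $\GG$ and then build $M$ as the union of an $\omega$-chain of finite graphs in $\GG$ by scheduling all extension tasks --- is exactly the standard route; the paper itself gives no proof of this statement, it simply cites \cite{Hodkinson} and \cite{w}, Prop.~3.5, where that chain construction is carried out. The problem is that your verification of the one-point extension lemma, which you yourself flag as the combinatorial heart, is wrong as stated, in two places. First, the danger case is misidentified: a triangle $\{w,x,y\}$ with both $x,y\in\triangle$ carries the labels $\triangle'(v,x)$, $\triangle'(v,y)$ and $\Gamma(x,y)=\triangle'(x,y)$, i.e.\ it is a label-for-label copy of the triangle $\{v,x,y\}$ of $\triangle'$, hence automatically consistent; no appeal to the edge-richness of $\G$, nor to $N\ge n(n-1)/2$ (which is only needed later, for the back-and-forth systems $\Theta^{\chi}$ on $M$), is relevant there. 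Second, the genuinely constrained triangles are the mixed ones, $x\in\Gamma\setminus\triangle$ and $y\in\triangle$, and for these your claim that labelling every new edge $(w,x)$ by $(\rho,i)$ with one \emph{fixed} colour $i$ makes clause (3) or (4) apply is false: if $\triangle'(v,y)=(a,j)$ and $\Gamma(x,y)=(c,l)$ with $a,c\in\G$, then only one label of the triangle is a $\rho$-label, clause (4) is unavailable, clause (3) demands that $ac$ be an edge of $\G$ (which you do not control), and if moreover $i=j=l$ then clause (1) fails as well, so the triangle is forbidden.

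The lemma is nevertheless true and the repair is cheap: keep $\rho$ as first coordinate on every new edge $(w,x)$ with $x\in\Gamma\setminus\triangle$, but choose the second coordinate $l_x$ separately for each such $x$. A triangle $\{w,x,y\}$ with $y\in\triangle$ can only fail when $\triangle'(v,y)$ and $\Gamma(x,y)$ both have non-$\rho$ first coordinates that are non-adjacent in $\G$ and carry one and the same colour $j$, and in addition $l_x=j$; so it suffices to pick $l_x<n$ outside the set of such colours $j$, a set of size at most $|\triangle|\le n-1$ because $|\triangle'|\le n$, and a good $l_x$ therefore exists. Triangles with both $x,y\in\Gamma\setminus\triangle$ are covered by clause (4) (two $\rho$'s), and triangles inside $\triangle'$ or inside $\Gamma$ are consistent by hypothesis; note that no property of $\G$ is used here at all, only that there are $n$ colours available. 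With this corrected lemma, your iteration over the at most $n$ points of $\triangle'\setminus\triangle$ and your enumeration/chain bookkeeping go through and yield the theorem as in the cited sources.
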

\begin{proof} \cite{Hodkinson}, \cite{w} prop. 3.5
\end{proof}
Let $L^n$ denote the first-order fragment of $L^n_{\infty \omega}$

\begin{definition}
An $n$-\textit{back-and-forth system} on $A$ is a set $\Theta$ of
one-to-one partial maps : $ A \rightarrow A$ such that:\\
\begin{enumerate}
\item if $\theta \in \Theta$ then $|\theta| \leq n$
\item if $ \theta' \subseteq \theta \in \Theta$ then $\theta' \in \Theta$
\item if $\theta \in \Theta$, $|\theta| \leq n$, and $ a \in A$,
then there is $ \theta' \supseteq \theta$ in $\Theta$ with $ a \in
dom(\theta')$ (forth)
\item if $\theta \in \Theta$, $|\theta| \leq n$, and $ a \in A$,
then there is $ \theta' \supseteq \theta$ in $\Theta$ with $ a \in
rng(\theta')$ (back).

\end{enumerate}
\end{definition}

 Recall that a \textit{partial isomorphism} of $A$ is a
partial map $ \theta : A \rightarrow A$ that preserves all
quantifier-free $L$-formulas.
The next theorem is a well known result in model theory.
\begin{theorem} 
\begin{enumarab}
\item Let $\Theta$ be an $n$-back-and-forth system
of partial isomorphism on $A$, let $\bar{a}, \bar{b} \in {}^{n}A$,
and suppose that $ \theta = ( \bar{a} \mapsto \bar{b})$ is a map in
$\Theta$. Then $ A \models \phi(\bar{a})$ iff $ A \models
\phi(\bar{b})$, for any formula $\phi$ of $L^n_{\infty \omega}$.

\item If $W$ is $L^n_{\infty \omega}$ definable, $\Theta$ is an
 $n$-\textit{back-and-forth} system
of partial isomorphisms on $A$, $\bar{a}, \bar{b} \in W$, and $
\bar{a} \mapsto \bar{b} \in \Theta$, then $ A \models \phi(\bar{a})$
iff $ A \models \phi(\bar{b})$ for any formula $\phi$ of
$L^n_{\infty \omega}$.
\end{enumarab}
\end{theorem}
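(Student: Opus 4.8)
The statement to be proved is the standard back-and-forth theorem: if $\Theta$ is an $n$-back-and-forth system of partial isomorphisms on $A$, and $\bar a\mapsto\bar b\in\Theta$, then $\bar a$ and $\bar b$ satisfy the same formulas of $L^n_{\infty\omega}$ (part 1), and the relativized version for an $L^n_{\infty\omega}$-definable set $W$ (part 2). The plan is to prove part 1 by induction on the structure of $\phi$, then deduce part 2 as a routine relativization. First I would set up the induction on $\phi\in L^n_{\infty\omega}$: for atomic $\phi$, the conclusion is immediate because every $\theta\in\Theta$ is a partial isomorphism and hence preserves quantifier-free formulas in both directions (this is exactly the definition of partial isomorphism recalled just before the theorem). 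The Boolean cases are trivial: negation is symmetric in $\bar a,\bar b$ (so the biconditional is preserved), and since $L^n_{\infty\omega}$ allows arbitrary conjunctions and disjunctions, $A\models\bigwedge_i\phi_i(\bar a)$ iff $A\models\phi_i(\bar a)$ for all $i$ iff (by induction hypothesis applied to each $\phi_i$, noting each $\phi_i$ still uses at most the $n$ variables and the same map $\bar a\mapsto\bar b$ works) $A\models\phi_i(\bar b)$ for all $i$ iff $A\models\bigwedge_i\phi_i(\bar b)$; disjunction is dual.

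The crucial case is the quantifier step, say $\phi=\exists x_k\,\psi$ where $\psi\in L^n_{\infty\omega}$ again uses only the $n$ variables $x_0,\dots,x_{n-1}$. Here is where the $n$-back-and-forth conditions (forth) and (back) enter. Suppose $A\models\exists x_k\,\psi(\bar a)$; pick a witness $c\in A$ so that $A\models\psi(\bar a\,[k/c])$, where $\bar a\,[k/c]$ is the tuple $\bar a$ with its $k$-th entry replaced by $c$. The map $\theta=(\bar a\mapsto\bar b)$ has $|\theta|\le n$; I would first pass to $\theta_0=\theta\restriction(\dom\theta\setminus\{a_k\})$, which is still in $\Theta$ by closure under subfunctions (condition 2) and has size $\le n-1$, and then apply (forth) with the element $c$ to get $\theta'\supseteq\theta_0$ in $\Theta$ with $c\in\dom(\theta')$; set $c'=\theta'(c)$. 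Then $\theta'$ restricted appropriately sends the tuple $\bar a\,[k/c]$ to $\bar b\,[k/c']$ and lies in $\Theta$ (again using condition 2 to trim down to exactly the relevant entries, size $\le n$). By the induction hypothesis applied to $\psi$ with this new map, $A\models\psi(\bar b\,[k/c'])$, hence $A\models\exists x_k\,\psi(\bar b)$. The converse direction is identical using (back) instead of (forth). A small bookkeeping point I would be careful about: when $a_k$ already equals some other entry $a_l$, or when $c$ coincides with an existing value, the trimming-and-extending has to be done so that the resulting partial map is genuinely well-defined and one-to-one; the closure conditions 1–2 on $\Theta$ together with the fact that partial isomorphisms are one-to-one handle this, but it is the one spot needing attention.

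For part 2, the point is simply that $W$ being $L^n_{\infty\omega}$-definable does not interfere: if $\bar a,\bar b\in W$ and $\bar a\mapsto\bar b\in\Theta$, then for any $\phi\in L^n_{\infty\omega}$ we already have from part 1 that $A\models\phi(\bar a)$ iff $A\models\phi(\bar b)$, so there is nothing extra to prove — the formulation in part 2 is just the convenient packaging used later (when $W$ is the unit of a relativized or weak set algebra) where one only cares about tuples lying in $W$. I would phrase part 2 as an immediate corollary, remarking only that the same induction goes through verbatim if one insists all intermediate witnesses also lie in $W$, provided the back-and-forth system is chosen to respect $W$; but in the generality stated it follows directly from part 1. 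The main obstacle, such as it is, is purely the quantifier case and the careful handling of the domain/range trimming so that membership in $\Theta$ is preserved at each step; everything else is routine structural induction.
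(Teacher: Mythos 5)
Your proof is correct and is exactly the standard induction on the complexity of $\phi$ (atomic and Boolean cases trivial, quantifier case via restrict-then-apply-forth/back) that the paper itself does not spell out, since it only cites \cite{w} (fact 4.5 and corollary 4.6), where the same back-and-forth induction is carried out; part (2) as literally stated is indeed an immediate consequence of part (1). The one wrinkle you flag (when $a_k$ coincides with another entry $a_l$, or $c$ with an existing value) is best removed by trimming to $\theta_0=\{(a_l,b_l):l\neq k\}$ rather than deleting the point $a_k$ from the domain, so that the image of $a_l=a_k$ remains forced to be $b_l$ and the extended map $\theta_0\cup\{(c,c')\}$ is automatically a well-defined restriction of a member of $\Theta$.
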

\begin{proof} \cite{w} fact 4.5, corollary 4.6
\end{proof}
\begin{definition} Let $L^+$ be the signature consisting of the binary
relation symbols $(a, i)$, for each $a \in \G \cup \{ \rho \}$ and
$ i < n$. Let $L = L^+ \setminus \{ (\rho, i) : i < n \}$. From now
on, the logics $L^n, L^n_{\infty \omega}$ are taken in this
signature.
\end{definition}

We may regard any non-empty labelled graph equally as an
$L^+$-structure, in the obvious way. The $n$-homogeneity built into
$M$ by its construction would suggest that the set of all partial
isomorphisms of $M$ of cardinality at most $n$ forms an
$n$-back-and-forth system. This is indeed true, but we can go
further.

\begin{definition}
Let $\chi$ be a permutation of the set $\omega \cup \{ \rho\}$. Let
$ \Gamma, \triangle \in \GG$ have the same size, and let $ \theta :
\Gamma \rightarrow \triangle$ be a bijection. We say that $\theta$
is a $\chi$-\textit{isomorphism} from $\Gamma$ to $\triangle$ if for
each distinct $ x, y \in \Gamma$,
\begin{itemize}
\item If $\Gamma ( x, y) = (a, j)$ with $a\in \N$, then there exist unique $l\in \N$ and $r$ with $0\leq r<N$ such that
$a=Nl+r$. 
\begin{equation*}
\triangle( \theta(x),\theta(y)) =
\begin{cases}
( N\chi(i)+r, j), & \hbox{if $\chi(i) \neq \rho$} \\
(\rho, j),  & \hbox{otherwise.} \end{cases}
\end{equation*}
\end{itemize}

\begin{itemize}
\item If $\Gamma ( x, y) = ( \rho, j)$, then
\begin{equation*}
\triangle( \theta(x),\theta(y)) \in
\begin{cases}
\{( N\chi(\rho)+s, j): 0\leq s < N \}, & \hbox{if $\chi(\rho) \neq \rho$} \\
\{(\rho, j)\},  & \hbox{otherwise.} \end{cases}
\end{equation*}
\end{itemize}
\end{definition}

\begin{definition}
For any permutation $\chi$ of $\omega \cup \{\rho\}$, $\Theta^\chi$
is the set of partial one-to-one maps from $M$ to $M$ of size at
most $n$ that are $\chi$-isomorphisms on their domains. We write
$\Theta$ for $\Theta^{Id_{\omega \cup \{\rho\}}}$.
\end{definition}

\begin{lemma}
For any permutation $\chi$ of $\omega \cup \{\rho\}$, $\Theta^\chi$
is an $n$-back-and-forth system on $M$.
\end{lemma}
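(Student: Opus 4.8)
The statement to prove is that for any permutation $\chi$ of $\omega\cup\{\rho\}$, the set $\Theta^\chi$ of partial one-to-one $\chi$-isomorphisms of $M$ of size $\le n$ is an $n$-back-and-forth system on $M$. Conditions (1) and (2) of the definition of an $n$-back-and-forth system are immediate: (1) holds because membership in $\Theta^\chi$ requires size at most $n$ by definition, and (2) holds because a restriction of a $\chi$-isomorphism is again a $\chi$-isomorphism on the smaller domain (the defining condition is a condition on each pair of distinct points, so it is inherited by induced subgraphs). So the whole content is in the ``forth'' clause (3); the ``back'' clause (4) then follows by applying ``forth'' to $\chi^{-1}$, since $(\Theta^\chi)^{-1}=\Theta^{\chi^{-1}}$, which is easy to check from the definition of $\chi$-isomorphism by symmetry of the defining clauses.

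For the forth clause, suppose $\theta\in\Theta^\chi$ with $|\theta|<n$ (if $|\theta|=n$ there is nothing to do since we only need $|\theta|\le n$ to be extendable when it is $<n$, and the case $|\theta|=n$ with $a\in\dom\theta$ already is vacuous) and let $a\in M$ be arbitrary; we must find $\theta'\supseteq\theta$ in $\Theta^\chi$ with $a\in\dom\theta'$. Write $D=\dom\theta\subseteq M$ and $\theta[D]=E\subseteq M$. The plan is: (i) form the labelled graph $\triangle$ which is the induced subgraph $M\restr{D\cup\{a\}}$; (ii) use $\theta$ together with the prescription in the definition of $\chi$-isomorphism to define a new labelled graph $\triangle'$ on the vertex set $E\cup\{a'\}$ (where $a'$ is a fresh symbol), by setting the label of each edge $(\theta(x),\theta(y))$ for $x,y\in D$ to its existing value in $M$, and choosing labels on edges from $a'$ to each $\theta(x)$ according to the $\chi$-rule applied to $\Gamma(a,x)$ --- for labels $(b,j)$ with $b\in\N$ this forces the label $(N\chi(i)+r,j)$ (or $(\rho,j)$ if $\chi(i)=\rho$, where $b=Nl+r$ say, hmm, I should be careful that here $i$ in the defining clause plays the role of $l$), and for labels $(\rho,j)$ we are free to pick any label of the allowed form; (iii) verify that $\triangle'\in\GG$, i.e. that the four-way disjunction in the definition of $\GG$ holds for every triangle of $\triangle'$ --- this is the crux; (iv) observe that $\triangle\subseteq\triangle'$, both have size $\le n$, and the identity-on-$E$ map $\triangle\to M$ is an embedding (since $E\subseteq M$ induces the same labelled graph), so by the extension property of $M$ the embedding $\triangle\hookrightarrow M$ extends to an embedding $\triangle'\hookrightarrow M$, sending $a'$ to some $b\in M$; (v) set $\theta'=\theta\cup\{(a,b)\}$ and check it is a $\chi$-isomorphism on its domain, which holds by construction of the edge-labels in $\triangle'$.

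The main obstacle is step (iii): showing $\triangle'\in\GG$. One must take an arbitrary triangle in $\triangle'$ and verify one of the four clauses. If the triangle lies entirely in $E$, it is a triangle of $M$ and we are done since $M\in\GG$. So the interesting triangles involve $a'$ and two points $\theta(x),\theta(y)$ with $x,y\in D$; the corresponding triangle $\{a,x,y\}$ in $\triangle\subseteq M$ satisfies one of the $\GG$-clauses, and one checks \emph{case by case} that the $\chi$-relabelling preserves whichever clause held. The key points are: the colour component $j$ (the second coordinate) is never changed by $\chi$, so clause (1) (``$|\{i,j,l\}|>1$'') is preserved verbatim; clause (4) (``two or more of $a,b,c$ are $\rho$'') is preserved because $\chi$ maps $\rho$ either to $\rho$ or to an $\N$-label, and crucially the \emph{definition of $\chi$-isomorphism} for a $(\rho,j)$-edge allows the image to be $(\rho,j)$ only when $\chi(\rho)=\rho$ --- so if $\chi(\rho)\ne\rho$ a $\rho$-edge becomes an $\N$-edge, but then one needs that the remaining structure still forces clause (2); clauses (2) and (3) (which talk about edges of the fixed graph $\G=(\N,E)$, i.e. about which pairs $a,b,c\in\G$ form $\G$-edges) are the delicate ones, and here one uses that $\G$-adjacency of $Nl+r$ and $Nl'+r'$ depends only on $|{(Nl+r)}-{(Nl'+r')}|$ lying strictly between $0$ and $N$, hence is controlled by $l,l',r,r'$ in a way the $\chi$-action on the $l$-part is designed to respect --- this is exactly the computation carried out in \cite{w} (the relevant lemma there). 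I would therefore do the combinatorial core of (iii) by direct reference to the corresponding verification in \cite{w}, checking only that the simpler graph $\G=(\N,E)$ used here satisfies the same hypotheses, and spell out in detail only the short arguments for clauses (1) and (4).

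\begin{demo}{Proof}
See the discussion above; the combinatorial heart is as in \cite{w}.
\end{demo}
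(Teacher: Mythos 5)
Your overall route is the paper's: closure under restriction is immediate, the back property follows from forth applied to $\chi^{-1}$, and forth is proved by building a one-point extension of the image side as a labelled graph, checking it lies in $\GG$, and invoking the extension property of $M$ to realise the new point inside $M$; the concluding verification that $\theta\cup\{(a,\phi(a'))\}$ is a $\chi$-isomorphism is as you say. The genuine gap is at your step (iii), which you both leave open and set up in a way that makes it false. For an edge $(a,x)$ with $M(a,x)=(\rho,j)$ and $\chi(\rho)\neq\rho$ you allow the new label to be ``any label of the allowed form'', i.e.\ any $(N\chi(\rho)+r,j)$ with $0\le r<N$. An arbitrary such choice can produce a triangle outside $\GG$: take $x,y\in \dom(\theta)$ with $M(a,x)=M(a,y)=(\rho,j)$ (such triangles are permitted in $M$ by clause (4)) and with $M(\theta(x),\theta(y))=(c,j)$ where $c$ lies in a block $\{N\chi(l)+s: 0\le s<N\}$ with $|\chi(l)-\chi(\rho)|\ge 2$, so that $c$ is not $\G$-adjacent to any element of the block of $\chi(\rho)$. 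If you give both new edges the same label $(N\chi(\rho)+r,j)$, then in the triangle on $\{\theta(x),\theta(y),a'\}$ all three colours are $j$ (clause (1) fails), no first component is $\rho$ (clauses (3),(4) fail), and the first components $N\chi(\rho)+r$ and $c$ span no edge of $\G$ (irreflexivity plus non-adjacency), so clause (2) fails as well; hence $\triangle'\notin\GG$ and the extension property of $M$ cannot be applied.

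What is missing is exactly the content of the paper's proof: before labelling, one chooses pairwise distinct offsets $e_s<N$, one for each new edge that requires a free choice, which moreover do not already occur (with the relevant colour) among the labels of $M\upharpoonright \rng(\theta)$; this is possible precisely because $N\ge n(n-1)/2$ bounds the number of edges of the extension graph. Distinctness guarantees that any two freely chosen edges receive $\G$-adjacent first components (two distinct offsets in $[0,N)$ differ by less than $N$), and freshness handles the mixed triangles in which one side is a freely chosen edge and another is an old image edge lying in the same block of $\chi(\rho)$, while injectivity of $\chi$ keeps the forced labels out of that block. This is also why deferring the combinatorial core to \cite{w} is not legitimate here: the paper redoes the verification explicitly ``because the graph used is different'' ($\G=(\N,E)$ with $0<|i-l|<N$ rather than a disjoint union of cliques), and in any case that verification only goes through for such a careful choice of labels, not for an arbitrary one. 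With this choice inserted, the remainder of your argument (including the points that forth need only be applied when $|\theta|<n$ and that $\phi(a')\notin\rng(\theta)$ by injectivity of the embedding) coincides with the paper's proof.
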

\begin{demo}{Proof}
The proof is similar to theorem 4.11 in \cite{w}; however, the graph used is different, so we include the proof.
Clearly, $\Theta^\chi$ is closed under restrictions. We check the
``forth" property. Let $\theta \in \Theta^\chi$ have size $t < n$.
Enumerate $ dom(\theta)$, $rng(\theta)$ respectively as $ \{ a_0,
\ldots, a_{t-1} \}$, $ \{ b_0,\ldots b_{t-1} \}$, with $\theta(a_i)
= b_i$ for $i < t$. Let $a_t \in M$ be arbitrary, let $b_t \notin M$
be a new element, and define a complete labelled graph $\triangle
\supseteq M \upharpoonright \{ b_0,\ldots, b_{t-1} \}$ with nodes
$\{ b_0,\ldots, b_{t} \}$ as follows.\\

Choose distinct "nodes"$ e_s < N$ for each $s < t$, such that no
$(e_s, j)$ labels any edge in $M \upharpoonright \{ b_0,\dots,
b_{t-1} \}$. This is possible because $N \geq n(n-1)/2$, which
bounds the number of edges in $\triangle$. We can now define the
colour of edges $(b_s, b_t)$ of $\triangle$ for $s = 0,\ldots, t-1$.

\begin{itemize}
\item If $M ( a_s, a_t) = ( Ni+r, j)$, for some $i\in \N$ and $0\leq r<N$, then
\begin{equation*}
\triangle( b_s, b_t) =
\begin{cases}
( N\chi(i)+r, j), & \hbox{if $\chi(i) \neq \rho$} \\
\{(\rho, j)\},  & \hbox{otherwise.} \end{cases}
\end{equation*}
\end{itemize}

\begin{itemize}
\item If $M ( a_s, a_t) = ( \rho, j)$, then assuming that $e_s=Ni+r,$ $i\in \N$ and $0\leq r<N$,
\begin{equation*}
\triangle( b_s, b_t) =
\begin{cases}
( N\chi(\rho)+r, j), & \hbox{if $\chi(\rho) \neq \rho$} \\
\{(\rho, j)\},  & \hbox{otherwise.} \end{cases}
\end{equation*}
\end{itemize}

This completes the definition of $\triangle$. It is easy to check 
that $\triangle \in \GG$. Hence, there is a graph embedding $ \phi : \triangle \rightarrow M$
extending the map $ Id_{\{ b_0,\ldots b_{t-1} \}}$. Note that
$\phi(b_t) \notin rng(\theta)$. So the map $\theta^+ = \theta \cup
\{(a_t, \phi(b_t))\}$ is injective, and it is easily seen to be a
$\chi$-isomorphism in $\Theta^\chi$ and defined on $a_t$.
The converse,``back" property is similarly proved ( or by symmetry,
using the fact that the inverse of maps in $\Theta$ are
$\chi^{-1}$-isomorphisms).
\end{demo}

But we can also derive a connection between classical and
relativised semantics in $M$, over the following set $W$:\\

\begin{definition}
Let $W = \{ \bar{a} \in {}^n M : M \models ( \bigwedge_{i < j < n,
l < n} \neg (\rho, l)(x_i, x_j))(\bar{a}) \}.$
\end{definition}
\begin{theorem}
$M \models_W \varphi(\bar{a})$ iff $M \models \varphi(\bar{a})$, for
all $\bar{a} \in W$ and all $L^n$-formulas $\varphi$.
\end{theorem}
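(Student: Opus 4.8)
The plan is to prove this by induction on the structure of the $L^n$-formula $\varphi$, with the only nontrivial case being the existential quantifier. Here $\models_W$ denotes satisfaction in the relativised semantics where quantifiers range over tuples staying inside $W$: that is, $M \models_W \exists x_i \psi(\bar a)$ holds iff there is $b \in M$ with $\bar a[i/b] \in W$ and $M \models_W \psi(\bar a[i/b])$. The atomic and Boolean cases are immediate since the two semantics agree on atomic formulas and commute with $\neg, \wedge$. For the quantifier case, the direction $M \models_W \exists x_i\psi(\bar a) \Rightarrow M\models \exists x_i \psi(\bar a)$ is trivial (a relativised witness is a witness), using the induction hypothesis. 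The substance is the converse: given $\bar a \in W$ and a witness $b \in M$ with $M \models \psi(\bar a[i/b])$, I must produce a witness $b' \in M$ with $\bar a[i/b'] \in W$ and $M \models \psi(\bar a[i/b'])$.

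The key idea is to use the back-and-forth system $\Theta^\chi$ from the preceding lemma for a suitably chosen permutation $\chi$ of $\omega \cup \{\rho\}$. Since $\bar a \in W$, no edge among the nodes of $\bar a$ is labelled by any $(\rho, l)$; the only ``bad'' labels involving $\rho$ can appear on edges incident to $b$. I would pick $\chi$ to be a permutation that fixes every colour actually occurring on edges of $M\restr{\rg\bar a}$ but moves $\rho$ to some natural number $k$ not among those colours — more precisely, I choose $\chi$ with $\chi(\rho) = k \neq \rho$ and $\chi = \mathrm{Id}$ on the finitely many relevant values, so that the $\chi$-isomorphism $\theta$ defined on $\rg \bar a \cup \{b\}$ by $\theta = \mathrm{Id}_{\rg\bar a} \cup \{(b, \text{new})\}$ lies in $\Theta^\chi$ (after using the forth property to land back inside $M$). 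Applying the first part of the back-and-forth theorem (for $L^n_{\infty\omega}$, hence $L^n$) to $\theta$, which maps $\bar a[i/b] \mapsto \bar a[i/b']$ with $b' = \theta(b)$, gives $M \models \psi(\bar a[i/b']) \iff M \models \psi(\bar a[i/b])$, so $M \models \psi(\bar a[i/b'])$. Moreover, by the choice of $\chi$, every edge from $b'$ to an $a_j$ that was labelled $(\rho,l)$ in the old tuple now receives a label $(Nk'+r, l)$ with $k' = \chi(\rho) \neq \rho$, i.e. a non-$\rho$ label, while edges among the $a_j$'s are unchanged (as $\bar a\in W$ they already carry non-$\rho$ labels and $\chi$ fixes those). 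Hence $\bar a[i/b'] \in W$, and by the induction hypothesis $M \models_W \psi(\bar a[i/b'])$, so $M \models_W \exists x_i\psi(\bar a)$, as required.

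The main obstacle I anticipate is verifying carefully that the constructed partial map $\theta$ really is a $\chi$-isomorphism of size $\le n$ lying in $\Theta^\chi$, and that the forth property can genuinely be invoked to replace the external new element by an element of $M$ while preserving all the relevant labels (including the crucial point that no edge incident to the image of $b$ remains $\rho$-labelled). One must also be slightly attentive that $\chi$ can indeed be chosen to simultaneously fix all finitely many colours occurring on $M\restr{\rg\bar a}$ and send $\rho$ to a usable natural number; since only finitely many colours are involved and $\omega$ is infinite this is routine, but it should be spelled out. Once these bookkeeping points are settled, the induction closes and the theorem follows; this essentially mirrors the argument of \cite{w}, fact 4.5 and its corollary, adapted to the present graph $\G$.
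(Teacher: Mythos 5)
Your proof follows exactly the route of the proof the paper points to (for this theorem the paper only cites \cite{w}): induction on $\varphi$, with the sole nontrivial case $\exists x_i\psi$ handled by replacing the classical witness $b$ by a $\chi$-isomorphic copy $b'$ obtained from the back-and-forth system $\Theta^{\chi}$, so that $\bar a[i/b']\in W$ and the preservation theorem for $L^n_{\infty\omega}$ transfers satisfaction. One bookkeeping refinement: it is not enough that $\chi$ fix the colours occurring on edges inside $\rng\,\bar a$ and move $\rho$ to some $k$ outside that set; you must also guarantee that no colour occurring on an edge from $b$ to $\rng\,\bar a$ is mapped to $\rho$ (e.g.\ choose the index $k$ swapped with $\rho$ outside the finitely many colour indices occurring on all edges within $\rng\,\bar a\cup\{b\}$), since otherwise some image edge $(b',a_j)$ could acquire a label $(\rho,l)$ and $\bar a[i/b']$ would fail to lie in $W$. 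With that adjustment your argument is precisely the standard one of Hodkinson and \cite{w}.
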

\begin{demo} {Proof} \cite{w}
\end{demo}

We can now extract form the labelled graph $M$ of Theorem  a
relativised set algebra $\A$, which will turn out to be
representable atomic polyadic algebra.
\begin{definition}
\begin{enumerate}
\item For an $L^n_{\infty \omega}$-formula $\varphi $, we define
$\varphi^W$ to be the set $\{ \bar{a} \in W : M \models_W \varphi
(\bar{a}) \}$. 

\item We define $\A$ to be the relativised set algebra with domain
$$\{\varphi^W : \varphi \,\ \textrm {a first-order} \;\ L^n-
\textrm{formula} \}$$  and unit $W$, endowed with the algebraic
operations ${\sf d}_{ij}, {\sf c}_i, $ ect., in the standard way .

\end{enumerate}

Note that $\A$ is indeed closed under the operations and so is a
bona fide relativised set algebra. For, reading off from the
definitions of the standard operations and the relativised
semantics, we see that for all $L^n$-formulas $\varphi,\psi,$
\begin{itemize}
\item $-^{\A}(\varphi^W)=(\neg\varphi)^W$
\item $\varphi^W \cdot ^{\A} \psi^W = (\varphi \wedge \psi)^W$
\item ${\sf d}^{\A}_{ij}=(x_i = x_j)^W \,\ \textrm{for all} \;\ i,j<n.$
\item ${\sf c}_i^{\A}(\varphi^W)=(\exists x_i \varphi)^W \,\ \textrm{for all} \,\ i <
n.$\\ 
For a formula $\phi$ and $i,j<n$ , $\phi[x_i,x_j]$ stands for the formula obtained from $\phi$ by interchanging the free occurences of $x_i$ 
and $x_j$. Then we have:
\item ${\sf p}_{ij}^{\A}(\varphi^W)=\varphi[x_i,x_j]^W$
\end{itemize}

\end{definition}

\begin{theorem}
$\A$ is a representable (countable) atomic polyadic algebra
\end{theorem}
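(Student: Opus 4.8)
The plan is to show that the relativised set algebra $\A$ built from the countable labelled graph $M$ of the preceding theorem is representable, atomic, and in fact a polyadic algebra, by following the template of \cite{w} but adapted to the present graph $\G=(\N,E)$. First I would check that $\A$ is a \emph{bona fide} $\PA_n$: the closure of the domain $\{\varphi^W:\varphi \text{ an } L^n\text{-formula}\}$ under the cylindric and polyadic operations is exactly the list of equalities recorded just before the statement (using Theorem \ref{...} relating $\models_W$ to $\models$ on $W$), so the Boolean operations, diagonals, cylindrifications ${\sf c}_i$, and substitutions ${\sf p}_{ij}$ all land back in $A$, and the polyadic (quasi-polyadic equality-free, i.e. substitution) axioms hold because they hold for the corresponding first-order formula manipulations. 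This is essentially bookkeeping and I would cite \cite{w} for the routine parts.

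Next, atomicity. The atoms of $\A$ should be the classes $\chi^W$ where $\chi$ ranges over the \emph{maximal} consistent descriptions of an $n$-tuple's edge-labels that can be realised in $M$ — concretely, for $\bar a\in W$, the formula $\alpha_{\bar a}(\bar x)=\bigwedge_{i<j<n}(l(a_i,a_j))(x_i,x_j)\wedge\bigwedge_{i<n}(x_i=x_i)$ (taking into account that on $W$ no edge is labelled $(\rho,l)$), and $\alpha_{\bar a}^W$ is an atom below any $\varphi^W$ containing $\bar a$. Here the $n$-homogeneity of $M$ from the construction theorem is what guarantees that $\alpha_{\bar a}^W$ really is an atom: any two tuples with the same labelled-graph type are related by a partial isomorphism of size $\leq n$, hence (by the back-and-forth lemma, Theorem on $\Theta$) are $L^n_{\infty\omega}$-indistinguishable, so $\alpha_{\bar a}^W$ cannot be properly split by an $L^n$-formula. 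That every nonzero $\varphi^W$ contains some such atom is immediate since any $\bar a\in\varphi^W$ gives one.

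For representability, the key device is the family of back-and-forth systems $\Theta^\chi$, one for each permutation $\chi$ of $\omega\cup\{\rho\}$, shown to be $n$-back-and-forth systems in the Lemma just proved. The representation is built as a disjoint union, over suitable $\chi$ (or over a sufficiently rich set of "recolourings" of $M$), of the classical semantics $\bar a\mapsto\{\bar a:M\models\varphi(\bar a)\}$ on the relevant pieces; the point of using the $\Theta^\chi$'s is that the diagonal of each such piece is definable and that joining enough of them kills the non-principal "$\rho$-type" that obstructs a square representation — exactly as in \cite{w}. Using the equivalence $M\models_W\varphi(\bar a)\iff M\models\varphi(\bar a)$ for $\bar a\in W$, one transfers the relativised algebra $\A$ into a genuine (weak) set algebra, and the $\Theta^\chi$-systems ensure the map is a complete embedding onto an atomic set algebra, so $\A$ is representable and atomic as a $\PA_n$.

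The main obstacle, as in the original Hodkinson-style argument, is the representability step: producing a base set and a representation that simultaneously respects cylindrifications (elimination of quantifiers via the "forth" moves), the diagonals, and the substitution operations, while ensuring the representation is over a legitimate unit. Concretely the delicate point is verifying that the $\Theta^\chi$ really do assemble into a representation of $\A$ (not merely of its $\CA_n$-reduct) — i.e. that the substitutions ${\sf p}_{ij}$ are honoured — which needs the observation that each $\chi$-isomorphism intertwines the coordinate-swap $[x_i,x_j]$ correctly; and, for atomicity of the image, that distinct atoms $\chi^W$ go to disjoint nonempty sets, which again is where the $n$-homogeneity of $M$ and the back-and-forth lemma do the real work. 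I expect everything else to be a faithful transcription of the proof in \cite{w} with $N\times\omega$ replaced by $(\N,E)$ with $0<|i-l|<N$.
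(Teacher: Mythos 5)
Your first two steps (closure of $A$ under the listed operations, and atomicity via the fact that tuples of $W$ with the same atomic type are indistinguishable by $L^n$-formulas, using quantifier elimination/homogeneity) are essentially right, up to a small slip: an atom must record the full basic matrix of the tuple, i.e.\ also which coordinates are equal, not just the edge labels (your $\bigwedge_{i<n}(x_i=x_i)$ does no work); the paper's identification of $\At\A$ with the $n$-dimensional basic matrices makes this explicit.

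The representability step, however, is where your proposal goes wrong, both in mechanism and in what it claims. In the actual argument (the paper cites \cite{w}, Prop.\ 5.2, which follows Hodkinson), the back-and-forth systems $\Theta^\chi$ are used only to prove the preceding theorem, namely that $M\models_W\varphi(\bar a)$ iff $M\models\varphi(\bar a)$ for $\bar a\in W$ and $\varphi\in L^n$; they are never assembled into a representation. Representability of $\A$ is then soft: the classical formula algebra $\A_0=\{\varphi^M:\varphi\in L^n\}$ is a genuine polyadic equality set algebra with square unit ${}^nM$, and relativization $X\mapsto X\cap W$ carries $\varphi^M$ to $\varphi^W$ and is a \emph{surjective homomorphism} onto $\A$ -- the agreement theorem applied to $\exists x_i\varphi$ and to $\varphi[x_i,x_j]$ is exactly what makes the relativized cylindrifiers and substitutions match -- and the class of representable algebras is a variety, hence closed under homomorphic images. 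Your proposed ``disjoint union over recolourings $\chi$ that kills the non-principal $\rho$-type'' is not a construction, and your concluding claim that the $\Theta^\chi$'s yield a \emph{complete} embedding onto an atomic set algebra is false and would prove too much: $\A$ admits no complete representation whatsoever, since a complete representation of $\A$ would induce a representation of $\Cm\At\A$, which the very next theorem shows is not even in ${\bf S}\Nr_n\CA_{n+2}$ (this is the whole point of the construction -- the atom structure is weakly but not strongly representable). So the third step must be replaced by the homomorphic-image argument (or some other genuine representation), not by an appeal to the back-and-forth systems.
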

\begin{demo}{Proof}
\cite{w} prop. 5.2
\end{demo}

Define $\cal C$ to be the complex algebra over $At\A$, the atom structure of $\A$.
Then $\cal C$ is the completion of $\A$. The domain of $\cal C$ is $\wp(At\A)$. The diagonal ${\sf d}_{ij}$ is interpreted as the set of all $S\in At\A$ with $a_i=a_j$ for some $\bar{a}\in S$.
The cylindrification ${\sf c}_i$ is interpreted by ${\sf c}_iX=\{S\in At\A: S\subseteq c_i^{\A}(S')\text { for some } S'\in X\}$, for $X\subseteq At\A$.
Finally ${\sf p}_{ij}X=\{S\in At\A: S\subseteq {\sf p}_{ij}^{\A}(S')\text { for some } S'\in X\}.$
Let $\cal D$ be the relativized set algebra with domain $\{\phi^W: \phi\text { an $L_{\infty\omega}^n$ formula }\}$,  unit $W$
and operations defined like those of $\cal A$.
Then it is not hard to show that ${\cal C}\cong \cal D$, via the map $X\mapsto \bigcup X$.

We shall use the following form of Ramsey's theorem:
If $n<\omega$, $S$ is a finite set, $f:[\omega]^n\to S$ is a map, then there exists an infinite $H\subseteq \omega$ such that 
$f\upharpoonright [H]^n$ is constant. Here $[X]^n$ denotes the set of all subsets of $X$ of size $n$.

\begin{theorem} 
$\Rd_{ca}{\cal C}\notin S\Nr_n\CA_{n+2}.$ In particular, $\Rd_{ca}\cal C$ is not representable.
\end{theorem}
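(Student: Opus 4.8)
The plan is to argue by contradiction: suppose $\Rd_{ca}{\cal C}\in S\Nr_n\CA_{n+2}$, so there is $\D\in\CA_{n+2}$ with $\Rd_{ca}{\cal C}\subseteq\Nr_n\D$. Since ${\cal C}$ is the completion of $\A$ and contains all the singletons $\{S\}$ for $S\in\At\A$, we can fix a bad configuration of atoms witnessing that ${\cal C}$ "remembers too much" about the labelled graph $M$, and then run a Ramsey-type colouring argument on the extra two dimensions of $\D$ to produce a monochromatic triangle of colours from $\G$ that is forbidden by clause (2) of the definition of $\GG$ (the requirement that a monochromatic triple $a,b,c\in\G$ must contain an edge of $\G$). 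Concretely, I would follow the well-trodden route of \cite{w} and \cite{Hodkinson}: pick an element $x$ of $\D$ lying below the relevant diagonal-type element coming from $W$, use the cylindrifications ${\sf c}_n,{\sf c}_{n+1}$ available in $\D$ (but not in ${\cal C}$) to "spread out" a putative network, and extract from a large enough finite sub-configuration, via Ramsey's theorem as stated just before the theorem, an infinite homogeneous set on which the induced $\G$-labels are all equal.

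The key steps, in order, would be: (1) Translate the hypothesis $\Rd_{ca}{\cal C}\in S\Nr_n\CA_{n+2}$ into the existence of an $(n+2)$-dimensional cylindric algebra $\D\supseteq_{\text{as }n\text{-neat reduct}}\Rd_{ca}{\cal C}$, and recall that ${\cal C}\cong{\cal D}$ (the $L^n_{\infty\omega}$-relativised algebra) via $X\mapsto\bigcup X$, so that elements of ${\cal C}$ are $L^n_{\infty\omega}$-definable subsets of $W$. (2) Using the $n$-back-and-forth systems $\Theta^\chi$ of the preceding lemma, observe that permuting the colour indices $\omega\cup\{\rho\}$ by $\chi$ induces automorphisms of the relevant structure, hence that all the atoms of $\A$ corresponding to a fixed "shape" are conjugate; this is what will let a single forbidden monochromatic triangle be manufactured from any sufficiently rich pattern. (3) Suppose toward a contradiction that a representation-like object survives into $\D$; take $N\times\omega$-many (or, with the sharper graph $\G=(\N,E)$ here, suitably many) witness points and colour the $2$-subsets (or $3$-subsets) of an index set by the $\G$-component of the labels assigned by the hypothetical embedding, landing in the finite set $\{0,\dots,N-1\}$ of residues. (4) Apply Ramsey to get an infinite homogeneous $H$; on $[H]^2$ (resp. $[H]^3$) all $\G$-labels share the same residue $r$, and since on $H$ no edge can be coloured by $\rho$ (by the choice of $W$ and clause (2)), we get a monochromatic $\G$-triangle with no $\G$-edge among its three vertices — contradicting membership in $\GG$, i.e. contradicting that the configuration embeds in $M$.

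The main obstacle, and where most of the real work lies, is step (3)–(4): making precise exactly which finite pattern in $\D$ one must build, and checking that the two spare dimensions $n,n+1$ genuinely force the existence of all the required witness points (this is the place where $k\geq 2$, equivalently $n+2$, is used, and where one must carry over the counting bound $N\geq n(n-1)/2$ so that enough distinct "node colours" $e_s<N$ are available, exactly as in the back-and-forth lemma above). One has to verify that the element of ${\cal C}=\wp(\At\A)$ being represented is a \emph{sum of atoms} that a complete representation would have to split into points of the model, and that the join defining it is genuinely infinite so that no single $L^n$-formula isolates it — this is the feature that $\A$ itself (the term algebra) lacks but its completion ${\cal C}$ possesses. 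Once the pattern is pinned down, the Ramsey extraction and the clash with clause (2) of $\GG$ are routine and essentially identical to \cite{w}; accordingly I would state the detailed network-chasing briefly and refer to \cite{w} and \cite{HHbook} for the analogous bookkeeping, emphasising only the modification forced by using $\G=(\N,E)$ with edge relation $0<|i-l|<N$ in place of the disjoint union of cliques $N\times\omega$.
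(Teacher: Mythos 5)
Your overall strategy --- reduce to a finite colouring by residue classes modulo $N$, apply Ramsey, and clash with the forbidden monochromatic triples --- is indeed the engine of the paper's proof, but as written your proposal has two genuine gaps. The first is the bridge you yourself flag as ``where most of the real work lies'' and then leave unsupplied: how the hypothesis $\Rd_{ca}{\cal C}\in S\Nr_n\CA_{n+2}$ is converted into a combinatorial object on which Ramsey can act. Vague talk of using ${\sf c}_n,{\sf c}_{n+1}$ to ``spread out a putative network'' does not do this. The paper's route is to pass to the relation algebra level: the atom structure of ${\cal C}$ is (isomorphic to) the set ${\cal M}_n$ of $n\times n$ basic matrices over the relation algebra atom structure $\alpha(\G)$ with atoms $\{1'\}\cup(\G\times n)$, and $\Cm\alpha(\G)$ embeds into $\Ra\Cm{\cal M}_n$; so it suffices to show $\Cm\alpha(\G)\notin S\Ra\CA_{n+2}$. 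That in turn is done by invoking the Hirsch--Hodkinson relativized representation theory: an algebra in $S\Ra\CA_{n+2}$ has an $(n+2)$-flat, hence $(n+2)$-square, relativized representation $h$, and squareness is precisely what guarantees that whenever three points carry labels $a,b,c$ one has $(a;b)\cdot c\neq 0$. Without this (or an equivalent neat-reduct-to-relativized-representation step) the contradiction never gets off the ground; your back-and-forth systems $\Theta^\chi$ and the conjugacy of atoms play no role here (they were used to prove $\A$ \emph{is} representable), and $\rho$ and the set $W$ are likewise irrelevant at this stage, since the Ramsey argument runs over $\alpha(\G)$, where no $\rho$-labels exist.

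The second gap is that your colouring is too coarse. You propose to colour pairs only by the residue $r<N$ of the $\G$-vertex of the label; but a triple of atoms of $\alpha(\G)$ is forbidden only when all three have the \emph{same colour index} $i<n$ in addition to having no $\G$-edge among their vertices, so a homogeneous set for your colouring need not produce an inconsistent triangle at all. The paper colours each pair of points of the representation by the member of the finite partition $J=\{1'\}\cup\{[N\mathbb{N}+r,s]:r<N,\ s<n\}$ of the unit containing its label, i.e.\ by residue \emph{and} colour; Ramsey then yields an infinite set all of whose pairs lie in $h(P)$ for a single monochromatic $P\neq 1'$, any two distinct vertices occurring in $P$ differ by at least $N$ (so carry no $\G$-edge), whence $(P;P)\cdot P=0$, contradicting squareness. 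Note also that these elements $[N\mathbb{N}+r,s]$ are exactly the infinite joins of atoms that exist in the completion ${\cal C}$ (equivalently in $\Cm\alpha(\G)$) but not in the term algebra $\A$ --- this is the precise sense in which ``${\cal C}$ remembers too much,'' which you gesture at but never pin down. With the reduction to $\Cm\alpha(\G)$, the $(n+2)$-square representation, and the correct partition $J$ in place, the rest of your outline does match the paper's argument.
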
 

\begin{demo}{Proof} We define a relation algebra atom structure $\alpha(\G)$ of the form
$(\{1'\}\cup (\G\times n), R_{1'}, \breve{R}, R_;)$.
The only identity atom is $1'$. All atoms are self converse, 
so $\breve{R}=\{(a, a): a \text { an atom }\}.$
The colour of an atom $(a,i)\in \G\times n$ is $i$. The identity $1'$ has no colour. A triple $(a,b,c)$ 
of atoms in $\alpha(\G)$ is consistent if
$R;(a,b,c)$ holds. Then the consistent triples are $(a,b,c)$ where

\begin{itemize}

\item one of $a,b,c$ is $1'$ and the other two are equal, or

\item none of $a,b,c$ is $1'$ and they do not all have the same colour, or

\item $a=(a', i), b=(b', i)$ and $c=(c', i)$ for some $i<n$ and 
$a',b',c'\in \G$, and there exists at least one graph edge
of $G$ in $\{a', b', c'\}$.

\end{itemize}
$\alpha(\G)$ can be checked to be a relation atom structure. 
The atom structure of $\Rd_{ca}\A$ is isomorphic (as a cylindric algebra
atom structure) to the atom structure ${\cal M}_n$ of all $n$-dimensional basic
matrices over the relation algebra atom structure $\alpha(\G)$.
Indeed, for each  $m  \in {\cal M}_n, \,\ \textrm{let} \,\ \alpha_m
= \bigwedge_{i,j<n}  \alpha_{ij}. $ Here $ \alpha_{ij}$ is $x_i =
x_j$ if $ m_{ij} = 1$' and $R(x_i, x_j)$ otherwise, where $R =
m_{ij} \in L$. Then the map $(m \mapsto
\alpha^W_m)_{m \in {\cal M}_n}$ is a well - defined isomorphism of
$n$-dimensional cylindric algebra atom structures.
We can show that thae $\Cm\alpha(\G)$ is not representable like exactly in \cite{w} and Hodkinson using Ramseys theore.
Here we show something stronger.

We shall first show that $\Cm\alpha(\G)$ is not in $S\Ra \CA_{n+2}$. The idea is to use relativized representations. Such algebras are 
localy representable, but the epresentation is global enough so that Ramseys theorem applies. 
Hence the full complex cylindric algebra over the set of $n$ by $n$ basic matrices
- which is isomorphic to $\cal C$ is not in $S\Nr_n\CA_{n+2}$ for we have a relation algebra
embedding of $\Cm\alpha(\G)$ onto $\Ra\Cm{\cal M}_n$.
Assume for contradiction that $\Cm\alpha(\G)\in S\Ra \CA_{n+2}$. Then $\Cm\alpha(\G)$ has an $n$-flat representation $M$ \cite{HH}  13.46, 
which is $n$ square \cite{HH} 13.10. 
In particular, there is a set $M$, $V\subseteq M\times M$ and $h: \Cm\alpha(\G)\to \wp(V)$ 
such that $h(a)$ ($a\in \Cm\alpha(\G)$) is a binary relation on $M$, and
$h$ respects the relation algebra operations. Here $V=\{(x,y)\in M\times M: M\models 1(x,y)\}$, where $1$ is the greatest element of 
$\Cm\alpha(\G)$.
A clique $C$ of $M$ is a subset of the domain $M$ such that for $x,y\in C$ we have $M\models 1(x,y)$, equivalently $(x,y)\in V$.
Since $M$ is $n+2$ square, then for all cliques $C$ of $M$ with $|C|<n+2$, all $x,y\in C$ and $a,b\in \Cm\alpha(\G)$, $M\models (a;b)(x,y)$ 
there exists $z\in M$ such that $C\cup \{z\}$ is a clique and $M\models a(x,z)\land b(z,y)$.
For $Y\subseteq \N$ and $s<n$, set 
$$[Y,s]=\{(l,s): l\in Y\}.$$
For $r\in \{0, \ldots N-1\},$ $N\N+r$ denotes the set $\{Nq+r: q\in \N\}.$
Let $$J=\{1', [N\N+r, s]: r<N,  s<n\}.$$
Then $\sum J=1$ in $\Cm\alpha(\G).$
As $J$ is finite, we have for any $x,y\in M$ there is a $P\in J$ with
$(x,y)\in h(P)$.
Since $\Cm\alpha(\G)$ is infinite then $M$ is infinite. 
By Ramsey's Theorem, there are distinct
$x_i\in X$ $(i<\omega)$, $J\subseteq \omega\times \omega$ infinite
and $P\in J$ such that $(x_i, x_j)\in h(P)$ for $(i, j)\in J$, $i\neq j$. Then $P\neq 1'$. 
Also $(P;P)\cdot P\neq 0$. 
This follows from $n+2$ squareness and that if $x,y, z\in M$, 
$a,b,c\in \Cm\alpha(\G)$, $(x,y)\in h(a)$, $(y, z)\in h(b)$, and 
$(x, z)\in h(c)$, then $(a;b)\cdot c\neq 0$. 
A non -zero element $a$ of $\Cm\alpha(\G)$ is monochromatic, if $a\leq 1'$,
or $a\leq [\N,s]$ for some $s<n$. 
Now  $P$ is monochromatic, it follows from the definition of $\alpha$ that
$(P;P)\cdot P=0$. This contradiction shows that 
$\Cm\alpha(\G)$ is not in $S\Ra\CA_{n+2}$. Hence $\Cm{\cal M}_n\notin S\Nr_n\CA_{n+2}$.
\end{demo}

\section{Strongly representable atom structures}

From now on we follow closely \cite{strong}, generalizing this result to other algebras, starting from diagonal free to quasipolyadic equality algebras. 
In \cite{strong} definition 3.5, 
the authors define a cylindric atom structure based on a graph $\Gamma$.
We enrich this atom structure by the relations corresponding to the polyadic operations:

\begin{definition} We define an atom structure $\eta(\Gamma)=(H, D_{ij}, \equiv_i, P_{ij})$
as follows.
\begin{enumarab}
\item $H$ is the set of all pairs $(K, \sim)$ where $K:n\to \Gamma\times n$ is a partial map and $\sim$ is an equivalence relation
on $n$ satisfying the followng conditions

(a) If $|n/\sim|=n$ then $dom(K)=n$ and $rng(K)$ is not  independent subset of $n$. 

(b) If $|n/\sim|=n-1$, then $K$ is defined only on the unique $\sim$ class $\{i,j\}$ say of size $2$
and $K(i)=K(j)$

(c) If $|n/\sim|\leq n-2$, then $K$ is nowhere defined.

\item $D_{ij}=\{(K,\sim)\in H: i\sim j\}$

\item $(K,\sim)\equiv_i (K', \sim')$ iff $K(i)=K'(i)$ and $\sim\upharpoonright (n\setminus \{i\})=\sim'\upharpoonright (n\setminus \{i\})$

\item $(K.\sim)\equiv_{ij}(K',\sim')$ iff $K(i)=K'(j)$ and $K(j)=K'(i),$ $K\upharpoonright n\sim \{i,j\}=K'\upharpoonright n\sim \{i,j\}$ 
and if $i\sim j$ then $\sim=\sim'$, if not, then $\sim'$ is related to $\sim$ as follows
For all $k\notin [i]_{\sim}\cup [j]_{\sim}$ $[k]_{\sim'}=[k]_{\sim}$ $[i]_{\sim'}=[j]_{\sim}\setminus \{j\}\cup \{i\}$ and 
$[j]_{\sim'}=[i]_{\sim}\setminus \{i\}\cup \{j\}.$ 
\end{enumarab}
\end{definition}

\begin{definition} Let $\C(\Gamma)$ be the complex algebra of polyadic type of the above atom structure.
That is $\C(\Gamma)=(\B(\eta(\Gamma)), {\sf c}_i, {\sf s}_i^j, {\sf p}_{ij}, {\sf d}_{ij})_{i,j<n}$ with extra non-Boolean operations
defined by: 
$${\sf d}_{ij}=D_{ij}$$
$${\sf c}_iX=\{c: \exists a\in X, a\equiv_ic\}.$$
$${\sf p}_{ij}X=\{c:\exists a\in X, a\equiv_{ij}c\}$$
and $${\sf s}_i^jx={\sf c}_j(x\cap d_{ij}).$$
\end{definition}
For $\A\in \CA_n$ and $x\in A$, recall that $\Delta x,$ the dimension set of $x$, is the set $\{i\in n: {\sf c}_ix\neq x\}$.

\begin{theorem} For any graph $\Gamma$, $\C(\Gamma)$ is a simple $\PEA_n$, that is generated by the set
$\{x\in C: \Delta x\neq n\}$.
\end{theorem}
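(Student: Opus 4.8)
The statement to prove is that for any graph $\Gamma$, the complex algebra $\C(\Gamma)$ is a simple $\PEA_n$ generated by $\{x\in C:\Delta x\neq n\}$. The plan has three separable parts: (1) $\C(\Gamma)$ is a $\PEA_n$; (2) it is simple; (3) it is generated by the non-$n$-dimensional elements.

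First I would establish that $\eta(\Gamma)$ is a genuine $\PEA_n$ atom structure, i.e.\ that the complex algebra $\C(\Gamma)$ built on it satisfies all the $\PEA_n$ axioms. Since $\C(\Gamma)$ is a complex algebra over a relational atom structure with operators defined pointwise from the accessibility relations $\equiv_i$, $\equiv_{ij}$, $D_{ij}$, it is automatically a BAO that is completely additive, so it suffices to check that the relational structure validates the Sahlqvist-type first-order correspondents of the $\CA_n$ axioms together with the axioms governing the substitution operators ${\sf p}_{ij}$ and the derived ${\sf s}_i^j$. Concretely: $\equiv_i$ is an equivalence relation (reflexivity/symmetry/transitivity are immediate from the definition since they only involve $K(i)$ and $\sim\restr(n\setminus\{i\})$), the $D_{ij}$ interact correctly with $\equiv_k$, and the commutativity of cylindrifications holds because changing coordinate $i$ then $j$ commutes with changing $j$ then $i$ on the partial maps and equivalence relations. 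For the polyadic part I would verify that each $\equiv_{ij}$ is an involution (symmetric, and its own inverse — clear since swapping $K(i),K(j)$ twice returns the original, and the prescribed transformation of $\sim$ is manifestly involutive), that $\equiv_{ij}=\equiv_{ji}$, and the mixed axioms relating ${\sf p}_{ij}$ to ${\sf c}_i$ and ${\sf d}_{ij}$. This part is routine but long; the delicate clauses are (a),(b),(c) in the definition of $H$, which must be checked to be preserved under all the accessibility relations — e.g.\ that if $(K,\sim)\equiv_{ij}(K',\sim')$ and $(K,\sim)$ satisfies (a)--(c) then so does $(K',\sim')$, using that $|n/\sim|=|n/\sim'|$ and that $\equiv_{ij}$ only permutes the roles of $i$ and $j$.

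Next, for simplicity: a $\CA_n$ (hence $\PEA_n$) is simple iff it is non-trivial and the only ideals are $\{0\}$ and the whole algebra, and for atomic complex algebras of atom structures this reduces to showing that the atom structure is ``connected'' in the sense that for any two atoms $a,b$ there is a finite composition of the $\equiv_i$ relations taking $a$ to $b$ — equivalently, that ${\sf c}_0{\sf c}_1\cdots{\sf c}_{n-1}x = 1$ for every non-zero $x$. Given an atom $(K,\sim)$, cylindrifying on all coordinates $0,\dots,n-1$ successively erases all the information: after applying $\equiv_i$ for every $i$ we can reach any $(K',\sim')$, because each step lets us reset $K(i)$ arbitrarily and modify the $\sim$-class structure around $i$; iterating over all $i<n$ one reaches every element of $H$. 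Thus ${\sf c}_{(n)}x=1$ for all $x\neq 0$, which is exactly the discriminator condition giving simplicity.

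Finally, for generation by $G=\{x\in C:\Delta x\neq n\}$: I would show every atom of $\C(\Gamma)$ is in $\Sg^{\C(\Gamma)}G$, since then the finitely-many-atom case plus the remark that $\C(\Gamma)$ is generated by its atoms (it is atomic) finishes it. An atom $(K,\sim)$ with $|n/\sim|\le n-1$ lies below some diagonal ${\sf d}_{ij}$, which has $\Delta{\sf d}_{ij}=\{i,j\}\neq n$ (using $n\ge 3$), so those atoms sit inside the subalgebra generated by diagonals and lower-dimensional pieces. For an atom with $|n/\sim|=n$, one has $\dom(K)=n$; I would express $\set{(K,\sim)}$ as a Boolean combination of sets of the form ``$K(i)$ takes a prescribed value'' each of which is (after intersecting with the right diagonals) cylindrification-fixed in all but one coordinate hence in $G$, together with the ``non-independent range'' constraint which is likewise definable from these coordinate-wise pieces. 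The main obstacle I anticipate is precisely this last step: one must carefully show that the single-coordinate ``slices'' of an atom really do have dimension set properly contained in $n$ and that the independence/non-independence conditions on $\rng(K)$ do not secretly require a genuinely $n$-dimensional element to pin down — this is where the combinatorics of conditions (a)--(c) has to be unwound, and it is the part most likely to need the specific structure of the construction (e.g.\ that the only $n$-dimensional atoms are the ``non-independent'' ones, so their complement within the appropriate $\sim$-fibre is a lower-dimensional, hence generated, set).
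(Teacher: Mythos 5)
Your plan for the axiom-checking part matches the paper (which simply lists the polyadic identities for ${\sf p}_{ij}$ and leaves them to the reader), but the paper does not reprove simplicity or generation from scratch: it quotes the Hirsch--Hodkinson result that the cylindric reduct $\Rd_{ca}\C(\Gamma)$ is a simple $\CA_n$ generated by $\{x:\Delta x\neq n\}$, and then transfers simplicity to the polyadic equality algebra by the one-line observation that every polyadic ideal is in particular a cylindric ideal. Your direct connectivity argument for simplicity is plausible but not yet a proof: you cannot ``reset $K(i)$ arbitrarily'' one coordinate at a time, because every intermediate configuration must remain in $H$, i.e.\ must respect clauses (a)--(c) (in particular the non-independence requirement on $\rng(K)$ when $|n/\sim|=n$); one has to route the path through atoms with fewer $\sim$-classes, and this needs to be said.

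The genuine gap is in your generation step. First, the reduction ``every atom lies in $\Sg\,G$, and $\C(\Gamma)$ is generated by its atoms since it is atomic'' is false for finitary generation when $\Gamma$ is infinite: the subalgebra generated by the atoms of a full complex algebra is only the term algebra, not all of $\wp(H)$. The generation asserted in the theorem is complete (infinitary) generation -- the paper makes this explicit in the very next proof, where it says $\C(\Gamma)$ is obtained from $J=\{x:\Delta x\neq n\}$ using \emph{infinite} intersections and complementation -- so you must argue that every element, being a union of atoms, lies in the completely generated subalgebra, not invoke atomicity as if it gave finitary generation. Second, the coordinate-slice decomposition you flag as the anticipated obstacle is exactly what the paper short-circuits with the identity $x={\sf c}_0x\cdot{\sf c}_1x\cdots{\sf c}_{n-1}x$ for every atom $x=\{(K,\sim)\}$: since $i\notin\Delta({\sf c}_ix)$, each factor has $\Delta\neq n$, so every atom is a finite product of elements of the generating set, uniformly in $|n/\sim|$. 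Without this (or an equivalent) identity your argument is incomplete; in particular, observing that an atom with $|n/\sim|\leq n-1$ lies below some ${\sf d}_{ij}$ does not by itself place that atom in the subalgebra generated by $\{x:\Delta x\neq n\}$.
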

\begin{demo}{Proof} $\Rd_{ca}\C(\Gamma)$ is a simple $\CA_n$ by \cite{h} lemma 5.1, 
hence if we prove that $\C(\Gamma)$ is a polyadic equality algebra, then as a polyadic equality algebra it will be simple. 
This follows from the simple observation that any polyadic ideal in $\C(\Gamma)$ is a cylindric ideal.
Furthermore for any atom $x=\{(K, \sim)\}$ of $\C$ we have $x={\sf c}_0x\cap {\sf c}_1x\ldots\cap {\sf c}_nx$.
We need to check the polyadic axioms. 
Since $\Rd_{ca}\A$ is a $\CA_n$ we need to show that 
the following hold for all $i,j,k\in n$:
\begin{enumarab}
\item ${\sf p}_{ij}$'s  are boolean endomorphisms
\item ${\sf p}_{ij}{\sf p}_{ij}x=x$
\item ${\sf p}_{ij}{\sf p}_{ik}={\sf p}_{jk}{\sf p}_{ij}x   \text { if } |\{i,j,k\}|=3$
\item ${\sf p}_{ij}{\sf s}_i^jx={\sf s}_j^ix$
\end{enumarab}
These properties, follow from the definitions, and are therefore left to the reader.
\end{demo}
Let $\Gamma=(G,E)$ be a graph. Then A set $X\subseteq G$ is independent if $E\cap (X\times X)=\emptyset$.
The chromatic number $\chi(\Gamma)$ of $\Gamma$ is the least  $k<\omega$ such that $G$ can be partitioned into $k$ independent sets, and
$\infty$ if there is no such set.

\begin{theorem} 
\begin{enumroman}
\item Suppose that $\chi(\Gamma)=\infty$. Then $\C(\Gamma)$ is representable  as a polyadic equality algebra.
\item If $\Gamma$ is infinite and $\chi(\Gamma)<\infty$ then $\Rd_{df}\C(\Gamma)$ is not representable.
\end{enumroman}
\end{theorem}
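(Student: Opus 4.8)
This is the standard two-part pattern from Hirsch--Hodkinson's strongly-representable-atom-structures paper \cite{strong}, now carried over to the polyadic equality setting. The plan is to prove (i) by a step-by-step construction of a complete representation of $\C(\Gamma)$ when $\chi(\Gamma)=\infty$, and (ii) by a Ramsey-type argument when $\chi(\Gamma)<\infty$ and $\Gamma$ is infinite. Throughout I would use that $\C(\Gamma)$ is a simple $\PEA_n$ generated by its set of lower-dimensional elements (the preceding theorem), so it suffices to build faithful representations rather than worrying about subdirect decompositions.

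For part (i): assume $\chi(\Gamma)=\infty$, so there is a colouring $c:\Gamma\to\omega$ using infinitely many colours, with each colour class independent; more usefully, $\chi(\Gamma)=\infty$ lets one find, for any finite subset of $\Gamma$ and any finite list of constraints, a vertex of a fresh colour avoiding a prescribed finite independent set. I would build a labelled relational structure $M$ for the signature of $\eta(\Gamma)$ by a back-and-forth / chase construction, exactly mirroring \cite{strong} but keeping track of the extra $\equiv_{ij}$ relations so that the transposition operations $\sfp_{ij}$ are honoured. Concretely, define a network to be a finite partial map assigning to tuples in $n$-element subsets of a base set elements of $H=\eta(\Gamma)$, consistent in the sense dictated by $D_{ij},\equiv_i,\equiv_{ij}$; show the networks form an amalgamation class (the defining clauses (a)--(c) of $\eta(\Gamma)$, together with $\chi(\Gamma)=\infty$, guarantee that any ``one-point amalgamation'' demanded by a cylindrifier can be realized by adjoining a new vertex of a suitably fresh colour); take a saturated limit $M$; then the map $x\mapsto\{$assignments $s$ with the induced atom at $s$ lying in $x\}$ is a representation of $\C(\Gamma)$ that preserves arbitrary joins because every atom is hit, hence a complete representation. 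Since representability of a simple $\PEA_n$ follows from having a faithful representation, and since complete representability is even stronger, (i) follows. The routine part is checking the network conditions are preserved; the only genuinely delicate point is that the $\sfp_{ij}$-clause in the definition of $\equiv_{ij}$ (the reshuffling of $\sim$-classes) is compatible with amalgamation, which I would verify by a direct case analysis on $|n/\!\sim|\in\{n,n-1,\le n-2\}$.

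For part (ii): suppose $\Gamma$ is infinite with $\chi(\Gamma)=k<\infty$, and suppose toward a contradiction that $\Rd_{df}\C(\Gamma)$ is representable, hence (being simple and atomic-complex) has a representation $h$ over some base $U$. Pick an independent partition $\Gamma=\bigsqcup_{t<k}I_t$ witnessing $\chi(\Gamma)=k$. For each vertex $a\in\Gamma$ and each $i<n$ the element $[\{a\},i]$ (the complex-algebra atom with first coordinate $a$, colour $i$) is an element of $\C(\Gamma)$; the defining consistency condition in $\eta(\Gamma)$ forbids monochromatic ``independent'' triples, i.e.\ if $a,b,c\in\Gamma$ lie in one independent set then no atom-triple using all-$a,b,c$ at a common colour $i$ is consistent. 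Now take infinitely many points in $U$ (the base is infinite since $\C(\Gamma)$ is infinite), and colour each pair of them by which of the finitely many elements $1'$ or $[I_t,i]$ ($t<k$, $i<n$) contains the corresponding pair under $h$; by Ramsey's theorem (the finite-colour version stated just above in the excerpt) there is an infinite homogeneous set with all pairs in some single $[I_t,i]$, giving a monochromatic triangle inside an independent set, contradicting the composition table of $\eta(\Gamma)$. Hence $\Rd_{df}\C(\Gamma)$ is not representable. (Note one must use the diagonal-free reduct, as in the theorem, since the argument only controls pairs and the diagonal-free reduct suffices to derive the forbidden triangle.)

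\textbf{Main obstacle.} The hard part is part (i): verifying that the class of networks over $\eta(\Gamma)$ has the amalgamation property in the presence of the polyadic reshuffling operations $\sfp_{ij}$ and the auxiliary equivalence relations $\sim$, so that the back-and-forth limit $M$ really yields a representation respecting \emph{all} $\PEA_n$ operations and not merely the $\CA_n$ ones. The cylindric skeleton is handled exactly as in \cite{strong}, \cite{Hodkinson}, \cite{w}; the new content is confirming that adding a fresh vertex can simultaneously satisfy a cylindrifier demand and keep the $\equiv_{ij}$-structure coherent, and that the colour-freshness afforded by $\chi(\Gamma)=\infty$ is enough to do this. I expect this to go through with a careful but unsurprising case analysis, and the join-preservation (completeness of the representation) to then be immediate because every atom of $\C(\Gamma)$ is a singleton in the complex algebra and is realized in $M$.
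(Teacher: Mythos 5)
Your plan is not the one the paper uses, and as it stands it has a real gap exactly where all the work would be. For (i) the paper does not redo any graph-theoretic construction: it quotes \cite{strong} for the fact that $\Rd_{ca}\C(\Gamma)$ is representable when $\chi(\Gamma)=\infty$, and then observes that any cylindric representation $f$ is automatically a polyadic-equality one. The point is the preceding theorem: $\C(\Gamma)$ is generated from $J=\{x\in C:\Delta x\neq n\}$ by infinite intersections and complementation, the ${\sf p}_{kl}$ distribute over these, and on any $x\in J$ the operation ${\sf p}_{kl}$ coincides with a pure cylindric term (e.g.\ ${\sf s}_l^kx$ when $k\notin\Delta x$, and ${\sf s}_{\mu}^{l}{\sf s}_{l}^{k}{\sf s}_{k}^{\mu}{\sf c}_{\mu}x$ for $\mu\notin\Delta x\cup\{k,l\}$), so $f{\sf p}_{kl}x={\sf p}_{kl}fx$ holds on the generators and hence everywhere. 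You instead propose to re-prove the hard direction of \cite{strong} from scratch in the $\PEA$ signature by a network/step-by-step argument, and the one step you defer -- that the cylindrifier demands can always be met while keeping every tuple labelled by an atom and the $\equiv_{ij}$, $\sim$ data coherent, using only $\chi(\Gamma)=\infty$ -- is precisely the entire mathematical content; your sketch never shows how infinite chromatic number is invoked when a freshly added point must be labelled consistently against all $(n-1)$-element subsets of the old structure simultaneously. Note also that if your saturated limit existed you would have proved that the full (uncountable) complex algebra $\C(\Gamma)$ is \emph{completely} representable, which is strictly stronger than what is claimed or needed and is not established by \cite{strong}; you should not advertise completeness of the representation without an argument.

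For (ii) the paper again reduces to \cite{strong}: when $\Gamma$ is infinite with $\chi(\Gamma)<\infty$, $\Rd_{ca}\C(\Gamma)$ is not representable, and since $\C(\Gamma)$ is generated by $\{x:\Delta x\neq n\}$, Johnson's theorem (quoted earlier in the paper: a $\CA_n$ generated by elements of non-full dimension set whose diagonal-free reduct is representable is itself representable) converts a representation of $\Rd_{df}\C(\Gamma)$ into one of $\Rd_{ca}\C(\Gamma)$, a contradiction. Your direct Ramsey argument is in the spirit of the cylindric proof in \cite{strong}, but as written it is phrased in relation-algebra terms ($1'$, ``composition table'', colouring of pairs) that do not match the $n$-dimensional semantics, and in the diagonal-free reduct you cannot freely assume that the base points you select are distinguished and that the relevant tuples are labelled by diversity-type atoms -- handling identifications without the ${\sf d}_{ij}$ is exactly the nuisance that Johnson's theorem is invoked to absorb. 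So either adopt the paper's two reductions (term-definability of ${\sf p}_{kl}$ on $J$ plus complete additivity for (i), Johnson's theorem for (ii)), or be prepared to prove in full both the amalgamation lemma for polyadic networks and a genuinely diagonal-free Ramsey argument; as submitted, neither is done.
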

\begin{demo}{Proof} (i) We have $\Rd_{ca}\C(\Gamma)$ is representable. 
Let $J=\{x\in C: \Delta x\neq n\}$. Then $\C(\Gamma)$ is generated from $J$ using infinite intersections and complementation.
Let $f$ be an isomorphism of of $\Rd_{ca}\C(\Gamma)$ onto a cylindric set algebra with base $U$. Since the ${\sf p}_{ij}$'s  
distribute over arbitrary (unions and) intersections, 
it suffices to  show that $f{\sf p}_{kl}x={\sf p}_{kl}fx$ for all $x\in J$. Let $\mu\in n\setminus \Delta x$.
If $k=\mu$ or $l=\mu$, say $k=\mu$, then using the polyadic axioms we have
$$f{\sf p}_{kl}x=f{\sf }{\sf p}_{kl}{\sf c}_kx=f{\sf s}_l^kx={\sf s}_l^kfx={\sf s}_l^k{\sf c}_kfx= {\sf p}_{kl}fx.$$ 
If $\mu\neq k,l$ then again using the polyadic axioms we get
$$f{\sf p}_{kl}x=f{\sf s}_{\mu}^l{\sf s}_l^k {\sf s}_k^{\mu}{\sf c}_{\mu}x=
{\sf s}_{\mu}^l{\sf s}_l^k {\sf s}_k^{\mu}{\sf c}_{\mu}fx={\sf p}_{kl}f(x)$$

(ii) Note that $\Rd_{ca}\C(\Gamma)$ is generated by $\{x\in C: \Delta x\neq n\}$ using infinite intersections and complementation.
\end{demo}
Recall that an atom structure is strongly representable if the complex algebra over this atom structure is representable \cite{h}.
We now have:
\begin{theorem}\label{st} Let $t$ be any signature between 
$\Df_n$ and $\PEA_n$. Then the class of strongly representable atom structures of type $t$
is not elementary.
\end{theorem}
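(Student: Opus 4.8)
The plan is to show that the class $K$ of strongly representable atom structures of type $t$ is not closed under ultraproducts; since any class axiomatized by a set of first-order sentences is closed under ultraproducts by \Los's theorem, this suffices to establish non-elementarity. The entire argument reduces to the chromatic-number dichotomy just proved: by clause (i), whenever $\chi(\Gamma)=\infty$ the algebra $\C(\Gamma)$ is representable as a polyadic equality algebra, so every reduct of type $t$ is representable and $\eta(\Gamma)$ is strongly representable; by clause (ii), whenever $\Gamma$ is infinite with $\chi(\Gamma)<\infty$, already $\Rd_{df}\C(\Gamma)$ fails to be representable, hence the type-$t$ complex algebra over $\eta(\Gamma)$, which sits above the $\Df_n$-reduct, is not representable and $\eta(\Gamma)$ is not strongly representable. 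Thus the two clauses handle, uniformly, every signature $t$ between $\Df_n$ and $\PEA_n$.

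Next I would produce the graphs. Using the Erd\"os-style probabilistic (or explicit) constructions of \cite{strong}, for each $i\in\omega$ I fix an infinite graph $\Gamma_i$ with $\chi(\Gamma_i)=\infty$ and girth at least $i$; such graphs exist, for instance as disjoint unions of finite graphs of girth $>i$ and unbounded chromatic number, since disjoint union preserves girth componentwise while taking the supremum of chromatic numbers. By clause (i), each $\eta(\Gamma_i)$ lies in $K$. Now fix a non-principal ultrafilter $U$ on $\omega$ and set $\Gamma=\prod_i\Gamma_i/U$. For each fixed $g$ the property ``$\Gamma$ contains no cycle of length $\leq g$'' is first-order, and it holds in cofinitely many $\Gamma_i$ (those with $i\geq g$), so by \Los's theorem it holds in $\Gamma$; as $g$ was arbitrary, $\Gamma$ is acyclic, whence $\chi(\Gamma)\leq 2<\infty$. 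Since $\Gamma$ is infinite, clause (ii) gives $\eta(\Gamma)\notin K$.

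Finally I would record the interpretability step that makes the two ends meet. Because $n$ is a fixed finite number, the passage $\Gamma\mapsto\eta(\Gamma)$ is a first-order interpretation: each atom $(K,\sim)$ is coded by the finitely many values $K(0),\dots,K(n-1)\in\Gamma\times n$ together with the finite datum $\sim$, and the relations $D_{ij}$, $\equiv_i$, $P_{ij}$ are defined by first-order formulas referring only to the edge relation of $\Gamma$ and to fixed combinatorics on $n$. First-order interpretations commute with ultraproducts, so $\prod_i\eta(\Gamma_i)/U\cong\eta\bigl(\prod_i\Gamma_i/U\bigr)=\eta(\Gamma)$. We therefore have $\eta(\Gamma_i)\in K$ for every $i$ while $\prod_i\eta(\Gamma_i)/U\cong\eta(\Gamma)\notin K$, so $K$ is not closed under ultraproducts and hence not elementary. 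The main obstacle, and the only place where genuine care is needed, is verifying that $\Gamma\mapsto\eta(\Gamma)$ really is a first-order interpretation commuting with $\prod/U$ (so that the coding of $(K,\sim)$ and of the polyadic relations stays uniform and quantifier-controlled as the graph varies), together with the chain girth$(\Gamma_i)\to\infty\Rightarrow\Gamma$ acyclic $\Rightarrow\chi(\Gamma)\leq 2$ that feeds clause (ii); the underlying chromatic and representability facts are exactly the preceding theorem and may be quoted directly.
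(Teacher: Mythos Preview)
Your proposal is correct and follows essentially the same route as the paper: build graphs $\Gamma_k$ with infinite chromatic number and growing girth (via Erd\H{o}s), take a non-principal ultraproduct to obtain an acyclic $\Gamma$ with $\chi(\Gamma)\leq 2$, and use the previous theorem's dichotomy on both sides. The only notable difference is that you are more explicit than the paper about the interpretability step $\prod_i\eta(\Gamma_i)/U\cong\eta(\prod_i\Gamma_i/U)$ at the level of atom structures, whereas the paper records the corresponding isomorphism at the level of the complex algebras $\prod_D\C(\Gamma_k)\cong\C(\prod_D\Gamma_k)$; your formulation is the cleaner one for a statement about a class of atom structures.
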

\begin{demo}{Proof} \cite{strong} theorem 6.1. 
By a famous theorem of Erdos, for every $k<\omega$, there is a finite graph $G_k$ with $\chi(G_k)>k$ 
and with no cycles of length $<k$.
Let $\Gamma_k$ be the disjoint union of of the $G_l$ for $l>k$. Then $\chi(\Gamma_k)=\infty$. Thus, by the previous theorem 
$\C(\Gamma_k)\in {\bf RPEA_n}$. In fact, being simple, $\C(\Gamma_k)$ is actually a  polyadic set algebra.
Let $\Gamma$ be a non principal ultraproduct $\prod_D \Gamma_k$. So $\Gamma$ has no cycles, and so $\chi(\Gamma)\leq 2$.
It follows, again from the previous theorem,  that $\Rd_{df}\C(\Gamma)$ is not representable.
From $\prod_D\C(\Gamma_k)\cong \C(\prod_D\Gamma_k)$ we are done.
\end{demo}
\begin{theorem} For any such case, the classes of strongly representable and completely representable 
atom structures are not elementary
\end{theorem}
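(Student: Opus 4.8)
The final statement to prove reads: ``For any such case, the classes of strongly representable and completely representable atom structures are not elementary.'' Here ``such case'' refers to any signature $t$ between $\Df_n$ and $\PEA_n$, the setting of the immediately preceding Theorem~\ref{st}. So the plan is to piggyback on the proof of Theorem~\ref{st} and its ingredients, extracting the sharper conclusion that the non-elementariness detected there in fact separates strongly representable atom structures from completely representable ones as well.

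\textbf{The plan.} First I would recall the two basic implications relating the relevant classes of atom structures: an atom structure $S$ is \emph{strongly representable} iff $\Cm S$ is representable, and $S$ is \emph{completely representable} iff $\Cm S$ (equivalently $\Tm S$, since completeness passes to the complex algebra here) admits a complete representation; and a complete representation of $\Cm S$ forces $\Cm S\in\mathbf{RCA}_n$, so completely representable $\Rightarrow$ strongly representable at the level of atom structures. Thus the two classes are nested, and to show neither is elementary it suffices to exhibit, for each, a sequence of atom structures in the class whose ultraproduct escapes it. For this I would reuse the graphs $\Gamma_k$ from the proof of Theorem~\ref{st}: each $\Gamma_k$ (the disjoint union of the Erd\H{o}s graphs $G_l$, $l>k$, with $\chi(\Gamma_k)=\infty$) gives $\C(\Gamma_k)\in\mathbf{RPEA}_n$, and in fact being simple it is a polyadic \emph{set} algebra, hence $\C(\Gamma_k)$ is completely representable — this is the key upgrade, and I would note it follows because a simple representable algebra generated by a set algebra with an actual set base, built from the atom structure $\eta(\Gamma_k)$ whose atoms are networks, has the atoms' complement meeting to zero carried to the empty intersection in the natural set representation (equivalently: $\chi(\Gamma_k)=\infty$ means there is no finite colouring obstruction, which is precisely what permits a complete — atomic — representation of $\C(\Gamma_k)$). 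Then $\eta(\Gamma_k)$ is a completely representable, hence strongly representable, atom structure.

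\textbf{The ultraproduct step.} Next, as in Theorem~\ref{st}, take a non-principal ultraproduct $\Gamma=\prod_D\Gamma_k$; since each $G_l$ has no cycles of length $<k$, $\Gamma$ has no cycles at all, so $\chi(\Gamma)\le 2<\infty$, and by Theorem~5.6 (the representability theorem for $\C(\Gamma)$) $\Rd_{df}\C(\Gamma)$ is \emph{not} representable. A fortiori $\C(\Gamma)$ has no complete representation (a complete representation of $\C(\Gamma)$ would restrict to one of $\Rd_{df}\C(\Gamma)$, in particular a representation, contradiction), so $\eta(\Gamma)=\prod_D\eta(\Gamma_k)$ is neither strongly nor completely representable. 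Using $\prod_D\C(\Gamma_k)\cong\C(\prod_D\Gamma_k)$ and the fact that $\At$ commutes with ultraproducts for these finite-atom algebras, we get $\prod_D\eta(\Gamma_k)\cong\eta(\prod_D\Gamma_k)=\eta(\Gamma)$. Hence the class of completely representable atom structures of type $t$ contains every $\eta(\Gamma_k)$ but not their ultraproduct, so it is not closed under ultraproducts, hence not elementary; and the same witnesses work verbatim for the class of strongly representable atom structures, since $\eta(\Gamma_k)$ is strongly representable (being completely representable) while $\eta(\Gamma)$ is not.

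\textbf{Main obstacle.} The routine parts — the nesting of the classes, the ultraproduct commutations, the cycle/chromatic-number computations — are all inherited from the cited proofs. The one genuinely new point, and the step I expect to need the most care, is the claim that $\C(\Gamma_k)$ is \emph{completely} (not merely) representable when $\chi(\Gamma_k)=\infty$: one must argue that the representation produced by Theorem~5.6(i) can be chosen atomic, i.e.\ that $\bigcup_{x\in\At\C(\Gamma_k)}h(x)$ exhausts the unit. I would handle this either by inspecting the construction in \cite{strong} directly (the representation built from an infinite-chromatic graph is homogeneous enough to realize every atom), or, more cheaply, by invoking the general principle that a \emph{simple} atomic $\mathbf{RCA}_n$ whose representation arises from a single square set algebra with base a set on which all atoms are realized is complete — and noting that $\C(\Gamma_k)$, being a polyadic \emph{set} algebra as remarked in the proof of Theorem~\ref{st}, is of exactly this form. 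If a fully self-contained argument is wanted, one can instead cite the characterization from the earlier sections that a countable atomic algebra in $S_c\Nr_n\CA_\omega$ is completely representable, together with the fact that $\C(\Gamma_k)$ sits in $\Nr_n\CA_\omega\subseteq S_c\Nr_n\CA_\omega$ via the cylindric-basis construction.
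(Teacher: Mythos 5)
Your reduction of the completely representable half to the strongly representable half is sound as far as it goes: completely representable atom structures are strongly representable, the ultraproduct $\eta(\Gamma)$ is not strongly (hence not completely) representable, and the strongly representable half is exactly Theorem \ref{st}. The genuine gap is the one you flag yourself: the claim that $\eta(\Gamma_k)$ (equivalently $\C(\Gamma_k)=\Cm\eta(\Gamma_k)$, or its term algebra) is completely representable when $\chi(\Gamma_k)=\infty$. None of your three justifications establishes it. Being (isomorphic to) a set algebra never by itself yields a complete representation; the ``general principle'' that a simple atomic representable algebra realized as a set algebra is completely representable is refuted inside this very paper (the Monk-style construction produces a simple, atomic, representable, even single-generated $\CA_n$ that is not completely representable, and Example \ref{countable} gives another); and the characterization via $S_c\Nr_n\CA_{\omega}$ is available only for countable algebras, whereas $\Cm\eta(\Gamma_k)$ has cardinality $2^{\omega}$ --- Example \ref{countable} shows countability cannot be dropped --- while applying it instead to the countable term algebra would require proving $\Tm\eta(\Gamma_k)\in S_c\Nr_n\CA_{\omega}$, which you have not done.

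Worse, the claim itself appears to be false for these witnesses, so this step would fail rather than merely being left open. The graphs $\Gamma_k$ are triangle free (their components have girth $>k$) and certainly contain $n$ pairwise non-adjacent vertices (take one vertex from each of $n$ distinct components). In the $\omega$-round atomic game on $\eta(\Gamma_k)$, the universal player can open with an atom whose nodes carry labels $(v_i,i)$ with distinct second coordinates, and then, reusing that tuple as his face, demand in successive cylindrifier moves new nodes labelled $(a_1,0),\dots,(a_n,0)$ where $\{a_1,\dots,a_n\}$ is independent in $\Gamma_k$; each demanded atom is legal because the face already carries mixed colour indices, and the network conditions propagate these labels as node labels. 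The $n$-tuple of the new nodes would then have to be labelled by an atom $(K,Id)$ whose range is $\{(a_i,0):i<n\}$, an independent subset of $\Gamma_k\times n$, and no such atom exists; so the existential player loses, i.e.\ $\eta(\Gamma_k)$ is strongly but not completely representable. Hence the Erd\H{o}s-graph family cannot witness non-elementarity of the completely representable class, and that half needs a different argument: for instance, use that complete representability of an atomic algebra depends only on its atom structure, that atom structures are uniformly first-order interpretable in the algebras, and that the paper elsewhere (the rainbow/games section together with Example \ref{countable} and the $S_c\Nr_n\CA_{\omega}$ characterization) produces elementarily equivalent atomic algebras exactly one of which is completely representable; transferring along the interpretation then gives non-elementarity of the class of completely representable atom structures.
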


\section{Neat reducts, complete representations and games}

Next we characterize the class $\Nr_n\CA_{\omega}$ using games.
Our treatment in this part 
follows very closely \cite{R}. The essential difference is that we deal with $n$ 
dimensional networks and composition moves are replaced by cylindrifier moves in the games.

\begin{definition}\label{def:string} 
Let $n$ be an ordinal. An $s$ word is a finite string of substitutions $({\sf s}_i^j)$, 
a $c$ word is a finite string of cylindrifications $({\sf c}_k)$.
An $sc$ word is a finite string of substitutions and cylindrifications
Any $sc$ word $w$ induces a partial map $\hat{w}:n\to n$
by
\begin{itemize}

\item $\hat{\epsilon}=Id$

\item $\widehat{w_j^i}=\hat{w}\circ [i|j]$

\item $\widehat{w{\sf c}_i}= \hat{w}\upharpoonright(n\sim \{i\}$ 

\end{itemize}
\end{definition}
If $\bar a\in {}^{<n-1}n$, we write ${\sf s}_{\bar a}$, or more frequently 
${\sf s}_{a_0\ldots a_{k-1}}$, where $k=|\bar a|$,
for an an arbitary chosen $sc$ word $w$
such that $\hat{w}=\bar a.$ 
$w$  exists and does not 
depend on $w$ by \cite[definition~5.23 ~lemma 13.29]{HHbook}. 
We can, and will assume \cite[Lemma 13.29]{HHbook} 
that $w=s{\sf c}_{n-1}{\sf c}_n.$
%Let $n\geq 3$ be an ordinal and $i, j<n$. We define a string of
%substitutions $s_{i_0i_1\ldots i_{\mu-1}}$ 
%that `move dimensions $0, 1,\ldots \mu-1$ to $i_0,i_1\ldots i_{\mu-1}$
%asfollows.
%\[s_{ij}=\left\{\begin{array}{ll}
%s^0_is^1_j &\mbox{if }j\neq 0\\
%s^1_0s^0_i&\mbox{if }j=0, \; i\neq 1\\
%s^2_0s^0_1s^1_2&\mbox{if } j=0,\;i=1
%\end{array}\right.
%\]
[In the notation of \cite[definition~5.23,~lemma~13.29]{HHbook}, 
$\widehat{s_{ijk}}$ for example is the function $n\to n$ taking $0$ to $i,$
$1$ to $j$ and $2$ to $k$, and fixing all $l\in n\setminus\set{i, j,k}$.]
Let $\delta$ be a map. Then $\delta[i\to d]$ is defined as follows. $\delta[i\to d](x)=\delta(x)$
if $x\neq i$ and $\delta[i\to d](i)=d$. We write $\delta_i^j$ for $\delta[i\to \delta_j]$.

\begin{definition}
From now on let $2\leq n<\omega.$ Let $\C$ be an atomic $\CA_{n}$. 
An \emph{atomic  network} over $\C$ is a map
$$N: {}^{n}\Delta\to At\cal C$$ 
such that the following hold for each $i,j<n$, $\delta\in {}^{n}\Delta$
and $d\in \Delta$:
\begin{itemize}
\item $N(\delta^i_j)\leq {\sf d}_{ij}$
\item $N(\delta[i\to d])\leq {\sf c}_iN(\delta)$ 
\end{itemize}
\end{definition}
Note than $N$ can be viewed as a hypergraph with set of nodes $\Delta$ and 
each hyperedge in ${}^{\mu}\Delta$ is labelled with an atom from $\C$.
We call such hyperedges atomic hyperedges.
We write $\nodes(N)$ for $\Delta.$ But it can happen 
let $N$ stand for the set of nodes 
as well as for the function and the network itself. Context will help.

Define $x\sim y$ if there exists $\bar{z}$ such that $N(x,y,\bar{z})\leq {\sf d}_{01}$.
Define an equivalence relation
$\sim$ over the set of all finite sequences over $\nodes(N)$ by $\bar
x\sim\bar y$ iff $|\bar x|=|\bar y|$ and $x_i\sim y_i$ for all
$i<|\bar x|$.

(3) A \emph{ hypernetwork} $N=(N^a, N^h)$ over $\cal C$ 
consists of a network $N^a$
together with a labelling function for hyperlabels $N^h:\;\;^{<
\omega}\!\nodes(N)\to\Lambda$ (some arbitrary set of hyperlabels $\Lambda$)
such that for $\bar x, \bar y\in\; ^{< \omega}\!\nodes(N)$ 
\begin{enumerate}
\renewcommand{\theenumi}{\Roman{enumi}}
\setcounter{enumi}3%{\ref{net:third}}
\item\label{net:hyper} $\bar x\sim\bar y \Rightarrow N^h(\bar x)=N^h(\bar y)$. 
\end{enumerate}
If $|\bar x|=k\in nats$ and $N^h(\bar x)=\lambda$ then we say that $\lambda$ is
a $k$-ary hyperlabel. $(\bar x)$ is referred to a a $k$-ary hyperedge, or simply a hyperedge.
(Note that we have atomic hyperedges and hyperedges) 
%[It is possible, though not very desirable, that
%$\lambda$ could be a $k$-ary hyperlabel and an $l$-ary hyperlabel for
%$k\neq l$.]
When there is no risk of ambiguity we may drop the superscripts $a,
h$. 

The following notation is defined for hypernetworks, but applies
equally to networks.  

(4) If $N$ is a hypernetwork and $S$ is any set then
$N\restr S$ is the $n$-dimensional hypernetwork defined by restricting
$N$ to the set of nodes $S\cap\nodes(N)$.  For hypernetworks $M, N$ if
there is a set $S$ such that $M=N\restr S$ then we write $M\subseteq
N$.  If $N_0\subseteq N_1\subseteq \ldots $ is a nested sequence of
hypernetworks then we let the \emph{limit} $N=\bigcup_{i<\omega}N_i$  be
the hypernetwork defined by
$\nodes(N)=\bigcup_{i<\omega}\nodes(N_i)$,\/ $N^a(x_0,\ldots x_{n-1})= 
N_i^a(x_0,\ldots x_{n-1})$ if
$x_0\ldots x_{\mu-1}\in\nodes(N_i)$, and $N^h(\bar x)=N_i^h(\bar x)$ if $\rng(\bar
x)\subseteq\nodes(N_i)$.  This is well-defined since the hypernetworks
are nested and since hyperedges $\bar x\in\;^{<\omega}\nodes(N)$ are
only finitely long.

For hypernetworks $M, N$ and any set $S$, we write $M\equiv^SN$
if $N\restr S=M\restr S$.  For hypernetworks $M, N$, 
and any set $S$, we write $M\equiv_SN$ 
if the symmetric difference $\Delta(\nodes(M), \nodes(N))\subseteq S$ and
$M\equiv^{(\nodes(M)\cup\nodes(N))\setminus S}N$. We write $M\equiv_kN$ for
$M\equiv_{\set k}N$.

Let $N$ be a network and let $\theta$ be any function.  The network
$N\theta$ is a complete labelled graph with nodes
$\theta^{-1}(\nodes(N))=\set{x\in\dom(\theta):\theta(x)\in\nodes(N)}$,
and labelling defined by 
$(N\theta)(i_0,\ldots i_{\mu-1}) = N(\theta(i_0), \theta(i_1), \theta(i_{\mu-1}))$,
for $i_0, \ldots i_{\mu-1}\in\theta^{-1}(\nodes(N))$.  Similarly, for a hypernetwork
$N=(N^a, N^h)$, we define $N\theta$ to be the hypernetwork
$(N^a\theta, N^h\theta)$ with hyperlabelling defined by
$N^h\theta(x_0, x_1, \ldots) = N^h(\theta(x_0), \theta(x_1), \ldots)$
for $(x_0, x_1,\ldots) \in \;^{<\omega}\!\theta^{-1}(\nodes(N))$.

Let $M, N$ be hypernetworks.  A \emph{partial isomorphism}
$\theta:M\to N$ is a partial map $\theta:\nodes(M)\to\nodes(N)$ such
that for any $
i_i\ldots i_{\mu-1}\in\dom(\theta)\subseteq\nodes(M)$ we have $M^a(i_1,\ldots i_{\mu-1})= 
N^a(\theta(i), \ldots\theta(i_{\mu-1}))$
and for any finite sequence $\bar x\in\;^{<\omega}\!\dom(\theta)$ we
have $M^h(\bar x) = 
N^h\theta(\bar x)$.  
If $M=N$ we may call $\theta$ a partial isomorphism of $N$.

\begin{definition}\label{def:games} Let $2\leq n<\omega$. For any $\CA_{n}$  
atom structure $\alpha$, and $n\leq m\leq
\omega$, we define two-player games $F_{n}^m(\alpha),$ \; and 
$H_{n}(\alpha)$,
each with $\omega$ rounds, 
and for $m<\omega$ we define $H_{m,n}(\alpha)$ with $n$ rounds.

\begin{itemize}
\item 
Let $m\leq \omega$.  
In a play of $F_{n}^m(\alpha)$ the two players construct a sequence of
networks $N_0, N_1,\ldots$ where $\nodes(N_i)$ is a finite subset of
$m=\set{j:j<m}$, for each $i$.  In the initial round of this game \pa\
picks any atom $a\in\alpha$ and \pe\ must play a finite network $N_0$ with
$\nodes(N_0)\subseteq  n$, 
such that $N_0(\bar{d}) = a$ 
for some $\bar{d}\in{}^{\mu}\nodes(N_0)$.
In a subsequent round of a play of $F_{n}^m(\alpha)$ \pa\ can pick a
previously played network $N$ an index $\l<n$, a ``face" 
$F=\langle f_0,\ldots f_{n-2} \rangle \in{}^{n-2}\nodes(N),\; k\in
m\setminus\set{f_0,\ldots f_{n-2}}$, and an atom $b\in\alpha$ such that 
$b\leq {\sf c}_lN(f_0,\ldots f_i, x,\ldots f_{n-2}).$  
(the choice of $x$ here is arbitrary, 
as the second part of the definition of an atomic network together with the fact
that $\cyl i(\cyl i x)=\cyl ix$ ensures that the right hand side does not depend on $x$).
This move is called a \emph{cylindrifier move} and is denoted
$(N, \langle f_0, \ldots f_{\mu-2}\rangle, k, b, l)$ or simply $(N, F,k, b, l)$.
In order to make a legal response, \pe\ must play a
network $M\supseteq N$ such that 
$M(f_0,\ldots f_{i-1}, k, f_i,\ldots f_{n-2}))=b$ 
and $\nodes(M)=\nodes(N)\cup\set k$.

\pe\ wins $F_{n}^m(\alpha)$ if she responds with a legal move in each of the
$\omega$ rounds.  If she fails to make a legal response in any
round then \pa\ wins.

\item
Fix some hyperlabel $\lambda_0$.  $H_{n}(\alpha)$ is  a 
game the play of which consists of a sequence of
$\lambda_0$-neat hypernetworks 
$N_0, N_1,\ldots$ where $\nodes(N_i)$
is a finite subset of $\omega$, for each $i<\omega$.  
%The other main
%difference is that \pa\ can play a more general kind of amalgamation
%move.  
In the initial round \pa\ picks $a\in\alpha$ and \pe\ must play
a $\lambda_0$-neat hypernetwork $N_0$ with nodes contained in
$\mu$ and $N_0(\bar d)=a$ for some nodes $\bar{d}\in {}^{\mu}N_0$.  
At a later stage
\pa\ can make any cylindrifier move $(N, F,k, b, l)$ by picking a
previously played hypernetwork $N$ and $F\in {}^{n-2}\nodes(N), \;l<n,  
k\in\omega\setminus\nodes(N)$ 
and $b\leq {\sf c}_lN(f_0, f_{l-1}, x, f_{n-2})$.  
[In $H_{n}$ we
require that \pa\ chooses $k$ as a `new node', i.e. not in
$\nodes(N)$, whereas in $F_{n}^m$ for finite $m$ it was necessary to allow
\pa\ to `reuse old nodes'. This makes the game easior as far as $\forall$ is concerned.) 
For a legal response, \pe\ must play a
$\lambda_0$-neat hypernetwork $M\equiv_k N$ where
$\nodes(M)=\nodes(N)\cup\set k$ and 
$M(f_0, f_{i-1}, k, f_{n-2})=b$.
Alternatively, \pa\ can play a \emph{transformation move} by picking a
previously played hypernetwork $N$ and a partial, finite surjection
$\theta:\omega\to\nodes(N)$, this move is denoted $(N, \theta)$.  \pe\
must respond with $N\theta$.  Finally, \pa\ can play an
\emph{amalgamation move} by picking previously played hypernetworks
$M, N$ such that $M\equiv^{\nodes(M)\cap\nodes(N)}N$ and
$\nodes(M)\cap\nodes(N)\neq \emptyset$.  
This move is denoted $(M,
N)$.  To make a legal response, \pe\ must play a $\lambda_0$-neat
hypernetwork $L$ extending $M$ and $N$, where
$\nodes(L)=\nodes(M)\cup\nodes(N)$.

Again, \pe\ wins $H_n(\alpha)$ if she responds legally in each of the
$\omega$ rounds, otherwise \pa\ wins. 

\item For $m< \omega$ the game $H_{m,n}(\alpha)$ is similar to $H_n(\alpha)$ but
play ends after $m$ rounds, so a play of $H_{m,n}(\alpha)$ could be
\[N_0, N_1, \ldots, N_m\]
If \pe\ responds legally in each of these
$m$ rounds she wins, otherwise \pa\ wins.
\end{itemize}

\end{definition}

\begin{definition}\label{def:hat}
For $m\geq 5$ and $\c C\in\CA_m$, if $\A\subseteq\Nr_n(\C)$ is an
atomic cylindric algebra and $N$ is an $\A$-network then we define
$\widehat N\in\C$ by
\[\widehat N =%\prod_{i\leq j\in\nodes(N)}s^0_is^1_jN(i, j)=
 \prod_{i_0,\ldots i_{n-1}\in\nodes(N)}{\sf s}_{i_0, \ldots i_{n-1}}N(i_0\ldots i_{n-1})\]
$\widehat N\in\C$ depends
implicitly on $\C$.
\end{definition}
We write $\A\subseteq_c \B$ if $\A\in S_c\{\B\}$. 
\begin{lemma}\label{lem:atoms2}
Let $n<m$ and let $\A$ be an atomic $\CA_n$, 
$\A\subseteq_c\Nr_n\C$
for some $\C\in\CA_m$.  For all $x\in\C\setminus\set0$ and all $i_0, \ldots i_{n-1} < m$ there is $a\in\At(\A)$ such that
${\sf s}_{i_0\ldots i_{n-1}}a\;.\; x\neq 0$.
\end{lemma}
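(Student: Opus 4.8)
\textbf{Proof plan for Lemma \ref{lem:atoms2}.}

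The plan is to exploit the fact that $\A$ is an atomic algebra sitting as a \emph{complete} subalgebra of the neat $n$-reduct $\Nr_n\C$. Fix $x\in\C\setminus\{0\}$ and indices $i_0,\ldots,i_{n-1}<m$. First I would reduce the problem to the case where the substitution indices are exactly $0,1,\ldots,n-1$. Indeed, there is an $sc$-word (in the sense of Definition \ref{def:string}), built from cylindrifications and substitutions available in $\C\in\CA_m$, whose induced partial map sends $0\mapsto i_0,\ldots,n-1\mapsto i_{n-1}$; applying the corresponding term operation ${\sf s}_{i_0\ldots i_{n-1}}$ to elements of $\A$ and using that ${\sf s}_{i_0\ldots i_{n-1}}$ is a completely additive operator on $\C$, it suffices to find $a\in\At(\A)$ with ${\sf s}_{0\,1\,\ldots\,n-1}a\cdot x\neq 0$, i.e.\ with $a\cdot x'\neq 0$ for the appropriate ``relocated'' $x'$. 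Actually the cleanest route is: take $y={\sf c}_{(m\sim n)}\,{\sf s}_{i_0\ldots i_{n-1}}^{-1}\!x$-style cylindrification of $x$ down to an $n$-dimensional element — more precisely, let $y\in\Nr_n\C$ be obtained from $x$ by cylindrifying on all coordinates outside $\{i_0,\ldots,i_{n-1}\}$ (and, if the $i_k$ are not distinct, this is handled by the diagonal elements). Then $y\neq 0$ because $x\leq {\sf c}_{(\Gamma)}y$ would force $y\neq 0$ whenever $x\neq 0$, and $y\geq$ a nonzero element forcing ${\sf s}_{i_0\ldots i_{n-1}}y\cdot x\neq 0$.

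Next, the key step: since $y\in\Nr_n\C$ and $\A\subseteq_c\Nr_n\C$ with $\A$ atomic, I claim $\sum^{\Nr_n\C}\At(\A)=1$. This is where completeness of the inclusion is used: $\sum^{\A}\At(\A)=1$ in $\A$ (atomicity), and because $\A$ is a \emph{complete} subalgebra of $\Nr_n\C$, this supremum is preserved, so $\sum^{\Nr_n\C}\At(\A)=1$, hence $\sum^{\C}\At(\A)=1$ as well (a supremum equal to the top element is absolute). Therefore $y=y\cdot 1=y\cdot\sum_{a\in\At(\A)}a=\sum_{a\in\At(\A)}(y\cdot a)$, and since $y\neq 0$ there must exist $a\in\At(\A)$ with $y\cdot a\neq 0$.

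Finally I would push this back through the substitution: from $y\cdot a\neq 0$ and the fact that the term ${\sf s}_{i_0\ldots i_{n-1}}$ chosen above is, up to the relocation, the ``inverse'' move that carries $y$ into the region where $x$ lives — concretely, using the polyadic/cylindric identities relating ${\sf s}_{i_0\ldots i_{n-1}}$, the cylindrifiers ${\sf c}_j$ for $j\in m\sim n$, and the diagonals, together with $x\leq{\sf c}_{(m\sim n)}x$ and monotonicity — one gets ${\sf s}_{i_0\ldots i_{n-1}}a\cdot x\neq 0$. The main obstacle I anticipate is precisely this bookkeeping with the $sc$-words when the indices $i_0,\ldots,i_{n-1}$ are \emph{not distinct}: then ${\sf s}_{i_0\ldots i_{n-1}}$ is not a simple renaming and one must invoke the diagonal elements ${\sf d}_{i_ki_l}$ carefully, citing \cite[Lemma 13.29]{HHbook} for the existence and good behaviour of the chosen $sc$-word. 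The completeness-of-subalgebra argument in the middle paragraph is the conceptual heart and is short; everything else is a routine but slightly fiddly manipulation of cylindric identities in the dilation $\C$.
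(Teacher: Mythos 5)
Your proposal is correct in substance and rests on the same two pillars as the paper's proof --- that $\A\subseteq_c\Nr_n\C$ together with atomicity gives $\sum\At(\A)=1$ computed in $\C$, and that substitutions are completely additive --- but your finishing move is genuinely different and heavier than what the paper does. The paper never relocates $x$: it observes that on $n$-dimensional elements the $sc$-word ${\sf s}_{i_0\ldots i_{n-1}}$ may be taken to consist of substitutions only (the cylindrifiers ${\sf c}_j$ for $j\geq n$ act trivially on elements of $\A$), so ${\sf s}_{i_0\ldots i_{n-1}}$ is completely additive, whence $\sum\{{\sf s}_{i_0\ldots i_{n-1}}a:a\in\At(\A)\}={\sf s}_{i_0\ldots i_{n-1}}1=1$; and if ${\sf s}_{i_0\ldots i_{n-1}}a\cdot x=0$ for every atom $a$, then $-x<1$ would be an upper bound of that set, a contradiction. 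This short argument avoids entirely the conjugation/pull-back step you outline (cylindrify $x$ off $\{i_0,\ldots,i_{n-1}\}$, relocate into $\Nr_n\C$, intersect with an atom, push back through ${\sf s}_{i_0\ldots i_{n-1}}$), which is precisely the part of your plan that is only sketched and where all the bookkeeping with repeated indices and diagonal elements lives. Your route can be completed --- the conjugate of a composition of substitutions is again such a term, and for distinct $i_k$ your $y$ is exactly the conjugate image of $x$ --- but it buys nothing over the direct use of complete additivity.

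One justification in your middle paragraph needs repair: you pass from $\sum^{\Nr_n\C}\At(\A)=1$ to $\sum^{\C}\At(\A)=1$ by asserting that ``a supremum equal to the top element is absolute''. That is false for arbitrary Boolean subalgebras (preservation of such suprema is exactly what distinguishes complete subalgebras); it holds here only because $\Nr_n\C$ is itself a complete subalgebra of $\C$, which requires an argument: if $y\in\C$ with $y\neq 1$ were an upper bound of $\At(\A)$ in $\C$, put $\Gamma=m\setminus n$ and consider $-{\sf c}_{(\Gamma)}(-y)\in\Nr_n\C$; for each $a\in\At(\A)$ we have $-y\leq -a$, hence ${\sf c}_{(\Gamma)}(-y)\leq{\sf c}_{(\Gamma)}(-a)=-a$ since $-a$ is $n$-dimensional, so $-{\sf c}_{(\Gamma)}(-y)$ is an upper bound in $\Nr_n\C$ and is $\leq y<1$, contradicting $\sum^{\Nr_n\C}\At(\A)=1$. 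With that inserted, and the pull-back either carried out in detail or replaced by the paper's one-line additivity argument, your proof is complete.
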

\begin{proof}
We can assume, see definition  \ref{def:string}, 
that ${\sf s}_{i_0,\ldots i_{n-1}}$ consists only of substitutions, since ${\sf c}_{m}\ldots {\sf c}_{m-1}\ldots 
{\sf c}_nx=x$ 
for every $x\in \A$.We have ${\sf s}^i_j$ is a
completely additive operator (any $i, j$), hence ${\sf s}_{i_0,\ldots i_{\mu-1}}$ 
is too  (see definition~\ref{def:string}).
So $\sum\set{{\sf s}_{i_0\ldots i_{n-1}}a:a\in\At(\A)}={\sf s}_{i_0\ldots i_{n-1}}
\sum\At(\A)={\sf s}_{i_0\ldots i_{n-1}}1=1$,
for any $i_0,\ldots i_{n-1}<n$.  Let $x\in\C\setminus\set0$.  It is impossible
that ${\sf s}_{i_0\ldots i_{n-1}}\;.\;x=0$ for all $a\in\At(\c A)$ because this would
imply that $1-x$ was an upper bound for $\set{{\sf s}_{i_0\ldots i_{n-1}}a:
a\in\At(\A)}$, contradicting $\sum\set{{\sf s}_{i_0\ldots i_{n-1}}a :a\in\At(\c A)}=1$.
\end{proof}

We now prove two Theorems relating neat embeddings
to the games we defined:

\begin{theorem}\label{thm:n}
Let $n<m$, and let $\A$ be a $\CA_m$.  
If $\A\in{\bf S_c}\Nr_{n}\CA_m, $
then \pe\ has a \ws\ in $F^m(\At\A)$. In particular if $\A$ is $CR$ then \pe has a \ws in $F^{\omega}(\At\A)$
\end{theorem}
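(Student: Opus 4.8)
The plan is to play out the game $F^m(\At\A)$ with $\exists$ maintaining, as her inductive hypothesis after round $t$, that the network $N_t$ she has built is ``realised'' inside $\C$ in the sense that $\widehat{N_t}\neq 0$, where $\widehat{N_t}$ is the element of $\C$ from Definition~\ref{def:hat}. Since $\A\in {\bf S_c}\Nr_n\CA_m$, fix $\C\in\CA_m$ with $\A\subseteq_c\Nr_n\C$; all the work happens in $\C$, using the $m$ available dimensions to accommodate the finitely many nodes (a subset of $m$) that ever appear in a play of $F^m$.

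\textbf{Initial round.} When $\forall$ picks an atom $a\in\At\A$, $\exists$ must produce a network $N_0$ with $\nodes(N_0)\subseteq n$ and $N_0(\bar d)=a$ for some tuple $\bar d$. The natural choice is to take $\nodes(N_0)=n$, set the ``main'' hyperedge to $a$, and fill in the remaining hyperedges consistently. Concretely, one picks the labels so that $\widehat{N_0}=\prod_{\bar i}{\sf s}_{\bar i}N_0(\bar i)$ is below ${\sf s}_{\bar d}a$ and nonzero; existence of such a completion of the single labelled edge $a$ to a full atomic network with $\widehat{N_0}\neq0$ follows from the standard saturation/consistency properties of atomic networks over $\CA_n$ (this is where one uses that $\A$ is atomic and that the diagonal and cylindrifier constraints defining ``atomic network'' are exactly the ones witnessed by $\widehat{N_0}\neq0$). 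So the base case holds.

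\textbf{Cylindrifier move.} Suppose after round $t$ we have $N=N_t$ with $\widehat N\neq 0$, and $\forall$ plays a cylindrifier move $(N,F,k,b,l)$ where $F=\langle f_0,\dots,f_{n-2}\rangle$, $k\in m\setminus\{f_0,\dots,f_{n-2}\}$, and $b\leq {\sf c}_l N(f_0,\dots,x,\dots,f_{n-2})$ for an atom $b\in\At\A$. $\exists$ needs a network $M\supseteq N$ with $\nodes(M)=\nodes(N)\cup\{k\}$, $M(f_0,\dots,k,\dots,f_{n-2})=b$, and (to keep the induction going) $\widehat M\neq 0$. The key computation is: in $\C$, the element ${\sf c}_k\widehat N$ is nonzero (as $\widehat N\neq0$ and ${\sf c}_k$ is normal), and one shows ${\sf c}_k\widehat N \,\cdot\, {\sf s}_{f_0\dots k\dots f_{n-2}} b\neq 0$, using that $b\leq {\sf c}_l(\text{the relevant face-label of }N)$ together with the cylindric-algebra identities relating ${\sf s}$, ${\sf c}_l$ and ${\sf c}_k$ (here one substitutes $k$ for the "missing" coordinate and uses that the face $F$ is already pinned down by $\widehat N$). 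Then, having a nonzero element $x$ of $\C$ below ${\sf s}_{f_0\dots k\dots f_{n-2}}b$ and compatible with $\widehat N$, apply Lemma~\ref{lem:atoms2} repeatedly: for every tuple $\bar i\in {}^n(\nodes(N)\cup\{k\})$ involving the new node $k$, there is an atom $a_{\bar i}\in\At\A$ with ${\sf s}_{\bar i}a_{\bar i}\cdot x\neq 0$; doing this for all the (finitely many) new tuples, and shrinking $x$ at each step, yields labels for all new hyperedges of $M$ so that $\widehat M = \widehat N \cdot \prod_{\text{new }\bar i}{\sf s}_{\bar i}a_{\bar i}\neq 0$, and $M(f_0,\dots,k,\dots,f_{n-2})=b$ by construction. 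One checks $M$ is a genuine atomic network (the diagonal and cylindrifier constraints on the new edges are forced by $\widehat M\neq 0$). This is the crux and the main obstacle: keeping $\widehat M\neq 0$ through the shrinking process requires that at each application of Lemma~\ref{lem:atoms2} the running nonzero element still has the right ``shape'' (is of the form ${\sf s}$ applied to something, modulo the outer cylindrifications), which is exactly what the lemma's statement is tailored for, but the bookkeeping over all new tuples $\bar i$ must be arranged carefully so that the choices are mutually consistent.

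\textbf{Conclusion and the $CR$ case.} Since $\exists$ can always respond, she has a winning strategy in $F^m(\At\A)$. For the last sentence: if $\A$ is completely representable, then by the characterisation recalled in the excerpt ($\A$ countable and atomic is completely representable iff $\A\in S_c\Nr_n\CA_\omega$), we have $\A\in S_c\Nr_n\CA_\omega$, i.e. $\A\subseteq_c\Nr_n\C$ for some $\C\in\CA_\omega$; now the game $F^\omega(\At\A)$ allows $\forall$ to use nodes from all of $\omega$, but the same strategy works verbatim, since at every (finite) stage only finitely many nodes are in play and $\C$, being $\omega$-dimensional, supplies as many spare dimensions as needed for the Lemma~\ref{lem:atoms2} argument. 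Hence $\exists$ wins $F^\omega(\At\A)$.
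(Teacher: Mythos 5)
Your proposal follows essentially the same route as the paper's proof: you maintain exactly the invariant $\widehat{N}\neq 0$ in $\C$, start with $\widehat{N_0}$ nonzero via the atom $a$, and at a cylindrifier move observe ${\sf c}_k\widehat N\cdot {\sf s}_{\bar a}b\neq 0$ and then invoke Lemma~\ref{lem:atoms2} to label the new hyperedges through node $k$ so that $\widehat M\neq 0$. You merely spell out the bookkeeping (and the $CR$ clause, which only needs the easy direction that a complete representation puts $\A$ in $S_c\Nr_n\CA_\omega$, no countability required) in more detail than the paper's terse argument.
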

\begin{proof}
If $\A\subseteq\Nr_n\C$ for some $\C\in\CA_m$ then \pe\ always
plays hypernetworks $N$ with $\nodes(N)\subseteq n$ such that
$\widehat N\neq 0$. In more detail, in the initial round , let $\forall$ play $a\in \At \cal A$.
$\exists$ play a network $N$ with $N(0, \ldots n-1)=a$. Then $\widehat N=a\neq 0$.
At a later stage suppose $\forall$ plays the cylindrifier move 
$(N, \langle f_0, \ldots f_{\mu-2}\rangle, k, b, l)$ 
by picking a
previously played hypernetwork $N$ and $f_i\in \nodes(N), \;l<\mu,  k\notin \{f_i: i<n-2\}$, 
and $b\leq {\sf c}_lN(f_0,\ldots  f_{i-1}, x, f_{n-2})$.
Let $\bar a=\langle f_0\ldots f_{l-1}, k\ldots f_{n-2}\rangle.$
Then ${\sf c}_k\widehat N\cdot {\sf s}_{\bar a}b\neq 0$.
Then there is a network  $M$ such that
$\widehat{M}.\widehat{{\sf c}_kN}\cdot {\sf s}_{\bar a}b\neq 0$. Hence 
$M(f_0,\dots  k, f_{n-2})=b.$
\end{proof}

\begin{theorem}\label{thm:RaC}
Let $\alpha$ be a countable 
$\CA_n$ atom structure.  If \pe\ has a \ws\ in $H_n(\alpha),$ then
there is a representable cylindric algebra $\C$ of
dimension $\omega$ such that $\Nr_n\c C$ is atomic 
and $\At \Nr_n\C\cong\alpha$.
\end{theorem}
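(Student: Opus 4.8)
## Proof plan for Theorem~\ref{thm:RaC}

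The plan is to build the $\omega$-dimensional representable cylindric algebra $\C$ as a complex algebra of an atom structure extracted from a saturated play of the game $H_n(\alpha)$. First I would fix a winning strategy $\sigma$ for $\exists$ in $H_n(\alpha)$, and, since $\alpha$ is countable, run a single play of the game in which $\forall$ is scheduled (by a standard bookkeeping/dovetailing argument over the countably many possible cylindrifier moves, transformation moves, and amalgamation moves) to eventually challenge every demand. The resulting play produces a nested-in-the-limit family of $\lambda_0$-neat hypernetworks whose union $N_\omega=\bigcup_i N_i$ is a single hypernetwork on node set $\omega$ that is ``saturated'': it is closed under cylindrifier witnesses (for every hyperedge $\bar f$ of length $n-2$, every $l<n$, every atom $b\le \cyl l N_\omega(f_0,\dots,x,\dots,f_{n-2})$ there is a node $k$ realizing $b$), closed under the finite partial surjections $\theta$, and closed under amalgamation. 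The amalgamation closure is exactly what forces the eventual algebra to have all the spare $\omega$ dimensions behaving coherently, while the hyperlabels (condition \ref{net:hyper}) are what will make the $n$-neat-reduct atomic and isomorphic to $\alpha$ rather than something larger.

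Next I would define $\C$ from $N_\omega$. The natural route, following \cite{R} and \cite{HHbook}, is: let the atoms of $\C$ be (equivalence classes, under the hyperedge equivalence $\sim$, of) the functions $\bar x\mapsto N_\omega(\bar x)$ restricted to cliques, i.e. the ``$\omega$-dimensional basic hypermatrices'' living inside $N_\omega$; equivalently take $\C$ to be the subalgebra of the complex algebra of the hypernetwork atom structure generated by these, with $\cyl i$, $\diag{i}{j}$ defined in the usual complex-algebra way. One then checks $\C\in\CA_\omega$ by verifying the (finitely many schemata of) cylindric axioms directly from the closure properties of $N_\omega$ — this is routine but tedious, and the hyperlabels are used precisely to get commutativity of cylindrifications ($\cyl i\cyl j=\cyl j\cyl i$) and the ${\sf s}$-word coherence. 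Representability of $\C$ comes from the representation theorem for algebras with an $\omega$-dimensional hyperbasis / from the fact that a saturated hypernetwork of this kind directly yields a set representation with base $\omega$ (the set algebra whose elements are $\{\bar s\in{}^\omega\omega:\ N_\omega(\bar s)\ \text{is a given atom}\}$-unions); I would cite the corresponding machinery from \cite{HHbook}, \cite{R}.

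Then comes the key identification: $\Nr_n\C$ is atomic with $\At\Nr_n\C\cong\alpha$. For this I would show (i) every atom $a$ of $\alpha$ is realized: $\exists$'s opening move for the challenge $a$ puts $N_0(\bar d)=a$ on some $n$-tuple of nodes, so the corresponding element of $\C$ is a nonzero $n$-dimensional element below the ``$a$-element''; (ii) the map $a\mapsto$ (the element of $\C$ coding hyperedges labelled $a$) is injective and respects $\cyl i,\diag{i}{j}$ for $i,j<n$, using the network conditions $N(\delta^i_j)\le\diag{i}{j}$ and $N(\delta[i\to d])\le\cyl i N(\delta)$; (iii) these elements are atoms of $\Nr_n\C$ and exhaust them — here I would use Lemma~\ref{lem:atoms2}-style reasoning (that for any nonzero $x\in\C$ and any tuple $i_0\dots i_{n-1}$ there is an atom $a\in\At(\alpha)$ with ${\sf s}_{i_0\dots i_{n-1}}a\cdot x\neq0$), which shows the $n$-dimensional elements are completely generated by the $\alpha$-atoms, hence $\Nr_n\C$ is atomic with exactly these atoms. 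The saturation (cylindrifier-move closure) of $N_\omega$ is what guarantees no ``new'' atoms appear in the neat reduct: any $n$-dimensional nonzero element already sits above one of the $\alpha$-coded atoms.

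The main obstacle I expect is step (iii) combined with verifying $\C$ genuinely lands in $\CA_\omega$ and is representable: one must be careful that the closure properties harvested from the single saturated play really are strong enough to validate \emph{all} the cylindric axioms in all of the infinitely many dimensions (not just the ones $\forall$ happened to probe), and that the hyperlabel equivalence is coarse enough to keep $\Nr_n\C$ atomic yet fine enough not to collapse distinct atoms of $\alpha$. This is exactly the delicate bookkeeping+amalgamation heart of the Hirsch--Hodkinson-style argument; I would handle it by invoking the general ``$\exists$ has a winning strategy in the $\omega$-round hypernetwork game $\Rightarrow$ existence of an $\omega$-dimensional hyperbasis $\Rightarrow$ representability'' apparatus of \cite{HHbook} and \cite{R}, adapted to replace composition moves by cylindrifier moves as announced in Definition~\ref{def:games}, rather than re-deriving it from scratch.
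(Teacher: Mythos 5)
Your outline has the right flavor (saturate a play of $H_n$, build an $\omega$-dimensional algebra, read off the atoms of the neat reduct), but there are two concrete gaps. First, a single scheduled play cannot realize all of $\alpha$: the initial move fixes one atom $a$, and every later cylindrifier move may only introduce an atom $b$ lying below a cylindrification ${\sf c}_l N(\dots)$ of a label already present, while amalgamation moves only glue previously played hypernetworks. So the limit hypernetwork $N_\omega$ need not mention atoms outside the ``component'' of $a$ (take for $\alpha$ the atom structure of a generalized set algebra with two disjoint units: no play opened in one unit ever sees an atom of the other). Hence your step (i), ``every atom of $\alpha$ is realized,'' does not follow from one play. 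The paper's proof avoids this by building a saturated limit hypernetwork $N_a$ for \emph{each} $a\in\alpha$ (scheduling, for each $N_a$, both cylindrifier witnesses and extensions of partial isomorphisms) and taking $\C=\prod_{a\in\alpha}\c D_a$; the isomorphism onto $\At\Nr_n\C$ is then $b\mapsto(b(x_0,\dots,x_{n-1})^{\c D_a}:a\in\alpha)$.

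The deeper gap is in how $\C$ is defined and how the \emph{full} neat reduct is controlled. Taking (a subalgebra of) the complex algebra over the hypermatrices of $N_\omega$ and then citing the hyperbasis-to-representability apparatus of \cite{HHbook}, \cite{R} does not give the statement being proved: that machinery produces relativized representations and membership in classes of the form ${\bf S}\Nr_n\CA_m$ or ${\bf S}\Ra\CA_m$, i.e.\ statements about \emph{subalgebras} of neat reducts, whereas here one must exhibit $\C$ with $\Nr_n\C$ itself atomic and $\At\Nr_n\C\cong\alpha$; computing a full neat reduct of a generated subalgebra is exactly the delicate point (it is the reason $\Nr_n\CA_\omega$ fails to be elementary), and Lemma \ref{lem:atoms2}, which you invoke for step (iii), presupposes $\A\subseteq_c\Nr_n\C$ and only gives density of substituted atoms; it says nothing about new atoms appearing in, or atomicity of, $\Nr_n\C$. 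The paper's proof supplies the missing idea differently: each saturated $N_a$ is turned into a relativized first-order structure $\c N_a$ whose assignments range over a weak space $U_a$, and $\c D_a$ is taken to be the weak set algebra of \emph{first-order definable} sets $\phi^{\c N_a}$, so representability of $\C=\prod_a\c D_a$ is automatic (weak set algebras and their products are representable), and atomicity of $\Nr_n\C$ with atoms exactly the $b(x_0,\dots,x_{n-1})^{\c D_a}$ is proved from the back-and-forth property (\ref{eq:theta}) secured by scheduling transformation and amalgamation moves: if $\phi^{\c D_a}$ is a nonzero $n$-dimensional element and $f$ satisfies it, one extends the partial isomorphism determined by $f(0),\dots,f(n-1)$ and uses the fact that an $n$-dimensional definable set is insensitive to changing finitely many spare coordinates, to conclude $b(x_0,\dots,x_{n-1})^{\c D_a}\subseteq\phi^{\c D_a}$ for $b=N_a(f(0),\dots,f(n-1))$. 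The restriction to definable sets over a homogeneous limit hypernetwork is what keeps $\Nr_n\C$ small enough to be atomic with atom structure exactly $\alpha$; your proposal has no substitute for this step.
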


\begin{proof} 
We shall construct a generalized atomic weak set algebra of dimension $\omega$ such that the atom 
structure of its full neat reduct is isomorphic to
the given atom structure. Suppose \pe\ has a \ws\ in $H_n(\alpha)$. Fix some $a\in\alpha$. We can define a
nested sequence $N_0\subseteq N_1\ldots$ of hypernetworks
where $N_0$ is \pe's response to the initial \pa-move $a$, requiring that
\begin{enumerate}
\item If $N_r$ is in the sequence and 
and $b\leq {\sf c}_lN_r(\langle f_0, f_{n-2}\rangle\ldots , x, f_{n-2})$.  
then there is $s\geq r$ and $d\in\nodes(N_s)$ such 
that $N_s(f_0, f_{i-1}, d, f_{n-2})=b$.
\item If $N_r$ is in the sequence and $\theta$ is any partial
isomorphism of $N_r$ then there is $s\geq r$ and a
partial isomorphism $\theta^+$ of $N_s$ extending $\theta$ such that
$\rng(\theta^+)\supseteq\nodes(N_r)$.
\end{enumerate}
%Since $\alpha$ is countable there are countably many requirements  to extend. 
%Since the sequence of networks is nested , these requirements
%to extend
We can schedule these requirements
to extend so that eventually, every requirement gets dealt with.
If we are required to find $k$ and $N_{r+1}\supset N_r$
such that 
$N_{r+1}(f_0, k, f_{n-2})=b$ then let $k\in \omega\setminus \nodes(N_r)$
where $k$ is the least possible for definiteness, 
and let $N_{r+1}$ be \pe's response using her \ws, 
to the \pa move $N_r, (f_0,\ldots f_{n-1}), k, b, l).$
For an extension of type 2, let $\tau$ be a partial isomorphism of $N_r$
and let $\theta$ be any finite surjection onto a partial isomorphism of $N_r$ such that 
$dom(\theta)\cap nodes(N_r)= dom\tau$. \pe's response to \pa's move $(N_r, \theta)$ is necessarily 
$N\theta.$ Let $N_{r+1}$ be her response, using her wining strategy, to the subsequent \pa 
move $(N_r, N_r\theta).$ 
%The amalgamation move in $H(\alpha)$ means that this can be done.  We omit
%the details. 

Now let $N_a$ be the limit of this sequence.
This limit is well-defined since the hypernetworks are nested.  

Let $\theta$ be any finite partial isomorphism of $N_a$ and let $X$ be
any finite subset of $\nodes(N_a)$.  Since $\theta, X$ are finite, there is
$i<\omega$ such that $\nodes(N_i)\supseteq X\cup\dom(\theta)$. There
is a bijection $\theta^+\supseteq\theta$ onto $\nodes(N_i)$ and $j\geq
i$ such that $N_j\supseteq N_i, N_i\theta^+$.  Then $\theta^+$ is a
partial isomorphism of $N_j$ and $\rng(\theta^+)=\nodes(N_i)\supseteq
X$.  Hence, if $\theta$ is any finite partial isomorphism of $N_a$ and
$X$ is any finite subset of $\nodes(N_a)$ then
\begin{equation}\label{eq:theta}
\exists \mbox{ a partial isomorphism $\theta^+\supseteq \theta$ of $N_a$
 where $\rng(\theta^+)\supseteq X$}
\end{equation}
and by considering its inverse we can extend a partial isomorphism so
as to include an arbitrary finite subset of $\nodes(N_a)$ within its
domain.
Let $L$ be the signature with one $\mu$ -ary predicate symbol ($b$) for
each $b\in\alpha$, and one $k$-ary predicate symbol ($\lambda$) for
each $k$-ary hyperlabel $\lambda$.  

%[Notational point: if $\lambda$ is
%$k$-ary and $l$-ary for $k\neq l$ then make one $k$-ary predicate
%symbol $\lambda$ and one $l$-ary predicate symbol $\lambda'$, so that
%every predicate symbol has a unique arity.]  We have alos equlatiy and a sequence of variable of order type $\omega$.
 
%Rename the nodes of the
%hypernetworks so that $a\neq b\in \alpha \Rightarrow
%\nodes(N_a)\cap\nodes(N_b)=\emptyset$.  For each $a\in \alpha$ pick
For fixed $f_a\in\;^\omega\!\nodes(N_a)$, let
$U_a=\set{f\in\;^\omega\!\nodes(N_a):\set{i<\omega:g(i)\neq
f_a(i)}\mbox{ is finite}}$.
Notice that $U_a$ is weak unit ( a set of sequences agrreing cofinitely with a fixed one)

% For any function
%$g\in\:^\gamma\!\nodes(N_a)$ write $g\approx f_a$ and say ``$g$ is finitely
%close to $f_a$'' if% $\rng(g)\subseteq\nodes(N_a)$ and
%$\set{i<\gamma:g(i)\neq f_a(i)}$ is finite.  
%Let
%$U=\bigcup_{a\in\alpha}\set{f\in\;^\gamma\!\nodes(N_a):\exists a\in\alpha, \; f\approx f_a}$.

We can make $U_a$ into the base of an $L$ relativized structure $\c N_a$.
Satisfiability for $L$ formluas at assignments $f\in U_a$ is defined the usual Tarskian way.

For $b\in\alpha,\;
l_0, \ldots l_{\mu-1}, i_0 \ldots, i_{k-1}<\omega$, \/ $k$-ary hyperlabels $\lambda$,
and all $L$-formulas $\phi, \psi$, let
\begin{eqnarray*}
\c N_a, f\models b(x_{l_0}\ldots  x_{n-1})&\iff&N_a(f(l_0),\ldots  f(l_{n-1}))=b\\
\c N_a, f\models\lambda(x_{i_0}, \ldots,x_{i_{k-1}})&\iff&  N_a(f(i_0), \ldots,f(i_{k-1}))=\lambda\\
\c N_a, f\models\neg\phi&\iff&\c N_a, f\not\models\phi\\
\c N_a, f\models (\phi\vee\psi)&\iff&\c N_a,  f\models\phi\mbox{ or }\c N_a, f\models\psi\\
\c N_a, f\models\exists x_i\phi&\iff& \c N_a, f[i/m]\models\phi, \mbox{ some }m\in\nodes(N_a)
\end{eqnarray*}
For any $L$-formula $\phi$, write $\phi^{\c N_a}$ for the set of assighnments satisfying it; that is
$\set{f\in\;^\omega\!\nodes(N_a): \c N_a, f\models\phi}$.  Let
$D_a = \set{\phi^{\c N_a}:\phi\mbox{ is an $L$-formula}}.$ 
Then this is the universe of the following weak set algebra 
\[\c D_a=(D_a,  \cup, \sim, {\sf D}_{ij}, {\sf C}_i)_{ i, j<\omega}\] 
then 
$\c D_a\in\RCA_\omega$. (Weak set algebras are representable).

Let $\phi(x_{i_0}, x_{i_1}, \ldots, x_{i_k})$ be an arbitrary
$L$-formula using only variables belonging to $\set{x_{i_0}, \ldots,
x_{i_k}}$.  Let $f, g\in U_a$ (some $a\in \alpha$) and suppose
is a partial isomorphism of $N_a$.  We can prove by induction over the
quantifier depth of $\phi$ and using (\ref{eq:theta}), that
\begin{equation}
\c N_a, f\models\phi\iff \c N_a,
g\models\phi\label{eq:bf}\end{equation} 

Let $\c C=\prod_{a\in
\alpha}\c \c D_a$.  Then  $\c C\in\RCA_\omega$, and $\c C$ is the desired generalized weak set algebra.
Note that unit of $\c C$ is the disjoint union of the weak spaces.
We set out to prove our claim. 
We shall show that $\alpha\cong \At\Nr_n\c C.$

An element $x$ of $\c C$ has the form
$(x_a:a\in\alpha)$, where $x_a\in\c D_a$.  For $b\in\alpha$ let
$\pi_b:\c C\to\c \c D_b$ be the projection defined by
$\pi_b(x_a:a\in\alpha) = x_b$.  Conversely, let $\iota_a:\c D_a\to \c
C$ be the embedding defined by $\iota_a(y)=(x_b:b\in\alpha)$, where
$x_a=y$ and $x_b=0$ for $b\neq a$.  Evidently $\pi_b(\iota_b(y))=y$
for $y\in\c D_b$ and $\pi_b(\iota_a(y))=0$ if $a\neq b$.

Suppose $x\in\Nr_{\mu}\c C\setminus\set0$.  Since $x\neq 0$, 
it must have a non-zero component  $\pi_a(x)\in\c D_a$, for some $a\in \alpha$.  
Say $\emptyset\neq\phi(x_{i_0}, \ldots, x_{i_k})^{\c
 D_a}= \pi_a(x)$ for some $L$-formula $\phi(x_{i_0},\ldots, x_{i_k})$.  We
 have $\phi(x_{i_0},\ldots, x_{i_k})^{\c D_a}\in\Nr_{\mu}\c D_a)$.  Pick
 $f\in \phi(x_{i_0},\ldots, x_{i_k})^{\c D_a}$ and let $b=N_a(f(0),
 f(1), \ldots f_{n-1})\in\alpha$.  We will show that 
$b(x_0, x_1, \ldots x_{n-1})^{\c D_a}\subseteq
 \phi(x_{i_0},\ldots, x_{i_k})^{\c D_a}$.  Take any $g\in
b(x_0, x_1\ldots x_{n-1})^{\c D_a}$, 
so $N_a(g(0), g(1)\ldots g(n-1))=b$.  The map $\set{(f(0),
g(0)), (f(1), g(1))\ldots (f(n-1), g(n-1))}$ is 
a partial isomorphism of $N_a$.  By
 (\ref{eq:theta}) this extends to a finite partial isomorphism
 $\theta$ of $N_a$ whose domain includes $f(i_0), \ldots, f(i_k)$. Let
 $g'\in U_a$ be defined by
\[ g'(i) =\left\{\begin{array}{ll}\theta(i)&\mbox{if }i\in\dom(\theta)\\
g(i)&\mbox{otherwise}\end{array}\right.\] By (\ref{eq:bf}), $\c N_a,
g'\models\phi(x_{i_0}, \ldots, x_{i_k})$. Observe that
$g'(0)=\theta(0)=g(0)$ and similarly $g'(n-1)=g(n-1)$, so $g$ is identical
to $g'$ over $\mu$ and it differs from $g'$ on only a finite
set of coordinates.  Since $\phi(x_{i_0}, \ldots, x_{i_k})^{\c
\ D_a}\in\Nr_{\mu}(\c C)$ we deduce $\c N_a, g \models \phi(x_{i_0}, \ldots,
x_{i_k})$, so $g\in\phi(x_{i_0}, \ldots, x_{i_k})^{\c D_a}$.  This
proves that $b(x_0, x_1\ldots x_{\mu-1})^{\c D_a}\subseteq\phi(x_{i_0},\ldots,
x_{i_k})^{\c D_a}=\pi_a(x)$, and so 
$$\iota_a(b(x_0, x_1,\ldots x_{n-1})^{\c \ D_a})\leq
\iota_a(\phi(x_{i_0},\ldots, x_{i_k})^{\c D_a})\leq x\in\c
C\setminus\set0.$$  Hence every non-zero element $x$ of $\Nr_{n}\c C$ 
is above
a an atom $\iota_a(b(x_0, x_1\ldots n_1)^{\c D_a})$ (some $a, b\in
\alpha$) of $\Nr_{n}\c C$.  So
$\Nr_{n}\c C$ is atomic and $\alpha\cong\At\Nr_{n}\c C$ --- the isomorphism
is $b \mapsto (b(x_0, x_1,\dots x_{n-1})^{\c D_a}:a\in A)$.
\end{proof}

We can use such games to show that for $n\geq 3$, there is a representable $\A\in \CA_n$ 
with atom structure $\alpha$ such that $\forall$ can win the game $F^{n+2}(\alpha)$.
However \pe\ has a \ws\ in $H_n(\alpha)$, for any $n<\omega$.
It will follow that there a countable cylindric algebra $\c A'$ such that $\c A'\equiv\c
A$ and \pe\ has a \ws\ in $H(\c A')$.
So let $K$ be any class such that $\Nr_n\CA_{\omega}\subseteq K\subseteq S_c\Nr_n\CA_{n+2}$.
$\c A'$ must belong to $\Nr_n(\RCA_\omega)$, hence $\c A'\in K$.  But $\c A\not\in K$
and $\c A\preceq\c A'$. Thus $K$ is not elementary. From this it easily follows that the class of completely representable cylindric algebras
is not elementary, and that the class $\Nr_n\CA_{n+k}$ for any $k\geq 0$ is not elementary either. 
Furthermore the constructions works for many variants of cylindric algebras
like Halmos' polyadic equality algebras and Pinter's substitution algebras.
Formally we shall prove:
\begin{theorem}\label{r} Let $3\leq n<\omega$. Then the following hold:
\begin{enumroman}
\item Any $K$ such that $\Nr_n\CA_{\omega}\subseteq K\subseteq S_c\Nr_n\CA_{n+2}$ is not elementary.
\item The inclusions $\Nr_n\CA_{\omega}\subseteq S_c\Nr_n\CA_{\omega}\subseteq S\Nr_n\CA_{\omega}$ are all proper
\end{enumroman}
\end{theorem}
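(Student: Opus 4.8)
# Proof Proposal for Theorem \ref{r}

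\textbf{The plan.} The two items are linked: item (ii) will follow from the non-elementariness in item (i) together with a direct separation argument, so I would prove (i) first and then harvest (ii). For (i), the strategy is the standard ``games detect neat embeddability'' technique, combining the two theorems just proved (Theorem \ref{thm:n} and Theorem \ref{thm:RaC}) with a Löwenheim--Skolem / elementary chain argument. Concretely: I would first exhibit a single atomic algebra $\A\in\CA_n$, representable, whose atom structure $\alpha=\At\A$ has the property that $\exists$ has a winning strategy in $H_n(\alpha)$ (all $\omega$ rounds) but $\forall$ has a winning strategy in the finite game $F^{n+2}_n(\alpha)$. Such an $\A$ is obtainable by a rainbow-type or Monk-type construction; I would reuse (a relativisation of) the construction already in the paper --- e.g. the $\C(\Gamma)$ of Section 9 built over an appropriate graph, or the relation-algebra-based atom structure of Example \ref{countable} read through its $n$-dimensional cylindric basis --- so that the atom structure is genuinely representable (hence $\exists$ wins the long hypernetwork game $H_n$) yet ``flat but not $(n+2)$-smooth'', so $\forall$ wins $F^{n+2}_n$.

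\textbf{Main steps for (i).} Step 1: Since $\forall$ wins $F^{n+2}_n(\At\A)$, the contrapositive of Theorem \ref{thm:n} gives $\A\notin S_c\Nr_n\CA_{n+2}$, hence $\A\notin K$ for any $K$ in the stated sandwich. Step 2: Since $\exists$ wins $H_n(\At\A)$, and using the fact that existence of a winning strategy in the $\omega$-round game transfers to any elementarily equivalent algebra (here one passes to a countable elementary subalgebra, or more carefully to a countable $\A'\equiv\A$ in which, via the Banach--Mazur / ``persistent strategy'' argument sketched after Theorem \ref{infinite}, $\exists$ still wins $H_n(\At\A')$ --- this is where one must be a little careful because $H_n$ is a game on the atom structure and $\A'$ need not be atomic, so one works with the atom structure of the canonical extension or argues via the standard trick that $\exists$'s winning strategy in $H_n$ is witnessed by a first-order-expressible family of conditions). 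Step 3: Apply Theorem \ref{thm:RaC} to $\At\A'$: a winning strategy for $\exists$ in $H_n(\At\A')$ yields $\C\in\RCA_\omega$ with $\Nr_n\C$ atomic and $\At\Nr_n\C\cong\At\A'$. Step 4: Conclude $\A'\cong\Nr_n\C\in\Nr_n\CA_\omega\subseteq K$ (using that $\A'$, being atomic with the same atom structure and both sitting densely, is isomorphic to $\Nr_n\C$ --- here I need $\A'$ atomic, so in fact I should take $\A'$ to be a \emph{countable atomic} elementary-equivalent algebra; Löwenheim--Skolem on $\A$ itself, which is already atomic, gives a countable elementary subalgebra $\A'\preceq\A$, hence atomic and $\equiv\A$). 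Step 5: Now $\A'\in K$, $\A\notin K$, and $\A'\equiv\A$, so $K$ is not closed under elementary equivalence, hence not elementary. This proves (i).

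\textbf{Step for (ii).} The chain $\Nr_n\CA_\omega\subseteq S_c\Nr_n\CA_\omega\subseteq S\Nr_n\CA_\omega=\RCA_n$ has all inclusions proper. For $S\Nr_n\CA_\omega\neq S_c\Nr_n\CA_\omega$: take an atomic representable algebra whose completion is not representable (these exist, e.g. the term algebra $\Tm\At\A$ of Hodkinson's construction discussed in the Introduction, or the algebra of Theorem~2 of Section 8); such an algebra lies in $\RCA_n=S\Nr_n\CA_\omega$ but, being atomic without a complete representation, cannot be in $S_c\Nr_n\CA_\omega$ (an algebra in $S_c\Nr_n\CA_\omega$ that is countable and atomic would be completely representable by the characterisation recalled in the paper). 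For $S_c\Nr_n\CA_\omega\neq\Nr_n\CA_\omega$: the algebra $\A$ of Example \ref{countable}(1), namely $\B=\Nr_n\c C$ with $\c C\in\CA_\omega$, sits in $\Nr_n\CA_\omega$; but one uses the non-elementariness from (i) (or directly the games) to produce a \emph{complete} subalgebra of (an algebra elementarily equivalent to) such a $\B$ that lies in $S_c\Nr_n\CA_\omega$ but not in $\Nr_n\CA_\omega$ --- in fact the proof of (i) already supplies this, since $\A'\preceq\A$ with $\A'\in\Nr_n\CA_\omega$ and $\A\notin\Nr_n\CA_\omega$ would contradict $\Nr_n\CA_\omega$ being elementary; but $\Nr_n\CA_\omega$ \emph{is} known to be pseudo-elementary and non-elementary, so one instead points to an explicit witness, e.g. a complete subalgebra of a suitable product as constructed in Section 7, which is in $S_c\Nr_n\CA_\omega\setminus\Nr_n\CA_\omega$.

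\textbf{The main obstacle.} The delicate point is Step 2--Step 4 of part (i): transferring ``$\exists$ wins $H_n$ on the atom structure'' to an elementarily equivalent algebra and then feeding it to Theorem \ref{thm:RaC}. One must ensure the elementary-equivalent copy $\A'$ is \emph{atomic} (so Löwenheim--Skolem giving an elementary \emph{subalgebra} of the already-atomic $\A$ is the safe route, since atomicity of each individual atom is first-order and ``is an atom'' below a named element is preserved downward in elementary substructures), and one must verify that $\exists$'s winning strategy genuinely descends to $\At\A'$ --- this uses that the relevant network/hypernetwork conditions are finitary and that $\At\A'$ embeds in $\At\A$ as a substructure over which the strategy can be replayed. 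Getting the base algebra $\A$ with exactly the right game behaviour ($\exists$ wins $H_n$, $\forall$ wins $F^{n+2}_n$) for \emph{all four} signatures (CA, SC, PA, PEA) simultaneously, with only the stated spare dimensions, is the other place real work is hidden, but that work is entirely parallel to \cite{R} and to the earlier sections of this paper, so I would invoke those constructions rather than redo them.
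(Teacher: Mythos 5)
There is a genuine gap in your part (i), and it lies exactly where you located the ``delicate point''. Your plan requires a single atomic algebra $\A$ whose atom structure $\alpha=\At\A$ satisfies simultaneously: $\forall$ wins $F^{n+2}_n(\alpha)$ and $\exists$ wins the full $\omega$-round hypernetwork game $H_n(\alpha)$. No such $\A$ can exist. Indeed, if $\exists$ won the $\omega$-round game on $\alpha$ (a countable atom structure), Theorem \ref{thm:RaC} would produce $\C\in\RCA_\omega$ with $\Nr_n\C$ atomic and $\At\Nr_n\C\cong\alpha$; since $\Nr_n\CA_\omega\subseteq S_c\Nr_n\CA_{n+2}$ (the very inclusion that makes the sandwich in the theorem non-vacuous), Theorem \ref{thm:n} would then hand $\exists$ a winning strategy in $F^{n+2}(\alpha)$ as well -- but both games are determined solely by the atom structure, and the two players cannot both have winning strategies in $F^{n+2}(\alpha)$. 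So the base configuration you want to ``reuse from a rainbow-type construction'' is contradictory, and consequently the downward L\"owenheim--Skolem step ($\A'\preceq\A$ countable, replay the strategy on $\At\A'$) has nothing to start from.

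What the paper actually does is asymmetric in precisely the way your plan erases: on the rainbow atom structure $\exists$ is shown to win only the \emph{finite-round} games $H_k$ (every $k<\omega$), while $\forall$ wins $F^{n+2}$; the $\omega$-round win is then manufactured on an elementarily equivalent algebra by passing \emph{upward} to a non-principal ultrapower $\B$ of $\A$ (in the $k$-th coordinate $\exists$ plays her $k$-round strategy, so she survives every round of $H(\B)$), and countability is recovered not by plain L\"owenheim--Skolem on $\A$ but by an elementary chain $\A=\A_0\preceq\A_1\preceq\cdots\preceq\B$ in which each $\A_{i+1}$ is chosen to contain all elements $\exists$'s strategy selects against moves from $\A_i$; the union $\A'$ is countable, $\A\preceq\A'$, and $\exists$ wins $H(\A')$, whence $\A'\in\Nr_n\CA_\omega\subseteq K$ while $\A\notin K$. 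Note also that the elementary relationship runs in the opposite direction from your write-up ($\A\preceq\A'$ with $\A'$ the ``good'' algebra, not $\A'\preceq\A$). Your treatment of (ii) -- properness of $S_c\Nr_n\CA_\omega\subseteq S\Nr_n\CA_\omega$ via a countable atomic representable algebra with no complete representation, and properness of $\Nr_n\CA_\omega\subseteq S_c\Nr_n\CA_\omega$ via the complete-subalgebra construction of the earlier section -- is in line with what the paper has available, but it cannot be ``harvested from (i)'' until (i) is repaired along the lines above.
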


%\end{demo}

%\bibitem{hv} Barely canonical

\end{document}